\newcommand{\hair}{\ifmmode\mskip1mu\else\kern0.08em\fi}
\renewcommand{\P}{\mathbb{P}}
\newcommand{\Var}{\mathrm{Var}}
\newcommand{\E}{\mathbb{E}}
\renewcommand{\L}{\mathbb{L}}
\newcommand{\R}{\mathbb{R}}
\newcommand{\ce}{\mathcal{E}}
\newcommand{\N}{\mathbb{N}}
\newcommand{\Z}{\mathbb{Z}}
\newcommand{\Exp}{\mathrm{Exp}}
\newcommand{\intint}[1]{\llbracket 1,#1 \rrbracket}
\renewcommand{\epsilon}{\varepsilon}
\newtheorem{maintheorem}{Theorem}
\newtheorem{theorem}{Theorem}[section]
\newtheorem*{theorem*}{Theorem}
\newtheorem*{prop*}{Proposition}
\newtheorem{prop}[theorem]{Proposition}
\newtheorem*{corollary*}{Corollary}
\newtheorem{corollary}[theorem]{Corollary}
\newtheorem{lemma}[theorem]{Lemma}
\newtheorem{proposition}[theorem]{Proposition}
\theoremstyle{definition}
\newtheorem{defn}[theorem]{Definition}
\newtheorem{remark}[theorem]{Remark}
\newcommand{\melonweight}{X_{n}^k}
\newcommand{\weight}{\ell}
\DeclareMathOperator*{\tf}{TF}
\newcommand{\linelower}{\mathbb{L}_{\mathrm{low}}}
\newcommand{\lineupper}{\mathbb{L}_{\mathrm{up}}}
\newcommand{\melon}[1]{\gamma^{#1}_{n}}
\newcommand{\pos}[2]{p_{#1,k}^{(#2)}}
\newcommand{\pathweight}[3]{X_{n,k,#3}^{(#1,#2,s)}}
\newcommand{\pathweightnos}[3]{X_{n,k,#3}^{(#1,#2)}}
\newcommand{\sep}[1]{\mathrm{sep}_{k}^{(#1)}}
\newcommand{\paragram}[2]{F^{(#1, #2, s)}}
\newcommand{\Xmid}{X^{r,s, t}_{\mathrm{mid}}}
\newcommand{\lineleft}{\mathbb{L}_{\mathrm{left}}}
\newcommand{\lineright}{\mathbb{L}_{\mathrm{right}}}
\newcommand{\pleft}{p_{\mathrm{left}}}
\newcommand{\pright}{p_{\mathrm{right}}}
\newcommand{\ordmel}[2]{\smash{\gamma_{n, \shortrightarrow}^{#1, #2}}}
\newcommand{\tordmel}[2]{\gamma_{n, \shortrightarrow}^{#1, #2}}
\newcommand{\pordmel}[2]{\smash{\phi_{n, \shortrightarrow}^{#1, #2}}}
\newcommand{\philower}{\phi_{\mathrm{low}}}
\newcommand{\phiupper}{\phi_{\mathrm{up}}}
\newcommand{\mc}{\mathcal}
\newcommand{\WPF}[2]{\smash{\mathrm{WE}_{#2}^{#1}}}
\newcommand{\ar}{\to}
\title[The geodesic watermelon in last passage percolation]{Interlacing and scaling exponents for the geodesic watermelon in last passage percolation}
\author{Riddhipratim Basu}
\address{Riddhipratim Basu, International Centre for Theoretical Sciences, Tata Institute of Fundamental Research, Bangalore, India}
\email{rbasu@icts.res.in}
\author{Shirshendu Ganguly}
\address{Shirshendu Ganguly, Department of Statistics, U.C. Berkeley, Berkeley, CA, USA}
\email{sganguly@berkeley.edu}
\author{Alan Hammond}
\address{Alan Hammond, Departments of Mathematics and Statistics, U.C. Berkeley, Berkeley, CA, USA}
\email{alanmh@berkeley.edu}
\author{Milind Hegde}
\address{Milind Hegde, Department of Mathematics, U.C. Berkeley, Berkeley, CA, USA}
\email{milind.hegde@berkeley.edu}
\begin{document}

\begin{abstract}
In a discrete planar last passage percolation (LPP), 
random values are assigned independently to each vertex in $\Z^2$, and 
each finite upright path in $\Z^2$ is ascribed the weight given by the sum of values attached to the vertices of the path. 
The weight of a collection of disjoint paths is the sum of its members’ weights. The notion of a geodesic, namely a path of maximum weight between two vertices, has a natural generalization concerning several disjoint paths.
Indeed, a $k$-geodesic watermelon  in $[1,n]^2\cap \Z^2$ is a collection of $k$ disjoint upright paths contained in this square that has maximum weight among all such collections. While the weights of such collections are known to be important objects, the maximizing paths have remained largely unexplored beyond the $k=1$ case. 
For exactly solvable models, such as exponential and geometric LPP, it is well known 
that for $k=1$ the exponents that govern fluctuation in weight  and  transversal distance are $1/3$ and $2/3$; which is to say, the weight of the geodesic on the route $(1,1) \to (n,n)$ typically fluctuates  around  a dominant linear growth of the form $\mu n$ by the order of~$n^{1/3}$; and the maximum Euclidean distance of the geodesic from the diagonal typically has order $n^{2/3}$.
Assuming a strong but local form of convexity  and
one-point moderate deviation estimates for the geodesic weight profile---which are available in all known exactly solvable models---we establish that, typically, the $k$-geodesic watermelon's weight falls below $\mu n k$ by order $k^{5/3}n^{1/3}$, and its transversal fluctuation is of order $k^{1/3}n^{2/3}$. Our arguments crucially rely on, and develop, a remarkable deterministic interlacing property that the watermelons admit. Our methods also yield sharp rigidity estimates for naturally associated point processes. These bounds improve on estimates obtained by applying tools from the theory of determinantal point processes available in the integrable setting. 
\end{abstract}
\maketitle

\begin{figure}[h]
\includegraphics[width=0.43\linewidth]{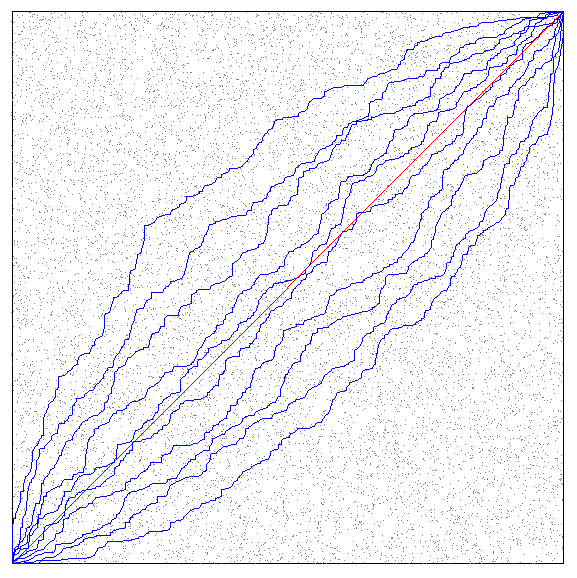}
\hspace{0.2cm}
\includegraphics[width=0.43\linewidth]{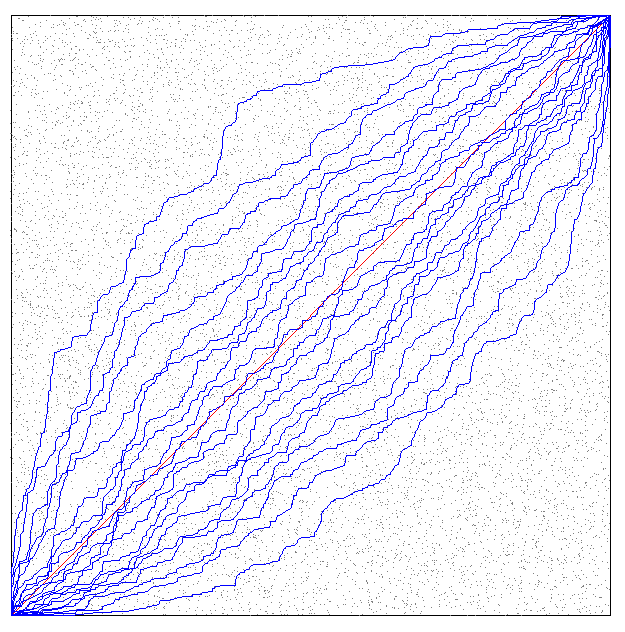}
\caption{Simulations of $k$-geodesic watermelons in Poissonian LPP; $k$ equals ten on the left and twenty on the right, with $n=200$ in both. Grey are the points of the underlying Poisson process of intensity one on $[0,n]^2$; blue are the watermelon's curves; and red is the diagonal $y=x$ around which the watermelon fluctuates on the scale $k^{1/3}n^{2/3}$.}%
\label{f.watermelon}
\end{figure}

\tableofcontents

\section{Introduction and main results}\label{intro12}

In a discrete planar last passage percolation (LPP) model, each vertex in $\Z^2$ is independently ascribed a non-negative value sampled from a given law $\nu$. An upright path in $\Z^2$ is a finite nearest-neighbour path on $\Z^2$ each of whose steps is upwards or to the right. To each such path a weight is ascribed by the LPP model, this being the sum of the values assigned to the vertices in $\Z^2$ visited by the path. For $n \in \N$, let $X_n$ denote the maximum weight assigned to an upright path that begins at the vertex $(1,1)$ and that ends at the vertex $(n,n)$. Any upright path with these endpoints whose weight equals $X_n$ is called a geodesic between $(1,1)$ and $(n,n)$, and $X_n$ is called the last passage time from $(1,1)$ to $(n,n)$; these two concepts make sense for  any pair of coordinate-wise ordered vertices.  Let the transversal fluctuation $\tf(n)$ denote the maximum anti-diagonal fluctuation of any geodesic between $(1,1)$ and $(n,n)$. That is, $\tf(n)$ equals the minimum of positive real $r$ such that every vertex of any geodesic between $(1,1)$ and $(n,n)$ has Euclidean distance from the diagonal $y = x$ at most $r$. Planar LPP models are paradigmatic examples of models predicted to exhibit features of the Kardar-Parisi-Zhang (KPZ) universality class; in particular, the characteristic KPZ exponents of one-third and two-thirds for the weight and transversal fluctuations of geodesics.

A few planar LPP models---for which the law $\nu$ has a special form---enjoy {\em integrable} properties that vastly facilitate their analysis.  Two such models  are geometric LPP with parameter $p \in (0,1)$, where $\nu(k) = p^{k-1} (1-p)$ for $k \in \N$; and exponential LPP, where $\nu$ is the exponential law. (By scaling properties of the exponential distribution, the rate of the exponential distribution is irrelevant, and we will consider exponentials of rate one.) For any non-negative $\nu$,  
the existence of the limiting growth rate $\mu$ for the weight maximum, specified as the almost sure limiting value of $n^{-1} X_n$ as $n \to \infty$, is a consequence of a straightforward superadditivity argument; the positivity of $\mu$ holds as soon as $\nu$ is not degenerate at $0$, while the finiteness is guaranteed under mild moment assumptions. For geometric and exponential LPP, $\mu$ can be explicitly evaluated. The integrable structure of these and a handful of other models has been crucial in proofs establishing the pair of KPZ scaling exponents.

Incidentally, the first model for which such a program was carried out was  Poissonian last passage percolation on the plane, where the statistic of interest $L_{n}$ is the maximum number of points on an upright path from $(0,0)$ to $(n,n)$ in a rate one Poisson process on $\R^2$; conditionally on the total number of points in $[0,n]^2$, this is Ulam's problem for the longest increasing subsequence in a uniform random permutation. In their seminal work~\cite{baik},  Baik, Deift, and Johansson showed that $n^{-1/3}(L_{n}-2n)$ converges weakly to the GUE Tracy-Widom distribution, namely the high-$n$ limiting law of a scaled version of the largest eigenvalue of an $n\times n$ random matrix picked according to the Gaussian unitary ensemble. Shortly thereafter, Johansson \cite{J00} used this longitudinal fluctuation result to establish the transversal fluctuation exponent of two-thirds in this model, showing that with high probability the smallest strip around the diagonal containing any geodesic has width $n^{2/3+o(1)}$. Similar transversal fluctuation results have been proved for last passage percolation on~$\Z^2$ with exponential and geometric passage times \cite{balazs2006}; and for the semi-discrete model of Brownian last passage percolation \cite{brownianLPPtransversal}.

The Baik-Deift-Johansson theorem was derived by noting that the random statistic $L_n$ has the distribution of  the length of the top row of a Young tableau picked according to the Poissonized Plancherel measure of appropriate parameters. The latter observation was obtained via the Robinson-Schensted-Knuth correspondence in \cite{schensted,knuth1970permutations}
and was first  exploited in \cite{logan26variational,vershik1977asymptotics}.
The correspondence extends to other rows of the tableau. Indeed, Greene \cite{greene} established that the sum of the lengths of the first $k$ rows of the random Young tableau picked from the same measure has the distribution of the maximum number of Poisson points on $k$ upright paths from $(0,0)$ to $(n,n)$ that are disjoint except at these shared endpoints. Baik, Deift and Johansson conjectured that the scaled lengths of the top $k$ rows converge jointly in distribution to the top $k$ points of a determinantal point process on $\R$
called the Airy point process.
The conjecture was proved soon after, independently, by Borodin, Okounkov and Olshanski \cite{borodin2000asymptotics}; Johansson \cite{johansson-toprows}; and Okounkov \cite{okounkov2000random}. By Greene's theorem, then, the highest scaled weight of a set of $k$ disjoint upright paths in Poissonian LPP converges in law to the sum of the top $k$ points of the Airy point process. Other integrable LPP models exhibit a similar correspondence; we discuss it in the case of exponential LPP in Section~\ref{s:special}. 

We have recounted these fragments of KPZ history to advocate the conceptual importance of systems of $k$ upright disjoint paths with given endpoints that maximize collective weight. We call them $k$-\emph{geodesic watermelons} and devote this article to a unified geometric treatment of them.

The parameter $k \in \N$ will be positive, with $k \leq n$. 
Define $X_n^k$ to be the maximum weight of any collection of $k$ disjoint upright paths contained inside the square with opposite corners $(1,1)$ and $(n,n)$; note that the collection attaining the maximum need not be unique, and that $X_n^1 = X_n$. The transversal fluctuation of any such collection of paths  is the maximum Euclidean distance between a vertex in~$\Z^2$ lying in the paths' union and the diagonal $y =x$. We specify $\overline\tf(n,k)$ to equal the maximum value of the transversal fluctuation over such sets of $k$ paths whose weight realises~$X_n^k$, and $\underline\tf(n,k)$ to be the minimum value of the transversal fluctuation over the same collection.  

Our main result establishes the values of the exponents that govern  weight and transversal fluctuations of $k$-geodesic watermelons in the context of geometric and exponential LPP. We note a simple heuristic to predict the exponents by considering the case that $k=n$, where the $k$-geodesic watermelon uses all the vertices in $\intint{n}^2$: then by the strong law of large numbers, $X_n^k$ will be of order $\E[\xi]n^2$, with $\xi$ distributed according to $\nu$, and $\underline\tf(n,k) = \overline\tf(n,k) = \Theta(n)$. It is reasonable to believe that $X_n^k$ to first-order should be linear in $nk$, just as $X_n$ is to first-order linear in $n$. Then if we assume that the fluctuations are governed by exponents $\alpha$ and $\beta$ as $X_{n}^k = \mu n k - \Theta(k^{\alpha}n^{1/3})$ and $\underline\tf(n,k) = \overline\tf(n,k) = \Theta(k^\beta n^{2/3})$, we obtain, from the $k=n$ case, the prediction that $\alpha = 5/3$ and $\beta = 1/3$. This is what our main result establishes with high probability for all $k$ up to a small constant times $n$.
\newpage

\begin{maintheorem}
\label{t.notwidenothin}
Consider geometric LPP of given parameter $p \in (0,1)$, or exponential LPP. 

\begin{enumerate} 
\item \label{t.weightfluc exp geo}
{\em Weight fluctuation:} There exist positive constants $C_1$, $C_2$, $C$, $c_1$ and $c$  such that
\begin{equation*}
 \P\left(X_{n}^k - \mu n k \notin - k^{5/3}n^{1/3} \cdot \big(C_1 ,C_2 \big) \right)\leq Ce^{-ck^2} 
\end{equation*}
 for $1\le k\leq c_1 n$,  
where we denote $a \cdot I = \big\{ ax: x \in I \big\}$ for $a \in \R$ and $I \subset \R$.
\item \label{tf} {\em Transversal fluctuation:} 
\begin{enumerate}
	\item \label{tf1} There exist positive constants $M$, $C$, $c$ and $c_1$  such that, for $1 \leq k \leq c_1n$,
	\begin{equation*}
	\P\left(\overline\tf(n,k) > Mk^{1/3}n^{2/3}\right) \leq Ce^{-ck}.
 	\end{equation*}
	
	\item \label{tf2} A matching lower bound holds: there exist positive constants $C$, $c$, $c_1$ and $\delta$ such that, for $1 \leq k \leq c_1 n$,
	\begin{equation*}
	\P\left(\underline\tf(n,k) < \delta k^{1/3}n^{2/3}\right) \leq Ce^{-ck^2}.
	\end{equation*}
\end{enumerate}
\end{enumerate}
\end{maintheorem}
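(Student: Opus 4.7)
The plan rests on three ingredients: a deterministic \emph{interlacing} property relating $k$-watermelons across orders; one-point upper and lower moderate deviation bounds for the weight of a single geodesic between anti-diagonally displaced endpoints; and the local parabolic curvature of the limit shape, which relates the weight deficit to the square of the anti-diagonal displacement via $\mu n - W \approx u^2/n$. The first ingredient will convert geometric constraints on the watermelon's curves into sharp weight estimates, and these in turn yield the transversal fluctuation bounds.

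\emph{Weight fluctuation.} I would first formulate and prove a deterministic statement to the effect that the curves of any $k$-watermelon and any $(k-1)$-watermelon, ordered anti-diagonally from outside in, interlace with one another. This should be a purely combinatorial fact, proved by swapping arguments on anti-diagonal slices together with the observation that weight-maximality forbids multiple crossings between the two families. A consequence is that the marginal $\Delta_j := X_n^j - X_n^{j-1}$ equals the weight of a single path whose endpoints are forced by the interlacing to be anti-diagonally displaced from $(1,1)$ and $(n,n)$ by amounts of order $j^{1/3} n^{2/3}$. The one-point moderate deviation bound then gives $\Delta_j \le \mu n - c\, j^{2/3} n^{1/3}$ with stretched-exponential failure probability, and summation $X_n^k - \mu n k = \sum_{j=1}^k (\Delta_j - \mu n)$ produces the upper bound of part~\ref{t.weightfluc exp geo}; the $e^{-ck^2}$ tail emerges as a product of $\Theta(k)$ approximately independent single-path tails, each of stretched-exponential magnitude $e^{-ck}$. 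For the matching lower bound on $X_n^k$, I would construct $k$ explicit disjoint paths, placing the $j$-th in a slab of width $\Theta(n^{2/3})$ centered at anti-diagonal displacement $\Theta(j^{1/3} n^{2/3})$, and applying the one-point lower bound (with its cubic left tail at scale $j^{2/3} n^{1/3}$) in each slab.

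\emph{Transversal fluctuation.} Both bounds follow from the weight estimates through the interlacing. For part~\ref{tf1}, suppose some watermelon curve is displaced by $D > M k^{1/3} n^{2/3}$; the interlacing then forces a positive fraction of the outer curves to also lie at displacement $\Omega(D)$, each contributing weight at most $\mu n - \Omega(M^2 k^{2/3} n^{1/3})$. Together with the upper bound on $X_n^{k/2}$ already established in the weight step, this pushes $X_n^k$ below its lower bound once $M$ is taken large. For part~\ref{tf2}, conversely, if every curve remained within $\delta k^{1/3} n^{2/3}$ of the diagonal, the marginal $\Delta_j$ would satisfy $\Delta_j \ge \mu n - C \delta^2 j^{2/3} n^{1/3}$ for each $j$, driving $X_n^k$ above its upper bound once $\delta$ is small enough.

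\emph{Main obstacle.} The principal technical challenge is extracting from the deterministic interlacing an effective one-point random input: the outermost curve of the watermelon has random endpoints, so a quantitative coupling between the watermelon and the geodesics emanating from a sufficiently rich family of deterministic perturbed corners is needed in order to bound each $\Delta_j$ by a nearly-independent family of single-path last passage times. Ensuring that this coupling transfers the full moderate deviation tails rather than merely first-moment information is what yields both the sharp exponent $5/3$ and the Gaussian $e^{-ck^2}$ concentration; establishing the propagation of displacement from one curve to a positive fraction of the outer curves via interlacing is the geometric heart of part~\ref{tf1}.
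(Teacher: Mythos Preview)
Your plan has a structural circularity and a wrong identification that would block the argument.

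\textbf{The increment $\Delta_j$ is not a one-path weight.} You write that $\Delta_j = X_n^j - X_n^{j-1}$ ``equals the weight of a single path whose endpoints are forced by the interlacing to be anti-diagonally displaced \dots\ by amounts of order $j^{1/3}n^{2/3}$.'' Neither clause holds. The paper only obtains the inequality $X_n^{j,j} \geq \Delta_j$ (the lightest curve of the $j$-melon weighs at least the increment), and in the reverse direction nothing prevents $\Delta_j$ from exceeding any individual curve weight. More seriously, the curves of the $j$-melon start and end at \emph{unit-order} displacement from the corners $(1,1)$ and $(n,n)$; the $j^{1/3}n^{2/3}$ scale is their \emph{mid-life} transversal fluctuation, which is precisely the quantity you are trying to establish in part~(\ref{tf}). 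Using it as input to bound $\Delta_j$ is circular. Interlacing is a purely relative statement (curve $i$ of the $k$-melon sits between curves $i$ and $i+1$ of the $(k+1)$-melon); it carries no absolute positional information on its own.

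\textbf{The weight upper bound is not a sum of one-point tails.} There is no one-point moderate deviation input that directly gives $\Delta_j \le \mu n - c j^{2/3} n^{1/3}$; this is (up to constants) the conclusion of Proposition~\ref{p:kthweight}, proved \emph{after} the full weight estimate. The paper's route to the sharp upper bound is: (i) a crude BK-based upper bound on $X_n^k$ with the wrong sign; (ii) an averaging argument over $j\in[k/2,k]$ locating a $j$ whose every curve is not too light; (iii) a separate BK-based result (Theorem~\ref{t:disjoint}) that many disjoint high-weight paths cannot be packed into a narrow strip; (iv) the weight-loss-from-large-TF estimate (Theorem~\ref{t.tf}); (v) interlacing to transfer from the random $j$ back to $k$. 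Steps (iii)--(iv) have no analogue in your outline.

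\textbf{Your argument for part (\ref{tf2}) has the inequality reversed.} Confining all $k$ curves to a strip of width $\delta k^{1/3}n^{2/3}$ does not give a \emph{lower} bound on their weights; it is a constraint, and constraints cost weight. The paper's mechanism is the opposite: if $k$ disjoint curves are packed into that strip, pigeonhole forces some of them into regions of width $\delta k^{-2/3}n^{2/3}$, which by one-point inputs makes each such curve unusually light with probability $1-e^{-ck}$, and BK then yields $e^{-ck^2}$ for the collection. Combined with the averaging step (which guarantees, for some nearby $j$, that all $j$-melon curves are \emph{not} too light), this produces the contradiction.

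Your lower-bound construction idea is closer in spirit, though the paper's actual construction is a careful five-phase (take-off, climb, cruise, descent, landing) multi-scale scheme designed so that flight corridors respect KPZ aspect ratios; a naive placement in parallel slabs does not obviously keep the paths disjoint near the corners while retaining on-scale weight loss.
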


Since the fundamental advances \cite{baik,J00}, much analysis of integrable LPP models has been based on exact formulas for the point-to-point last passage time and for finite-dimensional distributions of the passage time profile from a point to certain special lines \cite{borodin2008large,borodin2008large2}. These integrable techniques extend to the continuous scaling limits, such as the KPZ fixed point \cite{matetski2016kpz}. %
More recently, 
probabilistic and geometric technique in alliance with integrable input has been brought to bear on LPP problems.
 The Brownian Gibbs property is a resampling invariance enjoyed by the random ensemble of curves associated to Brownian LPP by the RSK correspondence.  
 It has been used in~\cite{brownianLPPtransversal} to analyse the weight of the $k$-geodesic watermelon  on the route from $(0,0)$ to $(n,n)$ in Brownian LPP, and in \cite{calvert2019brownian} to gain strong control on LPP weight profiles and the Airy$_2$ process. 
The resampling property is also a central tool in the recent advance of \cite{dauvergne2018directed}, which, with the aid of \cite{dauvergne2018basic}, constructs the full scaling limit of Brownian LPP. Similar Gibbs properties in other models have been explored in works such as \cite{corwin2018transversal,corwin2018kpz,aggarwal2019universality,wu2019tightness}.

The present paper falls within the scope of a separate program of probabilistic and geometric inquiry into KPZ, focused on exponential and Poissonian LPP.  By exploiting moderate deviation estimates from integrable probability and aspects of geodesic geometry, the slow bond problem was solved in the preprint~\cite{slow-bond}.  Developing this vein,  \cite{coalescence, hammond2018modulus, basu2018time} and \cite{zhang2019optimal} have offered information about coalescence structure of geodesics; their local fluctuations; and temporal correlation exponents. A combination of geometric and integrable methods, including the use of one-point estimates, has also been crucially used in \cite{ferrari2018universality,ferrari2019time} to establish universality of the GOE Tracy-Widom distribution in point-to-line LPP with general slope and time correlation exponents with generic initial conditions.

This paper pursues the preceding geometric and probabilistic program while adopting a novel geometric perspective: $k$-geodesic watermelons interlace, each with the next as the parameter $k \in \N$ rises; as we will explain in Section~\ref{s:interlace}, this property is a tool that governs our ideas and the proofs of our results. The technique is robust. Although our main theorem addresses geometric and exponential LPP,  its derivation makes very limited use of integrable inputs, holding sway under weak assumptions. Indeed, Theorem~\ref{t.notwidenothin} follows directly from a more general result, Theorem~\ref{t.notwidenotthingeneral}, valid under a rather natural set of assumptions that all known integrable models satisfy, which we state next. 

\subsection{Assumptions:}\label{assumptions} 
Here we state a set of assumptions and our main result in its general form Theorem~\ref{t.notwidenotthingeneral}. This form generalizes Theorem~\ref{t.notwidenothin}
because its hypotheses are the concerned assumptions, which we will show  in Appendix~\ref{app.exp and geo satisfy assumptions} to be satisfied by exponential and geometric LPP.

We recall first that $\nu$ is the distribution of the vertex weights and has support contained in $[0,\infty)$. %
 Consider next the limit shape defined by the map $[-1,1] \to \R: x\mapsto \lim_{r\to\infty} r^{-1}\E[X_{r(1-x), r(1+x)}]$, where $X_{r(1-x), r(1+x)}$ is the last passage value from $(1,1)$ to $(r(1-x), r(1+x))$. (Note then that $X_{r,r}$ coincides the existing usage $X_r$; we will write the latter in this special case.) Standard superadditivity arguments yield that the last limit in fact exists and that this map is concave. Recall also that $\mu = \lim_{r\to\infty} r^{-1}\E[X_{r,r}]$ is this map evaluated at zero. Superadditivity also yields that $r^{-1}\E[X_{r(1-x),r(1+x)}]$ for finite $r$ is at most the value of the limiting map at $x$. An important problem for general LPP models is to bound the non-random fluctuation given by the difference of these two quantities; for example, $\mu r -\E[X_r]$ when $x=0$, expected to be of fluctuation order $\sqrt{\Var(X_r)}=\Theta(r^{1/3})$. With this context, we state our assumptions.

\goodbreak
\begin{enumerate}

	\item \textbf{No atom at zero and limit shape existence:} 
	The distribution $\nu$ is such that $\nu( \{ 0 \} ) = 0$ and $\mu < \infty$. 
	\label{a.passage time continuity}
   \item \textbf{Strong concavity of limit shape and non-random fluctuations:} There exist positive constants $\rho$, $G_1, G_2$, $g_1$, and $g_2$ such that, for large enough $r$ and $x\in[-\rho, \rho]$, \label{a.limit shape assumption}
	\begin{equation*}%
	\E[X_{r(1-x),r(1+x)}] -\mu r\in [-G_1x^2r-g_1r^{1/3}, -G_2x^2r-g_2 r^{1/3}].
	\end{equation*}

	\item \textbf{Moderate and large deviation estimates, uniform in direction: } \label{a.one point assumption combined}
	
	\begin{enumerate}
		\item Fix any $\delta>0$, and let $|x|\in [0,1-\delta]$. Then, there exist positive finite constants $c =c(\delta)$, $\theta_0=\theta_0(\delta)$, and $r_0=r_0(\delta)$ such that, for $r>r_0$ and $\theta>\theta_0$,
		\begin{align*}
		&\displaystyle	\P\left(X_{r(1+x),r(1-x)} -\E[X_{r(1+x),r(1-x)}] > \theta r^{1/3}\right) \leq \exp(-c\min(\theta^{3/2}, \theta r^{1/3}))\quad \text{and} %
		\\
		&\displaystyle \P\left(X_{r(1+x),r(1-x)} -\E[X_{r(1+x),r(1-x)}] < -\theta r^{1/3}\right) \leq \exp(-c\theta^{3/2}).
		\end{align*}
		
		\label{a.one point assumption}

		\item \label{a.one point assumption convex}
		There exist convex functions $I_r: [0,\infty) \to \R$ for $r\in\N$ and a constant $c'$ such that $I_r(\theta) \geq c'\min(\theta^{3/2}, \theta r^{1/3})$ for all $\theta\geq 0$, such that
		$$\P\left(X_r -\E[X_r] > \theta r^{1/3}\right) \leq \exp(-I_r(\theta)).$$
	\end{enumerate}
	\end{enumerate}
	We will call these, naturally enough, Assumptions \ref{a.passage time continuity}, \ref{a.limit shape assumption}, and \ref{a.one point assumption combined}. They are expected to hold for a wide class of distributions $\nu$ and, in particular, are known to hold for the geometric and exponential cases. It is worth pointing out that a significant portion of our main result, Theorem~\ref{t.notwidenotthingeneral} ahead, holds without Assumption \ref{a.one point assumption convex}. %

	Assumption~\ref{a.limit shape assumption} encodes a non-trivial random fluctuation about a locally strongly concave limit shape. The assumption indicates that, even in the diagonal case, $X_r$ falls short in mean of the linear growth rate $\mu r$ by the order of typical fluctuation. This phenomenon is associated to the negativity of the mean of the GUE Tracy-Widom distribution. In regard to this assumption,
	we will say the endpoint of a path starting at $u\in \Z^2$ satisfies the ``$\rho$-condition" if it lies in the interval joining $u+\left((1-\rho)r, (1+\rho)r\right)$ and $u+\left((1+\rho)r, (1-\rho)r\right)$.

	It can be shown quite easily that Assumption~\ref{a.one point assumption} implies that $\nu$ has finite exponential moment, which in particular implies Assumption~\ref{a.passage time continuity}. Assumption~\ref{a.one point assumption} is itself expected to hold in a stronger form in general, with the lower tail bound with exponent 3 in place of the weaker $3/2$ as we have assumed.
	Finally, Assumption~\ref{a.one point assumption convex} is a slightly stronger version of the upper tail bound of \ref{a.one point assumption} when $x=0$. The existence of the convex functions $I_r$ follows from the superadditivity of the sequence $\{X_r\}_{r\in \N}$ and is not actually an assumption, but the lower bound on $I_r$ is. %

{We are ready to state Theorem~\ref{t.notwidenotthingeneral}. But first we remark that much recent progress in understanding the geometry of first passage percolation and other non-integrable models have been conditional results that hinged on similar assumptions on the limit shape and concentration estimates about the limit shape. This approach goes back to Newman's work in the 90s (see e.g.\ \cite{New95}) where geodesics and fluctuations in FPP were studied under curvature assumptions on the limit shape. More prominent recent examples include the work \cite{Cha13} of Chatterjee where the KPZ relation between the weight and transversal fluctuation exponents were proved assuming in a strong form the existence of these exponents; see also \cite{AD14}. The geometry of geodesics and bi-geodesics has been addressed under assumptions of strong convexity of the limit shape \cite{DH14} and moderate deviations around it \cite{Ale20}. Similar results have also been obtained in exactly solvable cases where the essential integrable ingredients used were estimates analogous to the ones in the above assumptions. It is thus of much interest to extract the minimal set of assumptions under which one can establish sharp geometric results for LPP models. 
\begin{maintheorem}
\label{t.notwidenotthingeneral}
Consider a last passage percolation model on $\Z^2$ that satisfies Assumptions \ref{a.passage time continuity}, \ref{a.limit shape assumption} and~\ref{a.one point assumption}.
\begin{enumerate}
\item \label{weight1'}{\em Weight fluctuation:} There exist positive  constants $C_1$, $C_2$, $C$, $c_1$ and $c$ such that, for $1\le k\leq c_1 n^{1/2}$,  
\begin{equation*}
\P\left(X_{n}^k - \mu nk\notin -k^{5/3}n^{1/3}\cdot(C_1, C_2) \right)\leq Ce^{-ck^2}.
\end{equation*}

\item {\em Transversal fluctuation:} \label{tf'}
\begin{enumerate}
	\item \label{tf1'} There exist positive  constants $M$, $C$, $c$ and $c_1$  such that, for $1 \leq k \leq c_1n^{1/2}$,
	\begin{equation*}
	\P\left(\overline\tf(n,k) > Mk^{1/3}n^{2/3}\right) \leq Ce^{-ck}.
	\end{equation*}

	\item \label{tf2'} There exist positive  constants $C$, $c$, $c_1$ and $\delta$ such that, for $1\le k\leq c_1 n^{1/2}$, 
	\begin{equation*}
	\P\left(\underline\tf(n,k) < \delta k^{1/3}n^{2/3}\right) \leq C e^{-ck^2}.
	\end{equation*}
\end{enumerate}
\end{enumerate}
If  Assumption \ref{a.one point assumption convex} also  holds, there exists $c_1>0$ such that the above hold for $k\leq c_1n$.

\end{maintheorem}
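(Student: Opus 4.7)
The plan is to build the entire proof around a deterministic \emph{interlacing property} of $k$-geodesic watermelons: via a swap-at-crossing argument, a $k$-watermelon and a $(k-1)$-watermelon can always be simultaneously realized as nested path systems with the latter's paths interlacing the former's. A first corollary is that if $w_{(1)} \geq \cdots \geq w_{(k)}$ are the ordered path-weights of a $k$-watermelon, then any $j$-subset of its paths forms a valid $j$-disjoint collection and hence $\sum_{i=1}^{j} w_{(i)} \leq X_n^{j}$; in particular, $j \mapsto X_n^{j}$ is concave. This concavity, together with the \emph{endpoint rigidity} that interlacing forces on the boundary points of the watermelon, is the mechanism that produces the sharp $k^{5/3}$ exponent.

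I would first establish the weight lower bound $X_n^k \geq \mu nk - C_2 k^{5/3}n^{1/3}$ constructively. Define an explicit $k$-tuple of disjoint paths whose $i$-th path is routed between endpoints on opposite sides of $[1,n]^2$ displaced from the diagonal by $\Theta\bigl((i-k/2)\,n^{2/3}/k^{2/3}\bigr)$, so that its endpoint-pair subtends slope $\Theta(in^{-1/3}/k^{2/3})$; Assumption~\ref{a.limit shape assumption} then gives an expected per-path weight of at least $\mu n - C(i/k^{2/3})^2 n^{1/3}$, and summing over $i$ yields the claimed total, with disjointness holding with high probability since individual transversal fluctuations are much smaller than the spacing. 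The $e^{-ck^2}$ concentration follows by union bounding the bounds of Assumption~\ref{a.one point assumption} over the $k$ paths. For the matching upper bound $X_n^k \leq \mu nk - C_1 k^{5/3}n^{1/3}$, the interlacing forces endpoint rigidity in the opposite direction: the $i$-th most extremal path of any $k$-watermelon must have at least one endpoint displaced by $\gtrsim i\cdot n^{2/3}/k^{2/3}$ from the diagonal, because the $i-1$ more central paths on that side already occupy the boundary region. Applying Assumption~\ref{a.limit shape assumption} to each such forced off-diagonal path and summing via Assumption~\ref{a.one point assumption} (or Assumption~\ref{a.one point assumption convex} to extend the admissible range from $k \leq c_1 n^{1/2}$ to $k \leq c_1 n$) yields the required deficit and tail bound.

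The transversal fluctuation estimates then follow as corollaries of the weight bounds. For part~\ref{tf1'}, if $\overline\tf(n,k) > Mk^{1/3}n^{2/3}$ then some watermelon path subtends slope $\gtrsim Mk^{1/3}n^{-1/3}$, incurring via Assumption~\ref{a.limit shape assumption} a per-path deficit $\gtrsim M^2 k^{2/3}n^{1/3}$ which, combined with the weight upper bound applied to the remaining $(k-1)$-watermelon, contradicts the weight lower bound for $M$ large. For part~\ref{tf2'}, if the entire $k$-watermelon lay in a diagonal strip of half-width $\delta k^{1/3}n^{2/3}$ then the endpoint-rigidity argument restricted to this strip would yield an upper bound of the form $X_n^k \leq \mu nk - c\delta^{-\alpha} k^{5/3}n^{1/3}$ for some $\alpha > 0$, contradicting the constructive lower bound for $\delta$ small enough. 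The hardest step throughout is establishing the \emph{deterministic} endpoint-spreading forced by interlacing: a naive path-by-path application of one-point estimates yields only a deficit of order $kn^{1/3}$, and recovering the sharp $k^{5/3}$ requires exploiting the combinatorial rigidity of the interlaced endpoints together with the strong quadratic concavity of the limit shape in Assumption~\ref{a.limit shape assumption}.
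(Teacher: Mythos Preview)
Your proposal rests on a misconception about the geometry of the watermelon that undermines essentially every step.

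\textbf{The endpoint-rigidity mechanism does not exist.} The paths of the $k$-watermelon do \emph{not} have endpoints spread out over width $k^{1/3}n^{2/3}$; Proposition~\ref{p.starting and ending points} shows they can all be taken to start at $(1,k-i+1)$ and end at $(n,n-i+1)$, i.e.\ within distance $k$ of the corners. There is no ``forced off-diagonal endpoint'' and hence no per-path loss $\sim (i/k^{2/3})^2 n^{1/3}$ to sum. The spreading that produces the $k^{5/3}$ deficit occurs in the \emph{interior} of the paths, and extracting it requires two separate ingredients the paper develops: a packing estimate (Theorem~\ref{t:disjoint}) showing $k$ disjoint high-weight paths cannot all stay in a strip of width $\ll k^{1/3}n^{2/3}$, and a weight-loss estimate (Theorem~\ref{t.tf}) for any single path with transversal fluctuation $\gtrsim k^{1/3}n^{2/3}$. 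These are combined via the BK inequality; interlacing is used, but only to transfer information from a random index $j\in[k/2,2k]$ back to $k$, not to rigidify endpoints.

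\textbf{The lower-bound construction fails on disjointness.} You propose routing the $i$-th path between endpoints at anti-diagonal displacement $\Theta(i k^{-2/3}n^{2/3})$ and claim disjointness because ``individual transversal fluctuations are much smaller than the spacing.'' But the spacing between adjacent paths is $k^{-2/3}n^{2/3}$, whereas the transversal fluctuation of a single length-$n$ geodesic is $\Theta(n^{2/3})$, which is larger by a factor of $k^{2/3}$. The paths overlap badly. This is precisely why the paper's construction (Section~\ref{s:construction}) is so elaborate: it confines each path to a chain of disjoint parallelogram ``flight corridors'' whose widths are tuned, via a dyadic take-off/climb/cruise/descent/landing scheme, to be on-scale with the corridor heights. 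Also, a union bound over $k$ one-point tails gives at best $e^{-ck}$, not $e^{-ck^2}$; the paper gets the latter from a Bernstein-type concentration for the sum of independent corridor weights.
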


\begin{remark}
An aspect of Theorem~\ref{t.notwidenotthingeneral}(\ref{weight1'}), namely the bound $\P(X_{n}^k < \mu nk - C_1k^{5/3}n^{1/3})\leq e^{-ck^2}$, holds in the broader range $k\leq c_1n$ without Assumption~\ref{a.one point assumption convex}: see Theorem~\ref{t.flexible construction}.

A simple argument shows that the tail exponent for this probability is sharp; i.e.,
$$\P\left(X_{n}^k < \mu nk - C_1k^{5/3}n^{1/3}\right)\ge e^{-ck^2}$$
 for some constant $c>0.$
 Indeed,  the event that $X_{n} < \mu n - C_1k^{2/3} n^{1/3}$ implies that $X_{n}^k < \mu n k - C_1k(k^{2/3}) n^{1/3}$ for any $k$, and $n$ correspondingly large enough. The preceding display follows from  
 $$
 \P\left(X_{n} < \mu n - C_1k^{2/3}n^{1/3}\right)=e^{-\Theta(k^{2})} \, ,
 $$ 
 a bound that is known, for example, in Exponential LPP: see \cite[Theorem 2]{basu2019lower}. %
\end{remark}

\begin{remark} 
A weaker form of Assumption  \ref{a.one point assumption}, namely that for some $\alpha<1$, all $\delta>0$, $|x|\in[0,1-\delta]$, and $\theta>\theta_0 = \theta_0(\delta)$,
$$\P\left(|X_{r(1+x), r(1-x)} - \E[X_{r(1+x), r(1-x)}| > \theta r^{1/3}\right) \leq \exp(-c\theta^{\alpha}),$$
for some $c = c(\delta)>0$, is enough to imply a variant of Theorem~\ref{t.notwidenotthingeneral} where the tail probability exponents are suitable functions of $\alpha$; the $5/3$ and $1/3$ exponents for weight and transversal fluctuations are unchanged. It does not appear to us to be challenging to chase through our arguments to compute the forms of these upper bound tail exponents, although we do not do so. 
\end{remark}

\subsubsection{A non-determinantal setting: the point-to-line geodesic watermelon}
Though our main result Theorem \ref{t.notwidenotthingeneral} is stated for last passage percolation on $\Z^2$, our technique is robust, and an inviting prospect is to adapt our method to other integrable models such as the semi-discrete model of Brownian LPP or the continuum model of Poissonian LPP; the adaptations appear to be for the most part minor, but we have not pursued this direction carefully. However, all  four examples---exponential and  geometric LPP and these last two---are {\em determinantal} in the sense that there is an exact representation of the geodesic watermelon weight as the sum of the position of top $k$ particles in a determinantal point process with explicit, albeit complex, formulae available for the distributions. We shall provide a more detailed discussion regarding the determinantal process connections for exponential and geometric LPP in Section~\ref{s:special}. But to illustrate the power of our geometric methods we end by treating a particular ``non-determinantal" setting: where although there exist explicit formulae for the one point distribution (the geodesic weight), the weight of the geodesic watermelon is not known to admit any connection to a determinantal process for $k>1$.

Formally, consider point-to-line LPP with independent vertex weights such that Assumptions \ref{a.passage time continuity}, \ref{a.limit shape assumption}, and \ref{a.one point assumption combined} are satisfied. Let $\Upsilon_{n}^{k}$ denote a  collection of $k$ disjoint paths contained in the triangular region
$$\left\{(x,y)\in \Z^2: x\geq 1, y\geq 1, x+y\leq 2n\right\},$$
that maximizes the total weight among all such collection of $k$ disjoint paths. Let $Z_{n}^{k}$ denote the total weight of the paths in $\Upsilon_{n}^{k}$. Clearly $Z_{n}^{1}$ is the point-to-line last passage time from $(1,1)$ to the line $x+y=2n$. While it is known in exponential LPP that $Z_{n}^{1}$ has the same distribution as the largest eigenvalue of the Laguerre Orthogonal Ensemble (implicitly in the works \cite{baik2001symmetrized,baik2002painleve,baik2001algebraic} with an explicit statement in \cite{basu2019lower}), as far as we are aware there does not exist any representation for $Z_{n}^{k}$ for $k>1$ as a functional of a determinantal point process.

As before we specify the transversal fluctuation of a collection of paths to be the maximum Euclidean distance between a vertex in $\Z^2$ lying in the paths' union and the diagonal $y =x$. We specify $\overline\tf^*(n,k)$ to equal the maximum value of the transversal fluctuation over such sets of $k$ paths whose weight realizes $Z_n^k$, and $\underline\tf^*(n,k)$ to be the minimum value of the transversal fluctuation over the same collection of sets.

The following is the analogue of Theorem \ref{t.notwidenothin} in this setting.

\begin{maintheorem}
\label{t:p2lgeneral}
Consider a model, such as geometric or exponential LPP, that  satisfies Assumptions \ref{a.passage time continuity}, \ref{a.limit shape assumption} and \ref{a.one point assumption}. The statements in Theorem~\ref{t.notwidenotthingeneral} %
that invoke Assumption~\ref{a.one point assumption} hold after the replacements $X^k_n \to Z^k_n$, $\overline\tf(n,k) \to \overline\tf^*(n,k)$  and $\underline \tf(n,k) \to \underline\tf^*(n,k)$ are made. In particular, the results for point-to-line geodesic watermelons hold for $k\leq c_1n^{1/2}$ for an absolute constant $c_1>0$.
\end{maintheorem}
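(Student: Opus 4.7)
The plan is to follow the same skeleton as the proof of Theorem~\ref{t.notwidenotthingeneral}, adapting each ingredient to the point-to-line setting. The two main items to transport from the point-to-point world are (i) the deterministic interlacing of $k$-watermelons as $k$ varies, and (ii) one-point moderate deviation estimates for the single-path weight; together these drive essentially every bound in the point-to-point proof.

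The first step is to establish a point-to-line one-point estimate analogous to Assumption~\ref{a.one point assumption}, namely that $\P(|Z_n^1 - \mu n| > \theta n^{1/3}) \leq \exp(-c\min(\theta^{3/2}, \theta n^{1/3}))$ up to admissible additive $O(n^{1/3})$ shifts. The representation $Z_n^1 = \max_{1 \leq j \leq 2n-1} X_{(1,1)\to(j,2n-j)}$, combined with the strong concavity provided by Assumption~\ref{a.limit shape assumption}, localizes the argmax endpoint to a window of transversal width $O(n^{2/3})$: endpoints $(n+u,n-u)$ with $|u|\gg n^{2/3}$ incur a mean deficit of order $u^2/n$ that dominates the typical $O(n^{1/3})$ fluctuations. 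A dyadic union bound over endpoints, combined with Assumption~\ref{a.one point assumption} applied on each dyadic block, then delivers the upper tail; the lower tail is softer, since $Z_n^1 \geq X_n$ deterministically.

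The second step is to verify that the point-to-line watermelons $\Upsilon_n^k$ inherit the monotone interlacing property that is crucial in the point-to-point arguments. The combinatorial swap-and-repair argument relies only on planarity of disjoint upright paths and not on the specific common endpoint, so it should go through once the endpoint line is viewed as a one-parameter family of endpoints and one sorts the endpoints on $x+y=2n$ along the anti-diagonal. I expect this to be the single most delicate step, since the absence of a common top corner removes a convenient degree of rigidity from the swap, and making the extraction of interlaced families from two watermelons of different sizes fully deterministic is the main obstacle of the whole proof; once it is settled the rest is largely a rerun of the point-to-point machinery.

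With one-point estimates and interlacing in hand, the weight lower bound $Z_n^k \geq \mu nk - C_2 k^{5/3}n^{1/3}$ is constructive: lay down $k$ approximately parallel disjoint paths to $k$ evenly spaced endpoints on $x+y=2n$ at transversal separation of order $k^{1/3}n^{2/3}$, and sum the resulting weights using the strong concavity of the limit shape and the one-point upper tail to control error terms. The matching upper bound and the transversal fluctuation statements then follow by running the point-to-point arguments of Theorem~\ref{t.notwidenotthingeneral} essentially verbatim, with $X_n^k$ replaced by $Z_n^k$, $\mu nk$ unchanged, and the curvature penalty now expressed in terms of the transversal displacement of the path endpoints on the line $x+y=2n$ rather than deviations from the fixed target $(n,n)$; interlacing converts deficits for individual paths into the collective $k^{5/3}n^{1/3}$ deficit and the $k^{1/3}n^{2/3}$ spread. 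The restriction $k\leq c_1 n^{1/2}$ arises for the same reason as in Theorem~\ref{t.notwidenotthingeneral}: the proof only extends to $k\leq c_1 n$ after invoking Assumption~\ref{a.one point assumption convex}, for which we have no point-to-line analogue.
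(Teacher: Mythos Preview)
Your outline is correct and matches the paper's skeleton: transport interlacing, establish a point-to-line one-point estimate, and then rerun the point-to-point machinery. Two calibrations are worth making, though.

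First, you overestimate the interlacing step. The paper observes that the combinatorial swap argument in Section~\ref{s:geometry} applies verbatim to paths confined to the triangular region $\{x+y\le 2n\}$; nothing in Propositions~\ref{p.starting and ending points} or~\ref{p.stronger interlacing} uses a common upper endpoint, only planarity of disjoint upright paths in a region with prescribed starting points. So this is not the main obstacle---it is essentially free.

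Second, you work too hard on the weight lower bound. The trivial deterministic inequality $Z_n^k \ge X_n^k$ (the point-to-line maximum is at least the point-to-point maximum over the same collection of disjoint paths) immediately transfers the lower bound from Theorem~\ref{t.notwidenotthingeneral}(\ref{weight1'}); no new construction with spread-out endpoints is needed.

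What you underemphasize is the one genuinely new analytic ingredient: a point-to-line analogue of Theorem~\ref{t.tf}, recorded in the paper as Theorem~\ref{t.point to line weight loss}. For point-to-line paths the large transversal fluctuation may occur \emph{at the endpoint itself}, a case not covered by Theorem~\ref{t.tf}; it is handled separately via the point-to-line upper tail Proposition~\ref{p.p2l general upper tail}. This is the tool that drives both the weight upper bound and the transversal fluctuation upper bound in the point-to-line setting; once it is in hand, the remaining argument really is verbatim substitution of $Z_n^k$ for $X_n^k$.
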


The next two sections develop an overview of the paper's concepts and results. 
The vital phenomenon of interlacing of  geodesic watermelons is surveyed in 
Section~\ref{s:interlace}, with some of its main consequences being indicated.  Section~\ref{iop} offers an outline to the paper's main proofs. We state some important technical results needed to obtain Theorem~\ref{t.notwidenotthingeneral}; outline how these results are proved; and explain how they are used, alongside interlacing, to prove Theorem~\ref{t.notwidenotthingeneral}.
Section~\ref{iop} ends with an indication of the structure of the later sections of the paper, which are devoted to giving the proofs.

\subsection*{Acknowledgements}
The authors thank Ivan Corwin for pointing them to references that the mean of the GUE Tracy-Widom distribution is negative. RB thanks Manjunath Krishnapur for useful discussions on determinantal point processes. MH thanks Satyaki Mukherjee for piquing his interest in the problem. RB is partially supported by a Ramanujan Fellowship (SB/S2/RJN-097/2017) from the Government of India, an ICTS-Simons Junior Faculty Fellowship, DAE project no. 12-R\&D-TFR-5.10-1100 via ICTS and Infosys Foundation via the Infosys-Chandrasekharan Virtual Centre for Random Geometry of TIFR. SG is partially
supported by NSF grant DMS-1855688, NSF CAREER Award DMS-1945172, and a Sloan Research Fellowship. AH is supported by the NSF through grant DMS-1855550 and by a Miller Professorship at U.C. Berkeley. MH acknowledges the generous support of the U.C. Berkeley Mathematics department via a summer grant and the Richman fellowship.

\section{Watermelon interlacing}
\label{s:interlace} 

In this section, we will specify notation and important definitions concerning 
 geodesic watermelons; state the important interlacing property that they enjoy; and state monotonicity and rigidity results for a natural point process associated to the weights of these watermelons. The results of this section do not rely on the assumptions introduced in Section~\ref{assumptions}, and, excepting Proposition~\ref{p.melon interlacing}, are deterministic.

\subsection{The geodesic watermelon} \label{s.geodesic watermelon subsection}
Let 
$\left\{\xi_v :  v\in \Z^2\right\}$
denote a field of values. For now, we take these values to be deterministic non-negative reals. 
For $a,b \in \Z$ with $a \leq b$, we denote the integer interval $\big\{ x\in \Z: a \leq x \leq b \big\}$ by $\llbracket a,b \rrbracket$. We let $\N = \{1,2, \ldots\}$ denote the natural numbers (without zero).

Let two elements $u = (u_1,u_2)$ and $v = (v_1,v_2)$ of $\Z^2$
be such that $u_1 \leq v_1$ and $u_2 \leq v_2$. 
  An upright path from $u$ to $v$ is a function $\gamma: \llbracket a,b \rrbracket \to \Z^2$, $\gamma_a = u$ and $\gamma_b = v$, with each increment of $\gamma$ equalling either $(0,1)$ or $(1,0)$; thus, $a,b \in \Z$ satisfy $b - a = (u_2 - u_1) + (v_2 - v_1)$. (We will sometimes omit `upright': every path is upright.)  The weight $\weight(\gamma)$ of $\gamma$ equals $\sum \big\{ \xi_v: v \in \gamma \big\}$, where we have abused notation, in a way that we often will, by mistaking $\gamma$ for its range. The weight of a disjoint collection of upright paths is the sum of the weights of the elements of the collection.
  
We will denote by $X_{u \to v}$ the maximum weight of all upright paths from $u$ to $v$. Recall that, for $r \in \N$, $X_r$ is a shorthand for $X_{r,r}$, itself shorthand for the last passage value $X_{(1,1) \to (r,r)}$ for the route from $(1,1)$ to $(r,r)$. 

Let $k \in \N$ be positive. We may wish to specify  the $k$-geodesic watermelon in the square $\intint{n}^2$ as a maximum weight collection of $k$ disjoint upright paths from $(1,1)$ to $(n,n)$; but, naturally, we cannot, because such paths meet when they begin and end.  The next definition succinctly deals with the need to unpick these points of contact.

\begin{defn}\label{d.geodesicwatermelon}
Let $n \in \N$ and $k \in \intint{n}$. A $k$-geodesic watermelon is a maximum weight collection of $k$ disjoint upright paths in $\intint{n}^2$.
\end{defn} 

The nomenclature of watermelons is not new. It was introduced in the physics literature to denote certain ensembles of non-intersecting curves, such as non-intersecting Brownian bridges, whose curves bear a faint likeness to the stripes on the surface of watermelon fruit.
These bridge systems arise, for example, when describing the weight of collections of disjoint paths in LPP models as the common endpoint of the collection varies.  The name `geodesic watermelon' distinguishes the denoted concept from the existing one, which we might call a \emph{weight} watermelon. We will not allude to the latter, beyond mentioning that   \cite{johnston2020scaling} treats weight watermelons related to the geometric RSK correspondence. We will sometimes write `$k$-melon' as shorthand for `$k$-geodesic watermelon'.

The two quantities measuring transversal fluctuation of the $k$-geodesic watermelon, $\overline\tf(n,k)$ and $\underline \tf(n,k)$,  were specified before Theorem~\ref{t.notwidenothin}. They respectively equal the maximum and minimum, over the collection of $k$-geodesic watermelons, of the maximum Euclidean distance to the diagonal $y=x$ of the vertices lying 
in some path belonging to that $k$-geodesic watermelon.

The set of vertices in a $k$-geodesic watermelon may not be unique; it is not in geometric LPP, for example. Further, for a given set of vertices which is a $k$-geodesic watermelon, the collection of constituent curves is not unique. It is easy to see that a sufficient condition for the geodesic watermelons to be almost surely unique is for $\nu$ to have no atoms, as, for example, in the case of exponential LPP.

The next result will help in specifying the curves of a watermelon that we may label and study. The result serves to capture the sense that the watermelon has maximum weight subject to its disentangling coincidences near $(1,1)$ and $(n,n)$. The proof will be given in Section~\ref{s:geometry}.

\begin{proposition}\label{p.starting and ending points}
Suppose that $\xi_v \geq 0$ for $v \in \intint{n}^2$.
For any given k-geodesic watermelon $\phi_n^k$, there exists a k-geodesic watermelon $\gamma_n^k$ such that the $i$\textsuperscript{th} element of $\gamma_n^k$ starts at $(1,k-i+1)$  and ends at $(n,n - i+1)$ for $i \in \intint{k}$, and the union of the vertices in the curves of $\gamma_n^k$ contains this union for $\phi_n^k$, and coincides with it if $\xi_v > 0$ for all $v\in\intint{n}^2$.
\end{proposition}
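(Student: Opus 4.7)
The plan is to construct $\gamma_n^k$ from $\phi_n^k$ by extension and rearrangement, and then invoke the non-negativity of vertex weights to conclude that $\gamma_n^k$ is still a maximizer. First, I would order the paths of $\phi_n^k$ as $\phi_1, \ldots, \phi_k$ from topmost to bottommost; disjoint upright paths in a rectangle do not cross, and any two that share a common column are strictly comparable there, so this total ordering is well-defined in a max-weight configuration (any remaining indeterminacy among paths with disjoint $x$-ranges may be broken arbitrarily).

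The principal step is to build $\gamma_1, \ldots, \gamma_k$ with $\gamma_i$ going from $(1,\,k-i+1)$ to $(n,\,n-i+1)$ and $\bigcup_i \gamma_i \supseteq \bigcup_i \phi_i$. I would process the paths iteratively from bottommost to topmost. At stage $i$, I prepend to the current $\phi_i$ (which may already have been modified at earlier stages) an $L$-shaped upright path from $(1,\,k-i+1)$ to its current starting vertex $(a_i,b_i)$. When the prescribed $L$-shape is obstructed by a higher path $\phi_j$ ($j < i$), I would ``steal'' the offending initial vertices from $\phi_j$ — reassigning them so that they now lie on $\gamma_i$ while $\phi_j$ is shortened at its start. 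A symmetric procedure, working on the right-hand side, extends each path forward to $(n,\,n-i+1)$. With the construction carried out, non-negativity of $\xi$ yields $\mathrm{weight}(\gamma_n^k) \geq \mathrm{weight}(\phi_n^k) = X_n^k$, and since $\gamma_n^k$ is itself a collection of $k$ disjoint upright paths in $\intint{n}^2$, one has $\mathrm{weight}(\gamma_n^k) \leq X_n^k$; hence $\gamma_n^k$ is also a $k$-geodesic watermelon. The containment $\bigcup_i \gamma_i \supseteq \bigcup_i \phi_i$ holds by construction. In the strictly positive case, any vertex added to $\bigcup_i \phi_i$ would strictly increase the total weight, contradicting optimality of $\phi_n^k$, which forces equality of the unions.

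The main obstacle is justifying the legality of the vertex-stealing operation: the stolen vertices from $\phi_j$ must form an initial segment that can be detached without destroying $\phi_j$ as a valid upright path, and the reassignment must not create a crossing with any path already processed at an earlier stage. I expect to handle this by induction on $k$, exploiting the topmost-to-bottommost ordering together with the observation that any obstruction to the $L$-shaped prepend arises at the initial portion of a unique higher path, and that processing from the bottom upward keeps the already-completed lower $\gamma$'s safely below the region where stealing occurs.
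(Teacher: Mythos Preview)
Your overall strategy is correct: extend the given disjoint paths to have the prescribed endpoints, use non-negativity of weights to preserve optimality, and observe that strict positivity forces equality of the vertex sets. The final paragraph of your argument (optimality and the ``coincides if $\xi_v>0$'' clause) is fine.

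The gap is in the construction itself. Your ``vertex-stealing'' operation is where all the difficulty lies, and as you acknowledge, it is not justified. Concretely: when you prepend an $L$-shape to the bottommost path, there is no reason the obstruction should come from a \emph{unique} higher path, nor that the obstructed vertices form an \emph{initial} segment of that path. A higher path $\phi_j$ can dip down, be hit by your $L$-shape in its middle, and continue; stealing those vertices would disconnect $\phi_j$. Moreover, after modifying $\phi_j$, the subsequent processing of $\phi_{j}$ itself may now conflict with paths both above and below it. Saying ``I expect to handle this by induction on $k$'' does not indicate what the inductive hypothesis would be or how the bottom-to-top processing interacts with it. Also, your remark that ``remaining indeterminacy among paths with disjoint $x$-ranges may be broken arbitrarily'' understates the issue: incomparable paths under the vertical order are exactly where naive extension arguments break down, and the choice of which path to treat as ``topmost'' genuinely matters for the construction.

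The paper's proof is organized differently and avoids stealing altogether. It inducts on $k$ by peeling off the \emph{top} path $\phi_1$ (specifically, the leftmost maximal element under a ``vertically above'' partial order), removes the top row and left column of the rectangle, and applies the inductive hypothesis to $\phi_2,\ldots,\phi_k$ restricted to the smaller rectangle. The key technical choice is to take the induced paths $\gamma_2',\ldots,\gamma_k'$ to be \emph{maximally to the right} among all valid choices. This extremal property is then used (via two short lemmas) to guarantee that the uncovered vertices of $\phi_1$ can be joined into a single upright path $\gamma_1$ from $(1,k)$ to $(m,n)$ without crossing $\gamma_2$. The ``maximally to the right'' device is the substitute for your stealing step, and it is what makes the argument go through cleanly.
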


Theorem~\ref{t.notwidenotthingeneral} concerns the vertex sets in geodesic watermelons considered simultaneously, rather than any particular watermelon. For this reason, we do not attempt to single out a melon when there are several distinct watermelon vertex sets. A subtlety regarding non-uniqueness and interlacing will however be addressed in the upcoming Section~\ref{s.interlacing}.

For any given set of vertices which form a geodesic watermelon, Proposition~\ref{p.starting and ending points} permits us to label the curves of the watermelon, which will aid us in stating the upcoming interlacing result. We wish to do this in a left-to-right manner, and to make this precise we next introduce a partial order on paths.

\begin{defn}\label{d.time range}
The time-range $\mc{R}(\gamma)$ of an upright path $\gamma$ is the interval of integers $t$ such that $x+y = t$ for some $(x,y) \in \gamma$. 
For $t \in \mc{R}(\gamma)$, the point $(x,y)$ is unique, and we abuse notation by writing $\gamma(t) = x - y$. (In contrast, subscripts were used to express $\gamma$ as a function on an integer interval earlier in the section.)

Two upright paths $\gamma$ and $\phi$
satisfy $\gamma \preceq \phi$ if the set  $\mc{R}(\gamma) \cap \mc{R}(\phi)$ is non-empty and every element $t$ satisfies $\gamma(t) \leq \phi(t)$. If each inequality is strict, we write $\gamma \prec \phi$. Informally, we say that $\gamma$ is to the left of $\phi$.

 A vector of upright paths is ordered if its component increase under $\prec$, and weakly ordered if they do so under $\preceq$.  
 For a vector of upright paths $\gamma$, note that if its components $\gamma_i$ are disjoint and every pair has non-disjoint time ranges, then by planarity there is exactly one labelling of the paths which is ordered from left to right. In that case, we shall simply say the collection of paths is ordered. 
\end{defn}

With this definition, for any given $k$-geodesic watermelon $\Gamma_n^k$ identified by Proposition~\ref{p.starting and ending points}, we record its elements as the components of the vector  
\begin{equation}\label{watermelondef}
\Gamma^k_{n} = \left(\Gamma^{k,1}_{n,\shortrightarrow}, \ldots, \Gamma^{k,k}_{n,\shortrightarrow}\right) \, ,
\end{equation}
choosing the left-to-right order, so that $\Gamma_{n,\shortrightarrow}^{k,i} \prec \Gamma_{n,\shortrightarrow}^{k,i+1}$ for $i\in\intint{k-1}$; thus, $\Gamma_{n,\shortrightarrow}^{k,i}$ begins at $(1,k-i+1)$  and ends at $(n,n - i+1)$.

\subsection{Interlacing}\label{s.interlacing}

The basic property that drives our proofs is simple and intuitive. Consider exponential LPP, so that all $k$-geodesic watermelons are almost surely unique. The square $\intint{n}^2$ is partitioned into two regions, NW and SE, by the geodesic $\Gamma^1_n$, which lies in both. 
Then $\Gamma_{n,\shortrightarrow}^{2,1}$ lies in NW and  $\Gamma_{n,\shortrightarrow}^{2,2}$ in SE. Three regions are specified by the boundaries  $\Gamma_{n,
\shortrightarrow}^{2,1}$ and  $\Gamma_{n,\shortrightarrow}^{2,2}$ and the sides of the square, and these contain, one apiece, the three paths in the $3$-geodesic watermelon: see Figure~\ref{f.interlacing} ahead. The partial order introduced in Definition~\ref{d.time range} allows us to put interlacing on a precise footing.

\begin{figure}[h]
   \centering
        \includegraphics[width=.75\textwidth]{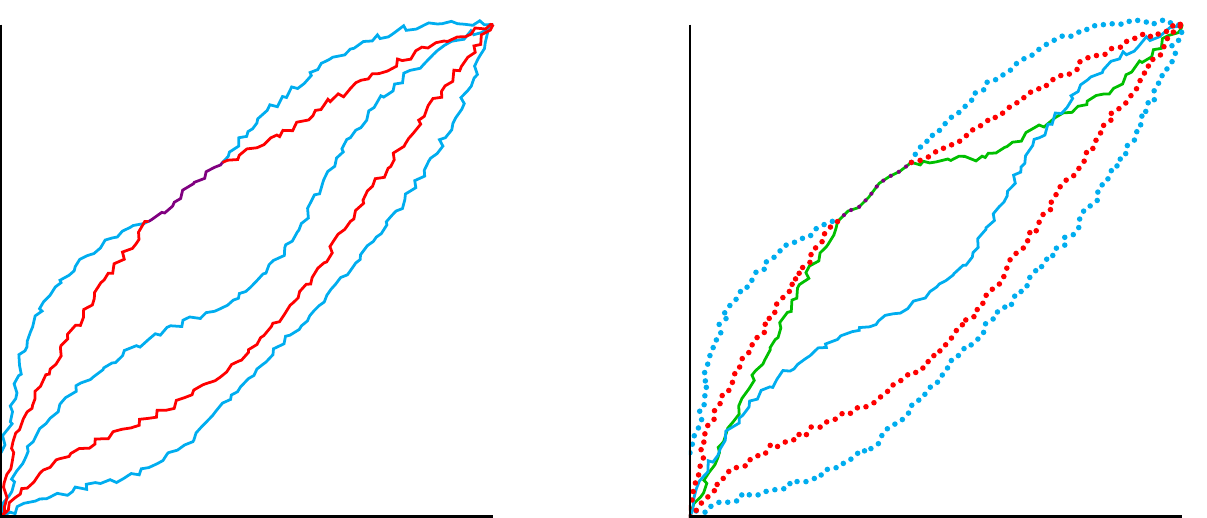}
 \caption{On the left is a depiction of the interlacing of $k$-geodesic watermelons for $k=2$ (red) and $k=3$ (blue). Each red curve remains within the region defined by adjacent blue curve boundaries. Observe that the red curves may overlap with the blue ones as shown between the top red and top blue curve; in fact, simulations suggest that a much more significant degree of overlap than depicted is typically present. Interlacing concerns consecutive values of $k$:  the $k$- and $(k+2)$-melons may cross, as the right figure illustrates for $k=1$. 
  The geodesic curve in green make uses of the overlapping purple portion of the $2$- and $3$-melons, while remaining in the envelope defined by the red curves. The middle blue curve cannot use the purple portion, and this causes the green curve to cross over the middle blue curve.}
 \label{f.interlacing}
\end{figure}

\begin{defn}
A $k$-vector $\gamma$ and a $(k+1)$-vector $\gamma'$ whose components are upright paths
 {\em interlace} when their components satisfy $\gamma'_i \preceq \gamma_i \preceq \gamma'_{i+1}$ for $i \in \intint{k}$.
\end{defn}

Figure \ref{f.interlacing} shows a pair of interlacing geodesic watermelons. We are ready to state our interlacing results, starting with the case where $\nu$ is continuous, so that Proposition~\ref{p.starting and ending points} gives a unique $k$-geodesic watermelon $\Gamma_n^k$ for every $k\in\intint{n}$.

\begin{proposition}[Geodesic watermelons interlace]\label{p.melon interlacing}
Suppose that the law $\nu$ is continuous.  For $k \in \intint{n-1}$, the geodesic watermelons
$\Gamma_{n}^{k}$ and $\Gamma_{n}^{k+1}$ interlace almost surely. 
\end{proposition}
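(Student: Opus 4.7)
My plan is to derive the interlacing by a planar swap argument, leveraging the continuity of $\nu$ to secure uniqueness of the watermelon vertex set. Since $\nu$ is continuous, almost surely no two distinct finite subsets of $\intint{n}^2$ have equal weight sums. Consequently, for each $k \in \intint{n}$, the vertex set of any maximum-weight $k$-disjoint path configuration in $\intint{n}^2$ is almost surely unique, and Proposition~\ref{p.starting and ending points} realises it as the canonical ordered tuple $\Gamma^k_n = (\gamma_1, \ldots, \gamma_k)$ with $\gamma_i$ running from $(1, k-i+1)$ to $(n, n-i+1)$. I will work on this almost sure event of uniqueness and write $\Gamma^{k+1}_n = (\delta_1, \ldots, \delta_{k+1})$ analogously.

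The key combinatorial tool will be the \emph{swap} operation: if two upright paths $\alpha$ and $\beta$ share a vertex $v$, then splitting each at $v$ into pre- and post-$v$ sub-paths and recombining yields two upright paths $\alpha^{\mathrm{pre}}\beta^{\mathrm{post}}$ and $\beta^{\mathrm{pre}}\alpha^{\mathrm{post}}$ whose combined vertex multiset, and hence combined weight, coincides with that of the original pair. A basic planarity fact for upright lattice paths---that along any anti-diagonal $\{x+y=t\}$ the difference $\alpha(t) - \beta(t)$ is an even integer and changes by $0$ or $\pm 2$ per step---shows that any switching of the relative left-right order between two upright paths must occur at a shared vertex.

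Suppose for contradiction that interlacing fails, say $\delta_i \not\preceq \gamma_i$ for some $i \in \intint{k}$; the case $\gamma_i \not\preceq \delta_{i+1}$ is treated symmetrically via the shared source rather than the shared sink. By the labelling above, $\gamma_i$ and $\delta_i$ share the sink $(n, n-i+1)$ while $\delta_i$ begins weakly to the left of $\gamma_i$ on the line $\{x=1\}$. The failure $\delta_i(t) > \gamma_i(t)$ at some intermediate time then forces, via the planarity fact, a shared vertex $v$ at which $\delta_i$ and $\gamma_i$ switch relative order. Performing the swap at $v$ produces modified paths $\tilde\gamma_i$ and $\tilde\delta_i$ with unchanged endpoints and preserved combined weight. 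If $\tilde\gamma_i$ is disjoint from the remaining $\gamma_j$'s and $\tilde\delta_i$ is disjoint from the remaining $\delta_j$'s, then the new configurations furnish distinct maximum-weight $k$- and $(k+1)$-melons of the same total weight, contradicting the uniqueness established in the first paragraph.

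The hard part will be to handle the possibility that the swap breaks disjointness within one or both melons: the post-$v$ portion of $\delta_i$ grafted into $\tilde\gamma_i$ may intersect some other $\gamma_j$, and symmetrically for the $(k+1)$-melon. To overcome this, I will globalise the argument, processing all ``bad'' crossings between $\gamma$- and $\delta$-paths in a carefully chosen order---for instance, innermost crossings first---exploiting planarity to ensure that each swap strictly decreases a monovariant, such as the total number of shared vertices between $\gamma$- and $\delta$-paths at which their relative left-right order switches. Once this monovariant vanishes, the pairwise disjointness within $\{\gamma_j\}$ and within $\{\delta_j\}$ combines with the uncrossed structure to yield interlacing, and the uniqueness of maximum-weight watermelons closes the contradiction.
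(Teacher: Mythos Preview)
Your high-level strategy---exchange pieces between the $k$- and $(k+1)$-melons in a weight-preserving way and derive a contradiction from vertex-set uniqueness---is exactly the paper's. But the step you flag as the hard part is not carried out, and the plan you sketch for it does not cohere. A tail swap between $\gamma_i$ and $\delta_i$ can indeed destroy disjointness within the $\gamma$-family, because $\delta_i^{\mathrm{post}}$ is only guaranteed to avoid the other $\delta$'s. Your remedy is to iterate swaps \emph{between} $\gamma$- and $\delta$-paths until a monovariant vanishes; but such swaps do nothing to repair intersections \emph{within} one family once they have been created, so after the first bad swap the $\gamma$'s are no longer an admissible $k$-tuple and the argument cannot proceed. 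The monovariant you name is also not shown to decrease: a tail swap can introduce new order-switches between the grafted segment and other curves. Finally, even in the clean case your sentence ``the new configurations furnish distinct maximum-weight $k$- and $(k+1)$-melons'' skips two points: maximality requires the two-sided argument ($\ell(\tilde\Gamma^k)+\ell(\tilde\Gamma^{k+1})=\ell(\Gamma^k)+\ell(\Gamma^{k+1})$ together with $\ell(\tilde\Gamma^j)\le \ell(\Gamma^j)$ forces equality in both), and distinctness at the level of vertex sets requires a specific choice of swap vertex, which you do not make.

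The paper sidesteps iteration altogether by performing a single simultaneous swap of a different kind. For each curve of the $(k+1)$-melon it isolates the \emph{wilderness excursions}---the maximal arcs lying outside the ``home'' strip between the two adjacent $k$-melon curves---and exchanges each such arc with the matching arc on the bounding $k$-melon curve (the \emph{candidate path}). This is an arc swap between coincident boundary points, not a tail swap. Two lemmas then verify directly that the resulting $k$- and $(k+1)$-tuples are each internally disjoint; there is nothing to iterate. The sum-preservation and uniqueness now force the new vertex sets to coincide with the old ones, and a final pigeonhole lemma shows that this coincidence is possible only if all wilderness excursions were empty, i.e., interlacing held. The moral is that swapping whole excursion arcs at once lets one check disjointness by a clean case analysis, whereas your one-crossing-at-a-time scheme creates exactly the intra-family intersections it cannot undo.
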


When the distribution~$\nu$ has atoms, the set of vertices visited by a geodesic watermelon may be variable. Here is a deterministic interlacing result that is valid in this setting. 

\begin{proposition}\label{p.specified melon interlacing}
For $n \in \N$, suppose that the values $\xi_v$ are non-negative for $v \in \intint{n}^2$. 
Let $k \in \intint{n}$, and let $\phi_n^k$ be any $k$-geodesic watermelon with starting and ending points on the bottom left and top right sides of $\intint{n}^2$ as in Proposition~\ref{p.starting and ending points}. 
For $i \in \intint{n}$, we may find an $i$-geodesic watermelon 
$\gamma_n^i$ such that consecutive terms in the sequence $\big\{ \gamma_n^i: i \in \intint{n} \big\}$ interlace, with the union of the vertices in the curves in $\gamma_n^k$
coinciding with this union for $\phi_n^k$.
\end{proposition}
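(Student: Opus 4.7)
The plan is to reduce to the unique-watermelon setting of Proposition~\ref{p.melon interlacing} via a perturbation-and-limit argument. Let $V$ denote the vertex set of $\phi_n^k$, and consider the perturbed weights
\[
\xi^{(\rho,\epsilon,\delta)}_v \;=\; \xi_v + \rho + \epsilon\,\one_{v \in V} + \delta\,\eta_v,
\]
where $\rho,\epsilon>0$ are small, $\delta>0$ satisfies $\delta \ll \epsilon$, and $\{\eta_v\}_{v\in \intint{n}^2}$ are i.i.d.\ uniform random variables on $[0,1]$. The three perturbations play complementary roles. The uniform lift $\rho$ makes all weights strictly positive, which forces every $k$-watermelon in the perturbed model to attain the maximum possible vertex count $k(2n-k)$ for $k$ disjoint upright paths in $\intint{n}^2$---namely, the structure identified in Proposition~\ref{p.starting and ending points}. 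The bonus $\epsilon$ on $V$ then distinguishes $V$ as the unique optimal vertex set: for any competing vertex set $U$ with $|U|=k(2n-k)$, one has $|U\cap V| \leq |V|$ with equality iff $U=V$, so $V$ uniquely maximizes the $\epsilon$-contribution to the perturbed weight. Finally, the continuous noise $\delta\eta_v$, for $\delta$ sufficiently small, breaks all remaining ties so that the $i$-watermelons are almost surely unique for every $i\in\intint{n}$, while preserving the $k$-watermelon's vertex set at $V$.

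With uniqueness of the $i$-watermelons in the perturbed model, the deterministic content of Proposition~\ref{p.melon interlacing}---namely, the assertion that unique watermelons at consecutive levels interlace---yields the desired interlacing in the perturbed model. I would then take iterated limits $\delta\to 0$, $\epsilon\to 0$, $\rho\to 0$ along subsequences on which all the $i$-watermelon vertex sets stabilize; this is possible because vertex sets lie in the finite family of subsets of $\intint{n}^2$. Passing to the limit in the optimality inequality defining an $i$-watermelon shows that each limiting vertex set is an $i$-watermelon in the original model. Moreover, interlacing is a closed condition on the ordered vectors of paths (which are determined by the vertex sets via the left-to-right labeling from Proposition~\ref{p.starting and ending points}) and hence survives the limits. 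The vertex set of $\gamma_n^k$ is preserved at $V$ throughout.

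The main obstacle is the first step: verifying that the three-fold perturbation indeed pins the $k$-watermelon's vertex set to $V$ rather than merely to a superset. The role of the lift $\rho$ is essential here---without it, in the $\epsilon$-perturbed model the $k$-watermelon is only guaranteed to contain $V$, but may properly contain it, for instance when $\xi_v=0$ for some $v\notin V$. The lift eliminates this by constraining every $k$-watermelon to have the maximum vertex count $k(2n-k)$, which equals $|V|$, so that $V\subseteq U$ and $|U|=|V|$ together force $U=V$. The remaining bookkeeping---ensuring $\delta \ll \epsilon$ is small enough (depending on $\rho, \epsilon$ and the finite gap between $\ell(\phi_n^k)$ and the weight of any non-watermelon $k$-collection) for the random perturbation not to disturb the $k$-watermelon---is a routine continuity estimate.
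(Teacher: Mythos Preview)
Your overall strategy---perturb so that distinct subsets have distinct weights, invoke the deterministic interlacing result, then return to the original environment---is exactly the paper's approach. Two simplifications in the paper's execution are worth noting.

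First, the uniform lift $\rho$ is unnecessary, and your stated rationale for it is mistaken. The hypothesis of the proposition already gives $\phi_n^k$ the structure of Proposition~\ref{p.starting and ending points}, so $|V| = k(2n-k)$ is the \emph{maximum} vertex count for any $k$-collection of disjoint upright paths in $\intint{n}^2$. Hence no competing vertex set $U$ can properly contain $V$; in fact $|U| \leq |V|$ automatically. With just the $\epsilon$-bonus on $V$ and the random noise (of amplitude strictly less than $\epsilon$ per vertex), one computes for any $U\neq V$ that
\[
\tilde\ell(V)-\tilde\ell(U) \;\geq\; \big(\ell(V)-\ell(U)\big) + \epsilon\,|V\setminus U| - \delta\,|U\setminus V| \;\geq\; (\epsilon-\delta)\,|V\setminus U| \;>\;0,
\]
using $\ell(U)\leq\ell(V)$ and $|U\setminus V|\leq |V\setminus U|$. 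So $V$ is already the unique $k$-watermelon without any lift.

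Second, the paper avoids the iterated limit entirely. Instead of sending the perturbation to zero and extracting stabilizing subsequences, one fixes $\epsilon$ to be the minimum, over $j\in\intint{n}$, of the gap between the $j$-watermelon weight and the best non-watermelon $j$-collection weight in the \emph{original} environment. With the total perturbation bounded by $3\epsilon/8$ (say), one then checks directly that each perturbed $j$-watermelon is also a $j$-watermelon in the original environment: if it were not, its original weight would fall short of the optimum by at least $\epsilon$, contradicting that it is within $3\epsilon/8$ of the perturbed optimum. This sidesteps the subsequence bookkeeping and the need to verify that interlacing (a condition on ordered path decompositions, not merely on vertex sets) survives the limit.
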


\subsection{Monotonicity of watermelon weight increments}\label{s.geometry.proof of ordering}

A natural point process $\{ Y_{n,i}: i \in \intint{n} \}$ associated to the sequence $\{X_n^k : k \in \intint{n} \}$ is obtained by stipulating that 
\begin{equation}\label{e:ensemble}
X_n^k \, = \, \sum_{i=1}^k Y_{n,i}
\end{equation}
for each $k \in \intint{n}$. 
This process records increments $Y_{n,i}=X_{n}^i-X_{n}^{i-1}$, $i \in \llbracket 2,n \rrbracket$,  in geodesic watermelon weight as the curve number rises, starting out at  $Y_{n,1}=X_{n}^1$, the geodesic weight for the route from $(1,1)$ to $(n,n)$.
 
We mention that, in exponential LPP, the process $\{ Y_{n,i}: i \in \intint{n} \}$ equals in law a list in decreasing order of the  eigenvalues of an $n\times n$-matrix picked randomly according to the  Laguerre Unitary Ensemble (LUE) \cite{adler-eig-perc-connection}. 
In particular, this random list decreases almost surely. Arguments of the flavour of those that establish interlacing yield this deterministic monotonicity of $\{ Y_{n,i}: i \in \intint{n} \}$ in a more general setting.

\begin{proposition}[Increment monotonicity]
\label{p:ordering}
For $n$ a positive integer, suppose that the values $\xi_v$ are non-negative for $v \in \intint{n}^2$.  Let $k \in \llbracket2,n\rrbracket$.
Then $Y_{n,k-1} \geq Y_{n,k}$.
Suppose further that the values $\big\{ \xi_v : v \in \intint{n}^2 \big\}$ are independently picked according to a continuous law $\nu$. 
Then the stated inequality is almost surely strict. 
\end{proposition}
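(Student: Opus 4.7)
Since $Y_{n,k-1} - Y_{n,k} = 2 X_n^{k-1} - X_n^k - X_n^{k-2}$ (adopting the convention $X_n^0 := 0$ so that the case $k=2$ is included), the claim reduces to the deterministic concavity inequality
\[
X_n^k + X_n^{k-2} \,\leq\, 2\, X_n^{k-1}
\qquad\text{for } k \in \llbracket 2, n \rrbracket.
\]
The base case $k=2$ becomes $X_n^2 \leq 2 X_n^1$, which is immediate since each of the two disjoint paths in any $2$-collection has weight at most $X_n^1$. Take $k \geq 3$ throughout the rest.

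\textbf{Swap construction.} Fix a $k$-melon $A = (a_1, \ldots, a_k)$ and a $(k-2)$-melon $B = (b_1, \ldots, b_{k-2})$, ordered left-to-right and with endpoints arranged via Proposition~\ref{p.starting and ending points}. The combined vertex multiset $V(A) \sqcup V(B)$ has total weight $X_n^k + X_n^{k-2}$. I will reshape $A \sqcup B$ into two collections $C_1, C_2$, each a family of $k-1$ vertex-disjoint upright paths in $\llbracket 1,n \rrbracket^2$, satisfying the multiset identity $V(C_1) \sqcup V(C_2) = V(A) \sqcup V(B)$. Granting this, $w(C_1) + w(C_2) = X_n^k + X_n^{k-2}$, while each $w(C_i) \leq X_n^{k-1}$, yielding the concavity bound.

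The construction is a Lindstr\"om--Gessel--Viennot-style swap guided by the interlacing structure of Proposition~\ref{p.specified melon interlacing}: there is a $(k-1)$-melon $M = (m_1, \ldots, m_{k-1})$ interlacing with both $A$ and $B$, so that $a_i \preceq m_i \preceq a_{i+1}$ and $m_i \preceq b_i \preceq m_{i+1}$ for the relevant indices. This pointwise interleaving together with the planarity of upright paths forces each $b_j$ to share a vertex with at least one of the neighbouring $a_i$'s. Swapping suffixes at such shared vertices preserves the total vertex multiset and reassembles the $2k-2$ paths into two vertex-disjoint $(k-1)$-families $C_1$ and $C_2$. The main technical obstacle here is careful bookkeeping of iterated swaps, so that the final two families are genuinely vertex-disjoint and cover $V(A)\sqcup V(B)$ exactly; the interlacing of $A$, $M$, and $B$ is what makes the recombination feasible.

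\textbf{Strict inequality when $\nu$ is continuous.} Under continuous $\nu$, $\xi_v > 0$ for every $v$ almost surely, so Proposition~\ref{p.starting and ending points} implies that every $(k-1)$-melon shares the same vertex set as the standardized one $\Gamma_n^{k-1}$; in particular this vertex set is almost surely unique. If equality $X_n^k + X_n^{k-2} = 2 X_n^{k-1}$ were to hold, then $w(C_1) = w(C_2) = X_n^{k-1}$, forcing $V(C_1) = V(C_2) = V(\Gamma_n^{k-1})$ and therefore $|V(C_1)| + |V(C_2)| = 2\,|V(\Gamma_n^{k-1})|$. However, a direct count of path lengths gives
\[
|V(A)| + |V(B)| \,=\, k(2n-k) + (k-2)(2n-k+2) \,=\, 2(k-1)(2n-k+1) - 2 \,=\, 2\,|V(\Gamma_n^{k-1})| - 2,
\]
contradicting the multiset identity $V(C_1) \sqcup V(C_2) = V(A) \sqcup V(B)$ by exactly two vertices. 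Hence $Y_{n,k-1} > Y_{n,k}$ almost surely.
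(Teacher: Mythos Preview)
Your reformulation and overall plan coincide with the paper's: reduce to the concavity $X_n^k + X_n^{k-2} \le 2X_n^{k-1}$, then split the vertex multiset of a $k$-melon $A$ together with a $(k-2)$-melon $B$ into two families $C_1,C_2$ of $k-1$ pairwise-disjoint paths with $V(C_1)\sqcup V(C_2)=V(A)\sqcup V(B)$. Your cardinality argument for strict inequality is correct and in fact more explicit than what the paper records; it goes through as soon as the decomposition exists.

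The genuine gap is that you have not produced $C_1,C_2$. The claim that ``planarity \ldots forces each $b_j$ to share a vertex with at least one of the neighbouring $a_i$'s'' is true only trivially at the endpoints (with standardized endpoints $b_j$ begins at the start of $a_{j+2}$ and ends at the end of $a_j$), and swapping suffixes at an endpoint is the identity. In the interior nothing forces an intersection: $b_j$ and $a_{j+1}$ both lie weakly between $m_j$ and $m_{j+1}$ but can be vertex-disjoint. Without interior intersections the Lindstr\"om--Gessel--Viennot swap moves nothing, and the naive partition one falls back on (e.g.\ $C_2=\{b_1,\ldots,b_{k-2},a_k\}$) need not be internally disjoint, since $b_{k-2}\preceq a_k$ permits overlap. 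You correctly flag ``careful bookkeeping of iterated swaps'' as the obstacle, but that bookkeeping is the entire proof and it is not supplied. Note also that the multisets of start and end points of $A\sqcup B$ and of two standardized $(k-1)$-melons differ, so $C_1,C_2$ cannot both have standardized endpoints; any construction must accommodate this.

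The paper's mechanism (its Lemma~\ref{l.k-1 sets decomposition}) is different: it first extends all $2k-2$ curves by horizontal segments outside $\intint{n}^2$ so that they share a common time-range, and then inductively peels off the left and right boundary curves of the combined system via the pointwise $\max$ and $\min$ of pairs of paths, alternating which boundary enters $C_1$ versus $C_2$. The multiset identity $\gamma\cup\gamma'\equiv\max(\gamma,\gamma')\cup\min(\gamma,\gamma')$ keeps the total fixed, and a multiplicity-at-most-two check propagated through the induction certifies that each family is internally disjoint; no swap at shared vertices is invoked.
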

An analogous deterministic statement for semi-discrete LPP is proved by algebraic methods in~\cite{dauvergne2018directed}.

\section{Technical ingredients and the key ideas in the proofs}\label{iop}

Here we describe some important technical tools, relying on the above geometric features, which form key ingredients for our proofs. We will also indicate roughly how  these tools are proved. This done, we will outline how the tools will aid in proving Theorem~\ref{t.notwidenotthingeneral}.

In this section, and indeed in the rest of this paper, Assumptions~\ref{a.passage time continuity}, \ref{a.limit shape assumption}, and \ref{a.one point assumption} will be in force in all statements. When Assumption~\ref{a.one point assumption convex} is used in place of \ref{a.one point assumption}, we will indicate this.

\subsection{The technical tools}

There are three major technical tools, each of which is a result of independent interest. 
The first two results give existence and non-existence of certain numbers of disjoint paths in $\intint{n}^2$ with high probability, while the third one gives a quantitative result on paths with high transversal fluctuation. 

\subsubsection{First tool: construction of disjoint paths of high weight}
The key starting point is an explicit construction of $m\in\intint{k}$ disjoint paths achieving, with high probability, a cumulative weight $\mu n m -O(mk^{2/3}n^{1/3})$. This allows by taking $m=k$ to prove the weight lower bound aspect of Theorem~\ref{t.notwidenotthingeneral}(\ref{weight1'}).

\begin{theorem}
\label{t.flexible construction}
There exist $c, c_1, C_1>0$, and $k_0\in \N$ such that, for $k_0\leq k \leq c_1 n$ and $m\in \intint{k}$, it is with probability at least $1-e^{-ckm}$ that there exists a set of $m$ disjoint paths $\big\{ \gamma_i : i \in \intint{m} \big\}$ in the square $\llbracket 1,n \rrbracket^2$, with $\gamma_i$ for each index $i$ being a path from $(1, k-i+1)$ to $(n,n-i+1)$ satisfying $\max_i \tf(\gamma_i) \leq 2mk^{-2/3}n^{2/3}$,  such that
$$ 
\sum_{i=1}^m \ell(\gamma_i) \, \geq \, \mu nm - C_1mk^{2/3}n^{1/3}.
$$
\end{theorem}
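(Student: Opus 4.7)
My plan is to construct the $m$ disjoint paths explicitly by concatenating short point-to-point geodesics through a carefully chosen grid of anchor vertices. Fix $T := n/k$ and, for $j \in \llbracket 0, k \rrbracket$, consider the $(k+1)$ anti-diagonal slices $S_j$ at along-diagonal position $jT$. For each $i \in \intint{m}$ and each $j$, I choose an anchor vertex $P_{i,j} \in S_j$ of the form $P_{i,j} = (x,y)$ with transverse coordinate $y - x = a_j + b_j\hair i$, where $a_j, b_j$ are scalar sequences satisfying $a_0 = k, b_0 = -1$ (forcing $P_{i,0} = (1, k-i+1)$), $a_k = 1, b_k = -1$ (forcing $P_{i,k} = (n, n-i+1)$), and interpolating smoothly so that $|b_j| \approx \Delta := C (n/k)^{2/3}$ throughout the middle portion, for a large absolute constant $C$. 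The candidate path $\gamma_i$ is then the concatenation over $j = 0, \ldots, k-1$ of the maximum-weight path $\gamma_{i,j}$ from $P_{i,j}$ to $P_{i,j+1}$ inside a parallelogram $F_{i,j}$ around this line segment. The widths of the $F_{i,j}$ are chosen (shrinking gradually near the endpoints in step with the local anchor spacing $|b_j|$) so that, for each fixed $j$, the family $\{F_{i,j} : i \in \intint{m}\}$ is pairwise disjoint---forcing disjointness of the $\gamma_i$---while being simultaneously large enough (of order $T^{2/3}$ in the middle) that with high probability $\gamma_{i,j}$ coincides with the unconstrained point-to-point geodesic from $P_{i,j}$ to $P_{i,j+1}$.

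For the weight lower bound, Assumption~\ref{a.limit shape assumption} gives $\E[\ell(\gamma_{i,j})] \geq \mu T - G_1 s_{i,j}^2 T - g_1 T^{1/3}$, where $s_{i,j}$ is the slope of the segment from $P_{i,j}$ to $P_{i,j+1}$. Summing over $i, j$, the weight deficit relative to $\mu nm$ splits into (i) the non-random limit-shape gap, totaling $g_1 m k T^{1/3} = g_1 m k^{2/3} n^{1/3}$; (ii) the cumulative slope cost $G_1 \sum_{i,j} s_{i,j}^2 T$, which a direct computation bounds by a constant multiple of $m k^2/n + m^3 T^{1/3}/k$, both of which are at most $m k^{2/3} n^{1/3}$ precisely when $m \leq k$ (using also $k \leq c_1 n$); and (iii) the sum of local fluctuations around their means. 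Because the $F_{i,j}$ are pairwise disjoint, the local weights $\ell(\gamma_{i,j})$ are mutually independent, each with typical fluctuations of order $T^{1/3}$ and lower tails $\exp(-c \theta^{3/2})$ by Assumption~\ref{a.one point assumption}; a Bernstein-type bound applied to the sum of $N = mk$ such variables then shows the cumulative fluctuation below the mean exceeds $C_0\hair m k^{2/3} n^{1/3}$ with probability at most $e^{-c k m}$. Combining (i)--(iii) yields $\sum_i \ell(\gamma_i) \geq \mu n m - C_1 m k^{2/3} n^{1/3}$ with the required probability. The transversal fluctuation bound $\max_i \tf(\gamma_i) \leq 2 m k^{-2/3} n^{2/3}$ is then immediate from the construction, since each $\gamma_i$ lies in the union of its defining parallelograms, whose perpendicular extent is bounded by the middle spread $m\Delta$ up to a constant factor absorbed into the choice of $C$.

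\textbf{Main obstacle.} The most delicate step is controlling the cumulative slope cost. A naive construction concentrating the ``spread'' of the paths into only a bounded number of end slabs produces slope cost of order $m^3 T^{1/3}$, which is acceptable only up to $m = O(\sqrt{k})$; spreading the transition of $b_j$ from $-1$ at the ends to $-\Delta$ in the middle uniformly across all $k$ slabs cuts this cost by a factor of $k$, pushing the construction to the intended limit $m = k$. A related subtlety is ensuring that the parallelograms $F_{i,j}$ near the endpoints---where the anchor spacing is as small as $1$---remain simultaneously pairwise disjoint and wide enough to capture the unconstrained point-to-point geodesic with high probability; this is arranged by shrinking the parallelogram widths smoothly near the endpoints in proportion to the local anchor spacing, relying on a point-to-point transversal-fluctuation estimate obtained by standard arguments from the combination of Assumptions~\ref{a.limit shape assumption} and~\ref{a.one point assumption}.
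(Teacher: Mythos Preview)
Your proposal has a genuine gap in the treatment of the slabs near the endpoints. You define $\gamma_{i,j}$ as the maximum-weight path from $P_{i,j}$ to $P_{i,j+1}$ \emph{constrained to the parallelogram} $F_{i,j}$, whose width you shrink near the endpoints to match the local anchor spacing $|b_j|$. You then invoke Assumption~\ref{a.limit shape assumption} to write $\E[\ell(\gamma_{i,j})] \geq \mu T - G_1 s_{i,j}^2 T - g_1 T^{1/3}$. But Assumption~\ref{a.limit shape assumption} bounds the mean of the \emph{unconstrained} point-to-point weight; for the constrained weight it is valid only when the parallelogram has width of order $T^{2/3}$ (this is precisely the content of Proposition~\ref{p.constrained lower tail}, whose hypothesis $L_1\leq\ell\leq L_2$ encodes that restriction). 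Near the endpoints your anchor spacing, and hence parallelogram width, is $O(1)$ while the slab height is $T=n/k$; a geodesic over height $T$ fluctuates on scale $T^{2/3}\gg 1$, so the constrained path does \emph{not} coincide with the unconstrained one, and its expected shortfall from $\mu T$ is of order $T/w_j$ rather than $T^{1/3}$. With your linear interpolation $w_j\approx 1+j\Delta/k$ one computes $\sum_j T/w_j \asymp k^{2/3}n^{1/3}\log(n/k)$, so your construction actually yields $\sum_i\ell(\gamma_i)\geq \mu nm - C m k^{2/3}n^{1/3}\log(n/k)$, off by a logarithm. The same issue contaminates your Bernstein step: the lower tail $\exp(-c\theta^{3/2})$ from Assumption~\ref{a.one point assumption} is again for unconstrained weights, not for geodesics forced through width-$O(1)$ strips of height $T$.

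There is a real tension here that a single-scale construction cannot resolve. Spreading the transition of $b_j$ over all $k$ slabs is exactly what keeps the slope cost at $m^3 T^{1/3}/k$ (as you correctly note), but it forces many slabs to have sub-KPZ width, incurring the logarithmic loss above. Concentrating the transition in $O(1)$ slabs removes the thin-strip issue but, as you also note, inflates the slope cost to $m^3 T^{1/3}$, acceptable only for $m=O(\sqrt{k})$. The paper's construction breaks this tension by going multi-scale: it abandons the fixed slab height $T=n/k$ and instead uses a dyadic ``climb'' phase in which separation doubles at each level and, crucially, the flight-corridor heights scale as the $3/2$ power of the local separation, so that width $\approx \mathrm{height}^{2/3}$ is maintained at every scale. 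This is the missing idea, and it is what eliminates the $\log(n/k)$ factor. (A short ``take-off'' phase of height $k^{2/3}n^{1/3}$, with the trivial weight bound zero, handles the very first step from unit spacing to spacing $k^{-1/3}n^{1/3}$.)
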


\begin{figure}[h]
   \centering
        \includegraphics[width=.4\textwidth]{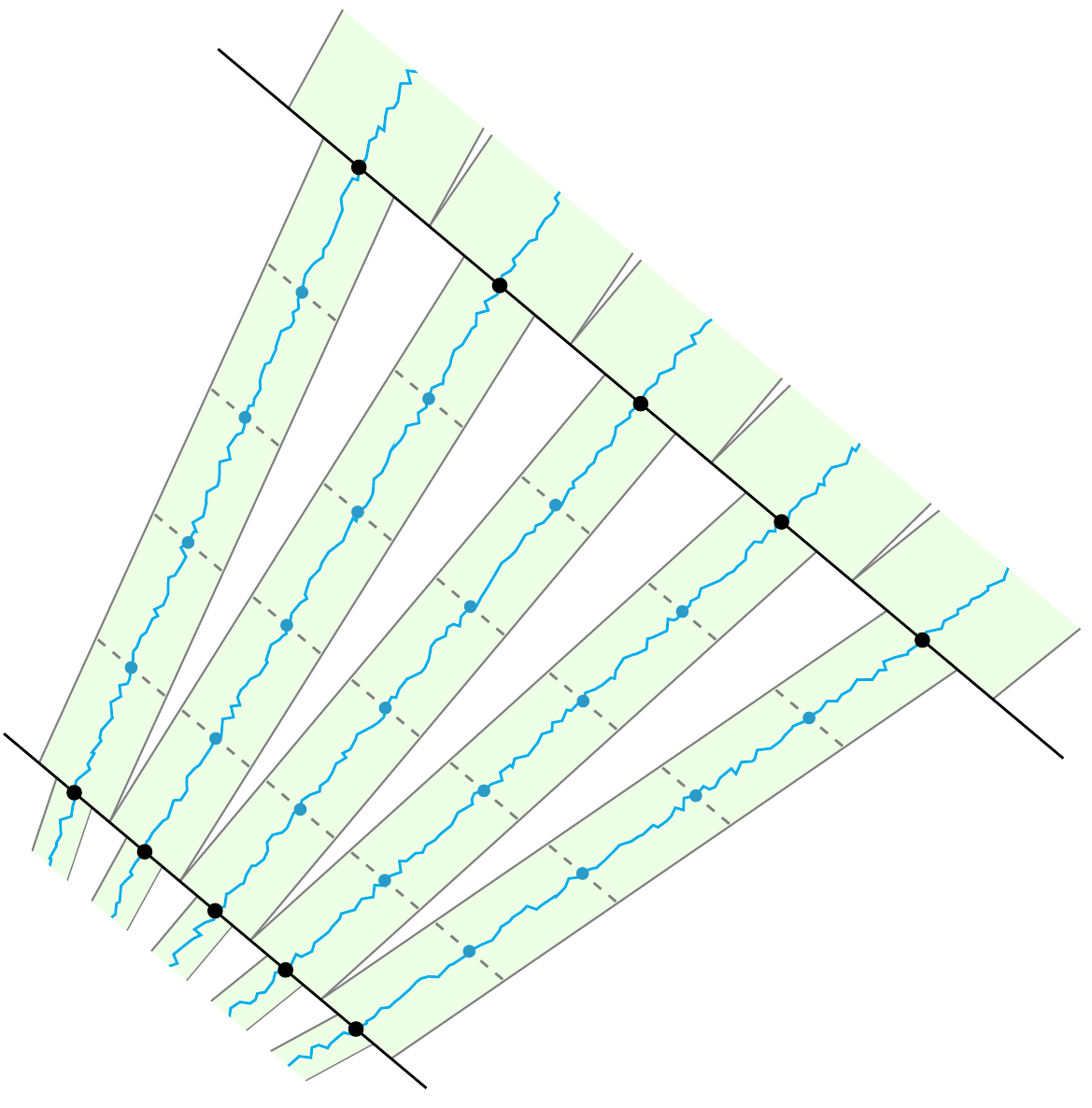}
 \caption{The dyadic construction of $k$ disjoint curves with $k=5$. Here we have focused on a single scale, while the preceding and succeeding scales are partially visible. Observe that in a single scale, each of the $k$ curves passes through $k$ consecutive parallelograms of a particular size (as determined by the scale) and aspect ratio. A more detailed depiction of this part of the construction is provided in Figure~\ref{f.construction segment 2}}.
 \label{f.construction mini}
\end{figure}

The construction, in  Section~\ref{s:construction}, is a multi-scale one where at each dyadic scale we consider geodesics constrained to lie in $k$ disjoint parallelograms with a carefully chosen aspect ratio: see Figure~\ref{f.construction mini}.  The paths $\gamma_i $ for $i=1,2,\ldots, k$ are now formed by concatenating these constrained geodesics at different scales. The weight lower bound in Theorem \ref{t.flexible construction} can then be obtained by analyzing the typical weight of the constrained geodesics, and a concentration estimate implies that the weight of a concatenation of independent geodesics is typically close to its mean. 

\subsubsection{Second tool: several disjoint paths constrained in a strip typically have low weight}
This tool asserts that the event that there exist $k$ curves, each meeting a certain weight lower bound and packed within a thin strip around the diagonal, has very low probability. 
It will serve to help prove Theorem~\ref{t.notwidenotthingeneral}(\ref{tf2'}),
the transversal fluctuation lower bound.
Letting 
\begin{equation}\label{parallelogram1}
U_{n,w}:= \big\{2\leq x+y\leq 2n, |x-y|\leq w\big\},
\end{equation}
 we show that if the width $w$ equals $k^{1/3-o(1)}n^{2/3}$, then it is unlikely that there are $k$ disjoint paths in $U_{n,w}$ each of which has weight at least $\mu n-\Theta (k^{2/3}n^{1/3})$.

\begin{theorem}
\label{t:disjoint}
Given $C_3>0$, there exist $\delta=\delta(C_3)>0$ and $c_1 = c_1(C_3)>0$ such that, for $k\in\N$ and $k\leq c_1n$, the following holds. Let $\ce=\ce(n,k,\delta,C_3)$ denote the event that there exist $k$ disjoint paths $\gamma_1,\gamma_2,\ldots, \gamma_{k}$ contained in $U_{n,\delta k^{1/3}n^{2/3}}$ such that $\ell(\gamma_i)\geq \mu n- C_3 k^{2/3}n^{1/3}$ for $i=1,2,\ldots, k$. Then there exists an absolute constant $c>0$ such that $\P(\ce)\leq e^{-ck^2}$.  
\end{theorem}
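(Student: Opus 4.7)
The plan is to bound $\P(\ce)$ via a multi-midpoint decomposition, combined with the limit-shape concavity of Assumption~\ref{a.limit shape assumption} and the moderate-deviation upper tail of Assumption~\ref{a.one point assumption}. On $\ce$, the total weight $W:=\sum_i \ell(\gamma_i)$ is at least $k\mu n - C_3 k^{5/3}n^{1/3}$. I would introduce $M=C_4 k$ equally-spaced anti-diagonal lines partitioning $U_{n,w}$ into slabs of diagonal extent $\sim n/M$; each path crosses each line at a unique lattice point $q_j^i$ in the strip, with transverse displacement from the diagonal at most $w/2=\delta k^{1/3}n^{2/3}/2$. Concatenation yields $W\leq \sum_{j,i} X_{q_j^i\to q_{j+1}^i}$.

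For each fixed configuration $(q_j^i)$, Assumption~\ref{a.limit shape assumption} gives the mean shortfall
\begin{equation*}
\sum_{j,i}\E\bigl[X_{q_j^i\to q_{j+1}^i}\bigr] \leq k\mu n - Mkg_2 (n/M)^{1/3} - \frac{c_1 M}{n}\sum_{j,i}(\Delta_j^i)^2,
\end{equation*}
where $\Delta_j^i$ is the transverse displacement of the $j$th sub-segment of path $i$. A one-midpoint argument only delivers a shortfall of order $kn^{1/3}$ from the $g_2$-term, which is insufficient; by taking $C_4=(2C_3/g_2)^{3/2}$ the collected $g_2$-terms instead contribute at least $2C_3 k^{5/3}n^{1/3}$, and on $\ce$ one is forced to have an aggregate upward deviation $\sum_{j,i}(X_{q_j^i\to q_{j+1}^i}-\E)\geq C_3 k^{5/3}n^{1/3}$. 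Each summand has upper tail $\exp(-c\theta^{3/2})$ at scale $(n/M)^{1/3}$ by Assumption~\ref{a.one point assumption}; a Cramér-type estimate for $kM\sim k^2$ approximately-independent summands with aggregate deviation of order $k^2\cdot 1\cdot (n/M)^{1/3}$ yields the probability bound $\exp(-ck^2)$.

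The main obstacle is the union bound over midpoint configurations $(q_j^i)$: each line contributes $\binom{w}{k}$ options, so the total is $\exp\bigl(O(k^2(\log(1/\delta)+\log(n/k)))\bigr)$. For $\delta=\delta(C_3)$ small and $c_1=c_1(C_3)$ small so that $\log(n/k)=O(1)$ in the range $k\leq c_1n$, this combinatorial cost is absorbed into $\exp(-ck^2)$. A secondary issue is the correlation among $X_{q_j^i\to q_{j+1}^i}$: for differing $i$ (sharing vertices within a slab) I would partition each slab into $k$ thin parallelograms of aspect ratio $\sim (n/M)^{2/3}k^{-2/3}$, one per path, yielding genuinely independent per-path constrained LPP contributions and reducing the deviation bound to $k^2$ independent one-point tails; for differing $j$ the only overlap is at the midpoint vertices, whose total contribution is $O(kM)$ and controllable. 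Small values of $k$, for which the above $\log(n/k)$ factor is unfavourable, can be handled separately using the single-geodesic transversal fluctuation estimate already available for $k=1$.
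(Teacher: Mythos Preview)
Your overall strategy --- slice the strip into $M\sim k$ slabs, track the crossing point of each path on each slab boundary, and show that any such configuration forces a large upward deviation of many nearly-independent point-to-point weights --- is in the same spirit as the paper's argument. But there is a genuine gap in your entropy count that breaks the proof for the whole range $k\le c_1 n$, not just for small $k$.

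You union-bound over lattice crossing points $q_j^i$, which gives entropy $\binom{w}{k}^M$ with $w=\delta k^{1/3}n^{2/3}$ and $M\sim k$, i.e.\ $\exp\bigl(\Theta(k^2\log(n/k))\bigr)$. Your claim that choosing $c_1$ small makes $\log(n/k)=O(1)$ on $k\le c_1 n$ is backwards: that constraint gives a \emph{lower} bound $\log(n/k)\ge \log(1/c_1)$, not an upper bound, and for any $k=o(n)$ (e.g.\ $k=n^{1/2}$) the factor $\log(n/k)$ is unbounded. So your combinatorial cost swamps the $e^{-ck^2}$ probability bound except in the regime $k\asymp n$, and the single-geodesic fix you mention only covers $k=O(1)$, leaving the entire intermediate range uncovered. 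The paper resolves exactly this by discretizing not at lattice scale but at cell width $\eta\epsilon n^{2/3}$ with $\epsilon\sim k^{-2/3}$: each anti-diagonal grid line then carries only $M\sim (\delta/\eta)k$ cells, and the entropy of $k$-tuples of ``grid journeys'' is $\binom{k+M-1}{k}^{N}$ with $N\sim k$, which is $\exp\bigl(k^2 H(\alpha)\bigr)$ for $\alpha=\delta/\eta$ and hence beats $e^{-c_3k^2}$ once $\delta$ is small.

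A second issue is your handling of correlations across $i$. You cannot pre-assign path $i$ to the $i$th thin parallelogram in a slab: the $\gamma_i$ are only known to be disjoint, not to sit in prescribed sub-strips, so your ``genuinely independent per-path constrained LPP contributions'' are not available without a further union bound (which brings back entropy). The paper sidesteps this entirely by working path-by-path rather than with the aggregate $W$: for a fixed grid journey $\mathscr L$, it bounds $\P(X_{\mathscr L}>\mu n - C_3 k^{2/3}n^{1/3})\le e^{-c_3 k}$ via Bernstein on the $N\sim k$ slab contributions along that one journey (which \emph{are} independent), and then uses the van den Berg--Kesten inequality to pass from one high-weight path to $k$ disjoint such paths, obtaining $e^{-c_3 k^2}$. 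BK is exactly the device that converts disjointness into a product bound without any independence juggling, and it is what your proposal is missing.
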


This result will be proved in  Section~\ref{s:bk}. How? If there are a large number of disjoint paths in a strip, then some of them must be restricted to some thin region. Our assumptions will imply that any path contained in a thin region has an exponentially-low-in-$k$ probability of having weight shortfall from $\mu n$ less than $C_3 k^{2/3}n^{1/3}$, and the van den Berg-Kesten (BK) inequality will complete the proof. Recall that the BK inequality bounds the probability of occurrence of a number of events \emph{disjointly} by the product of their individual probabilities.

A similar argument has appeared in \cite{BHS18} to show that the maximum number of disjoint geodesics across the shorter sides of an $n\times n^{2/3}$ rectangle is uniformly tight with good tails.  The argument in this paper is a refinement of this approach, requiring more careful entropy calculations to take care of the growing width of $U_{n,\delta k^{1/3}n^{2/3}}$ in $k$.

\subsubsection{Third tool: the weight shortfall of paths with high transversal fluctuations} 
This interesting result will help to  prove Theorem~\ref{t.notwidenotthingeneral}(\ref{tf1'}), the upper bound on geodesic watermelon transversal fluctuation.
The tool asserts that any path from $(1,1)$ to $(r,r)$ of transversal fluctuation greater than $sr^{2/3}$ typically suffers a weight loss of order $s^2r^{1/3}$. The result applies to any path and is not a bound on the probability of the maximal weight path having such a transversal fluctuation. The statement may be expected from known relations between transversal and longitudinal fluctuations (which is a consequence of the curvature assumption in Assumption~\ref{a.limit shape assumption} and one-point fluctuation information in Assumption~\ref{a.one point assumption}). While similar results with suboptimal exponents have appeared (see \cite{slow-bond, BGZ19}), we obtain the following sharp  result, adapting a multi-scale argument from \cite{slow-bond}. The proof appears in Section~\ref{s:upperexp}.
\begin{theorem}\label{t.tf}
Let $X^{r,s,t}$ be the maximum weight over all paths $\Gamma^{r,s,t}$ from the line segment joining $(-tr^{2/3}, tr^{2/3})$ and $(tr^{2/3}, -tr^{2/3})$ to the line segment joining 
$(r-tr^{2/3},r+tr^{2/3})$ and $(r+tr^{2/3},r-tr^{2/3})$
such that $\tf(\Gamma^{r,s,t}) > (s+t)r^{2/3}$, with $t\leq s$.
There exist absolute constants
$r_0$, $s_0$, $c>0$ and $c_2>0$
such that, for $s>s_0$ and $r>r_0$,
$$\P\left(X^{r,s,t} - \mu r > -c_2s^2r^{1/3}\right) < e^{-cs^3}.$$
\end{theorem}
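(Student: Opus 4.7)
The plan is to bound $X^{r,s,t}$ via a peak-decomposition of the maximising path, followed by a dyadic union bound over the location of the peak, combining the curvature of the limit shape (Assumption~\ref{a.limit shape assumption}) with the one-point upper-tail estimate (Assumption~\ref{a.one point assumption}(a)); this is the adaptation of the multi-scale method of \cite{slow-bond} alluded to in the text.

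For the decomposition, I will let $p$ be a vertex on a candidate path $\Gamma$ at maximum transverse distance $h$ from the diagonal, so $h > (s+t)r^{2/3}$, and use that $\xi_v \geq 0$ to obtain the deterministic upper bound $\ell(\Gamma) \leq X_{u \to p} + X_{p \to v}$, where $u, v$ are the endpoints of $\Gamma$. Writing $\tau = p_1 + p_2$ for the anti-diagonal time of $p$, the sub-paths from $u$ to $p$ and from $p$ to $v$ have scale parameters $\tau/2$ and $r - \tau/2$ with slope parameters of order $\pm 2h/\tau$ and $\pm 2h/(2r-\tau)$. Applying Assumption~\ref{a.limit shape assumption} to each sub-path and using $\tau^{-1} + (2r - \tau)^{-1} \geq 2/r$ gives
\begin{equation*}
\E\!\left[X_{u \to p} + X_{p \to v}\right] \,\leq\, \mu r \,-\, 4 G_2 \, h^2/r \,+\, O(r^{1/3})
\end{equation*}
as long as $h/r$ lies in the $\rho$-regime; the case $h \gtrsim \rho r$ is handled separately by a cruder linear-shortfall estimate. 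Choosing $c_2 < 2G_2$, the event $\{ X_{u \to p} + X_{p \to v} > \mu r - c_2 s^2 r^{1/3} \}$ then requires an upward fluctuation of order $h^2/r$ above the mean, which Assumption~\ref{a.one point assumption}(a) bounds by $\exp\!\bigl(-c' (h^2/r^{2/3})^{3/2}\bigr)$ in the moderate-deviation window.

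For the union bound, I will discretise $u$ and $v$ to $r^{2/3}$-grids on their segments ($O(t)$ points each), $\tau$ to an $r^{2/3}$-grid ($O(r^{1/3})$ points), and partition the transverse coordinate of $p$ into dyadic shells $h \in \bigl[2^\ell (s+t)r^{2/3},\, 2^{\ell+1}(s+t)r^{2/3}\bigr)$ with an $r^{2/3}$-grid inside each shell; the supremum of $X_{u \to p} + X_{p \to v}$ over $p$ in a single box is absorbed by the deterministic monotonicity bound $X_{u \to p} + X_{p \to v} \leq X_{u \to p_+} + X_{p_- \to v}$, where $p_- \leq p \leq p_+$ are the box's corners. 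At shell $\ell$, the single-triple tail is $\exp(-c' 8^\ell s^3)$ and the number of triples is $O(t^2 \cdot 2^\ell s \cdot r^{1/3})$, so summing in $\ell$ the contributions decay geometrically and the $\ell = 0$ term dominates.

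The main obstacle is absorbing the polynomial-in-$r$ discretisation factor by the desired $e^{-cs^3}$ bound, since $s_0$ is required to be an absolute constant independent of $r$. This is where one must exploit the full form of Assumption~\ref{a.one point assumption}(a): once $4^\ell s^2 \gtrsim r^{1/3}$ the upper tail switches to $\exp(-c'\theta r^{1/3})$, whose exponent carries a factor $r^{1/3}$ that readily kills entropy losses. The delicate part of the adaptation from \cite{slow-bond} lies in calibrating the dyadic scales and grid spacings so that, at every $\ell$, either $8^\ell s^3$ strictly dominates the logarithm of the corresponding entropy, or else one is already in the strong $\theta r^{1/3}$ branch, and in verifying that the two estimates glue together cleanly across the crossover $4^\ell s^2 \sim r^{1/3}$.
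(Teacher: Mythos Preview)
Your peak-decomposition has a genuine gap at the entropy step, and the paper's proof takes a different route precisely to avoid it. The failure is at your base shell $\ell=0$: there $h\asymp (s+t)r^{2/3}$, so the required upward fluctuation is $\theta\asymp s^2$ in the notation of Assumption~\ref{a.one point assumption}. Since the theorem is vacuous for $s>r^{1/3}$, you may assume $s\le r^{1/3}$, whence $\theta^{3/2}\asymp s^3\le s^2 r^{1/3}\asymp \theta r^{1/3}$; you are therefore in the $\exp(-c\theta^{3/2})=\exp(-cs^3)$ branch, not the strong $\exp(-c\theta r^{1/3})$ branch. The union bound over your $O(r^{1/3})$ time-grid points then yields $r^{1/3}e^{-cs^3}$, which is not $\le e^{-c's^3}$ for $s$ an absolute constant. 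Your crossover criterion $4^\ell s^2\gtrsim r^{1/3}$ only engages at $2^\ell\gtrsim r^{1/3}/s$, so every shell $0\le \ell\ll \log(r^{1/3}/s)$ sits in the weak branch and carries the unabsorbable $r^{1/3}$ factor. Coarsening the time grid does not rescue this: a bin of diagonal width $\Delta$ forces, via your corner bound $X_{u\to p_+}+X_{p_-\to v}$, an additive $\mu\Delta$ overcount in the linear term, and keeping $\mu\Delta\ll s^2 r^{1/3}$ still leaves $r/\Delta\gg r^{2/3}/s^2$ bins.

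The paper's argument is a multi-scale decomposition in the \emph{diagonal} direction, not the transverse one. It first establishes a midpoint version (Proposition~\ref{p.midpoint tf}) via two point-to-line tails. For the full statement it lays down dyadic grids $S_j=\{\ell 2^{-j}r\}$ and lets $T_j$ be the event that the path is within $(s_j+t)r^{2/3}$ of the diagonal at every point of $S_j$, with $s_j\uparrow s/2$. On $T_{j-1}\cap T_j^c$ the path is tame at two consecutive level-$(j-1)$ points but escapes at their midpoint; applying the midpoint proposition at scale $\tilde r=2^{-(j-1)}r$ produces an effective parameter $\tilde s=s_{j-1}2^{(j-1)/3}$ and hence a tail $\exp(-c\tilde s^{\,3})=\exp(-c\,2^{j-1}s_{j-1}^3)$. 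The exponent grows like $2^j$ while the level-$j$ entropy is only $2^j$ points (logarithm $\asymp j$), so for $s$ above an absolute constant the level-$j$ contribution is at most $4^{-j}e^{-cs^3}$ and the sum in $j$ converges. This amplification of the fluctuation parameter by shrinking the time scale is exactly what a single-scale peak decomposition cannot provide.
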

We now show how the interlacing property and the above results will yield Theorem~\ref{t.notwidenotthingeneral}. 

\subsection{Proof outline of Theorem~\ref{t.notwidenotthingeneral}}
We already noted how Theorem~\ref{t.flexible construction} implies the weight lower bound of Theorem~\ref{t.notwidenotthingeneral}(\ref{weight1'}). This is a collective weight lower bound, but for the remaining parts of Theorem~\ref{t.notwidenotthingeneral} we will roughly need a lower bound on individual curve weights of order $\mu n - \Theta(k^{2/3}n^{1/3})$; note that this quantity is $k^{-1}$ times the weight lower bound $\mu nk-\Theta(k^{5/3}n^{1/3})$ of Theorem~\ref{t.notwidenotthingeneral}(\ref{weight1'}). This individual curve lower bound is provided in a slightly weaker (but sufficient) sense by an averaging argument, which is a simple but core ingredient of our proofs.

\subsubsection{The averaging argument} Let $X_n^{j,i}$ be the weight of the $i$\textsuperscript{th} heaviest curve of a given $j$-geodesic watermelon. The averaging argument yields that
there exists with high probability $j\in \llbracket k+1, 2k\rrbracket$ such that, for all $i\le j$,
\begin{equation}\label{avecons}
X_{n}^{j,i}\ge X_{n}^{j,j}\ge \mu n -\Theta(k^{2/3}n^{1/3}),
\end{equation}
and similarly for some $j\in \llbracket \lfloor k/2\rfloor, k\rrbracket$.
The argument is powered by a simple but very useful inequality: for $j\in \N$,
\begin{equation}\label{e.averaging inequality}
X_n^{j,j}\geq X_n^{j}-X_n^{j-1}.
\end{equation}
This is proved by considering the lowest weight curve and the remaining $j-1$ curves of the $j$-melon separately. The weight of the latter is of course at most the weight of the $(j-1)$-melon, and this gives the inequality. Notice that while $X_n^{j,j}$ is defined only once a $j$-geodesic watermelon has been specified, the lower bound in \eqref{e.averaging inequality} is independent of the choice of $j$-geodesic watermelon. This allows us to prove that \eqref{avecons} holds for some $j\in\llbracket k+1, 2k\rrbracket$ simultaneously for all choices of $j$-geodesic watermelon with high probability. 

 Deriving \eqref{avecons} from \eqref{e.averaging inequality}  is a matter of appealing to the lower bound on $X_n^{2k}$ from Theorem~\ref{t.notwidenotthingeneral}(\ref{weight1'}) and the following crude upper bound on $X_n^k$, which is proved using one-point information (Assumption~\ref{a.one point assumption}) and the BK inequality (see Section \ref{s:bk}); note the change in sign of the fluctuation term when compared with the weight upper bound of Theorem~\ref{t.notwidenotthingeneral}(\ref{weight1'}).

\begin{lemma}\label{l.first upper bound on melon weight}
 Under Assumption~\ref{a.one point assumption}, we may find $c>0$ such that, if $t>0$, there exists $k_0 = k_0(t)$ for which $n>n_0$ and $k_0<k\leq  t^{-3/4}n^{1/2}$ imply that
$$\P\left(\melonweight > \mu nk + tk^{5/3}n^{1/3}\right) \leq \exp\left(-ct^{3/2}k^2\right).$$
If instead Assumption~\ref{a.one point assumption convex} is available, then the upper bound on $k$ may be taken to be $\min(1,t^{-3/2})n$.
\end{lemma}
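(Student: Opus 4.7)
The strategy is to reduce the event $A = \{\melonweight \geq \mu nk + tk^{5/3}n^{1/3}\}$ to a disjoint occurrence of individual path weight tail events, then to apply the van den Berg--Kesten (BK) inequality together with Assumption~\ref{a.one point assumption}. By Proposition~\ref{p.starting and ending points}, I may fix the endpoints of the $k$-melon's paths as $u_i=(1,k-i+1)$ and $v_i = (n,n-i+1)$; since $v_i-u_i=(n-1,n-k)$ does not depend on $i$, the point-to-point weights $X_{u_i\to v_i}$ are equidistributed, and Assumption~\ref{a.limit shape assumption} yields $\E[X_{u_i\to v_i}]\leq \mu n$ for $k\ll n$. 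On $A$, restricting attention to those indices $i$ with $\ell(\gamma_i)\geq\mu n$ (whose excesses over $\mu n$ still sum to at least $tk^{5/3}n^{1/3}$, since the remaining paths have weight less than $\mu n$), there exist $S\subseteq\intint{k}$ and non-negative integers $\theta_i$ ($i\in S$, after discretization to the unit grid) with $\sum_{i\in S}\theta_i\geq tk^{5/3}-k$ such that the increasing events $\{\exists\gamma:u_i\to v_i,\;\ell(\gamma)\geq \mu n+\theta_i n^{1/3}\}$ occur disjointly. BK and Assumption~\ref{a.one point assumption} then give
\[
\P(A) \;\leq\; \sum_{S\subseteq\intint{k}}\;\sum_{(\theta_i)_{i\in S}}\; \prod_{i\in S}\exp\!\bigl(-c\min(\theta_i^{3/2},\,\theta_i n^{1/3})\bigr),
\]
where the inner sum ranges over non-negative integer vectors with $\sum_{i\in S}\theta_i\geq tk^{5/3}$ (after absorbing the $-k$ slack into $k\geq k_0(t)$).

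The crux is to show $\sum_{i\in S}\min(\theta_i^{3/2},\theta_i n^{1/3})\geq ct^{3/2}k^2$ under the constraint. If every $\theta_i\leq n^{2/3}$, then Jensen applied to the convex map $x\mapsto x^{3/2}$ gives $\sum_{i\in S}\theta_i^{3/2}\geq |S|^{-1/2}\bigl(\sum_{i\in S}\theta_i\bigr)^{3/2}\geq k^{-1/2}(tk^{5/3})^{3/2}=t^{3/2}k^2$. Otherwise some $\theta_j>n^{2/3}$ alone contributes $\theta_j n^{1/3}\geq n$, which exceeds $t^{3/2}k^2$ precisely when $k\leq t^{-3/4}n^{1/2}$ -- this is the origin of the hypothesized range of $k$. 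I then split the exponent in half: the first half yields the uniform per-configuration estimate $e^{-(c/2)t^{3/2}k^2}$, while the second half controls the enumeration via $\sum_{\theta\geq 0}e^{-(c/2)\theta^{3/2}}=O(1)$, producing
\[
\P(A)\;\leq\; 2^k\cdot C^k\, e^{-(c/2)t^{3/2}k^2}\;\leq\; e^{-c't^{3/2}k^2}
\qquad\text{for } k\geq k_0(t).
\]

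To extend to $k\leq\min(1,t^{-3/2})n$ under Assumption~\ref{a.one point assumption convex}, I replace the non-convex function $\theta\mapsto\min(\theta^{3/2},\theta n^{1/3})$ by the convex function $I_n$, so that Jensen's inequality applies uniformly in $\theta$. A short computation yields $\sum_{i\in S}I_n(\theta_i)\geq|S|\,I_n(\bar\theta)\geq c\min(t^{3/2}k^2,\;tk^{5/3}n^{1/3})$ with $\bar\theta=\sum\theta_i/|S|\geq tk^{5/3}/|S|$, and the second quantity dominates $t^{3/2}k^2$ exactly when $k\leq t^{-3/2}n$. The principal obstacle in the weaker setting is precisely the non-convexity of $\theta\mapsto\min(\theta^{3/2},\theta n^{1/3})$: for large $\theta$ the one-point tail is only as strong as $e^{-c\theta n^{1/3}}$ rather than $e^{-c\theta^{3/2}}$, and this mismatch forces the bifurcation in the range of $k$ between the two versions of the statement.
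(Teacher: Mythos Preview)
Your argument is correct. For the part under Assumption~\ref{a.one point assumption convex} it essentially coincides with the paper's proof in Appendix~\ref{app.proof of lemmas}: both enumerate over subsets $S\subseteq\intint{k}$ and integer excess profiles, apply BK, and use convexity of $I_n$ via Jensen to produce the exponent $t^{3/2}k^2$, with the enumeration absorbed by a factor $C^k$.

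For the part under Assumption~\ref{a.one point assumption}, however, your route is genuinely different from the paper's. The paper avoids the enumeration over $(\theta_i)$ entirely: it introduces, for each $j\in\intint{k}$, the event
\[
A_j=\Big\{\exists\,\mc W:\ X^{k,j}_{n,\mc W}>\mu n+\tfrac14 t\,j^{-2/3}k^{4/3}n^{1/3}\Big\},
\]
shows by a telescoping estimate $\sum_{j}j^{-2/3}\leq 3k^{1/3}$ that $\{X_n^k>\mu nk+tk^{5/3}n^{1/3}\}\subseteq\bigcup_{j=1}^k A_j$, and bounds $\P(A_j)$ by BK applied to $j$ \emph{identical} events with the common threshold $\tfrac14 t\,j^{-2/3}k^{4/3}$, giving $(\exp(-ct^{3/2}k^2/j))^j=\exp(-ct^{3/2}k^2)$ directly. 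This is shorter: only $k$ events are union-bounded, not $2^k\cdot(\text{compositions})$, and no case split on the size of the $\theta_i$ is needed. Your approach, by contrast, is unified across the two assumptions and makes the role of convexity transparent; the case analysis ``all $\theta_i\le n^{2/3}$'' vs.\ ``some $\theta_j>n^{2/3}$'' is exactly the device that substitutes for the missing convexity of $\theta\mapsto\min(\theta^{3/2},\theta n^{1/3})$, and it cleanly explains why the threshold $k\le t^{-3/4}n^{1/2}$ arises. One small simplification you might adopt: fixing the endpoints $u_i,v_i$ is unnecessary, since any path in $\intint{n}^2$ has weight at most $X_n$ (extend using nonnegative weights), so you can bound each factor by $\P(X_n>\mu n+\theta_i n^{1/3})$ directly.
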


The relaxation on the condition on $k$ we obtain under Assumption~\ref{a.one point assumption convex} is important to prove Theorem~\ref{t.notwidenothin} with the full claimed range of $k$. Lemma~\ref{l.first upper bound on melon weight} is the source of all future statements which give different conditions $k$ under Assumption~\ref{a.one point assumption convex}. The proof of Lemma~\ref{l.first upper bound on melon weight} using Assumption \ref{a.one point assumption convex} is slightly different than with \ref{a.one point assumption}, and is handled separately in Appendix~\ref{app.proof of lemmas}, while the proof with \ref{a.one point assumption} appears in Section~\ref{s:bk}.

With the result of the averaging argument in hand, Theorem~\ref{t.notwidenotthingeneral}(\ref{tf}) is a straightforward consequence of interlacing and Theorems~\ref{t:disjoint} and \ref{t.tf}, as we briefly describe next.

\subsubsection{Bounding transversal fluctuations}
Equation~\eqref{avecons}, along with Theorem~\ref{t:disjoint}, implies the following transversal fluctuation lower bound for the random index $j$,
$$\P\left(\exists j\in \{\lfloor k/2\rfloor,\ldots, k\} \text{ such that } \underline\tf(n,j) > \delta k^{1/3}n^{2/3}\right) \geq 1-e^{-ck^2}.$$ 
This can then be upgraded to Theorem~\ref{t.notwidenotthingeneral}(\ref{tf2'}) by a simple application of interlacing. A similar argument with Theorem~\ref{t.tf} implies that
$$\P\left(\exists j\in \{k+1,\ldots, 2k\} \text{ such that } \overline\tf(n,j) < C k^{1/3}n^{2/3}\right) \geq 1-e^{-ck},$$ 
which when combined with interlacing gives Theorem~\ref{t.notwidenotthingeneral}(\ref{tf1'}).

What remains is the proof of the weight upper bound, Theorem~\ref{t.notwidenotthingeneral}(\ref{weight1'}), which will use essentially all of the gathered ingredients.

\subsubsection{Bounding above watermelon weight} 
The proof has a few parts. Consider, as we did above, a random $k/2\le j\le k$ that satisfies \eqref{avecons}, so that for all $1\le i\le j,$ $X_{n}^{j,i}$ is not too light (for any choice of $j$-geodesic watermelon). By an application of Theorem \ref{t:disjoint} with $k$ replaced by $k/4$, it follows that with  probability $1-e^{-ck^2}$, at least $k/4$ paths in all $j$-geodesic watermelons exit a strip of width $\Theta(k^{1/3}n^{2/3}).$ The interlacing property implies that the latter property holds for any fixed $k$-geodesic watermelon $\Gamma_{n}^{k}$ as well. 

We also know that the cumulative weight of the heaviest $7k/8$ paths of $\Gamma_n^k$ cannot be too high, by the crude estimate Lemma \ref{l.first upper bound on melon weight} (with $k$ replaced by $7k/8$). This implies that on the event that the $k$-geodesic watermelon's weight is high, the cumulative weight of the \emph{lightest} $k/8$ paths of $\Gamma_n^k$ is high. Hence the heaviest path among the latter has unusually high weight which, along with monotonicity, implies the same lower bound for all the heaviest $7k/8$ paths of $\Gamma_n^k$.  

Since we have already established that at least $k/4$ paths in the ensemble $\Gamma_{n}^{k}$ exit a strip around the diagonal of width $\Theta(k^{1/3}n^{2/3}),$ this implies that at least $k/8$ disjoint paths both exit a strip of width $\Theta(k^{1/3}n^{2/3})$ and each have an unusually high weight. The probability that a single curve has both these properties is at most $\exp(-ck)$ by Theorem~\ref{t.tf}, and so the probability that $O(k)$ many disjoint curves have these properties is $\exp(-ck^2)$ by the BK inequality. We note that, without interlacing, the above would give a single curve with unusually high weight and large transversal fluctuation, which would yield the weaker probability bound of $\exp(-ck)$.

\subsubsection{A few further comments in overview} The proof strategies make it apparent that the arguments have minimal dependence on the precise nature of the model and indeed straightforward modifications will yield Theorem 
\ref{t:p2lgeneral}.

The proofs critically rely on the averaging argument and watermelon interlacing. The latter is a central theme of the paper and is a consequence of non-local geometric aspects of the watermelons as will be clear from the proofs in Section~\ref{s:geometry}. The monotonicity result, Proposition~\ref{p:ordering}, will also be proved in Section~\ref{s:geometry}, by similar geometric considerations. The geometric point of view defines our approach, both in  technique and outcome. Interlacing is the principal means by which we have expressed this point of view, but in fact the stated monotonicity offers an alternative, but still geometric, route to proving Theorem~\ref{t.notwidenotthingeneral}. In Section~\ref{s:special}, we explore this approach and explain how it leads to certain sharp concentration bounds on each $Y_{n,k}$. We contrast this with weaker concentration bounds obtainable in the integrable setting of exponential LPP by determinantal point process techniques: see%
\begin{submitted-version}
Proposition~\ref{p:kthweight} and \eqref{e.eigenvalue concentration}.
\end{submitted-version}\begin{arxiv-version}
Propositions~\ref{p:kthweight} and \ref{p:kthweight2}.
\end{arxiv-version}%

\subsection{Basic tools}\label{s.basictools}

To implement the ideas of the preceding sections, we will require certain basic estimates on geometrically relevant quantities. These include, for example, tail estimates for last passage values where the endpoints are allowed to vary over intervals whose length is of order $n^{2/3}$, or where one endpoint is allowed to vary over the line $x+y=2n$; and tail and mean estimates for last passage values constrained to remain in certain parallelograms.  In this section we state the precise forms of these estimates that we will be using.

Similar estimates have appeared in the literature, for example in the preprint \cite{slow-bond}, but never under the general assumptions that we adopt. Our proofs follow similar strategies as in these previous works and are provided in Appendix~\ref{app.tail bounds} for completeness. The statements are nonetheless technical and the reader might choose to skip this section at a first reading and only refer back to it as needed later in the paper.

As we prepare to give the statements, we adopt a convention that will prove convenient at several moments in the article: we will refer to measurements taken along the anti-diagonal as ``width'', and measurements along the diagonal as ``height''. For example, the parallelogram defined by $\{(x,y) \in \Z^2 : |x-y|\leq 2\ell r^{2/3}, 2\leq x+y\leq 2r\}$ will be said to have width $2\ell r^{2/3}$ and height $r$. The implicit sense of the diagonal direction as vertical will find expression in usages such as `upper' and `lower' to refer to more or less advanced diagonal coordinates.

\subsubsection{Interval-to-interval estimates}
We start with estimates for the deviations of the weight when the endpoints are allowed to vary over intervals. For this we define some notation.

Extending the notation \eqref{parallelogram1}, let $U=U_{r,\ell r^{2/3}, zr^{2/3}}$ be a parallelogram of height~$r$ with one pair of sides parallel to the anti-diagonal, each of width $\ell r^{2/3}$,  the lower side centred at $(1,1)$ and the upper side at $(r-zr^{2/3},r+zr^{2/3})$ for a $z$ such that $|z|\leq r^{1/3}$. %
Let $A$ and $B$ be the lower and upper sides of $U$  and let $\mathcal S(U) = A\times B$. With $G_2$ and $\rho$ as in Assumption~\ref{a.limit shape assumption}, let 
\begin{align*}
\widetilde Z &:= \sup_{(u,v)\in \mathcal S(U)} \frac{X_{u \ar v} - \mu r}{r^{1/3}} +  \frac{G_2}{1+2\ell^{3/2}}\big(z\wedge \rho r^{1/3}\big)^2.
\end{align*}
Although we have yet to employ the notation $Z$ without a tilde, it will be used to denote a scaled form for weight of certain LPP paths. The tilde indicates a parabolic adjustment which compensates (and possibly over-compensates) the curvature penalty for off-diagonal movement. The parabolic form of this adjustment is indicated in Assumption~\ref{a.limit shape assumption}. This  form is guaranteed by Assumption~\ref{a.limit shape assumption} only up to anti-diagonal displacement of order $z=\rho r^{1/3}$; concavity nonetheless guarantees that at least this loss occurs even beyond this point, and this accounts for the $z\wedge \rho r^{1/3}$ term in the specification of $\widetilde{Z}$.

However, Assumption~\ref{a.one point assumption} only holds for paths whose endpoints are bounded away from the coordinate axes, and so for extreme anti-diagonal displacement we will need to work with a different object: define, for $\delta>0$,
$
\mathbb{L}_{\mathrm{ext}}(\delta) = \left\{(r+zr^{2/3},r-zr^{2/3}): |z|\geq (1-\delta)r^{1/3} \right\}$, which is an interval of points of extreme slope,
and let $\linelower = \{(x,y)\in \Z^2: x+y=2, |x-y|\leq 2r^{2/3}\}$. Finally define
$$Z^{\mathrm{ext}, \delta} = \sup_{u\in \linelower, v\in \mathbb{L}_{\mathrm{ext}}(\delta)} \frac{X_{u\ar v} - \mu r}{r^{1/3}},$$ the interval-to-interval passage time between $\linelower$ and $\mathbb{L}_{\mathrm{ext}}(\delta).$

\begin{proposition}\label{l.sup tail}
Suppose $\delta>0$ is such that $|z| < (1-\delta) r^{1/3}$. Then, under Assumptions  \ref{a.limit shape assumption} and \ref{a.one point assumption}, there exist  $\theta_0 = \theta_0(\ell, \delta)$, $c =c(\delta)>0$, and $r_0 = r_0(\ell,\delta)$ such that, for $r>r_0$ and $\theta > \theta_0$,
\begin{equation}\label{e.sup tail away from axes}
\P\left(\widetilde Z > \theta \ell^{1/2} \right) < \exp\left(-c\min(\theta^{3/2}, \theta r^{1/3})\right).
\end{equation}
Further, there exist $\delta>0$ and $c>0$ such that, for $\theta>\theta_0$ and $r>r_0$,
\begin{equation}\label{e.sup tail extreme}
\P\left(Z^{\mathrm{ext},\delta} > \theta\right) \leq \exp(-cr-c\min(\theta^{3/2}, \theta r^{1/3})).
\end{equation}
\end{proposition}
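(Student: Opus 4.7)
Both bounds reduce to one-point tail estimates via a discretization-and-monotonicity argument. I would partition $A$ into $N = \lceil \ell \rceil + 1$ consecutive sub-intervals of anti-diagonal extent at most $r^{2/3}$, and partition $B$ analogously; for each sub-interval $A_i$ let $u_i^- \in \Z^2$ denote the coordinate-wise minimum of the axis-aligned bounding box of $A_i$, and for each $B_j$ let $v_j^+$ denote the coordinate-wise maximum of the bounding box of $B_j$. For any $(u,v) \in A_i \times B_j$ the inclusions $u_i^- \le u$ and $v \le v_j^+$ allow any upright path from $u$ to $v$ to be prepended by an upright path from $u_i^-$ to $u$ and appended by one from $v$ to $v_j^+$; non-negativity of the vertex weights then gives $X_{u \to v} \le X_{u_i^- \to v_j^+}$, whence
$$ \sup_{(u,v) \in \mc S(U)} X_{u \to v} \;\le\; \max_{1 \le i, j \le N} X_{u_i^- \to v_j^+}. $$

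For \eqref{e.sup tail away from axes}, $v_j^+ - u_i^-$ has anti-diagonal offset $(z + \sigma_{ij}) r^{2/3}$ with $|\sigma_{ij}| \le C_* \ell$ for some absolute $C_*$; provided $|z|+\ell \le \rho r^{1/3}/C_*$, Assumption~\ref{a.limit shape assumption} applies and gives $\E[X_{u_i^- \to v_j^+}] \le \mu r - G_2 (z+\sigma_{ij})^2 r^{1/3}$. Combining with Assumption~\ref{a.one point assumption}, the event $\{\widetilde Z > \theta \ell^{1/2}\}$ forces, for some $(i,j)$, that $X_{u_i^- \to v_j^+} - \E[X_{u_i^- \to v_j^+}] > \tau_{ij} r^{1/3}$ with
$$ \tau_{ij} \;=\; \theta \ell^{1/2} + G_2 (z+\sigma_{ij})^2 - \tfrac{G_2}{1+2\ell^{3/2}}(z \wedge \rho r^{1/3})^2. $$
Expanding the square and applying Young's inequality in the form $2|z|\,C_* \ell \le \tfrac{2\ell^{3/2}}{1+2\ell^{3/2}} z^2 + \tfrac{C_*^2(1+2\ell^{3/2})\ell^{1/2}}{2}$, I would verify (splitting into the cases $|z| \le C_* \ell$ and $|z| > C_* \ell$) that $\tau_{ij} \ge \tfrac12 \theta \ell^{1/2}$ once $\theta$ exceeds a constant depending on $G_2, \rho, C_*$. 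A union bound over the $O((\ell+1)^2)$ pairs, with the counting factor absorbed by choosing $\theta_0(\ell,\delta)$ large enough that $(\ell+1)^2 \le \tfrac12 \exp(c \theta^{3/2} \ell^{3/4})$, then delivers the claimed tail $\exp(-c'\min(\theta^{3/2}, \theta r^{1/3}))$.

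For \eqref{e.sup tail extreme} the endpoint $v$ lies at slope bounded away from the diagonal by $1-\delta$, a regime where Assumption~\ref{a.limit shape assumption} is unavailable but Assumption~\ref{a.one point assumption}(a) still applies with constants depending on $\delta$. Concavity of the limit shape at $0$ together with superadditivity implies $\E[X_{u_i^- \to v_j^+}] \le (\mu - \alpha(\delta))r + o(r)$ for some $\alpha(\delta) > 0$ at every pair obtained by discretizing $\linelower$ and $\mathbb{L}_{\mathrm{ext}}(\delta)$ at scale $r^{2/3}$; the total number of such pairs is $O(r^{1/3})$. Applying the upper tail of Assumption~\ref{a.one point assumption}(a) with tail parameter $\theta + \tfrac12 \alpha(\delta) r^{2/3}$, together with the sub-additivity $(a+b)^{3/2} \ge 2^{-1/2}(a^{3/2} + b^{3/2})$, yields a per-pair bound of $\exp(-c r)\exp(-c \min(\theta^{3/2}, \theta r^{1/3}))$, which easily swallows the $O(r^{1/3})$ counting factor.

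The main delicacy is the parabolic book-keeping in the first bound: the specific coefficient $\tfrac{G_2}{1+2\ell^{3/2}}$ and the matching $\theta \ell^{1/2}$ threshold are precisely calibrated so that the $O(\ell)$ spread of endpoint directions across the discretization is exactly absorbed via Young's inequality, and one must track this carefully through both regimes of $z$. The truncation regime $|z| > \rho r^{1/3}$ in the definition of $\widetilde Z$ requires separate handling, but there the mean shortfall $\mu r - \E[X_{u_i^- \to v_j^+}]$ is already of order $r$ by concavity of the limit shape beyond $|x| = \rho$, so Assumption~\ref{a.one point assumption} on its own furnishes an $\exp(-cr)$ bound that dominates any union-bound counting.
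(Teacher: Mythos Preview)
Your approach for \eqref{e.sup tail away from axes} has a genuine gap in the bounding-box step. The sub-intervals $A_i$ and $B_j$ lie on anti-diagonal lines, so their axis-aligned bounding boxes are squares of side $\Theta(r^{2/3})$; the corner $u_i^-$ therefore sits at diagonal coordinate $\ell_0 - \Theta(r^{2/3})$ rather than $\ell_0$, and likewise $v_j^+$ is pushed up by $\Theta(r^{2/3})$. Consequently the pair $(u_i^-, v_j^+)$ is separated by diagonal distance $r' = r + \Theta(r^{2/3})$, not $r$, and Assumption~\ref{a.limit shape assumption} gives
\[
\E[X_{u_i^- \to v_j^+}] \;\le\; \mu r' - G_2(z+\sigma_{ij})^2 (r')^{-1/3} \;=\; \mu r + \Theta(\mu r^{2/3}) - G_2(z+\sigma_{ij})^2 r^{1/3} + o(r^{1/3}).
\]
The extra $\Theta(\mu r^{2/3})$ is of order $r^{1/3}$ \emph{larger} than the fluctuation scale $r^{1/3}$, so your $\tau_{ij}$ is actually $\theta\ell^{1/2} + (\text{parabolic terms}) - \Theta(\mu r^{1/3})$, which is hugely negative for any fixed $\theta$. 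No choice of $\theta_0$ independent of $r$ rescues this, and refining the discretization does not help: at scale $\epsilon r^{2/3}$ the height error becomes $\epsilon\mu r^{1/3}$ while the union-bound count becomes $(\ell/\epsilon)^2$, and neither extreme ($\epsilon$ fixed or $\epsilon \to 0$) yields a bound uniform in $r$.

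The paper circumvents this by a \emph{backing-off} argument rather than discretization: it fixes single points $\philower = (-\ell^{3/2}r, -\ell^{3/2}r)$ and $\phiupper$ symmetrically beyond $B$, so that the interval width $\ell r^{2/3}$ matches the on-scale transversal window $(\ell^{3/2}r)^{2/3}$ for paths from $\philower$ to $A$. Letting $(u^*,v^*)$ be the (random) maximizing pair, one conditions on the environment in $U$ and uses independence to show the side events $\{X_{\philower\to u^*} > \mu\ell^{3/2}r - \tfrac13\theta\ell^{1/2}r^{1/3}\}$ and its analogue each have probability at least $\tfrac12$; on their intersection with the main event, $X_{\philower\to\phiupper}$ exceeds its mean by $\tfrac13\theta\ell^{1/2}r^{1/3}$, and a single application of Assumption~\ref{a.one point assumption} finishes. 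The coefficient $G_2/(1+2\ell^{3/2})$ in $\widetilde Z$ is exactly the curvature loss for the point pair $(\philower,\phiupper)$ at distance $(1+2\ell^{3/2})r$ and offset $zr^{2/3}$, which is why it appears; your Young-inequality manipulation targets this same coefficient but rests on the faulty mean bound. Your argument for \eqref{e.sup tail extreme} is essentially correct and matches the paper's in spirit, since there the mean shortfall is of order $r$ and absorbs the $O(r^{2/3})$ height error.
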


\subsubsection{Point-to-line estimates}

Here we bound the upper tail of point-to-line geodesic weights, using only the parabolic curvature of the limit shape (Assumption~\ref{a.limit shape assumption}) and the point-to-point upper tail (Assumption~\ref{a.one point assumption}).
We define $\lineleft = \lineleft (r,t) = \{(x,y): x+y=0, |y-x|\leq tr^{2/3}\}$.

\begin{proposition}\label{p.p2l general upper tail}
Let $X$ be the maximum weight of all paths $\Gamma$ from $\lineleft$ to the line $x+y=2r$ whose endpoint does not lie on the line segment connecting $(r-(s+t)r^{2/3}, r+(s+t)r^{2/3})$ and $(r+(s+t)r^{2/3}, r-(s+t)r^{2/3})$. Then under Assumptions~\ref{a.limit shape assumption} and~\ref{a.one point assumption} there exist constants $c_1>0, c>0$, $s_0$, $\theta_0$, and $r_0$ such that, for $r>r_0$ and either (i) $s=0$ and $\theta\geq \theta_0$, or (ii) $s>s_0$, $0\leq t \leq s^2$ and $\theta=0$, we have that
$$\P\left(X \geq \mu r +\theta r^{1/3} - c_1 s^2 r^{1/3}\right) \leq \exp\left(-c\big(\min(\theta^{3/2}, \theta r^{1/3}) + s^3\big)\right).$$
\end{proposition}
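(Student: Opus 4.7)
The plan is to discretize the starting set $\lineleft$ and the allowed endpoint set on $\{x+y=2r\}$ into sub-intervals of anti-diagonal width $r^{2/3}$, apply the interval-to-interval estimate Proposition~\ref{l.sup tail} to each resulting parallelogram, and combine via a union bound. The essential mechanism is that the parabolic adjustment built into $\widetilde{Z}$ absorbs the $-c_1 s^2 r^{1/3}$ shift of the event threshold and, combined with the geometric constraint that the endpoint lies outside a central interval of width $(s+t)r^{2/3}$, produces both the $s^3$-exponential factor demanded by the proposition and an additional $|\gamma|^3$-decay that makes the sum over endpoint intervals convergent.

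Concretely, I would cover $\lineleft$ by $O(1+t)$ intervals $A_\alpha$ of width $r^{2/3}$ centred at $(-\alpha r^{2/3}, \alpha r^{2/3})$, $|\alpha|\leq t/2$ integer-indexed, and cover the allowed portion of the endpoint line by intervals $B_\beta$ of width $r^{2/3}$ centred at $(r-\beta r^{2/3}, r+\beta r^{2/3})$ with $|\beta| > s+t$. If the event in the proposition occurs, then for some such pair the maximum weight over paths with endpoints in $A_\alpha \times B_\beta$ exceeds $\mu r + \theta r^{1/3} - c_1 s^2 r^{1/3}$. Writing $\gamma = \beta - \alpha$, Proposition~\ref{l.sup tail} (with $\ell = 1$) applied to the translated parallelogram recasts this as
\begin{equation*}
\widetilde{Z}_{\alpha,\beta} \geq \theta - c_1 s^2 + \tfrac{G_2}{3}(\gamma \wedge \rho r^{1/3})^2.
\end{equation*}
The geometric observation $|\gamma| \geq |\beta| - |\alpha| > s$ holds in both cases of the proposition (with $s=0$ in case (i)). In the moderate regime $|\gamma| \leq \rho r^{1/3}$, choosing $c_1$ to be a sufficiently small multiple of $G_2$ (e.g.\ $c_1 = G_2/12$) yields $\widetilde{Z}_{\alpha,\beta} \geq c''(\theta + s^2 + \gamma^2)$ for an absolute $c'' > 0$; using $(a+b)^{3/2} \geq a^{3/2} + b^{3/2}$ for $a, b \geq 0$, Proposition~\ref{l.sup tail} then provides a per-pair upper bound of $\exp(-c[\min(\theta^{3/2}, \theta r^{1/3}) + s^3 + |\gamma|^3])$. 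The geometric sum over $\gamma$ converges to an absolute constant, while the $O(1+t) \leq O(1+s^2)$ factor from starting intervals is absorbed into the exponent by taking $\theta_0$ in case (i) or $s_0$ in case (ii) large.

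The main obstacle I anticipate is the treatment of extreme endpoints $|\gamma| > (1-\delta)r^{1/3}$, where Proposition~\ref{l.sup tail}'s first form ceases to apply and the parabolic adjustment in $\widetilde{Z}$ caps at $(G_2\rho^2/3)r^{2/3}$. For $|\gamma| \in [\rho r^{1/3}, (1-\delta)r^{1/3}]$ the capped parabolic bonus alone pushes the adjusted threshold into the linear regime of Proposition~\ref{l.sup tail}, giving $\exp(-cr)$ per interval, which dominates everything else under the implicit constraint $s \leq c_0 r^{1/3}$. The truly extreme range $|\gamma| > (1-\delta) r^{1/3}$ is delicate because \eqref{e.sup tail extreme} controls only strictly positive deviations of $Z^{\mathrm{ext},\delta}$, whereas our event corresponds to $Z^{\mathrm{ext},\delta} \geq -c_1 s^2$. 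Here I plan to exploit that, for such endpoints, the typical weight lies below $\mu r$ by $\Theta(r)$ (by concavity of the limit shape), so the event still represents an upward deviation of order $r^{2/3}$ on the $\theta$-scale; this can then be controlled by applying \eqref{e.sup tail extreme} at an appropriate positive $\theta$ after accounting for the concavity-driven shortfall in $\E[X]$.
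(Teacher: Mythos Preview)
Your proposal is correct and follows essentially the same approach as the paper's proof: the paper likewise discretizes the endpoint line $x+y=2r$ into width-$r^{2/3}$ intervals $\mathbb{L}_j$ indexed by their distance $(s+t+j)r^{2/3}$ from the diagonal, discretizes $\lineleft$ into at most $t$ width-$r^{2/3}$ segments, and then applies the first part of Proposition~\ref{l.sup tail} to each pair, using the parabolic adjustment $\tfrac{G_2}{3}(s+j)^2$ to extract the $\exp(-c(s^3+j^3))$ decay. The paper packages the extreme-endpoint case (your $|\gamma|>(1-\delta)r^{1/3}$) into a single event $A$ handled by the second part of Proposition~\ref{l.sup tail}, exploiting precisely the $\Theta(r)$ concavity-driven shortfall you identify, and absorbs the $-c_1 s^2 r^{1/3}$ shift using $s\leq r^{1/3}$ to ensure the deviation above mean is still of order $r$; your plan for this regime is the same mechanism.
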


\subsubsection{Lower tail of constrained point-to-point}

Next we come to a crucial ingredient of Theorem~\ref{t.flexible construction}, an upper bound on the lower tail of the weight of the point-to-point geodesic constrained to not exit a given parallelogram, and a lower bound on the mean of the same. We obtain precisely the tail exponent of one that will be used in Theorem~\ref{t.flexible construction}; the optimal exponent, however, is expected to be three, matching the optimal exponent of the point-to-point weight lower tail. Even given the optimal lower tail exponent of three as input via Assumption~\ref{a.one point assumption}, the argument we give for Proposition~\ref{p.constrained lower tail} would only yield the exponent $3/2$ for the constrained lower tail.

With notation for parallelograms as above, let $U = U_{r,\ell r^{2/3}, zr^{2/3}}$. Set $u=(1,1)$ and $u^{\prime}=(r-(z+h)r^{2/3}, r+(z+h)r^{2/3})$ where $|h| \leq \ell/2$. Then define $X^U_{u \ar u'}$ to be the maximum weight over all paths from $u$ to $u'$ that are constrained not to exit $U$. Recall $G_1$ from Assumption~\ref{a.limit shape assumption}. We have the following lower tail and mean estimates for $X^U_{u \ar u'}$.

\begin{proposition}[Lower tail and mean of constrained point-to-point]\label{p.constrained lower tail}
Let $L_1,L_2>0$ and $K>0$ be fixed. Let $z$ and $\ell$ be such that $|z|\leq K$ and $L_1\leq \ell\leq L_2$. There exist positive constants $r_{0}(K, L_1, L_2)$ and $\theta_{0}(K, L_1, L_2)$, and an absolute constant $c>0$, such that, for $r>r_{0}$ and $\theta>\theta_{0}$, 
\begin{equation}\label{e.constrained lower tail}
\P\left(X_{u\ar u^{\prime}}^{U} \leq \mu r-\theta r^{1 / 3}\right) \leq \exp\left(-c\ell \theta\right).
\end{equation}
As a consequence, there exists $C = C(K, L_1, L_2)$ such that, for $r>r_0$,
\begin{equation}\label{e.constrained mean}
\E[X^U_{u \ar u'}] \geq \mu r - G_1z^2 r^{1/3} - Cr^{1/3}.
\end{equation}
\end{proposition}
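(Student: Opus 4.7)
The plan is to exploit the width $\ell r^{2/3}$ of the parallelogram $U$ by decomposing it anti-diagonally into $N = \lfloor c_0 \ell \rfloor$ disjoint thin sub-parallelograms $P_1, \ldots, P_N$ (for a small absolute constant $c_0 > 0$), each of width of order $r^{2/3}$ and sharing the anti-diagonal slope of $U$, and to amplify the lower tail by exploiting independence of LPPs within these disjoint regions. In each $P_i$, I would fix anchor points $a_i, b_i$ slightly inside the bottom and top of $P_i$, chosen so that the displacement $b_i - a_i$ has slope $\sim z r^{-1/3}$ (matching that of $u \to u'$) and so that short upright connecting paths $u \to a_i$ and $b_i \to u'$ fit inside $U$. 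For each $i$, concatenating these connectors with the constrained geodesic achieving $W_i := X^{P_i}_{a_i \to b_i}$ yields an upright path from $u$ to $u'$ inside $U$, whose weight is at least $W_i$ by non-negativity of the vertex weights. Hence $X^U_{u \to u'} \geq \max_i W_i$.

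Because the $P_i$ are disjoint, the $W_i$ are mutually independent. I would bound the lower tail of each $W_i$ by comparison with the unconstrained LPP $X_{a_i \to b_i}$: on the event that the unconstrained geodesic stays within $P_i$---controllable via interval-to-interval transversal fluctuation estimates such as Proposition~\ref{l.sup tail}---the two coincide, so Assumption~\ref{a.limit shape assumption} supplies the mean lower bound $\mu r - G_1 z^2 r^{1/3} - g_1 r^{1/3}$ and Assumption~\ref{a.one point assumption} supplies a lower tail $\exp(-c \theta^{3/2})$, giving
\[
\P\bigl(W_i \leq \mu r - G_1 z^2 r^{1/3} - \theta r^{1/3}\bigr) \leq \exp(-c\theta^{3/2}) + \exp(-c'), \qquad \theta \geq \theta_0.
\]
Multiplying $N \sim \ell$ such independent factors yields
\[
\P\bigl(X^U_{u\to u'} \leq \mu r - G_1 z^2 r^{1/3} - \theta r^{1/3}\bigr) \leq \exp(-c\ell\theta),
\]
and after absorbing the shift $G_1 z^2 r^{1/3} \leq G_1 K^2 r^{1/3}$ into a constant adjustment of $\theta$, this gives \eqref{e.constrained lower tail}. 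The mean bound \eqref{e.constrained mean} then follows by routine integration:
\[
\E[X^U_{u \to u'}] \geq \mu r - G_1 z^2 r^{1/3} - \theta_0 r^{1/3} - r^{1/3} \int_{\theta_0}^{\infty} e^{-c\ell\theta}\,d\theta \geq \mu r - G_1 z^2 r^{1/3} - Cr^{1/3}.
\]

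The principal obstacle is the comparison between constrained and unconstrained LPP in $P_i$: since $P_i$ has width of order $r^{2/3}$, comparable to the natural transversal fluctuation of the unconstrained geodesic, the event of the geodesic exiting $P_i$ occurs with non-negligible probability, and a balance between the sub-parallelogram width and the target deficit $\theta$ (possibly adapting $c_0$ or the sub-parallelogram width as a function of $\theta$) is needed to retain the $\exp(-c\ell\theta)$ form uniformly across $\theta \geq \theta_0$. A secondary challenge is to capture the sharp coefficient $G_1$ in the mean bound, which requires a uniform application of Assumption~\ref{a.limit shape assumption} over the anchor pairs $\{a_i, b_i\}_{i=1}^{N}$, keeping track of the $O(r^{-1/3})$ slope corrections that arise from the anti-diagonal offset of each $P_i$ from the center of $U$.
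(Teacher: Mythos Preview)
Your anti-diagonal decomposition has a geometric flaw more basic than the width-versus-$\theta$ balance you flag. For a sub-parallelogram $P_i$ whose anti-diagonal center is offset from $u = (1,1)$ by $D_i$ (and $|D_i|$ ranges up to order $\ell r^{2/3}$ across the $P_i$), an upright path $u \to a_i$ exists only when $a_i$ sits at diagonal height at least $|D_i|$ above $u$. Hence for all but the central $P_i$ the connectors are not short on the relevant scale: they occupy diagonal height $\Theta(r^{2/3})$, so the segment $a_i \to b_i$ inside $P_i$ has height $r - \Theta(r^{2/3})$ and $\E[W_i] \le \mu r - \Theta(r^{2/3})$, far below $\mu r - \theta r^{1/3}$. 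Consequently $\P(W_i \le \mu r - \theta r^{1/3}) \to 1$ for every non-central $i$, and the product over $i$ gives no decay in $\theta$. Retaining the connector weights instead of discarding them via non-negativity would restore the centering but destroy the independence you need, since all connectors emanate from $u$ and overlap near it.

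The paper splits in the \emph{diagonal} direction instead, into $J = \theta^{1/2}/\ell$ consecutive pieces $u = u_0, u_1, \ldots, u_J = u'$, and uses concatenation followed by a union bound: if $X^U_{u \to u'} \le \mu r - \theta r^{1/3}$ then some piece satisfies $X^U_{u_j \to u_{j+1}} \le \mu r/J - (\theta/J)r^{1/3}$. On the local scale $r/J$ the fixed width $\ell r^{2/3}$ of $U$ equals $\ell J^{2/3} \cdot (r/J)^{2/3}$, so the event that the unconstrained geodesic over one piece exits $U$ has probability at most $\exp(-c\ell^3 J^2)$ by Theorem~\ref{t.tf}, while the point-to-point lower tail contributes $\exp(-c\theta^{3/2}/J)$ by Assumption~\ref{a.one point assumption}. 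The choice $J = \theta^{1/2}/\ell$ equalizes both exponents to $\ell\theta$, yielding \eqref{e.constrained lower tail}; the mean bound \eqref{e.constrained mean} then follows by tail integration as you indicate.
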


\subsection{Organization}
In this section, we have signposted the location of several upcoming aspects. With some repetition, and for the sake of convenience, we now summarise the structure of the rest of the paper, which is principally devoted to proving  Theorem~\ref{t.notwidenotthingeneral}.
There are four elements: lower and upper bounds on the transversal fluctuation of geodesic watermelons; and upper and lower bounds on the watermelons' weight.
The respective arguments are offered in Sections  \ref{s:lowerexp},~\ref{s:upperexp},~\ref{s:upper} and~\ref{s:construction}.
The first of the four elements is aided by the `thin strip means low weight' Theorem~\ref{t:disjoint}. First, in Section~\ref{s:bk}, we will recall the BK inequality and use it to prove this theorem. 
The deterministic interlacing results assembled in  Section~\ref{s:interlace} are important inputs, and, in Section~\ref{s:geometry}, we prove them, so that the proof of Theorem~\ref{t.notwidenotthingeneral} is completed in this section. 
Section~\ref{s:special} compares concentration bounds for the weight increments $Y_{n,k}$ specified in~(\ref{e:ensemble}) obtained by our geometric methods and by determinantal point process techniques (in the case of exponential LPP)\begin{submitted-version}; a proof of the concentration bound via determinantal techniques is provided in the arXiv version of this article
\end{submitted-version}.
The proof of the point-to-line result, Theorem~\ref{t:p2lgeneral}, is provided in Section~\ref{s:p2l}.

There are three appendices: Appendix~\ref{app.exp and geo satisfy assumptions} is devoted to proving that exponential and geometric LPP satisfy the Assumptions; 
Appendix~\ref{app.tail bounds} to proving the three basic tools, Propositions~\ref{l.sup tail},~\ref{p.p2l general upper tail}
 and~\ref{p.constrained lower tail}, laid out in Section~\ref{s.basictools}; and Appendix~\ref{app.proof of lemmas} to proving Lemma~\ref{l.first upper bound on melon weight} under the stronger Assumption~\ref{a.one point assumption convex}. 
 The proofs in Appendix~\ref{app.tail bounds} have been deferred because they roughly mimic publicly available LPP arguments; we judge the flow of the arguments at large to be aided by  the few proof deferrals that we have mentioned. The appendices render the article independent of LPP inputs beyond fairly well-known assertions such as the expression of the cardinality of a determinantal point process as a sum of independent Bernoulli random variables, and an estimate on the mean of the eigenvalue count in a given interval for the Laguerre unitary ensemble. 

\section{Disjoint paths and the van den Berg-Kesten inequality}
\label{s:bk} 

In this section, we prove Theorem \ref{t:disjoint}, our assertion that several disjoint high-weight paths may not typically coexist in a narrow strip.   The proof is founded on the notion that  at least one among $k$ disjoint paths packed into the strip \smash{$U_{n,\delta k^{1/3}n^{2/3}}$} is forced to inhabit a thin region of width \smash{$\delta k^{-2/3}n^{2/3}$}. 
Here in outline are the principal steps that we will follow to realize the proof.
\begin{enumerate}[label=(\arabic*)]
	\item 
	We will argue that it is with probability decaying exponentially in $k$ that a path existing in a given thin region---of length $n$ and of width roughly~$k^{-2/3}n^{2/3}$---has weight larger than $\mu n - \Theta(k^{2/3}n^{1/3})$. %

	\item The van den Berg-Kesten (BK) inequality
	 bounds above the probability of events occurring in a certain {\em disjoint} fashion by the product of the probabilities of the events.
	 This inequality permits us to 
	 conclude that the probability that there exist $k$ disjoint paths whose weight satisfies the above lower bound, each contained in a specified thin region, is exponentially small in $k^2$.

	\item A union bound will then be taken indexed by a collection of $k$-tuples of thin regions whose elements capture all possible routes for the $k$ paths. The collection will be selected by means of a grid-based discretization; the cardinality of the collection will be small enough that the upper bound on probability for a given $k$-tuple will not be significantly undone by taking the union bound. 
\end{enumerate}

To implement this three-step plan, we need to make precise the notion of `thin region' in Step~$1$, and this in fact involves specifying the grid used in Step~$3$.
This we do in a first subsection. But first we remind the reader of our norm that statements by default suppose Assumptions~\ref{a.passage time continuity}, \ref{a.limit shape assumption} and \ref{a.one point assumption} and that we indicate when Assumption~\ref{a.one point assumption convex} is needed in place of \ref{a.one point assumption}.

\subsection{Specifying a grid that classifies paths running in a narrow strip}

 The grid will be called $G$. It will comprise anti-diagonal planar intervals whose elements track the progress of any path of transversal fluctuation at most $\delta k^{1/3}n^{2/3}$.  

Recall that width and height refer to  anti-diagonal and diagonal length. 
Beyond the height $n$, path number $k$ and strip thinness parameter $\delta > 0$, the grid will be constructed by the use of two positive parameters $\eta$ and $\epsilon$. 
We hope that the explanation we next offer alongside Figure~\ref{f.grid} will make their roles clear. 

The strip of width  $\delta k^{1/3}n^{2/3}$ and height $n$ will be divided into cells. Each cell will have width $\eta \epsilon n^{2/3}$ and height  $\epsilon^{3/2}n$. The width is the product of $\eta$ and the two-thirds' power of the height, so~$\eta$ is a measure of the thinness of the cell, judged on the KPZ scale; we will choose this parameter to be a small constant, independently of~$k$ and~$n$. We will choose $\epsilon$ to have order $k^{-2/3}$, so that a union of intervals at consecutive heights may play the role of a `thin region' indicated in the three-point summary.    
Each column of the grid contains $N$ cells,  where  $N = n/(\epsilon^{3/2}n) = \epsilon^{-3/2}$; and each row contains $M$ cells, where $M = 2\delta k^{1/3}n^{2/3}/(\eta\epsilon n^{2/3}) = 2\delta k^{1/3}\eta^{-1}\epsilon^{-1}$.

To begin the definition, let $\epsilon>0$ and $\eta>0$; we will specify the conditions on these values presently. 
We suppose $n \in \N$ to be large enough that $\epsilon \eta n^{2/3}\geq 1$, in order that cell width be at least the difference of $x$- or $y$-coordinates between consecutive elements in $\Z^2$ along an anti-diagonal line. 
Set $M,N \in \N$ according to  
\begin{equation}
N=\lfloor\epsilon^{-3/2}\rfloor\quad \text{and} \quad M = 2\cdot\lceil \delta k^{1/3}\eta^{-1}\epsilon^{-1}\rceil. \label{e.N and M value}
\end{equation}
 The grid $G$ is a set of planar anti-diagonal intervals $\L_{ij}$:
$$
G=\big\{\L_{ij}:  i \in \llbracket 0 , N \rrbracket , j \in \llbracket 0, M \rrbracket \big\} \, ,
$$
where 
$\L_{ij}$ connects the points
$$\left(v_i-h_j, v_i+h_j\right) \quad \text{and}\quad \left(v_i -h_{j+1}, v_i+h_{j+1}\right),$$ 
with $v_i = \lfloor i\epsilon^{3/2}n\rfloor$ and $h_j = \lfloor(\delta k^{1/3}-j\eta\epsilon)n^{2/3}\rfloor$ for $i\in\llbracket 0, N-1\rrbracket$; for $i=N$ we take $v_i = n$. Thus the last row of cells has greater height than the first $N-1$ rows', by a factor lying in $[1,2)$, but this will not affect our arguments.
Note that each element of the grid belongs to  the rectangle $\{|x-y| \leq 2\delta k^{1/3}n^{2/3}, 0\leq x+y\leq 2n\}$. The grid is partitioned into subsets at given height by setting  
$G_i = \big\{ \L_{ij} : j \in \llbracket 0, M \rrbracket \big\}$ for $i \in \llbracket 0, N \rrbracket$.
To avoid cumbersome expressions, we employ, in the remainder of this section, a notational abuse by which $(x,y)$ for $x,y \in \R$  will actually denote the rounded integer lattice point $(\lfloor x \rfloor, \lfloor y \rfloor)$.

\begin{figure}[h]
   \centering
        \includegraphics[width=.45\textwidth]{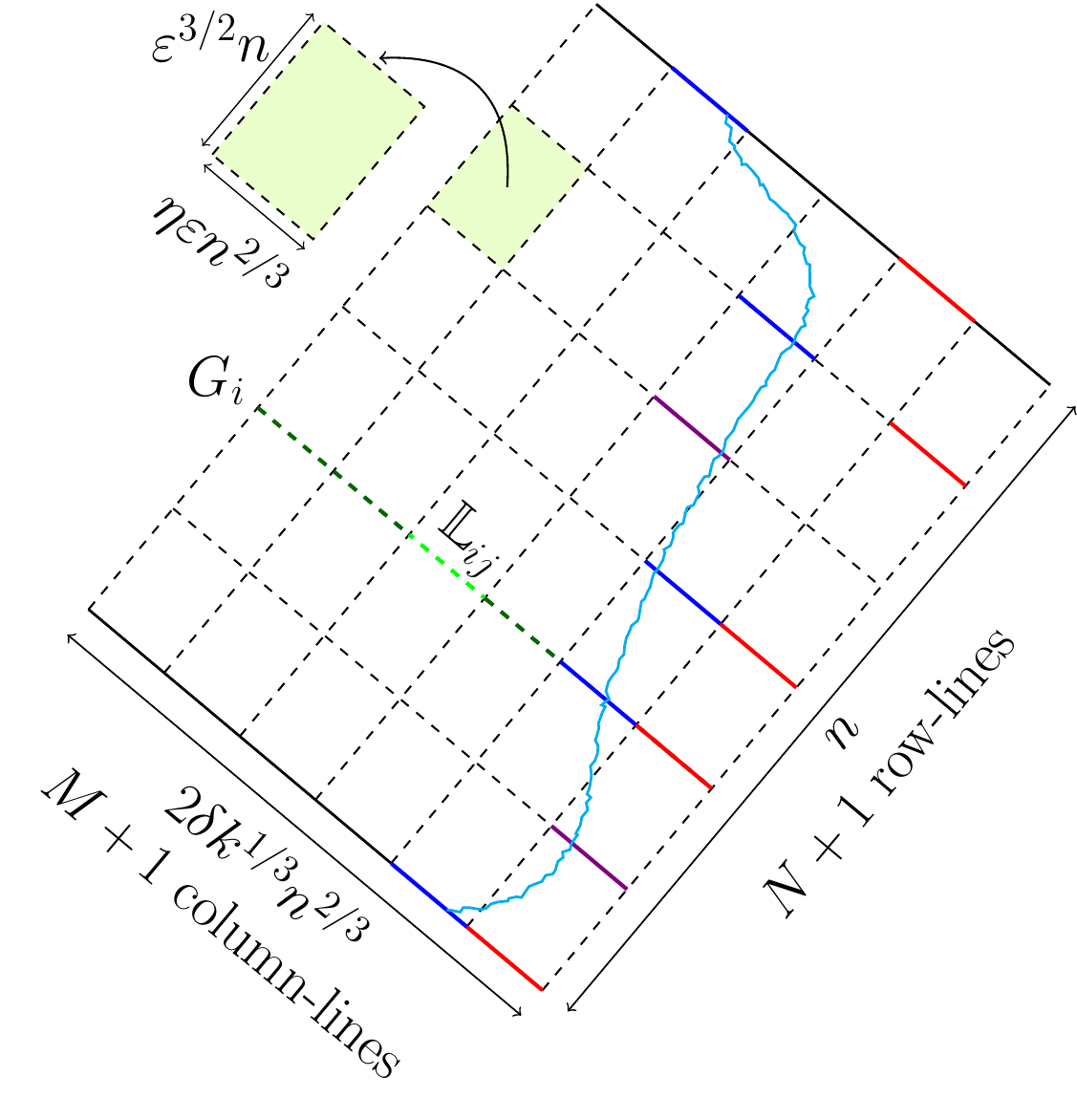}
 \caption{The grid $G$. There are $N+1$ row lines and $M+1$ column lines, and the width and height of a single cell are $\eta \epsilon n^{2/3}$ and $\epsilon^{3/2}n$ respectively. The width of the grid as a whole is $2\delta k^{1/3}n^{2/3}$. The role of the grid is to be a discretization of paths from the bottom side to the top. On the right, such a path is depicted in sky blue along with its corresponding discretization, which is the sequence of intervals, one on each grid line, which are coloured blue (or purple). Also depicted is a sequence of red intervals (when a red interval coincides with a blue interval, it is coloured purple). These red intervals satisfy the property that on each grid line, the red interval coincides with, or is to the right of, the blue interval on the same grid line. This is a property that is satisfied by the discretization of any path from bottom to top which is disjoint from and to the right of the sky blue path.}
 \label{f.grid}
\end{figure}

\subsection{Bounding the maximum weight of paths between grid elements at consecutive heights}

A {\em grid journey} is an $(N+1)$-vector $\big\{ L_i: i \in \llbracket{0,N} \rrbracket \big\}$ such that $L_i \in G_i$ for each index~$i$. A grid journey plays in the implementation the conceptual role of a thin region in the outline. 
Indeed, for any grid journey $\mathscr{L}$, we write $X_\mathscr{L}$ for the supremum of the weights of paths contained in $\intint{n}^2$
that intersect every one of the $N+1$ components of $\mathscr{L}$. 
In Step~$1$, we will bound the upper tail of~$X_\mathscr{L}$. To be ready to do so, we first gain control, in Lemma~\ref{l.sup mean}, on the mean of the weight of the heaviest path that begins in a given element of $G_i$
and that ends in a given element of $G_{i+1}$. Since the height discrepancy is $r = \varepsilon^{3/2}n$, Assumption~\ref{a.limit shape assumption} indicates that the weight of the geodesic running between {\em given} endpoints in these intervals has leading term $\mu r$, with a negative correction whose order is $r^{1/3}$. When the endpoints are varied by at most $\eta r^{2/3}$, a Gaussian-order perturbation is transmitted to the maximum weight. This change of order $\eta^{1/2} r^{1/3}$ is less than the order of the negative mean provided that $\eta$ is small enough (and it is this consideration that determines our upper bound $\eta_0$ on the parameter $\eta$). This is how we will prove Lemma~\ref{l.sup mean}.

For $\eta>0$, $r > 0$ and $|s|\leq r$, let $\linelower$ be the line segment joining $(-\eta r^{2/3}, \eta r^{2/3})$ and $(\eta r^{2/3}, -\eta r^{2/3})$ and let $\lineupper$ be the line segment joining $(r-s-\eta r^{2/3}, r+s+\eta r^{2/3})$ and $(r-s+\eta r^{2/3}, r+s-\eta r^{2/3})$. Define $Z$ by
$$
Z=\sup  \Big\{ r^{-1/3} \big( X_{u \ar v}-\mu r \big) : u\in \linelower,v\in \lineupper \Big\} \,,
$$
so that we seek control on the mean of $Z$. 

\begin{lemma}\label{l.sup mean}
Under Assumption~\ref{a.limit shape assumption}, there exist positive constants $c_1$, $r_0$ and $\eta_0$ such that, for $\eta < \eta_0$, $|s|\leq r$ and $r> r_0$,
$\E[Z] \leq -2c_1$. 
\end{lemma}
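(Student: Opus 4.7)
The plan is to decompose the supremum relative to a well-chosen reference pair and handle the two pieces separately. Fix $u_0 = (0,0) \in \linelower$ and $v_0 = (r-s, r+s) \in \lineupper$ (the midpoints of the two sides), and set $Z_0 = r^{-1/3}(X_{u_0 \to v_0} - \mu r)$. Since $(u_0,v_0) \in \mathcal{S}(U)$, we have $Z \geq Z_0$ almost surely, so $Z - Z_0 \geq 0$. I will write
\[
\E[Z] \; = \; \E[Z_0] \; + \; \E[Z - Z_0],
\]
and show that the first term is bounded above by $-g_2$, while the second is bounded above by a quantity that goes to zero with $\eta$.

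For $\E[Z_0]$, I will invoke Assumption~\ref{a.limit shape assumption}. By translation invariance, $X_{u_0 \to v_0}$ has the distribution of the last passage weight from $(1,1)$ to a point with slope parameter $x = s/r + O(1/r)$. When $|s| \leq \rho r$ and $r$ is sufficiently large, $|x| \leq \rho$, so Assumption~\ref{a.limit shape assumption} directly yields $\E[Z_0] \leq -g_2 - G_2 (s/r^{2/3})^2 \leq -g_2$. For $|s| > \rho r$ (permitted by the range $|s| \leq r$), concavity of the limit shape combined with the strong concavity on $[-\rho,\rho]$ provides $\E[Z_0] \leq -c r^{2/3}$ for an absolute $c>0$, which is overwhelmingly more negative than any correction encountered below.

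For $\E[Z - Z_0]$, the target is a bound of order $\eta^{1/2}$. Heuristically, this reflects the Brownian-like behaviour of the rescaled LPP weight profile on small scales: varying $(u,v)$ within a window of anti-diagonal width $\eta r^{2/3}$ should change $r^{-1/3}(X_{u \to v} - \mu r)$ by only $O(\eta^{1/2})$. The quantitative tool is Proposition~\ref{l.sup tail}, applied with $\ell = 2\eta$, which gives $\P(\widetilde Z > \theta(2\eta)^{1/2}) \leq \exp(-c\min(\theta^{3/2}, \theta r^{1/3}))$ for $\theta > \theta_0$. Since $\widetilde Z = Z + c'$ with $c' \geq 0$, this controls the upper tail of $Z$ at the scale $\eta^{1/2}$. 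Combining this with the one-point lower-tail bound for $Z_0$ from Assumption~\ref{a.one point assumption} via a split of the event $\{Z - Z_0 > t\}$ into the subcases where $Z_0$ is within and outside a window about its mean, and integrating the resulting tail bound in $t$, yields $\E[Z - Z_0] \leq C \eta^{1/2}$ for some constant $C$ independent of $r$.

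Putting the pieces together gives $\E[Z] \leq -g_2 + C\eta^{1/2}$; choosing $\eta_0 = (g_2/(2C))^2$ and $c_1 = g_2/4$, the asserted bound $\E[Z] \leq -g_2/2 = -2c_1$ follows for all $\eta < \eta_0$. The hardest step is obtaining the $\eta^{1/2}$ scaling of $\E[Z-Z_0]$: a naive tail integration that treats the upper tail of $Z$ and the lower tail of $Z_0$ as if they were independent collapses to an $O(1)$ bound, which is insufficient to defeat the constant $g_2$. The correct scaling is recovered only by exploiting the correlation between $Z$ and $Z_0$ that is captured by Proposition~\ref{l.sup tail}, in particular its dependence on $\ell^{1/2} = (2\eta)^{1/2}$ rather than on the number of discrete endpoints in $\mathcal{S}(U)$.
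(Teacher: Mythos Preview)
Your decomposition $\E[Z]=\E[Z_0]+\E[Z-Z_0]$ and the bound $\E[Z_0]\le -g_2$ are fine, but the step $\E[Z-Z_0]\le C\eta^{1/2}$ does not follow from the tools you cite. Proposition~\ref{l.sup tail} gives $\P(\widetilde Z>\theta(2\eta)^{1/2})\le e^{-c\theta^{3/2}}$ only for $\theta>\theta_0$, i.e.\ an upper tail for $\widetilde Z=Z+c_z$ \emph{above zero}, whereas $Z_0+c_z$ has mean at most $-g_2$ (since $\E[Z_0]\le -G_2 z^2-g_2$ and $c_z\le G_2 z^2$). Any split $\{Z-Z_0>t\}\subseteq\{Z>a\}\cup\{Z_0<a-t\}$ in which the first event is handled by Proposition~\ref{l.sup tail} requires $a+c_z>\theta_0(2\eta)^{1/2}$, and in which the second is handled by Assumption~\ref{a.one point assumption} requires $a-t<\E[Z_0]$; together these force $t\gtrsim g_2$. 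For $t$ below that constant threshold the only available bound is $1$, so tail integration gives $\E[Z-Z_0]=O(1)$, which, added to $\E[Z_0]\le-g_2$, yields nothing. You correctly diagnose that a naive approach gives $O(1)$, but the proposed fix does not work: the $\ell^{1/2}$ scaling in Proposition~\ref{l.sup tail} is already used in the computation above and only makes the integral over $t\gtrsim g_2$ small; it carries no joint information about $(Z,Z_0)$ and cannot shrink the constant-order contribution from small $t$. (Heuristically $\E[Z-Z_0]=O(\eta^{1/2})$ is true, but establishing it needs a local modulus-of-continuity estimate for the weight profile, not a marginal tail bound.)

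The paper's argument is different and sidesteps this entirely via a first-moment sandwich that uses only Assumption~\ref{a.limit shape assumption}. One backs off to $\phi_1=m_1-(\eta^{3/2}r,\eta^{3/2}r)$ and $\phi_2=m_2+(\eta^{3/2}r,\eta^{3/2}r)$ and uses superadditivity,
\[
X_{\phi_1\to u^*}+X_{u^*\to v^*}+X_{v^*\to\phi_2}\le X_{\phi_1\to\phi_2},
\]
with $(u^*,v^*)$ the maximizer. Since $u^*$ is measurable with respect to the environment inside $U$, one has $\E[X_{\phi_1\to u^*}]\ge\inf_{u\in\linelower}\E[X_{\phi_1\to u}]$, and both this infimum and $\E[X_{\phi_1\to\phi_2}]$ are bounded directly by Assumption~\ref{a.limit shape assumption}. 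The backing-off distance $\eta^{3/2}r$ is chosen so that the interval width $\eta r^{2/3}$ is on the two-thirds scale, making the curvature loss only $O(\eta^{1/2}r^{1/3})$; this gives $\E[Z]\le -g_2+O(\eta^{1/2})$ with no tail bounds and no appeal to Assumption~\ref{a.one point assumption}.
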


The basic strategy of the argument is to back away from $\linelower$ and $\lineupper$ on either side and consider a point-to-point weight; such ideas have appeared in the literature: see for instance \cite{slow-bond}.

\begin{proof}
Let $m_1$ and $m_2$ be the points on $\linelower$ and $\lineupper$ which are closest to the average of the ends of the respective intervals; we make this specification since these averages need not be lattice points. 

The $\ell^1$ distance of these two points is $2r$. 
	 Let $u^* \in \linelower$ and $v^* \in \lineupper$ be such that $X_{u^* \ar v^*}$ equals $\mu r + Zr^{1/3}$. For $\eta>0$, let $\phi_1 = m_1 - (\eta^{3/2}r,\eta^{3/2}r)$ and $\phi_2 = m_2+(\eta^{3/2}r,\eta^{3/2}r)$. Then we have
	$$X_{\phi_1 \ar u^*} + X_{u^* \ar v^*} + X_{v^* \ar \phi_2} \leq X_{\phi_1 \ar \phi_2}.$$
	Note that the $\ell^1$ distance between $\phi_1$ and $\phi_2$ lies in $(2+4\eta^{3/2})r + [-2,0]$. First we consider the case that this pair of points satisfies the $\rho$-condition in Assumption \ref{a.limit shape assumption}. We get that for large $r$, 
	$$\E[X_{\phi_1 \ar \phi_2}]  \leq \mu(1+2\eta^{3/2})r - g_2 \big((1+2\eta^{3/2})r-2\big)^{1/3} \leq \mu(1+2\eta^{3/2})r - g_2 r^{1/3}.$$
	 To get an upper bound on $\E[X_{u^* \ar v^*}]$, we need a lower bound on $\E[X_{\phi_1 \ar u^*}]$, which will also apply to $\E[X_{v^* \ar \phi_2}]$ by symmetry. Note that $u^*$ is independent of the environment outside $U$, so $\E[X_{\phi_1 \ar u^*}]\geq \inf_{u\in \linelower} \E[X_{\phi_1 \ar u}]$. We bound this below using Assumption \ref{a.limit shape assumption}, and note that that expression is minimized if $u$ is either endpoint of $\linelower$. So we take $u=(-\eta r^{2/3}, \eta r^{2/3})$. This gives
	\begin{align*}
	\E[X_{\phi_1 \ar u}] &\geq  \mu\eta^{3/2}r-\mu - G_1 \eta^{1/2} r^{1/3} - g_1\eta^{1/2}r^{1/3},
	\end{align*}
	for $r$ such that $\eta^{3/2}r > r_0$ for a large $r_0$.
	Thus we get an upper bound on $\E[X_{u^* \ar  v^*}]$:
	\begin{align*}
	\E[X_{u^* \ar v^*}] -\mu r \leq 2\mu-g_2 r^{1/3} + 2(G_1+g_1)\eta^{1/2}r^{1/3}
	&\leq -2c_1r^{1/3}
	\end{align*}
	for $c_1 = g_2/4$, $\eta$ small enough, and $r$ large enough. This completes the proof when the $\rho$-condition in Assumption \ref{a.limit shape assumption} is satisfied.

	If the $\rho$-condition is not satisfied, we have, using the monotonicity of the upper bound in Assumption \ref{a.limit shape assumption} for all $x$ and a simple calculation,
	$$\E[X_{u^* \ar v^*}] \leq \E[X_{\phi_1 \ar \phi_2}]  \leq (\mu + 2\mu\eta^{3/2} - \rho^2 G_2)r.$$
	In this case the proof is completed by noting that, for $\eta$ sufficiently small, the coefficient of $r$ is strictly smaller than $\mu$.
\end{proof}

The upper bound of $\eta_0$ from Lemma~\ref{l.sup mean} will be imposed on $\eta$, in order that the mean of $Z$ be negative. A further upper bound on $\eta$ will later be needed; namely, 
\begin{equation}\label{e.eta value}
\eta ec_1C_3^{-1} <1, 
\end{equation}
where $c_1$ is as in Lemma~\ref{l.sup mean} and $C_3$ is as in Theorem~\ref{t:disjoint}.  Let  $\eta < \eta_0$ satisfy~\eqref{e.eta value}.

The next three subsections in turn carry out the outlined three-step plan for proving Theorem~\ref{t:disjoint}.

\subsection{Step one: bounding above the weight of the heaviest path on a given grid route}
We rigorously take Step~$1$ by stating and proving the next result. The tail of $\exp(-\Theta(\epsilon^{-3/2}))$ we obtain here is crucial, as this will become $\exp(-\Theta(k))$ when we later set $\epsilon$ to be of order $k^{-2/3}$.

\begin{prop}\label{p.single curve weight}
Let $c_1$ be as in Lemma~\ref{l.sup mean} and let $\eta>0$ be fixed as after $\eqref{e.eta value}$. There exist positive $c_3$ and $C$ such that, when $\epsilon > 0$ and $n \in \N$ satisfy $n \geq C \epsilon^{-3/2}$,
$$
P\left(X_{\mathscr L} > \mu n-c_1\epsilon^{-1}n^{1/3}\right) \, \leq \, \exp\left\{ -c_3\epsilon^{-3/2}\right\} \, .
$$
\end{prop}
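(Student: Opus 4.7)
The plan is to decompose $X_{\mathscr{L}}$ across the $N\approx\epsilon^{-3/2}$ horizontal layers of the grid and apply a Chernoff-type bound to a sum of $N$ nearly independent scaled weights, each of which has negative mean by Lemma~\ref{l.sup mean}. Setting $r:=\epsilon^{3/2}n$, so that $Nr=n$ and $Nr^{1/3}=\epsilon^{-1}n^{1/3}$, any upright path in $\intint{n}^2$ meeting every $L_i$ splits at its crossings with the $L_i$, whence its total weight is bounded above by $\sum_{i=0}^{N-1}Y_i$, where $Y_i:=\sup\{X_{u\to v}:u\in L_i,v\in L_{i+1}\}$. Writing $Z_i:=r^{-1/3}(Y_i-\mu r)$, we get $X_{\mathscr{L}}\leq\mu n+r^{1/3}\sum_i Z_i$, so the event in the conclusion is contained in $\{\sum_i Z_i>-c_1 N\}$. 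Because an upright path between intervals $L_i,L_{i+1}\subset U_{n,\delta k^{1/3}n^{2/3}}$ at diagonal separation $r$ exists only when the anti-diagonal offset $|s_i|$ between their centres is at most $r+O(\eta r^{2/3})$, Lemma~\ref{l.sup mean} applies layer by layer and gives $\E Z_i\leq -2c_1$; the target event therefore entails the positive deviation $\sum_i(Z_i-\E Z_i)>c_1 N$.

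Two ingredients then finish the argument. First, by assigning the lattice vertices on each internal $L_i$ to one of its two adjacent layers (say, the one below), each $Z_i$ becomes a function of a disjoint slab of the environment, so the family $\{Z_i\}_{i=0}^{N-1}$ is independent; the decomposition inequality is preserved because each $\xi_v$ is then counted in exactly one of the $Y_i$'s. Second, Proposition~\ref{l.sup tail}, applied with $\ell=\eta$ and $z=s_i r^{-2/3}$, gives the upper tail $\P(Z_i-\E Z_i>\theta)\leq\exp\bigl(-c\min(\theta^{3/2},\theta r^{1/3})\bigr)$; a matching lower tail follows from Assumption~\ref{a.one point assumption} applied to a single path between the midpoints of $L_i$ and $L_{i+1}$. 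Consequently each $Z_i-\E Z_i$ has uniformly bounded variance and uniformly bounded exponential moments on a fixed $\lambda$-interval around zero, and a standard Bernstein/Chernoff computation yields $\P\bigl(\sum_i(Z_i-\E Z_i)>c_1 N\bigr)\leq\exp(-c_3 N)=\exp(-c_3\epsilon^{-3/2})$, which is the claimed bound.

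I expect the principal technical obstacle to be the treatment of layers with large anti-diagonal offset, namely those in which $|s_i|$ is comparable to $r$ so that the $\rho$-condition in Assumption~\ref{a.limit shape assumption} fails. As the proof of Lemma~\ref{l.sup mean} shows, in this regime the layerwise mean is in fact much more negative (of order $-r$ rather than $-r^{1/3}$), so the same concentration argument applies \emph{a fortiori}. The remaining bookkeeping point, the one-sided boundary assignment used to secure independence, merely shifts the constants $c_1$ and $c_3$ by universal factors. Aside from these items, the proof is a routine moderate-deviation estimate for a sum of $N$ independent sub-exponential summands with a controlled strictly negative mean, producing the stretched-exponential decay $\exp(-c_3\epsilon^{-3/2})$ promised in the statement.
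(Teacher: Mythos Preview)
Your proposal is correct and follows essentially the same approach as the paper: decompose $X_{\mathscr L}$ into a sum of $N\approx\epsilon^{-3/2}$ independent layerwise suprema $Z_i$, use Lemma~\ref{l.sup mean} for the uniform negative mean, Proposition~\ref{l.sup tail} (including its extreme-offset part \eqref{e.sup tail extreme}) for the sub-exponential upper tail, and conclude via Bernstein. The paper secures independence by the same displacement idea (writing $\overline L_i$ for $L_i$ shifted by one lattice step), and your anticipation of the large-offset case matches the paper's invocation of $Z^{\mathrm{ext},\delta}$; your lower-tail remark is harmless but unnecessary, since a one-sided Bernstein/Chernoff bound for $\sum_i(Z_i-\E Z_i)>c_1N$ uses only the upper tail of each $Z_i$.
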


\begin{proof}
Set $Z_i=\sup_{u\in \overline L_{i},v\in L_{i+1}} \epsilon^{-1/2}n^{-1/3} \big( X_{u \ar v}-\mu \epsilon^{3/2}n \big)$ for $i \in \llbracket 0, N-1 \rrbracket$, where
$\overline L_i$ is the anti-diagonal interval which is the union of the two line segments obtained by displacing $L_i$ by $(1,0)$ and~$(0,1)$; this choice is made so that 
the $Z_i$ are determined by disjoint regions of the noise field and so are independent.
Then  $X_{\mathscr L}-\mu n \leq \epsilon^{1/2}n^{1/3} \sum_{i=0}^{N - 1}Z_i$.  
As noted, the collection $\big\{ Z_i: \llbracket 0, N-1 \rrbracket  \big\}$ is independent; and, by Lemma~\ref{l.sup mean}, the elements' means  lie uniformly to the left of zero. We will verify that these random variables have exponential upper tails and apply Bernstein's inequality to $\sum Z_i$ in order to obtain the proposition.

If we set $C>r_0$ in the statement that we are seeking to prove, then a choice of $r=\epsilon^{3/2}n$ in  Lemma \ref{l.sup mean}  will satisfy the hypothesis that $r \geq r_0$. 
Applying this lemma with $\eta$ as specified, we find that 
$$\E[Z_i] \leq -2c_1$$
 in view of translation invariance of the noise field $\big\{ \xi_v: v \in \Z^2 \big\}$.
	With $\eta$ now fixed, we claim that for $\theta>\theta_0$,
	$$\P(Z_i > \theta \eta^{1/2}) \leq \exp(-c\theta).$$
	This is because $Z_i\leq \tilde Z_i$ (where $\tilde Z_i$ is defined analogously to $\tilde Z$ in Proposition~\ref{l.sup tail}) in the case that $L_i$ and $L_{i+1}$ form opposite sides of a parallelogram whose midpoint-to-midpoint anti-diagonal displacement is at most $(1-\delta)\epsilon^{3/2}n$ for the $\delta>0$ fixed in the second part of Proposition~\ref{l.sup tail}; while $Z_i \leq Z_i^{\mathrm{ext},\delta}$ (again defined analogously to $Z^{\mathrm{ext},\delta}$ in Proposition~\ref{l.sup tail}) in the other case. Applying Proposition~\ref{l.sup tail} completes the claim.
Note that we have again used that $r=\epsilon^{3/2}n > r_0$ by setting $C>r_0$ in the statement of Proposition~\ref{p.single curve weight}.

The above displayed bound establishes that $Z_i$ has exponential upper tails and so satisfies the conditions of Bernstein's inequality (see equation 2.18 and Theorem 2.13 of \cite{wainwright-concentration}). Applying the latter and since  $N = \epsilon^{-3/2}$ (see \eqref{e.N and M value}), we get, for some $c_3>0$,
	\begin{align*}
	\P\left(X_{\mathscr L} > \mu n-c_1\epsilon^{-1}n^{1/3}\right) \leq \P\left(\sum_{i=0}^{N-1} Z_i > -c_1\epsilon^{-3/2}\right)
	&\leq \P\left(\sum_{i=0}^{N-1} (Z_i-\E[Z_i]) > c_1\epsilon^{-3/2}\right)\\
	&\leq \exp\left(-c_3 \epsilon^{-3/2}\right).\qedhere
	\end{align*}
	\end{proof}

\subsection{Step~$2$: the rarity of $k$ high weight paths in a narrow strip}

The BK inequality is the principal tool enabling the second step. The inequality provides an upper bound on the probability of a number of events occurring \emph{disjointly} in terms of the individual events' probabilities. To state it we need a precise definition of disjointly occurring events; this is taken from \cite{bk-inequality}, which also proves the inequality in the setting of infinite spaces that we require.

\begin{defn}
For $d, n\in \N$, let $A_i \subseteq \R^d$ be Borel measurable for $i \in \intint{n}$. For $\omega\in \R^d$ and $K\subseteq \intint{d}$, define the cylinder set $\mathrm{Cyl}(K,\omega) = \{\omega': \omega'_i = \omega_i, i\in K\}$. Also define, for $A\subseteq \R^d$,
$$[A]_K := \{\omega: \mathrm{Cyl}(K, \omega)\subseteq A\} \quad \text{and}\quad \bigbox_{i=1}^n A_i := \bigcup_{J_1, \ldots, J_n} \bigcap_{i=1}^n [A_i]_{J_i},$$
where the union is over disjoint subsets $J_{1}, \ldots,  J_n$ of $\{1, \ldots, d\}$.
\end{defn}

With the notation established, we may state the BK inequality.

\begin{proposition}[BK inequality, Theorem 7 of \cite{bk-inequality}]\label{p.bk}
Fix $n\in \N$ and let $A_i\subseteq \R^d$ be Borel measurable for $i\in\intint{n}$. Under any complete product probability measure $\nu$ on $\R^d$,
$$\nu\left(\bigbox_{i=1}^n A_i\right) \leq \prod_{i=1}^n \nu(A_i).$$
\end{proposition}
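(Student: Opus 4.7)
My strategy is to reduce the Borel/continuous setting to the classical finite-discrete version of the BK inequality and then extend by approximation. The finite case---due to van den Berg and Kesten for monotone events and Reimer in general---is the genuine combinatorial heart, which I would take as a black box: on a finite product probability space $\Omega = \prod_{j=1}^d \Omega_j$ with product measure $\mu$, one has $\mu\bigl(\bigbox_{i=1}^n A_i\bigr) \le \prod_{i=1}^n \mu(A_i)$ for arbitrary $A_i \subseteq \Omega$. On $\{0,1\}^d$ with uniform measure, Reimer's proof is a clever compression/pairing argument yielding $\bigl|\bigbox_i A_i\bigr| \cdot |\Omega|^{n-1} \le \prod_i |A_i|$; a standard encoding of a weighted alphabet into large binary blocks extends this to arbitrary finite product measures.

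Next, to pass to Borel events in $\R^d$ under a product measure $\nu = \prod_j \nu_j$: by the regularity of Borel product measures, each $A_i$ admits an inner approximation by a nested sequence $A_i^{(m)} \subseteq A_i$ of sets in the algebra generated by products of rational-endpoint intervals drawn from a common dyadic refinement of the axes, with $\nu(A_i^{(m)}) \uparrow \nu(A_i)$. Each $A_i^{(m)}$ factors through a finite quotient of $\R^d$, so the discrete BK inequality applies level by level. Since $A_i^{(m)} \subseteq A_i$ implies $[A_i^{(m)}]_{J_i} \subseteq [A_i]_{J_i}$, one has $\bigbox_i A_i^{(m)} \subseteq \bigbox_i A_i$, and taking $m \to \infty$ via monotone convergence of both $\nu(\bigbox_i A_i^{(m)})$ and $\nu(A_i^{(m)})$ delivers the general bound.

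\textbf{Main obstacle.} The subtle point is that $A \mapsto [A]_J$ is not a simple set-theoretic operation: it demands the \emph{entire} cylinder $\mathrm{Cyl}(J,\omega)$ to lie inside $A$, so naive approximations of $A$ need not yield good approximations of $[A]_J$, and $\bigbox$ is neither monotone nor continuous in an obvious way. Approximating within a common finite refining algebra cures this, because on such an algebra the cylinder condition becomes a finite partition-measurable combinatorial condition that is preserved under refinement. The real difficulty of the result lives elsewhere: in Reimer's inequality itself, which I treat as input.
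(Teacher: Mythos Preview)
The paper does not prove this proposition at all; it is quoted verbatim as Theorem~7 of the cited reference and used as a black box. So there is no ``paper's own proof'' to compare against here.

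Your outline (Reimer for finite alphabets, then approximation) is the standard route, but the approximation step as you have written it does not close. You approximate each $A_i$ from \emph{inside} by finite-algebra sets $A_i^{(m)}\subseteq A_i$ and then invoke ``monotone convergence of $\nu(\bigbox_i A_i^{(m)})$''. Monotonicity of $\bigbox$ does give $\bigbox_i A_i^{(m)}\subseteq\bigbox_i A_i$, so the measures are increasing, but their limit is \emph{not} $\nu(\bigbox_i A_i)$ in general. Take $d=2$, $A_1=A_2=\R^2$, any product probability measure $\nu$, and bounded dyadic approximants $A_i^{(m)}$ (which your algebra forces). For any proper nonempty $J\subsetneq\{1,2\}$ one has $[A_i^{(m)}]_J=\emptyset$ since no full line fits inside a bounded set, and $[A_i^{(m)}]_\emptyset=\emptyset$ since $A_i^{(m)}\neq\R^2$; hence $\bigbox_i A_i^{(m)}=\emptyset$ for every $m$, while $\bigbox_i A_i=\R^2$. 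Your limit is $0$, not $1$.

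The fix is to reverse the direction: approximate from \emph{outside}. Choose $A_i^{(m)}\supseteq A_i$ measurable with respect to a common finite product partition, with $\nu(A_i^{(m)})\downarrow\nu(A_i)$ (obtainable by inner-approximating $A_i^c$). Then $\bigbox_i A_i\subseteq\bigbox_i A_i^{(m)}$, the finite-alphabet BK/Reimer inequality gives $\nu(\bigbox_i A_i^{(m)})\le\prod_i\nu(A_i^{(m)})$, and sending $m\to\infty$ yields $\nu^*(\bigbox_i A_i)\le\prod_i\nu(A_i)$ with no need for the left side to converge. The completeness hypothesis on $\nu$ enters exactly because $[A]_J$, and hence $\bigbox_i A_i$, need not be Borel for Borel $A$; your sketch does not address this measurability point either.
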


Note that we may take the $A_i$ to be the same event $A$ in this bound, in which case $\bigbox_{i=1}^n A_i$ is the event of the presence of $n$ disjoint instances of $A$. Many of our applications will take this form. The condition that $\nu$ is a complete probability measure is a technical one which does not affect our arguments, as we may assume that our vertex weight distribution $\nu$ is complete.

To state the formal realization of Step~2 of the outline, we define a \emph{$k$-disjoint grid journey} to be a collection of grid journeys $\{\mathscr L_{m} : m\in\intint{k}\}$ such that there exist $k$ disjoint curves $\gamma_1, \ldots, \gamma_k$, ordered from left to right, with $\gamma_m$ intersecting each component of $\mathscr L_m$ for each $m\in\intint{k}$. 

This can be expressed equivalently as the following constraint on the components of the $\mathscr L_m$, which we label as $(L_0^{m}, \ldots, L^m_N)$: if $L^m_i = \L_{i,j_{i,m}}$, then $j_{i,m} \geq j_{i,m-1}$ for each $i\in \llbracket 0,N\rrbracket$, $m\in \intint{k}$. This means that, for every grid line of $G$ of slope $-1$, the interval chosen for $\mathscr L_m$ coincides with or is to the right of that for $\mathscr L_{m-1}$ for every $m\in\intint{k}$. See the red and blue intervals depicted in Figure~\ref{f.grid}.

We set $\varepsilon>0$ for the rest of the proof of Theorem~\ref{t:disjoint} to be
\begin{equation}\label{e.epsilon value}
	\epsilon = \min(1,c_1C_3^{-1})\cdot k^{-2/3},
	\end{equation}
where $c_1$ is as in Proposition \ref{p.single curve weight}. Thus with both $\eta$ and $\epsilon$ fixed, the grid cells of $G$ have been fully specified.

With these definitions, we may state the next result, which obtains the $\exp(-ck^2)$ bound promised for Step~2 on the probability of there existing $k$ disjoint paths, each constrained to be in a thin region and each of which is not too light. 
	\begin{proposition}\label{p.weight of single path collection}
	Let $C_3>0$ be given and $\varepsilon$ as in \eqref{e.epsilon value}. There exist positive finite $c_3$ and $C$ such that, for any $n\in\N$ and $k\in\N$ satisfying $n> Ck$ and any $k$-disjoint grid journey $\big\{ \mathscr{L}_m : m \in \intint{k} \big\}$,
	$$
	\P\left( \bigbox_{m=1}^k \left\{X_{\mathscr L_m} > \mu n-C_3k^{2/3}n^{1/3}\right\}\right) \leq e^{-c_3k^2} \, .
	$$
	\end{proposition}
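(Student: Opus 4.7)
My plan is to implement Step~$2$ of the outline by coupling the single-curve estimate of Proposition~\ref{p.single curve weight} with the BK inequality (Proposition~\ref{p.bk}).

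First I would note that, for the chosen value $\epsilon = \min(1, c_1 C_3^{-1}) k^{-2/3}$ from~\eqref{e.epsilon value}, we have $c_1 \epsilon^{-1} \geq C_3 k^{2/3}$: if $c_1 \leq C_3$ then $c_1\epsilon^{-1} = C_3 k^{2/3}$, while if $c_1 > C_3$ then $c_1\epsilon^{-1} = c_1 k^{2/3} > C_3 k^{2/3}$. Consequently $\mu n - c_1 \epsilon^{-1} n^{1/3} \leq \mu n - C_3 k^{2/3} n^{1/3}$, so the monotonicity inclusion
\[
\left\{X_{\mathscr L} > \mu n - C_3 k^{2/3} n^{1/3}\right\} \subseteq \left\{X_{\mathscr L} > \mu n - c_1 \epsilon^{-1} n^{1/3}\right\}
\]
is valid for every grid journey $\mathscr L$. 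Choosing the constant $C$ in the statement large enough that the hypothesis $n \geq C \epsilon^{-3/2}$ of Proposition~\ref{p.single curve weight} is implied by $n > Ck$ (which is permissible since $\epsilon^{-3/2}$ is a constant multiple of $k$), we obtain from that proposition
\[
\P\left(X_{\mathscr L_m} > \mu n - C_3 k^{2/3} n^{1/3}\right) \leq \exp\left(-c_3 \epsilon^{-3/2}\right) = \exp(-c_3' k)
\]
for each $m \in \intint{k}$, where $c_3' = c_3 \min(1, c_1 C_3^{-1})^{-3/2}$.

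Next I would invoke the BK inequality. The event $\{X_{\mathscr L_m} > \mu n - C_3 k^{2/3} n^{1/3}\}$ is increasing in the vertex weight field $\{\xi_v\}$ and its occurrence is witnessed by the vertex set of any path achieving the asserted weight while intersecting every component of $\mathscr L_m$. Thus the definition of $\bigbox$ captures precisely the notion of witnessing the $k$ events via disjoint vertex sets, and Proposition~\ref{p.bk} applies to yield
\[
\P\left( \bigbox_{m=1}^k \left\{X_{\mathscr L_m} > \mu n - C_3 k^{2/3} n^{1/3}\right\}\right) \leq \prod_{m=1}^k \P\left(X_{\mathscr L_m} > \mu n - C_3 k^{2/3} n^{1/3}\right) \leq \exp(-c_3' k^2),
\]
which is the desired conclusion.

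The one subtle point, and the main conceptual step, is the interplay between the two parameter choices: $\eta$ had to be taken small enough (subject to~\eqref{e.eta value}) so that $c_1$ in Proposition~\ref{p.single curve weight} exceeds a constant multiple of $C_3$ after the rescaling by $\epsilon^{-1}$, while $\epsilon$ of order $k^{-2/3}$ is the sweet spot at which the single-event bound of $\exp(-c_3\epsilon^{-3/2})$ becomes exactly $\exp(-\Theta(k))$. Given these already-fixed choices, the proof is a direct product estimate and the BK inequality does all of the remaining work.
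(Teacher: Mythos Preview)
Your proof is correct and follows essentially the same approach as the paper: apply Proposition~\ref{p.single curve weight} with the chosen $\epsilon$ to obtain the single-event bound $\exp(-\Theta(k))$, then use the BK inequality to upgrade to $\exp(-\Theta(k^2))$. One small inaccuracy in your closing commentary: the condition~\eqref{e.eta value} on $\eta$ is not what makes $c_1\epsilon^{-1}\ge C_3 k^{2/3}$ (that comes purely from the choice of $\epsilon$ in~\eqref{e.epsilon value}); rather,~\eqref{e.eta value} is needed later for the entropy bound in Lemma~\ref{l.entropy bound}. This does not affect the validity of your argument.
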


\begin{proof}%
 	In view of Proposition \ref{p.single curve weight} and the value of $\epsilon$, there exists  $C<\infty$ such that, for $m\in\intint{k}$ and $n>Ck$, 
	$$\P\left(X_{\mathscr L_m} > \mu n-c_1\epsilon^{-1}n^{1/3}\right) \leq e^{-c_3\epsilon^{-3/2}}.$$
By \eqref{e.epsilon value}, we may apply $\epsilon<c_1C_3^{-1}k^{-2/3}$ on the left and $\epsilon< k^{-2/3}$ on the right. This yields
	\begin{equation}\label{e.weight loss of collection}
		\P\left(X_{\mathscr L_m} > \mu n-C_3k^{2/3}n^{1/3}\right) \leq e^{-c_3k}.
	\end{equation}
	By the BK inequality Proposition~\ref{p.bk} and \eqref{e.weight loss of collection}, we obtain Proposition~\ref{p.weight of single path collection}. 
	\end{proof}

	\subsection{Step 3: Bounding the number of $k$-disjoint grid journeys}

	Following Step~3, we plan a union bound over the collection of $k$-disjoint grid journeys $\{\mathscr L_m : m\in\intint{k}\}$. For this, we need a bound on the cardinality of this collection, which we record next.

	\begin{lemma}\label{l.entropy bound}
	Let $\epsilon$ and $\eta$ be as set earlier. Then there exists $\delta = \delta(C_3, \eta)>0$ such that the number of $k$-disjoint grid journeys is bounded by $e^{\frac12 c_3k^2}$ for $k\geq 1$, with $c_3$ as in Proposition~\ref{p.weight of single path collection}.
	\end{lemma}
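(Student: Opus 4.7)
The plan is to translate a $k$-disjoint grid journey into a combinatorial object and count directly. Observe that a $k$-disjoint grid journey $\{\mathscr L_m : m\in\intint{k}\}$ is, by the equivalent reformulation given just before \eqref{e.epsilon value}, exactly the data of, for each row index $i \in \llbracket 0,N\rrbracket$, a weakly increasing sequence
\[
0 \leq j_{i,1} \leq j_{i,2} \leq \cdots \leq j_{i,k} \leq M
\]
recording the column indices of the intervals $\mathcal{L}_{i,j_{i,m}}$ used by the $m$-th journey at height $i$. Since the choices at different heights are completely free of one another, the total number of $k$-disjoint grid journeys is at most $\binom{M+k}{k}^{N+1}$, since the number of weakly increasing length-$k$ sequences valued in $\{0,1,\ldots,M\}$ equals $\binom{M+k}{k}$ by a stars-and-bars count.

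Next I would substitute in the previously fixed values. From \eqref{e.N and M value} together with $\epsilon = \min(1, c_1 C_3^{-1})\cdot k^{-2/3}$ from \eqref{e.epsilon value}, we obtain $N+1 \leq C_N k$ and $M \leq C_M \delta k$ for constants $C_N$ and $C_M$ depending only on $C_3$ and $\eta$ (and for $k$ sufficiently large so that the ceilings and floors contribute only multiplicative constants). Using the standard binomial bound $\binom{M+k}{M}\leq (e(M+k)/M)^M$ and assuming $\delta$ small enough that $C_M\delta \leq 1$, this gives
\[
\log \binom{M+k}{k} \;\leq\; C_M\delta k \,\log\!\left(\frac{2e}{C_M \delta}\right).
\]
Raising to the $(N+1)$-th power and taking logarithms,
\[
\log\left(\binom{M+k}{k}^{N+1}\right)\;\leq\; C_N C_M\,\delta\, \log(1/\delta)\cdot k^2 \cdot \bigl(1+o_\delta(1)\bigr),
\]
where the $o_\delta(1)$ factor absorbs the $\log(2e/C_M)$ contribution for $\delta$ small.

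Finally I would choose $\delta = \delta(C_3,\eta)$ sufficiently small that the coefficient of $k^2$ above is bounded by $c_3/2$, which is possible because $\delta \log(1/\delta) \to 0$ as $\delta\to 0^+$. This yields the claim for all $k \geq 1$ (with the small-$k$ cases trivially absorbed by increasing the constant in front if necessary, since $c_3 k^2/2$ is bounded away from zero for $k\geq 1$). I do not anticipate a real obstacle here: the argument is essentially a counting exercise, and the only thing to track carefully is the dependence of the constants $C_N, C_M$ on the already-fixed parameters $C_3$ and $\eta$, so that the final $\delta$ is indeed a function solely of $(C_3,\eta)$ as claimed.
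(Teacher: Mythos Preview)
Your proposal is correct and follows essentially the same approach as the paper: encode a $k$-disjoint grid journey as an independent choice, on each of the $N+1$ gridlines, of a weakly increasing $k$-tuple in $\{0,\ldots,M\}$; bound this by $\binom{M+k}{k}^{N+1}$; substitute $N\asymp k$ and $M\asymp\delta k$; and choose $\delta$ small so that the resulting exponent is below $\tfrac12 c_3 k^2$. The paper uses the entropy bound $\binom{n}{\beta n}\le e^{nH(\beta)}$ in place of your $\binom{M+k}{M}\le (e(M+k)/M)^M$, but these are interchangeable here.
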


This lemma permits us to finish the proposed proof.

	\begin{proof}[Proof of Theorem~\ref{t:disjoint}]
	  We set $\delta$ to be as in Lemma~\ref{l.entropy bound} and $\varepsilon$ as in Proposition~\ref{p.weight of single path collection}. Let $\ce = \ce(n,k,\delta,C_3)$ be the event in the statement of Theorem~\ref{t:disjoint}. Note that
	  $$\ce \subseteq \bigcup \left(\bigbox_{m=1}^k \left\{X_{\mathscr L_m} > \mu n-C_3k^{2/3}n^{1/3}\right\}\right),$$
	  where the union is over the collection of $k$-disjoint grid journeys $\{\mathscr L_m : m\in \intint{k}\}$. Lemma~\ref{l.entropy bound} says that the cardinality of this set is $\exp(\frac{1}{2}c_3k^2)$, while Proposition~\ref{p.weight of single path collection} says that the probability of a single member of the union is at most $\exp(-c_3k^2)$. Taking a union bound thus yields
	\begin{align*}
	\P\left(\ce\right) < e^{-c_3k^2 + \frac12c_3k^{2}}= e^{-\frac12c_3k^2},
	\end{align*}
	 if $n\geq C\epsilon^{-3/2} = Ck$ and $k\in \N$. This proves Theorem~\ref{t:disjoint} with $c = \frac{1}{2}c_3$.
	\end{proof}
One task remains.

\begin{proof}[Proof of Lemma \ref{l.entropy bound}]
	We first observe a bound on the number of ways to select the collection of intervals from the $i$\textsuperscript{th} gridline $G_i$ that constitutes the union of the $i$\textsuperscript{th} component of $\mathscr L_m$ over $m\in\intint{k}$ for a $k$-disjoint grid journey $\{\mathscr L_m:m\in\intint{k}\}$. Raising this bound to the power $N$ will then yield a bound on the cardinality of the set of $k$-disjoint grid journeys.

	Assign to any collection of $k$ intervals from $G_i$ the $M$-vector whose $j$\textsuperscript{th} coordinate records the number of times $\L_{ij}$ was picked. The $M$-vector assigned has components which sum to $k$, and the cardinality of the set of such vectors is $\binom{k+M-1}{k}$. Not all such vectors can be achieved by the specified map, as we are ignoring the constraint imposed by the selection of intervals in $G_{i-1}$, and so this binomial coefficient is an upper bound. This then gives an upper bound of
	\begin{equation}\label{e.binomial coefficient}
	\left[\binom{k+M-1}{k}\right]^N
	\end{equation}
	on the number of $k$-disjoint grid journeys.
	Now to bound this quantity we recall that $M=2\lceil \delta k^{1/3}\eta^{-1}\epsilon^{-1}\rceil$. Given $\alpha>0$, we set $\delta$ to be
	$$\delta = 2^{-1}\eta (1+c_1^{-1}C_3)^{-1} \alpha.$$
	Then,  since $\epsilon^{-1} < (1+c_1^{-1}C_3)k^{2/3}$ from the statement of Proposition~\ref{p.weight of single path collection}, we get $M \leq \alpha k$ from \eqref{e.N and M value}. We  recall the well-known fact~\cite[eq. (4.7.4)]{ash-info-theory} that for $0<\beta<1/2$ and $n\geq 1$, $$\binom{n}{\beta n}\leq \exp\left(n H(\beta)\right), \text{ where }H(x) = -x\log x - (1-x)\log(1-x),$$ for $x\in[0,1]$, is the entropy function. We use this to bound \eqref{e.binomial coefficient}:
	\begin{align*}
	\left[\binom {k+M-1} {k}\right]^{N} \leq \left[\binom {(1+\alpha)k} {\alpha k}\right]^{N}
	&\leq \exp\left[(1+\alpha)kN\cdot H\left(\frac{\alpha}{1+\alpha}\right)\right].
	\end{align*}
	Using that $N=\epsilon^{-3/2}$ and $\epsilon = \min(1,c_1C_3^{-1})k^{-2/3}$, we set $\alpha$ small enough that the exponent is smaller than $\frac12 c_3 k^2$, since $H(x)\to 0$ as $x\to 0$. This bound holds for all $k\geq 1$, yielding
	\begin{align*}
	\left[\binom {k+M-1} {k}\right]^{N}&\leq \exp\left({\frac12 c_3 k^2}\right). \qedhere
	\end{align*} 
\end{proof}

\subsection{A crude upper bound of the watermelon weight: proving Lemma \ref{l.first upper bound on melon weight}}\label{proofofcrude}

Theorem~\ref{t.notwidenotthingeneral}(\ref{weight1'}) implies that $X_{n}^k - \mu n k$ is typically at most a negative quantity of order $k^{5/3}n^{1/3}$. We will first need a cruder upper bound,  Lemma \ref{l.first upper bound on melon weight}, where this order is asserted without any claim about the sign of the difference.
We begin by restating this application of the BK inequality.

\medskip 
\noindent  
\textbf{Lemma \ref{l.first upper bound on melon weight}.}
{\em Under Assumption~\ref{a.one point assumption}, we may find $c>0$ such that, if $t>0$, there exists $k_0 = k_0(t)$ for which $n>n_0$ and $k_0<k\leq  t^{-3/4}n^{1/2}$ imply that}
$$\P\left(\melonweight > \mu nk + tk^{5/3}n^{1/3}\right) \leq \exp\left(-c\min(t,t^{3/2})k^2\right).$$
{\em If instead Assumption~\ref{a.one point assumption convex} is available, then the upper bound on $k$ may be taken to be $n$.}

As mentioned in Section~\ref{iop}, we will prove here Lemma~\ref{l.first upper bound on melon weight} under only Assumption~\ref{a.one point assumption}, which limits the range of $k$ up to $O(n^{1/2})$. The stronger conclusion of the lemma under Assumption~\ref{a.one point assumption convex} has a slightly more complicated argument, and this is provided in Appendix~\ref{app.proof of lemmas}.

\begin{proof}[Proof of Lemma~\ref{l.first upper bound on melon weight} under Assumption~\ref{a.one point assumption}]
For any fixed $k$-geodesic watermelon $\mc W$, let $\smash{X_{n,\mc W}^{k,j}}$ be the weight of its $j$\textsuperscript{th} heaviest curve, imitating notation introduced in Section~\ref{s:interlace}. Define the event~$A_j$ by
\begin{align*}
A_j &= \left\{\exists \mc W: X^{k, i}_{n,\mc W} > \mu n + \frac14tj^{-2/3}k^{4/3}n^{1/3}\text{ for all } i\in\intint{j}\right\}\\
&=\left\{\exists \mc W: X^{k, j}_{n,\mc W} > \mu n + \frac14tj^{-2/3}k^{4/3}n^{1/3}\right\},
\end{align*}
where the second equality is because of the monotonicity relation $X_{n, \mc W}^{k, i} \geq X_{n,\mc W}^{k,i+1}$ for all $i \in \intint{k}$.
We start by claiming that, for $j\in \intint{k}$,
\begin{equation}
\P\left(A_j\right) \leq \exp\left(-ct^{3/2}k^2\right). \label{e.top j high weight}
\end{equation}
This is because the probability on the left-hand side is bounded by the probability that there exist $j$ disjoint curves, each with weight at least $\mu n + \frac14tj^{-2/3}k^{4/3}n^{1/3}$. This probability, by Assumption~\ref{a.one point assumption} and the BK inequality (Proposition~\ref{p.bk}), is bounded by
$$\left(\exp(-ct^{3/2}k^{2}j^{-1})\right)^{j} = \exp\left(-ct^{3/2}k^2\right) \, ,$$
for large enough $n$, if $tk^{2/3}$ is large enough. For this bound, we also need $\frac14tj^{-2/3}k^{4/3}n^{1/3} \leq n$, which is provided by the assumed upper bound on $k$. We now {\em  claim} that
$$\left\{\melonweight > \mu nk + tk^{5/3}n^{1/3}\right\} \subseteq \bigcup_{j=1}^k A_j \, .
$$
Indeed, suppose that $A_j^c$ occurs for each $j\in\intint{k}$.  For each $j\in\intint{k}$, by definition, on $A^c_j$,  $X_{n, \mc W}^{k, j} \leq \mu n+ \frac14tj^{-2/3}k^{4/3}n^{1/3}$ for all $k$-geodesic watermelons $\mc W$. This then gives, for any $\mc W$, that
\begin{align*}
\melonweight = \sum_{j=1}^k X_{n, \mc W}^{k,j} \leq \mu nk + \frac14tk^{4/3}n^{1/3}\sum_{j=1}^k j^{-2/3}
&\leq \mu nk + tk^{5/3}n^{1/3},
\end{align*}
which completes the proof of the claim. The derivation of Lemma~\ref{l.first upper bound on melon weight} is concluded by applying~\eqref{e.top j high weight}, using the union bound, and reducing $c$ in \eqref{e.top j high weight} to $c/2$ (which is possible for $k$ large enough depending on only $t$).
\end{proof}

\section{Not too thin: Bounding below the transversal fluctuation}
\label{s:lowerexp}
The aim of this section is to prove the lower bound on the transversal fluctuation exponent for the geodesic, i.e.,  Theorem~\ref{t.notwidenotthingeneral}(\ref{tf2'}). In view of Theorem \ref{t:disjoint},  this would be immediate if we could show that, with high probability, $X_{n}^{k,k} \geq \mu n - Ck^{2/3}n^{1/3}$ for some $C>0$. However, in the absence of such an estimate so far, we follow the strategy outlined in Section \ref{iop}, relying on an averaging argument and  geodesic watermelon interlacing. More precisely, the interlacing result Proposition \ref{p.melon interlacing} implies that it is sufficient to show the existence of some $j\in \intint{k}$ such that no $j$-geodesic watermelon is contained in the strip \smash{$U_{n,\delta k^{1/3}n^{2/3}}$}. To this end, we shall establish an averaged version of the lower bound of \smash{$X_{n}^{k,k}$}; i.e., with high probability, that $X_{n}^{j,j} \geq \mu n - Ck^{2/3}n^{1/3}$ for some $j\in \llbracket \lfloor \frac{k}{2} \rfloor , k\rrbracket$. This, together with the above observation and Theorem \ref{t:disjoint}, will complete the proof. 

We note in the above discussion that $X_n^{j,j}$ is only well-defined after specifying the $j$-geodesic watermelon, unlike $X_n^j$---and this we have not done in the case where $\nu$ has atoms. However, using a uniform lower bound on $X_n^{j,j}$ which holds over all possible choices of the curves of the $j$-geodesic watermelon, we will be able to show that the averaging statement  holds for $\underline X_n^{j,j}$, defined for $j\in\intint{n}$ as 
$$
\underline X_n^{j,j} \, = \, \min_{\mc W} X_{n, \mc W}^{j,j} \, ,
$$
where $X_{n,\mc W}^{j,j}$ is the weight of the lightest curve of the $j$-geodesic watermelon $\mc W$, and the minimum is over all $j$-geodesic watermelons.

We now state the averaging result.

\begin{lemma}\label{p.smallest curve weight lower bound}
For $C_1$ as in Theorem~\ref{t.notwidenotthingeneral}(\ref{weight1'}), and under Assumption~\ref{a.one point assumption} (respectively \ref{a.one point assumption convex}), there exist positive  constants $c_1,$ $c>0$, $C$ and $n_0$ such that, for $k\geq 1$ and $n\geq n_0$ for which $k\leq c_1n^{{1/2}}$ (resp. $k\leq c_1n$),
$$\P\left(\bigcup_{j =\lfloor \frac{k}{2} \rfloor}^{k} \left\{\underline X_{n}^{j,j} > \mu n-4C_1k^{2/3}n^{1/3}\right\}\right) \geq 1-Ce^{-ck^2}.$$
\end{lemma}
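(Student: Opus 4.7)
The plan is to deploy the averaging argument previewed in Section~\ref{iop}, powered by the deterministic inequality \eqref{e.averaging inequality}. Recall that for any $j$-geodesic watermelon, removing its lightest curve leaves $j-1$ disjoint paths whose total weight $X_n^j - X_n^{j,j}$ cannot exceed $X_n^{j-1}$; hence $X_n^{j,j} \geq X_n^j - X_n^{j-1}$. Since the right-hand side does not depend on the choice of $j$-melon, the same bound holds with $\underline X_n^{j,j}$ in its place. Summing over $j \in \llbracket \lfloor k/2 \rfloor, k \rrbracket$ telescopes the right-hand side:
$$\sum_{j=\lfloor k/2 \rfloor}^{k} \underline X_n^{j,j} \;\geq\; X_n^k - X_n^{\lfloor k/2 \rfloor - 1}.$$

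Next I would control the right-hand side by the two weight estimates already in hand. Theorem~\ref{t.flexible construction} with $m=k$ supplies $X_n^k \geq \mu n k - C_1 k^{5/3} n^{1/3}$ with probability at least $1 - e^{-ck^2}$, valid for $k \leq c_1 n$. The crude upper bound Lemma~\ref{l.first upper bound on melon weight}, applied with $k$ replaced by $\lfloor k/2 \rfloor - 1$ and with $t$ chosen as a suitably small constant (depending only on $C_1$), yields
$$X_n^{\lfloor k/2 \rfloor - 1} \;\leq\; \mu n \bigl(\lfloor k/2 \rfloor - 1\bigr) + t \bigl(\lfloor k/2 \rfloor - 1\bigr)^{5/3} n^{1/3}$$
with probability at least $1 - e^{-c t^{3/2} k^2}$. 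The permitted range of $k$ is dictated by this crude upper bound: $k \leq c_1 n^{1/2}$ under Assumption~\ref{a.one point assumption} and $k \leq c_1 n$ under Assumption~\ref{a.one point assumption convex}, exactly as in the statement of the lemma. Subtracting the two estimates on the intersection of their respective high-probability events gives
$$\sum_{j=\lfloor k/2\rfloor}^k \underline X_n^{j,j} \;\geq\; \mu n \bigl(k - \lfloor k/2\rfloor + 1\bigr) - \bigl(C_1 + t \cdot 2^{-5/3}\bigr) k^{5/3} n^{1/3}.$$

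Since the sum contains at least $k/2$ terms, by pigeonhole there is some index $j \in \llbracket \lfloor k/2 \rfloor, k \rrbracket$ realising $\underline X_n^{j,j} \geq \mu n - 2\bigl(C_1 + t \cdot 2^{-5/3}\bigr) k^{2/3} n^{1/3}$; choosing the constant $t$ small enough forces the numerical factor in parentheses to be at most $2 C_1$, producing the claimed $4C_1$ bound. A union bound over the two underlying random events absorbs both tails into a single $C e^{-c k^2}$ probability. Values $k \leq k_0(t)$ too small for Lemma~\ref{l.first upper bound on melon weight} to apply are absorbed by inflating the constant $C$.

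The hard part, such as it is, is bookkeeping of constants: one must keep $t$ small enough that the average shortfall is within $4 C_1 k^{2/3} n^{1/3}$ yet large enough that the $e^{-c t^{3/2} k^2}$ tail of Lemma~\ref{l.first upper bound on melon weight} remains of order $e^{-c k^2}$. No further probabilistic input beyond the construction (Theorem~\ref{t.flexible construction}) and the crude upper bound (Lemma~\ref{l.first upper bound on melon weight}) is required; in particular, interlacing and Theorem~\ref{t:disjoint} play no role in this averaging step, being deferred to the subsequent derivation of Theorem~\ref{t.notwidenotthingeneral}(\ref{tf2'}).
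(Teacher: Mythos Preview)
Your proposal is correct and follows essentially the same averaging argument as the paper: both use the telescoping bound $\sum_j \underline X_n^{j,j}\ge X_n^k-X_n^{\lfloor k/2\rfloor-1}$, control the right side via Theorem~\ref{t.flexible construction} and Lemma~\ref{l.first upper bound on melon weight}, and extract a good index by pigeonhole (the paper phrases this as bounding the intersection of complements, which is the same thing). One small remark: your worry about balancing $t$ is unnecessary---the paper simply takes $t=2^{5/3}C_1$, which already gives $2(C_1+t\cdot 2^{-5/3})=4C_1$ exactly, so no delicacy is involved.
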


\begin{proof}
Recall from \eqref{e.averaging inequality} that, for every $j$, $\underline X_{n}^{j,j} \geq X_{n}^{j}-X_{n}^{j-1}$.
	Indeed, any $j$-geodesic watermelon is comprised of a lightest  curve and $j-1$ other curves that together weigh at most as much as the $(j-1)$-geodesic watermelon. See Figure~\ref{f.Xnk increment}.
We see then that, for any $k$,
	$$\P\left(\bigcup_{j=\lfloor k/2\rfloor}^{k} \left\{\underline X_{n}^{j,j} > \mu n-4C_1k^{2/3}n^{1/3}\right\}\right) \geq \P\left(\bigcup_{j=\lfloor k/2\rfloor}^{k} \left\{X_{n}^{j}-X_{n}^{j-1} > \mu n-4C_1k^{2/3}n^{1/3}\right\}\right).$$
\begin{figure}[h]
   \centering
        \includegraphics[width=.3\textwidth]{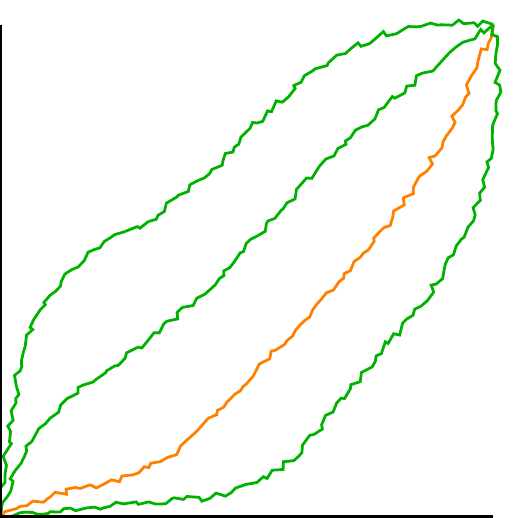}
 \caption{The basic argument for the inequality $X_n^{k,k} \geq X_n^k - X_n^{k-1}$. Here $k=4$. The four curves shown are the curves of the $k$-melon, and the orange curve is the curve of lowest weight among the four, and so has weight $X_n^{k,k}$. The remaining green curves form a collection of three disjoint curves, and hence have cumulative weight  bounded above by $X_n^{k-1}$. Thus $X_n^{k,k} + X_n^{k-1}\geq X_n^k$, and rearranging gives the inequality.}
 \label{f.Xnk increment}
\end{figure}
	We now observe that
	\begin{align*}
	\P\Bigg(\bigcap_{j=\lfloor k/2\rfloor}^{k} \Big\{X_{n}^{j}-X_{n}^{j-1} \leq \mu n-4C_1&k^{2/3}n^{1/3}\Big\}\Bigg)\\
	&\leq \P\left(X_{n}^{k}-X_{n}^{\lfloor k/2\rfloor-1} \leq \mu n(k+1 -\lfloor k/2\rfloor)-2C_1k^{5/3}n^{1/3}\right).
	\end{align*}
	From Lemma \ref{l.first upper bound on melon weight} with $t=2^{5/3}C_1$, with the valid $k$ range depending on whether Assumption~\ref{a.one point assumption convex} is in force in that statement, we have that
	$\P\left(X_{n}^{\lfloor k/2\rfloor-1} > \mu n(\lfloor k/2\rfloor-1) + C_1k^{5/3}n^{1/3}\right) \leq e^{-ck^2}.$
	Thus,
	\begin{align*}
	\P\Big(X_{n}^{k}-X_{n}^{\lfloor k/2\rfloor-1} \leq \mu n(k+1 -\lfloor k/2\rfloor)&-2C_1k^{5/3}n^{1/3}\Big)
	\leq \P\left(X_{n}^{k} \leq \mu nk- C_1k^{5/3}n^{1/3}\right)+e^{-ck^2}.
	\end{align*}
	The weight lower bound of Theorem~\ref{t.notwidenotthingeneral}(\ref{weight1'}) tells us that the first term is bounded by $e^{-ck^{2}}$. By the three preceding displays, we find that
	$$\P\left(\bigcup_{j=\lfloor k/2\rfloor}^{k} \left\{\underline X_{n}^{j,j} > \mu n-4C_1k^{2/3}n^{1/3}\right\}\right) \geq 1-2e^{-ck^{2}},$$
	so we are done.
\end{proof}

We are now ready to complete the proof of Theorem~\ref{t.notwidenotthingeneral}(\ref{tf2'}).

\begin{proof}[Proof of Theorem~\ref{t.notwidenotthingeneral}(\ref{tf2'})]
Under Assumption~\ref{a.one point assumption} (resp. \ref{a.one point assumption convex}) and $c_1$ as in Lemma~\ref{p.smallest curve weight lower bound}, fix $k$ sufficiently large and $n$ such that $k\leq c_1n^{1/2}$ (resp. $k\leq c_1n$). Let $C_1$ be as in Theorem~\ref{t.notwidenotthingeneral}(\ref{weight1'}). Let $\delta=\delta(2^{2/3}\cdot 4C_1)$ be as in Theorem \ref{t:disjoint} and let $\delta '= 2^{-1/3}\delta$. Let $A_{k}$ denote the event that there exist $\lfloor k/2\rfloor$ disjoint paths contained in $U_{n,\delta' k^{1/3}n^{2/3}}$, each of which has weight at least \smash{$\mu n- 4C_1k^{2/3}n^{1/3}$}. Further, let $B_{k}$ denote the event from Lemma \ref{p.smallest curve weight lower bound}: 
$$B_{k}:=\left\{\bigcup_{j =\lfloor \frac{k}{2} \rfloor}^{k} \left\{\underline X_{n}^{j,j} > \mu n-4C_1k^{2/3}n^{1/3}\right\}\right\}.$$
Clearly, by Theorem \ref{t:disjoint} and Lemma \ref{p.smallest curve weight lower bound}, we have $\P(A_{k}^c \cap B_{k})\geq 1-e^{-ck^2}$ for some $c>0$. 

Observe next that on $A_{k}^{c}\cap B_k$, there exists $j\in \{\lfloor\frac{k}{2}\rfloor, \ldots , k\}$ such that all the curves of all $j$-geodesic watermelons have weight at least $\mu n- 4C_1k^{2/3}n^{1/3}$, and hence some of them must exit $U_{n,\delta' k^{1/3}n^{2/3}}$. By the interlacing result Proposition \ref{p.melon interlacing}, the same is true for all $k$-geodesic watermelons. This completes the proof of Theorem~\ref{t.notwidenotthingeneral}(\ref{tf2'}) with $\delta$ there  replaced by $\delta'$.
\end{proof}

\section{Not too wide:  Bounding above the transversal fluctuation}
\label{s:upperexp}
Our objective in this section is to prove Theorem~\ref{t.notwidenotthingeneral}(\ref{tf1'}). We derive this result using Theorem~\ref{t.tf}, whose proof appears  at the end of the section. The basic idea is same as in Section \ref{s:lowerexp}. To show that the $k$-geodesic watermelon has transversal fluctuation of order $k^{1/3}n^{2/3}$ with large probability, we shall rely on the interlacing property and show that, with large probability, there exists $j\in \llbracket k,  2k \rrbracket$ such that the $j$-geodesic watermelon has transversal fluctuations at most of order $k^{1/3}n^{2/3}$. Admitting Theorem~\ref{t.tf} for now, we may prove Theorem \ref{t.notwidenotthingeneral}(\ref{tf1'}).

\begin{proof}[Proof of Theorem~\ref{t.notwidenotthingeneral}(\ref{tf1'})]
Let $C_1$ be as in the Lemma~\ref{p.smallest curve weight lower bound} and let 
$$B_k' = \bigcup_{j=k}^{2k}\left\{\underline X_n^{j,j} > \mu n - 7C_1 k^{2/3}n^{1/3}\right\};$$
note that $4\times 2^{2/3} < 7$, so that Lemma~\ref{p.smallest curve weight lower bound} with $2k$ in place of $k$ implies that $\P\left(B_k'\right) \geq 1-e^{-ck^2}$.

Let $A'_{k}=A'_{k}(C'')$ denote the event that there exists a path from $(1,1)$ to $(n,n)$ that exits $U_{n,C''k^{1/3}n^{2/3}}$ (recall the notation from \eqref{parallelogram1}) and has weight at least $\mu n - 7C_1k^{2/3}n^{1/3}$. Now choose $C''>0$ (possible by invoking Theorem \ref{t.tf} with $s$ a multiple of $k^{1/3}$) such that 
$$\P(A'_{k})\leq e^{-c''k}$$ 
for some $c''>0$, for all $k$ sufficiently large and all $n$ sufficiently large, depending on $k$. Clearly, it now suffices to show that, on $B'_k \cap (A'_{k})^c$, no $k$-geodesic watermelon exits \smash{$U_{n,C''k^{1/3}n^{2/3}}$}. By definition of $B'_{k}$, there exists $j\in \llbracket k,2k\rrbracket$ such that all paths of all $j$-geodesic watermelons have weight at least $\mu n-7C_1k^{2/3}n^{1/3}$, and $(A'_{k})^c$ ensures that all these paths are contained in $U_{n,C''k^{1/3}n^{2/3}}$. The proof is completed by interlacing,  invoking Proposition \ref{p.melon interlacing}, with the parameter $M$ in Theorem~\ref{t.notwidenotthingeneral}(\ref{tf1'}) being $C''$.
\end{proof}

Now we turn to the proof of Theorem \ref{t.tf}.  We will follow the argument that yields Theorem 11.1 of \cite{slow-bond}, but merely suppose  Assumptions~\ref{a.limit shape assumption} and~\ref{a.one point assumption}.
We first prove a companion result concerning paths that have large transversal fluctuation at the midpoint. The idea is that the weight of such a path is less than the sum of the weights of two point-to-line paths whose endpoints lie outside a central interval. We will make use of the upper tail estimate for such point-to-line weights, Proposition~\ref{p.p2l general upper tail}.

For an upright path $\Gamma$ from $(1,1)$ to $(r, r)$, and $x\in \intint{r}$, let $\Gamma(x)$ denote the unique integer such that $(x-\Gamma(x), x+ \Gamma (x))$ lies on $\Gamma$. For $t\geq1$, let
\begin{align*}
\lineleft (r,t) &= \{(x,y): x+y=2, |y-x|<tr^{2/3}\}\qquad\text{and}\\
\lineright (r,t) &= \{(x,y): x+y= 2r, |y-x|<tr^{2/3}\}
\end{align*}
be line segments of length $2tr^{2/3}$ of slope $-1$ through $(1,1)$ and $(r, r)$ respectively. For the sake of brevity, and in the hope that there is little scope for confusion, we shall omit the two arguments from now on. 

\begin{prop} \label{p.midpoint tf}
Let $\Xmid$ be the maximum weight of all paths $\Gamma$
from $\lineleft$ to $\lineright$
such that $|\Gamma(r/2)| > (s+t)r^{2/3}$.
Then there exist constants $r_0, s_0$, $c>0$ and $\tilde c_2 >0$
(all independent of $t$) 
such that for $s>s_0$, $r> r_0$ and $0<t\leq s^2$,
$$\P\left(\Xmid- \mu r > -\tilde c_2s^2r^{1/3}\right) < e^{-cs^3}.$$
\end{prop}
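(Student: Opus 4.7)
The plan is to split any candidate path at the midpoint in time and bound the two halves separately using Proposition~\ref{p.p2l general upper tail}. Given any $\Gamma$ from $\lineleft$ to $\lineright$ satisfying $|\Gamma(r/2)| > (s+t)r^{2/3}$, cut $\Gamma$ at its vertex $v$ on the anti-diagonal $x+y = r$ (or its nearest lattice analogue). Writing $\Gamma_1$ and $\Gamma_2$ for the two halves, we have $\ell(\Gamma) \le \ell(\Gamma_1) + \ell(\Gamma_2)$ because vertex weights are non-negative, and both $\Gamma_1$ and $\Gamma_2$ share as their midpoint-line endpoint the vertex $v$, which lies at anti-diagonal distance exceeding $(s+t)r^{2/3}$ from $(r/2,r/2)$. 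Taking the supremum over all such $\Gamma$,
\begin{equation*}
\Xmid \le X^{(1)} + X^{(2)},
\end{equation*}
where $X^{(1)}$ is the maximum weight of paths running from $\lineleft$ to the line $x+y=r$ and terminating outside the central segment of half-width $(s+t)r^{2/3}$, and $X^{(2)}$ is the analogous quantity for paths from that same family of midpoint vertices to $\lineright$.

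Each of $X^{(1)}$ and $X^{(2)}$ now matches the setup of Proposition~\ref{p.p2l general upper tail} after a rescaling, and (for $X^{(2)}$) after invoking the $180^{\circ}$ rotation symmetry of LPP which swaps the roles of source and sink. The rescaling replaces the proposition's scale by $R := r/2$, so that the width $tr^{2/3}$ of $\lineleft$ translates to $t' R^{2/3}$ with $t' = 2^{2/3} t$, and the excluded central interval at the far line (of half-width $(s+t)r^{2/3}$) matches the proposition's with effective parameter $s' + t' = 2^{2/3}(s+t)$, giving $s' = 2^{2/3} s$. The hypothesis $t \le s^2$ yields $t' \le (s')^2$, and taking $s > s_0$ large enough ensures $s' > s_0$ in the sense of the proposition. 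Applying Proposition~\ref{p.p2l general upper tail} (case~(ii) with $\theta = 0$), for $i \in \{1,2\}$,
\begin{equation*}
\P\bigl(X^{(i)} \ge \mu(r/2) - c_1 (s')^2 (r/2)^{1/3}\bigr) \le \exp\bigl(-c (s')^3\bigr).
\end{equation*}
A direct computation gives $c_1 (s')^2 (r/2)^{1/3} = 2 c_1 s^2 r^{1/3}$ and $(s')^3 = 4 s^3$.

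Combining via a union bound and setting $\tilde c_2 = 4 c_1$,
\begin{equation*}
\P\bigl(\Xmid > \mu r - \tilde c_2 s^2 r^{1/3}\bigr) \le 2 \exp(-4 c s^3) \le \exp(-c' s^3)
\end{equation*}
for some absolute $c' > 0$, once $s$ and $r$ exceed thresholds. The only delicate point is the scaling bookkeeping: tracking how the effective width parameters $t'$ and $s'$ arise and verifying that the hypothesis $t' \le (s')^2$ is preserved under the rescaling, together with checking that the forbidden central interval in the scaled proposition aligns with that appearing in our decomposition at $x+y=r$. Beyond this, the argument is a straightforward application of Proposition~\ref{p.p2l general upper tail} and requires no new probabilistic input.
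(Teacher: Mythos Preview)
Your proof is correct and follows essentially the same approach as the paper: split at the midpoint line $x+y=r$, bound each half by a point-to-line weight with a forbidden central interval, and apply Proposition~\ref{p.p2l general upper tail} (case~(ii), $\theta=0$) to each half. The paper's proof is terser about the rescaling, whereas you have carefully tracked the effective parameters $s'=2^{2/3}s$, $t'=2^{2/3}t$ and verified $t'\le (s')^2$; this bookkeeping is exactly what is needed and is implicit in the paper's one-line invocation of Proposition~\ref{p.p2l general upper tail}.
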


\begin{proof}
	Let $X_1$ be the maximum weight of all paths $\Gamma$ from $\lineleft$ to the line $x+y=r$ whose midpoint is outside the interval joining $(r/2-(s+t)r^{2/3}, r/2+(s+t)r^{2/3})$ and $(r/2+(s+t)r^{2/3}, r/2-(s+t)r^{2/3})$. Let $X_2$ be the same for paths starting on the line $x+y=r$ outside the mentioned interval and ending on $\lineright$.	
	Then
	$$\left\{\Xmid > \mu r - \tilde c_2 s^2 r^{1/3}\right\}\subseteq \bigcup_{i=1}^2 \left\{X_i > \mu \frac{r}{2} - 0.5\tilde c_2s^2r^{1/3}\right\}.$$
	
	From here, we apply Proposition \ref{p.p2l general upper tail} with $\theta = 0$, which yields
	\begin{align*}
	\P\Big(X_i > \mu\frac{r}{2}  -0.5\tilde c_2s^2r^{1/3}\Big) \leq  e^{-cs^3} 
	\end{align*}
 for $i \in \{1,2\}$;		a union bound now completes the proof of Proposition~\ref{p.midpoint tf}. 
	\end{proof}

\begin{figure}[h]
   \centering
        \includegraphics[width=.75\textwidth]{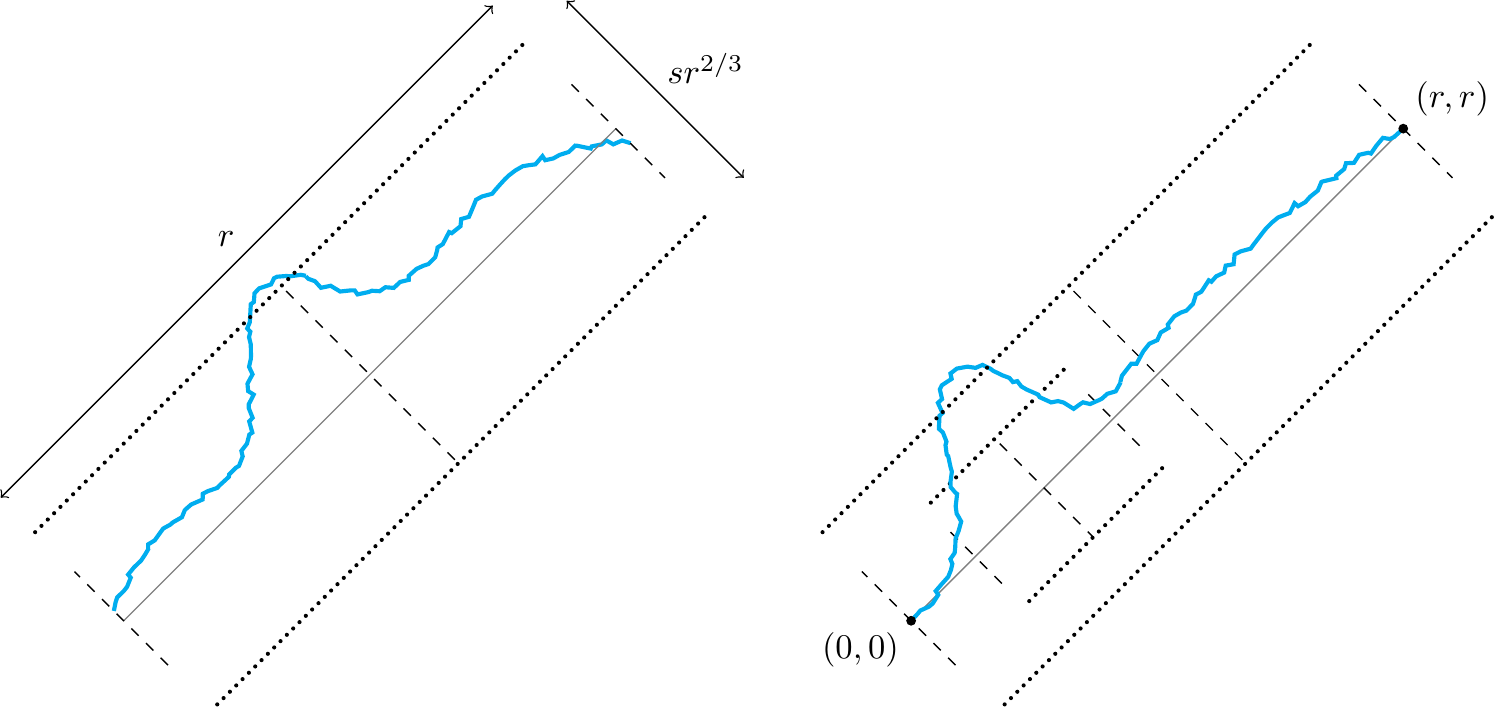}
 \caption{On the left is depicted the event whose probability is bounded in Proposition~\ref{p.midpoint tf}, namely that a path from $\lineleft$ to $\lineright$ has high transversal fluctuation at the midpoint, but does not suffer an appropriately high loss in weight. On the right, we see how this allows us to infer a similar bound for the event that the curve has high transversal fluctuation (not necessarily at the midpoint) and an unusually high weight. We argue that for any such curve there exists a pair of intervals on a dyadic scale such that the path passes through both, but avoids the longer interval at their midpoint; this is the collection of smaller three intervals shown in the right panel, which can be thought of as a smaller scale version of the left panel. The loss suffered by this portion of the curve cannot be compensated by the remaining portions.}
 \label{f.tf}
\end{figure}

We now use a multi-scale argument to extend the result at the midpoint to transversal fluctuations at any point. Roughly, we will dyadically place points on the diagonal $x=y$ between $(1,1)$ and $(r,r)$. Suppose that some path has transversal fluctuation greater than $(s+t)r^{2/3}$ at some point while satisfying the weight lower bound. Then a consecutive pair of points in the dyadic division exists  such that the intervening  path is not too light and has a midpoint whose  transversal fluctuation has order $(s+t)r^{2/3}$.  
The probability of this circumstance is bounded by Proposition~\ref{p.midpoint tf}. Bounds on path weights for each side of this dyadic interval will finish the proof.

We remind the reader from the statement of Theorem~\ref{t.tf} that $X^{r,s,t}$ refers to the maximum weight over all paths $\Gamma^{r,s,t}$ from $\lineleft$ to $\lineright$ with transversal fluctuation greater than $(s+t)r^{2/3}$ at some point. We now fix $\Gamma^{r,s,t}$ to be the leftmost such path (which is uniquely defined by the weight-maximization property of the path) whose weight is $X^{r,s,t}$.

\begin{proof}[Proof of Theorem \ref{t.tf}]
	We may assume that $s\leq r^{1/3}$, as otherwise the theorem is trivial.
	We may also simplify by assuming that $\Gamma^{r,s,t}(x) > (s+t)r^{2/3}$ for some $x\in[1,r]$ (the case where $\Gamma^{r,s,t}(x) < -(s+t)r^{2/3}$ is symmetric).
	Let $A$ be the event $\{X^{r,s,t} > \mu r -c_2s^2r^{1/3}\}$ for $c_2$ to be specified later.
	For $j\geq 1$, let $S_j$ be the dyadic points, i.e., 
	$$S_j = \left\{\ell 2^{-j}r: \ell  \in \llbracket 0, 2^j \rrbracket \right\}.$$
	For $j\geq 1$, let $T_j$ be the event defined by
	$$T_j = \left\{\Gamma^{r,s,t}(x) < (s_j+t)r^{2/3} \quad \forall x\in S_j\right\},$$
	where 
	\begin{equation}\label{e.s_j value}
	s_j = \frac{s}{M}\prod_{i=1}^{j-1}(1+2^{-i/3})
		\quad\text{and}\quad
	M = 2\cdot\prod_{i=1}^{\infty}(1+2^{-i/3}) < \infty,
	\end{equation}
	so that $s_j < \frac12s$ for all $j$. 
	We fix $j_0$ so that
	the separation between points in $S_{j_0}$ is $\frac{s}{10}r^{2/3}$, i.e., $j_0$ is such that 
	$2^{-j_0}r = \frac{s}{10}r^{2/3}$.
	The next lemma asserts that the defined dyadic breakup is fine enough to capture any path which has a high transversal fluctuation at some point.

	\begin{lemma}\label{l.empty intersection}
	We have that
	$\bigcap_{j=1}^{j_0} T_j = \emptyset.$
	\end{lemma}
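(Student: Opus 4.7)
The plan is to show that $T_{j_0}$ alone cannot hold for $\Gamma^{r,s,t}$, which suffices to empty the intersection. By construction of $\Gamma^{r,s,t}$ (and the simplification made at the outset of the proof of Theorem~\ref{t.tf}), there exists $x^* \in [1, r]$ with $\Gamma^{r,s,t}(x^*) > (s+t) r^{2/3}$. Since the grid $S_{j_0}$ has spacing $2^{-j_0} r = (s/10) r^{2/3}$, the idea is to transfer this large transversal displacement at $x^*$ to a nearby point of $S_{j_0}$ via a Lipschitz estimate for upright paths, then compare against $s_{j_0}$.

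First I would record the Lipschitz bound $|\Gamma^{r,s,t}(x_1) - \Gamma^{r,s,t}(x_2)| \leq |x_1 - x_2|$. This is immediate from the observation that the portion of the upright path running between the two corresponding lattice points $(x_i - \Gamma^{r,s,t}(x_i), x_i + \Gamma^{r,s,t}(x_i))$ consists of $2|x_1 - x_2|$ unit steps, each changing the anti-diagonal coordinate by $\pm 1$, so that the anti-diagonal coordinate $2\Gamma^{r,s,t}(\cdot)$ can drift by at most $2|x_1-x_2|$.  Second, I would observe from~\eqref{e.s_j value} that $s_j \leq (s/M) \prod_{i=1}^{\infty}(1+2^{-i/3}) = s/2$ for every $j \geq 1$; the scaling factor $M$ was chosen precisely to secure this uniform headroom.

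The contradiction is then a direct calculation. Pick $x \in S_{j_0} \cap [1, r]$ with $|x - x^*| \leq (s/20) r^{2/3}$; the Lipschitz bound yields
$$
\Gamma^{r,s,t}(x) \;\geq\; \Gamma^{r,s,t}(x^*) - (s/20) r^{2/3} \;>\; (19s/20 + t) r^{2/3} \;>\; (s_{j_0} + t) r^{2/3},
$$
where the last inequality uses $s_{j_0} < s/2 < 19s/20$. This is exactly the failure of $T_{j_0}$, so $\Gamma^{r,s,t}$ cannot simultaneously belong to all of $T_1, \ldots, T_{j_0}$, and the intersection is empty.

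The only real obstacle is a minor boundary bookkeeping issue: the nearest element of $S_{j_0}$ to $x^*$ must in fact lie in $[1, r]$ so that $\Gamma^{r,s,t}$ is defined there.  This is painless because $x^*$ is itself forced to lie well in the interior: applying the same Lipschitz bound with the fact that the path starts on $\lineleft$ and ends on $\lineright$ gives $|\Gamma^{r,s,t}(1)|, |\Gamma^{r,s,t}(r)| < tr^{2/3}/2$, so $x^*$ is at distance at least $(s + t/2) r^{2/3} \gg (s/10) r^{2/3}$ from each endpoint, leaving ample room for a valid grid neighbour.
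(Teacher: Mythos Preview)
Your proof is correct and takes essentially the same approach as the paper's: both show that $T_{j_0}$ alone already fails by transferring the large displacement at $x^*$ to a nearby point of $S_{j_0}$ via a Lipschitz-type estimate for upright paths, then comparing against $s_{j_0}<s/2$. The paper's only cosmetic difference is that it uses the one-sided monotonicity of $x+\Gamma(x)$ and takes $\bar x$ to be the smallest grid point exceeding $x^*$, which always lies in $S_{j_0}$ and so sidesteps your boundary bookkeeping.
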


	\begin{proof}
		By definition of $\Gamma^{r,s,t}$,
		there is a $x\in[1,r]$ such that $\Gamma^{r,s,t}(x) > (s+t)r^{2/3}$.
		Let $\bar x$ be the smallest point of $S_{j_0}$ bigger than $x$.
		Observe that since $\Gamma^{r,s,t}$ is an upright path, its $y$-coordinate $x+\Gamma^{r,s,t}(x)$ is increasing in $x$, so that
		$x+\Gamma^{r,s,t}(x) \leq \bar x+\Gamma^{r,s,t}(\bar x)$.
		From the definition of $S_{j_0}$, we also have that $\bar x \leq x+\frac{s}{10}r^{2/3}$. This implies
		$$\Gamma^{r,s,t}(x) \leq (\bar x-x) + \Gamma^{r,s,t}(\bar x) \leq \left(\frac{s}{10}+s_{j_0}+t\right)r^{2/3},$$
		the last inequality holding on $T_{j_0}$.
		However $s_{j_0} + \frac{s}{10}+t < \frac{s}{2} + \frac{s}{10}+t < s+t$, a contradiction.
	\end{proof}

	Let $T_0$ be the whole probability space. From Lemma \ref{l.empty intersection} we have
	$$\P(A) = \P\Bigg(\bigcup_{j=0}^{j_0} T_j^c \cap A\Bigg) \leq \sum_{j=1}^{j_0} \P\left(T_{j-1}\cap T_j^c \cap A\right).$$
	We set $r>r_0$ and $s>s_0$, where $r_0$ and $s_0$ are obtained from Proposition \ref{p.midpoint tf}. Then Proposition~\ref{p.midpoint tf} establishes that $\P(T_1^c\cap A) \leq e^{-cs^3}$, and so the proof of Theorem~\ref{t.tf} is completed by the next lemma.
\end{proof}

\begin{lemma}\label{l.needslabel} Let $r$ and $s$ be as before. There exists  $c>0$ such that, for $j\geq 2$,
$$\P\left(T_{j-1}\cap T_j^c \cap A\right) \leq 2^{-j}e^{-cs^3}.$$
\end{lemma}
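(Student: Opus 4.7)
On $T_{j-1}\cap T_j^c$, some point of $S_j$ violates the bound $|\Gamma^{r,s,t}(x)|\le(s_j+t)r^{2/3}$ while every point of $S_{j-1}$ satisfies the weaker bound with $s_{j-1}$.  Since $S_j\setminus S_{j-1}$ consists of the midpoints of consecutive pairs in $S_{j-1}$, there exists $\ell\in\{0,1,\ldots,2^{j-1}-1\}$ such that, writing $a_\ell=\ell\cdot 2^{-(j-1)}r$ and $x_\ell^\ast=(2\ell+1)\cdot 2^{-j}r$, we have $|\Gamma^{r,s,t}(x_\ell^\ast)|>(s_j+t)r^{2/3}$ and $|\Gamma^{r,s,t}(a_\ell)|,|\Gamma^{r,s,t}(a_{\ell+1})|\le(s_{j-1}+t)r^{2/3}$.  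The plan is to union bound over these $2^{j-1}$ candidate pairs.

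For each $\ell$ I would decompose the weight of $\Gamma^{r,s,t}$ along the anti-diagonals through $a_\ell$ and $a_{\ell+1}$, yielding $X^{r,s,t}\le X_L^{(\ell)}+X_M^{(\ell)}+X_R^{(\ell)}$.  Here $X_L^{(\ell)}$ is the maximum weight of a path from $\lineleft$ to the window $W_\ell:=\{x+y=2a_\ell,\,|y-x|\le 2(s_{j-1}+t)r^{2/3}\}$; $X_M^{(\ell)}$ is the maximum weight from $W_\ell$ to $W_{\ell+1}$ constrained to pass through a point of height $2x_\ell^\ast$ with $|\Gamma|>(s_j+t)r^{2/3}$; and $X_R^{(\ell)}$ is the analogous quantity from $W_{\ell+1}$ to $\lineright$.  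On the event $A$, the three terms together must exceed $\mu r-c_2 s^2 r^{1/3}$.

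The crux is the middle-piece bound.  The excursion has transversal displacement at least $(s_j-s_{j-1})r^{2/3}=s_{j-1}2^{-(j-1)/3}r^{2/3}$ over anti-diagonal distance $r'=2^{-(j-1)}r$, so by Assumption~\ref{a.limit shape assumption} the mean curvature shortfall is of order $s_{j-1}^2 2^{(j-1)/3}r^{1/3}$.  Proposition~\ref{p.midpoint tf} unfortunately does not apply verbatim at this sub-scale, because the midpoint excess $(s_j-s_{j-1})r^{2/3}$ is \emph{smaller} than the endpoint window $(s_{j-1}+t)r^{2/3}$, making the rescaled Proposition parameter ``$s$'' negative.  Instead I would split $X_M^{(\ell)}$ at $x_\ell^\ast$ and bound each half via Proposition~\ref{l.sup tail}, choosing the parallelogram width parameter $\ell$ there to match the window widths and the slope parameter $z$ to match the required anti-diagonal displacement.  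The built-in parabolic correction absorbs the curvature penalty and yields, by the standard $\theta^{3/2}$ upper tail, a bound of the form $\P\bigl(X_M^{(\ell)}>\mu(a_{\ell+1}-a_\ell)-c' s_{j-1}^2 2^{(j-1)/3} r^{1/3}\bigr)\le \exp(-c\,s_{j-1}^3\,2^{j-1})$.  For the two outer pieces I would apply Proposition~\ref{p.p2l general upper tail} in its $s=0$ branch with $\theta\sim\eta_j(r/a_\ell)^{1/3}$, giving $\P(X_L^{(\ell)}>\mu a_\ell+\eta_j r^{1/3})\le\exp(-c\eta_j^{3/2})$ for $\eta_j$ of order $s_{j-1}^2 2^{(j-1)/3}$, and similarly for $X_R^{(\ell)}$; the resulting exponent $\eta_j^{3/2}$ is at least of order $s_{j-1}^3 2^{(j-1)/2}$.

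With $c_2$ taken sufficiently small relative to $c'/M^2$ and $\eta_j$ chosen appropriately, the event $T_{j-1}\cap T_j^c\cap A$ forces the failure of at least one of these three bounds for some $\ell$.  A union bound over the $2^{j-1}$ choices of $\ell$ and the three types of failure gives a probability at most $3\cdot 2^{j-1}\exp(-c_0 s_{j-1}^3 2^{(j-1)/2})$; inserting $s_{j-1}\ge s/(2M)$ from~\eqref{e.s_j value} and absorbing constants yields $3\cdot 2^{j-1}\exp\bigl(-c_1 s^3\,2^{(j-1)/2}\bigr)$.  Since the exponential factor $2^{(j-1)/2}$ dominates both the union-bound prefactor $2^{j-1}$ and the target prefactor $2^{-j}$, this is bounded by $2^{-j}e^{-cs^3}$ for all $j\ge 2$, uniformly, provided $s\ge s_0$ is taken large enough.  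The principal obstacle is the middle-piece bound: reconciling Proposition~\ref{p.midpoint tf}'s hypotheses with the sub-scale geometry (where the excursion only just protrudes above the endpoint window) requires the more delicate supremum bound of Proposition~\ref{l.sup tail} together with a careful matching of its geometric parameters.
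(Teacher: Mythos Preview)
Your overall architecture---union bound over the $2^{j-1}$ new dyadic midpoints, three-part decomposition $X_L+X_M+X_R$, and the final accounting showing that the exponent $2^{(j-1)/2}$ beats the union-bound prefactor---matches the paper's proof exactly. The discrepancy is in how you handle the middle piece.

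Your central claim, that Proposition~\ref{p.midpoint tf} ``does not apply verbatim'' because the rescaled parameter $\tilde s$ would be negative, is a misreading. In that proposition the window half-width is $t r^{2/3}$ and the midpoint must exceed $(s+t)r^{2/3}$; the relevant $s$ is simply the \emph{excess beyond the window}, not the difference between excursion and window. At the sub-scale $\tilde r = 2^{-(j-1)}r$ one has $\tilde t\,\tilde r^{2/3}=(s_{j-1}+t)r^{2/3}$ and $(\tilde s+\tilde t)\,\tilde r^{2/3}=(s_j+t)r^{2/3}$, so
\[
\tilde s \;=\; (s_j-s_{j-1})\,2^{2(j-1)/3} \;=\; s_{j-1}\,2^{(j-1)/3} \;>\;0,
\]
using $s_j-s_{j-1}=s_{j-1}2^{-(j-1)/3}$ from~\eqref{e.s_j value}. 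The only hypothesis to check is $\tilde t\le \tilde s^2$, i.e.\ $s_{j-1}+t\le s_{j-1}^2$, and this holds for $s$ large because $t\le s$ and $s_{j-1}\ge s/M$. The paper simply invokes Proposition~\ref{p.midpoint tf} with these parameters, obtaining $\P(\ell(\Gamma_2)>\mu\tilde r-\tilde c_2\tilde s^2\tilde r^{1/3})\le\exp(-c\tilde s^3)=\exp(-c\,2^{j-1}s_{j-1}^3)$ in one line. Your workaround---splitting $X_M$ at $x_\ell^\ast$ and invoking Proposition~\ref{l.sup tail} on each half---is essentially a re-derivation of Proposition~\ref{p.midpoint tf}'s proof, so it is not wrong, only redundant.

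A second, smaller point: for the outer pieces you invoke Proposition~\ref{p.p2l general upper tail}, but that result controls paths whose far endpoint lies \emph{outside} a central interval, whereas here the endpoint is constrained to lie \emph{inside} the window $W_\ell$. The paper handles the outer pieces by partitioning $W_\ell$ into unit-width segments of length $r^{2/3}$ and applying Proposition~\ref{l.sup tail} directly to each, then union bounding; this yields the $\exp(-c\,2^{(j-1)/2}s^3)$ you want without any geometric mismatch.
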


\begin{proof}
	We split $T_j^c = \bigcup_{h=0}^{2^j} T_{j,h}^c$
	based on at which dyadic point $h2^{-j}r$ of $S_j$
	we have $\Gamma^{r,s,t}(h2^{-j}r) > (s_j+t)r^{2/3}$. 

	Let $\L_1$ be the line segment of length $2(s_{j-1}+t)r^{2/3}$ of slope $-1$ centred at $((h-1)2^{-j}r, (h-1)2^{-j}r)$, and let $\L_2$ be the same centred at $((h+1)2^{-j}r, (h+1)2^{-j}r)$.

	Now, on $T_{j-1}\cap T_{j,h}^c \cap A$, there exists a path $\Gamma_1$ from $\lineleft$ to $\L_1$, a path $\Gamma_2$ from $\L_1$ to $\L_2$, and a path $\Gamma_3$ from $\L_2$ to $\lineright$ with the following properties: (i) $\Gamma_2(h2^{-j}r) > (s_j+t)r^{2/3}$ and (ii)~$\weight(\Gamma_1) + \weight(\Gamma_2) + \weight(\Gamma_3)> \mu r-c_2s^2r^{1/3}.$

	We will first show that, due to condition (i), $\Gamma_2$ must suffer a large weight loss, via Proposition~\ref{p.midpoint tf}. Then we will show using Proposition~\ref{l.sup tail}  that $\Gamma_1$ and $\Gamma_3$ are unlikely to be able to make up this loss sufficiently well for (ii) to occur. The basic reason for the large weight loss of $\Gamma_2$ is that the loss from the transversal fluctuation is much amplified by its being defined on the scale $2^{-(j-1)}r$ instead of $r$.

	We set the parameters for the application of Proposition~\ref{l.sup tail}. In order to avoid confusion, the parameter values will be distinguished by tildes. So set $\tilde r = 2^{-(j-1)}r$. As the endpoint of $\Gamma_2$ can lie anywhere on an interval of length $2(s_{j-1}+t)r^{2/3}$, set $\tilde t$ such that $\tilde t(\tilde r)^{2/3} = (s_{j-1}+t) r^{2/3}$; i.e., $\tilde t = (s_{j-1}+t)2^{2(j-1)/3}$. The path $\Gamma_2$ at its midpoint it must be at least $(s_j+t)r^{2/3}$ away from the diagonal, so the minimum transversal fluctuation the path undergoes is $((s_j+t)-(s_{j-1}+t))r^{2/3} = s_{j-1}2^{-(j-1)/3}r^{2/3}$ (using \eqref{e.s_j value}). Accordingly we set $\tilde s = s_{j-1}2^{(j-1)/3}$, so that $\tilde s(\tilde r)^{2/3} = (s_j-s_{j-1})r^{2/3}$. 

	To apply Proposition~\ref{p.midpoint tf} with these parameters, we require \smash{$\tilde t\leq \big(\tilde s\big)^2$}. Since the hypotheses of Theorem~\ref{t.tf} include that $t\leq s$, and since $s_j\in[s/M, s/2]$ for all $j$, the requirement is implied if 
	$$\frac{3s}{2}\cdot2^{2(j-1)/3}\leq \frac{s^2}{M^2}\cdot2^{2(j-1)/3},$$
	which clearly holds for all large enough $s$. So, making use of distributional translational invariance of the environment and applying Proposition~\ref{p.midpoint tf}, we obtain
	\begin{align*}
	\P\left(\weight(\Gamma_2) > \mu \tilde r - \tilde c_2(\tilde s)^2(\tilde r)^{1/3}\right) \leq \exp\left(-c(\tilde s)^3\right) \, ; %
	\end{align*}
	i.e.,
	\begin{gather*}
	\P\left(\weight(\Gamma_2) > \mu \tilde r - \tilde c_2s_{j-1}^2 2^{(j-1)/3} r^{1/3}\right) \leq \exp\left(-c2^{(j-1)}s^3\right) \, .
	\end{gather*}
	This yields
	\begin{align*}
	\P\left(\sum_{i=1}^3 \weight(\Gamma_i)> \mu r-c_2s^2r^{2/3}\right) &\leq \P\left(\weight(\Gamma_2) > \mu \tilde r - \tilde c_2 2^{(j-1)/3}s_{j-1}^2r^{1/3}\right)\\
	&\quad+ \P\left(\weight(\Gamma_1) + \weight(\Gamma_3) > \mu (r-\tilde r) + (\tilde c_2 2^{(j-1)/3}s_{j-1}^2 - c_2s^2)r^{1/3}\right) \, .
	\end{align*}
	We must address the second term. Set $c_2 = \tilde c_2/2M^2$, so that $\tilde c_2 s_{j-1}^2/2 > c_2s^2$.
	After dividing $\L_1$ and $\L_2$ into segments of length $r^{2/3}$, applying Proposition \ref{l.sup tail}, and taking a union bound over the segments, we get that the second probability is bounded by $\exp\left(-c2^{(j-1)/2} s^3\right)$.

	For large enough $s$, these two quantities are bounded by $4^{-j}e^{-cs^3}$, and taking a union bound over the $2^j$ values of $h$, we obtain Lemma~\ref{l.needslabel}.
\end{proof}

Theorem~\ref{t.tf} readily implies that the transversal fluctuation of the geodesic is of order $n^{2/3}$, with the optimal tail exponent of three.

\begin{corollary}
There exist constants $s_0$ and $n_0$ such that for $s>s_0$ and $n>n_0$,
$$\P\left(\tf(\Gamma_{n}) > sn^{2/3}\right) \leq e^{-cs^3}.$$
\end{corollary}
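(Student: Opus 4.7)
The plan is to deduce the bound directly from Theorem~\ref{t.tf} by observing that, on the event that the point-to-point geodesic $\Gamma_n$ has large transversal fluctuation, the geodesic itself is one of the paths whose weight is dominated by $X^{n,s-1,1}$. Concretely, I will apply Theorem~\ref{t.tf} with $r=n$, $t=1$, and $s$ replaced by $s-1$ (legitimate once $s>s_0 \geq 2$, so that the required inequality $t\leq s$ is satisfied). Since $(1,1)$ lies on the interval $\{x+y=2,\ |y-x|< n^{2/3}\}$ and $(n,n)$ lies on the corresponding upper interval for all sufficiently large $n$, the geodesic $\Gamma_n$ is a valid candidate in the definition of $X^{n,s-1,1}$ whenever its transversal fluctuation exceeds $(s-1+1)n^{2/3}=sn^{2/3}$. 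In that case $X_n=\ell(\Gamma_n)\leq X^{n,s-1,1}$, and hence
\[
\{\tf(\Gamma_n)>sn^{2/3}\}\subseteq \{X^{n,s-1,1}\geq X_n\}.
\]

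Next I will split this last event at the threshold $\tau=\mu n - c_2(s-1)^2 n^{1/3}$, writing
\[
\{X^{n,s-1,1}\geq X_n\}\subseteq \{X^{n,s-1,1}>\tau\}\cup\{X_n\leq \tau\}.
\]
The first piece is bounded by $e^{-c(s-1)^3}$ directly by Theorem~\ref{t.tf}. For the second piece, I will use Assumption~\ref{a.limit shape assumption} to control the deterministic shortfall $\mu n-\E[X_n]\leq g_1 n^{1/3}$, so that, once $s$ is large enough that $c_2(s-1)^2\geq 2g_1$, the event $\{X_n\leq\tau\}$ is contained in $\{X_n-\E[X_n]\leq -\tfrac{1}{2}c_2(s-1)^2 n^{1/3}\}$. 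The lower-tail estimate in Assumption~\ref{a.one point assumption} then gives probability at most $\exp(-c'(s-1)^3)$.

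Combining the two bounds and absorbing the $s$--$(s-1)$ discrepancy into the constant for $s>s_0$ large enough yields $\P(\tf(\Gamma_n)>sn^{2/3})\leq e^{-cs^3}$, as desired. The only minor point that requires a line of verification is the centering step: one must check that the Gaussian-type deterministic correction $\mu n-\E[X_n]$, guaranteed to have order $n^{1/3}$ by Assumption~\ref{a.limit shape assumption}, is dominated by the much larger $s^2 n^{1/3}$ deficit appearing in the two-sided split. This is straightforward once $s$ is taken larger than an absolute constant depending on $g_1$ and $c_2$. No multi-scale decomposition or fresh geometric input is needed beyond Theorem~\ref{t.tf}; the corollary is essentially a packaging statement converting the transversal-fluctuation-versus-weight tradeoff into an unconditional tail bound on $\tf(\Gamma_n)$.
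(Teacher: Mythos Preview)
Your proposal is correct and follows essentially the same argument as the paper's proof. Both split on whether $X_n$ exceeds the threshold $\mu n - c_2 s^2 n^{1/3}$ (the paper applies Theorem~\ref{t.tf} with $t=0$ directly, whereas you take $t=1$ and shift $s\mapsto s-1$, a purely cosmetic difference), bound the first piece by Theorem~\ref{t.tf}, and bound the second piece by the lower-tail input from Assumption~\ref{a.one point assumption}. Your explicit handling of the recentering from $\mu n$ to $\E[X_n]$ via Assumption~\ref{a.limit shape assumption} is a detail the paper simply absorbs into the words ``Assumption~\ref{a.one point assumption}''.
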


\begin{proof}
We have
\begin{align*}
\P\left(\tf(\Gamma_{n}) > sn^{2/3}\right) &\leq \P\left(\tf(\Gamma_{n}) > sn^{2/3}, X_{n} > \mu n - c_2 s^2 n^{1/3}\right) + \P\left(X_{n} \leq \mu n - c_2 s^2 n^{1/3}\right)\\
&\leq e^{-cs^3} + e^{-cs^3},
\end{align*}
the last inequality from Theorem \ref{t.tf} for the first term and Assumption~\ref{a.one point assumption} for the second.
\end{proof}

We end this section by recording a version of Theorem~\ref{t.tf} for point-to-line paths which will be used in Section~\ref{s:p2l}.

\begin{theorem}\label{t.point to line weight loss}
Let $\Gamma^{r,s}_\mathrm{line}$ be the maximum weight path among all paths $\Gamma$ from $(1,1)$ to the line $x+y=2r$ with $\tf(\Gamma) > sr^{2/3}$, and let $X^{r,s}_{\mathrm{line}}$ be its weight. Under Assumptions~\ref{a.limit shape assumption} and \ref{a.one point assumption}, there exist absolute constants $r_0$, $s_0$, $c>0$ and $c_3>0$ such that, for $s_0<s<r^{1/3}$ and $r>r_0$,
$$\P\left(X^{r,s}_{\mathrm{line}} -\mu r > -c_3 s^2 r^{1/3}\right) \leq e^{-cs^3}.$$
\end{theorem}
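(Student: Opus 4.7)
The plan is to split the event according to where the maximizing point-to-line path $\Gamma^{r,s}_{\mathrm{line}}$ ends on the line $x+y=2r$, and reduce each piece to an already-proved estimate. Writing the endpoint as $(x, 2r-x)$, I would distinguish two regimes: (Case~1) the endpoint lies close to the diagonal, namely $|x-r| \le (s/2)\,r^{2/3}$; and (Case~2) the endpoint is anti-diagonally far, $|x-r| > (s/2)\,r^{2/3}$.

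In Case~1, the path runs between an interval of anti-diagonal width $s\,r^{2/3}$ around the origin (which contains $(1,1)$ for $r$ large) and an interval of the same width around $(r,r)$, and has transversal fluctuation exceeding $sr^{2/3}=(s/2+s/2)r^{2/3}$. This is exactly the setup of Theorem~\ref{t.tf} applied with first parameter $s/2$ and second parameter $t=s/2$; the constraint $t\le s$ holds trivially and $s/2>s_0$ once $s$ is large. Consequently $X^{r,s}_{\mathrm{line}}\,\mathbf{1}_{\text{Case 1}} \le X^{r,s/2,s/2}$, so Theorem~\ref{t.tf} yields that the probability of the weight exceeding $\mu r - c_2(s/2)^2 r^{1/3}$ is at most $e^{-c(s/2)^3}$.

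In Case~2, curvature alone already forces substantial weight loss, independently of the transversal fluctuation assumption. Here I would invoke Proposition~\ref{p.p2l general upper tail} in its case~(ii), with the proposition's parameters set to $s_P=s/4$, $t_P=s/4$ and $\theta=0$; then $s_P+t_P=s/2$ matches the endpoint exclusion, $t_P=s/4\le(s/4)^2=s_P^2$ whenever $s\ge 4$, and the starting interval $\lineleft(r,s/4)$ contains $(1,1)$ for large $r$. The proposition then gives that the supremum over paths from $\lineleft(r,s/4)$ to endpoints with $|x-r|>(s/2)r^{2/3}$ exceeds $\mu r - c_1(s/4)^2 r^{1/3}$ only on an event of probability at most $e^{-c(s/4)^3}$.

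A union bound over the two cases produces the theorem with $c_3=\min(c_2/4,\,c_1/16)$ and $c$ reduced by a bounded factor. There is no serious conceptual obstacle: the argument is a routine bookkeeping extension of the point-to-point result Theorem~\ref{t.tf}. The only care needed is in the parameter matching (ensuring $t\le s$ and $t_P\le s_P^2$), in absorbing the $O(1)$ shift from the line $x+y=0$ used in the cited results to the lattice starting point $(1,1)$ (negligible on the $r^{2/3}$ scale), and in taking $s$ large enough that both Theorem~\ref{t.tf} and Proposition~\ref{p.p2l general upper tail} are applicable with the chosen parameters.
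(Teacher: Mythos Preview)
Your proposal is correct and follows essentially the same approach as the paper: split on whether the endpoint satisfies $|x-r|\le (s/2)r^{2/3}$, then apply Theorem~\ref{t.tf} in the near-diagonal case and Proposition~\ref{p.p2l general upper tail} in the far case. The only difference is cosmetic: in Case~2 the paper applies Proposition~\ref{p.p2l general upper tail} with parameters $(s/2,\,t=0,\,\theta=0)$ rather than your $(s/4,\,s/4,\,0)$, but both choices satisfy the hypotheses and yield the same conclusion.
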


\newcommand{\gammaline}{\Gamma^{r,s}_{\mathrm{line}}}

\begin{proof}
	Let $x$ be such that $\gammaline$ has endpoint $(r-x, r+x)$. Let $A$ denote the event that $|x|< sr^{2/3}/2$. %
	On the event $A$, we have the claimed bound by applying Theorem \ref{t.tf} with $t=s/2$. 
	On~$A^c$, the claim arises by applying Proposition \ref{p.p2l general upper tail} with $t=0$, $\theta=0$, and $s/2$ in place of $s$ in its statement.
\end{proof}

\section{Not too heavy: Bounding above the watermelon weight}
\label{s:upper}
Our aim in this section is to complete the proof of the weight upper bound of Theorem~\ref{t.notwidenotthingeneral}(\ref{weight1'}) and to provide the proof of Proposition~\ref{p:kthweight}. Recall that for the former our rough aim is to prove that there exist $C_2>0$ and $c>0$ such that, for appropriate ranges of $k$ and $n$,
\begin{equation}\label{e.weight upper bound}
\P\left(X_n^k > \mu nk - C_2k^{5/3}n^{1/3}\right) \leq e^{-ck^2}.
\end{equation}

We start by proving that with high probability at least $k/4$ curves of the $k$-geodesic watermelon must exit a strip $U = U_{n,\frac12\delta k^{1/3}n^{2/3}}$ around the diagonal of width $\frac12 \delta k^{1/3}n^{2/3}$, with $\delta$ as in Theorem~\ref{t:disjoint}. We will resort to proving an averaged version, as in Lemma~\ref{p.smallest curve weight lower bound}; i.e., we will show that there exists  $j\in \llbracket{ \lfloor k/2\rfloor, k\rrbracket}$ such that $k/4$ curves of the $j$-melon exit $U$. This will suffice by the interlacing guaranteed by Proposition~\ref{p.melon interlacing}.

For $j\in\intint{n}$ and $\delta>0$, define $\underline E_n^j(\delta)$ to be the minimum of the number of curves of a $j$-geodesic watermelon $\Gamma_n^j$ which \emph{exit} $U$, minimized over all $j$-geodesic watermelons $\Gamma_n^j$.

\begin{lemma}\label{l.no interior packing}

Let $C_1>0$ be as given in Theorem~\ref{t.notwidenotthingeneral}(\ref{weight1'}), and let $\delta = \delta(4C_1)$ be as in Theorem~\ref{t:disjoint}. There exists $c_1>0$ such that, under Assumption~\ref{a.one point assumption} (resp. \ref{a.one point assumption convex}), for $k \leq c_1 n^{1/2}$ (resp. $k\leq c_1n$),
$$\P\left(\bigcup_{j=\lfloor k/2\rfloor}^{k} \left\{\underline E_n^j(\delta) > \frac{k}{4}\right\}\right) \geq 1-e^{-ck^2}.$$
\end{lemma}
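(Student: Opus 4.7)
The plan is to combine the averaging estimate of Lemma~\ref{p.smallest curve weight lower bound} with Theorem~\ref{t:disjoint}, the latter applied with its ``$k$'' parameter taken to be $\lfloor k/4 \rfloor$.

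Let $B_k$ denote the event of Lemma~\ref{p.smallest curve weight lower bound}, namely that there exists $j \in \llbracket \lfloor k/2 \rfloor, k \rrbracket$ with $\underline X_n^{j,j} > \mu n - 4 C_1 k^{2/3} n^{1/3}$; that lemma gives $\mathbb{P}(B_k) \geq 1 - Ce^{-ck^2}$ in the stated range of $k$. Denote by $E_k$ the event whose probability we wish to bound from below. It suffices to prove $\mathbb{P}(B_k \cap E_k^c) \leq e^{-ck^2}$.

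On $B_k \cap E_k^c$ I fix the index $j$ supplied by $B_k$. Then $\underline E_n^j(\delta) \leq k/4$, so some $j$-geodesic watermelon has at most $k/4$ of its curves exiting the strip $U_{n, \frac12 \delta k^{1/3} n^{2/3}}$; equivalently, at least $j - k/4 \geq k/4 - 1$ of its (pairwise disjoint) curves lie entirely inside that strip. By $B_k$ each such curve has weight at least $\mu n - 4 C_1 k^{2/3} n^{1/3}$. Rewriting the strip half-width as $2^{-1/3} \delta \lfloor k/4 \rfloor^{1/3} n^{2/3} \cdot (1+o(1))$ and the weight shortfall as $4^{5/3} C_1 \lfloor k/4 \rfloor^{2/3} n^{1/3} \cdot (1+o(1))$, this is exactly the configuration to which Theorem~\ref{t:disjoint} applies with $C_3 = 4^{5/3} C_1$ and its $k$ replaced by $\lfloor k/4 \rfloor$. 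Provided $\delta = \delta(4 C_1)$ in the lemma statement is taken small enough---specifically, no larger than $2^{1/3}$ times the $\delta$ produced by Theorem~\ref{t:disjoint} at $C_3 = 4^{5/3} C_1$, a choice that is legitimate since shrinking $\delta$ only strengthens the hypothesis of that theorem---Theorem~\ref{t:disjoint} gives
\[
\mathbb{P}(B_k \cap E_k^c) \;\leq\; e^{-c' \lfloor k/4 \rfloor^2} \;\leq\; e^{-c'' k^2}
\]
for some $c'' > 0$, provided $k$ is large enough.

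Combining this with $\mathbb{P}(B_k^c) \leq C e^{-ck^2}$ from Lemma~\ref{p.smallest curve weight lower bound} yields the claim; the range of $k$ inherits the worse of the two conditions (which under Assumption~\ref{a.one point assumption} is $k \leq c_1 n^{1/2}$, and under Assumption~\ref{a.one point assumption convex} is $k \leq c_1 n$, with $c_1$ possibly reduced). The one point requiring care is the bookkeeping that reconciles the two occurrences of $\delta$, but since Theorem~\ref{t:disjoint} permits its $\delta$ to be shrunk freely, there is no substantive obstacle; I do not anticipate any other difficulty in turning this outline into a rigorous argument.
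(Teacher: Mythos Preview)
Your proposal is correct and follows essentially the same route as the paper: combine the averaging event $B_k$ from Lemma~\ref{p.smallest curve weight lower bound} with Theorem~\ref{t:disjoint} applied at scale $\lfloor k/4\rfloor$, observing that on $B_k\cap E_k^c$ the $j$-melon furnished by $B_k$ has at least $j-k/4\ge k/4-1$ disjoint curves of weight at least $\mu n-4C_1k^{2/3}n^{1/3}$ trapped in the narrow strip. The paper writes the decomposition slightly differently (as an inclusion of the complement into a union over $j$ of the ``trapped with high weight'' events, followed by a union bound), but the substance is identical.

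Your treatment of the $\delta$-bookkeeping is in fact more explicit than the paper's: you correctly note that applying Theorem~\ref{t:disjoint} with $\lfloor k/4\rfloor$ in place of $k$ converts the weight threshold to $C_3=4^{5/3}C_1$ and the strip parameter to $2^{-1/3}\delta$, and you resolve the apparent mismatch with the lemma's stated $\delta=\delta(4C_1)$ by observing that Theorem~\ref{t:disjoint} remains valid for any smaller $\delta$, so the labelled value may be taken small enough to serve both purposes. The paper leaves this reconciliation implicit.
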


\begin{proof}
	Recall from Section~\ref{s:lowerexp} the definition of $\underline X_n^{j,j}$ as the minimum weight of the lightest curve of a $j$-geodesic watermelon $\mc W$ over all such $\mc W$. We will  bound below
	$$\P\left(\bigcup_{j=\lfloor k/2\rfloor}^k\left\{\underline X_{n}^{j,j}>\mu n- 4C_1k^{2/3}n^{1/3}\right\}\cap\left\{\underline E_n^j(\delta) > \frac{k}{4}\right\}\right),$$
	which clearly suffices.
	We have
	\begin{align*}
	&\bigcap_{j=\lfloor k/2\rfloor}^{k} \left(\left\{\underline X_{n}^{j,j}\leq \mu n- 4C_1k^{2/3}n^{1/3}\right\} \cup \left\{\underline E_n^j(\delta)) \leq \frac{k}{4}\right\}\right)\\
	&\subseteq \left(\bigcap_{j=\lfloor k/2\rfloor}^k \left\{\underline X_{n}^{j,j}\leq \mu n- 4C_1k^{2/3}n^{1/3}\right\}\right)\cup\left(\bigcup_{j=\lfloor k/2\rfloor}^k \left\{\underline E_n^j(\delta) \leq \frac{k}{4}, \underline X_n^{j,j}> \mu n - 4C_1 k^{2/3}n^{1/3}\right\}\right).
	\end{align*}
	Focus on the right-hand side of this display. Lemma \ref{p.smallest curve weight lower bound} says that the probability of the first term in parentheses is bounded by $e^{-ck^2}$. For the second term, note that, on each inner event,  there exist $3k/4$ curves contained in \smash{$U_{n,\frac{1}{2}\delta k^{1/3}n^{2/3}}$} whose weight is at least \smash{$\mu n - 4C_1k^{2/3}n^{1/3}$}. Theorem~\ref{t:disjoint} then implies that the probability of the inner event is bounded by $e^{-ck^2}$ for all $j$; and a union bound, accompanied by a reduction in the value of $c$, completes the proof.
\end{proof}

\begin{proof}[Proof of the weight upper bound in Theorem~\ref{t.notwidenotthingeneral}(\ref{weight1'})]
	Let $B_1$ denote the event that  for no $j \in \llbracket \lfloor k/2\rfloor, k\rrbracket$ are there at least $k/4$ curves in every $j$-geodesic watermelon that exit the strip of width $\frac{1}{2}\delta k^{1/3}n^{2/3}$. By Lemma~\ref{l.no interior packing},
	\begin{equation}\label{e.B_1 bound}
	\P(B_1) \leq e^{-ck^2}.
	\end{equation}
	By interlacing Proposition~\ref{p.specified melon interlacing}, we see that when  $B_1^c$ occurs, at least $k/4$ of the curves of every $k$-geodesic watermelon must exit the strip of width $\frac12\delta k^{1/3}$: see Figure~\ref{f.no interior packing}.

	\begin{figure}[h]
   \centering
        \includegraphics[width=.35\textwidth]{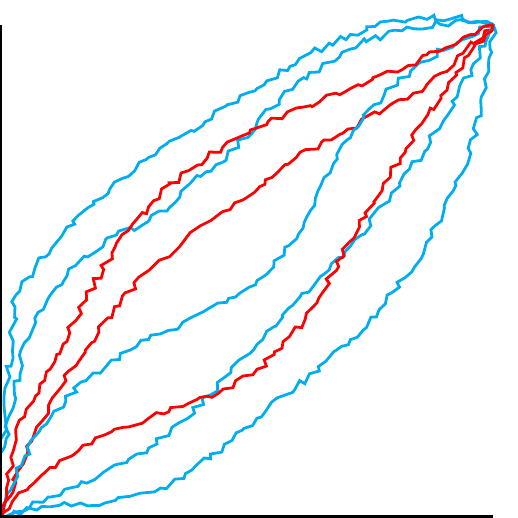}
 \caption{An illustration of how curves of $\Gamma_n^j$ exiting a strip force the same for curves of $\Gamma_n^k$ for $j=3$ (red) and $k=5$ (blue), where the two ensembles are elements of an interlacing family of gedoesic watermelons. The underlying observation is that interlacing allows one to match each curve of $\Gamma_n^j$ with one curve of $\Gamma_n^k$ which has larger transversal fluctuation. Perhaps contrary to intuition, interlacing does \emph{not} imply that a single curve of \smash{$\Gamma_n^{j}$} exiting a strip forces at least $k-j$ curves of $\Gamma_n^k$ to exit the same strip. This can also be seen in the figure, as there is only a single curve of $\Gamma_n^k$ which has a larger transversal fluctuation on the left than $\Gamma_n^j$ even though $k-j = 2$. }
 \label{f.no interior packing}
\end{figure}

Let $B_2$ denote the event that there exists an upright path $\Gamma$ from $(1,1)$ to $(n,n)$
for which  $\ell(\Gamma)>\mu n-c'k^{2/3}n^{1/3}$ and $\tf(\Gamma)>\frac12\delta k^{1/3}n^{2/3}$.
	By Theorem \ref{t.tf}, there exists $c' > 0$ such that
		\begin{equation}\label{e.B_2 bound}
	\P(B_2)\leq e^{-ck}.
	\end{equation}
Set $A=\left\{X_{n}^k > \mu nk - \frac{1}{16}c'k^{5/3}n^{1/3}\right\}$ and 
	$$B_3 = \left\{X_{n}^{\lfloor 7k/8\rfloor} > \mu n\cdot\lfloor 7k/8\rfloor + \frac{1}{16}c' k^{5/3}n^{1/3}\right\},$$
	so that, by Lemma \ref{l.first upper bound on melon weight} under Assumption~\ref{a.one point assumption} (resp. \ref{a.one point assumption convex}), for $k\leq c_1n^{1/2}$ (resp. $k\leq c_1n$),
	\begin{equation}\label{e.B_3 bound}
	\P(B_3) \leq e^{-ck^2}.
	\end{equation} 
	We now fix some $k$-geodesic watermelon $\Gamma_n^k$, and let $X_n^{k, j}$ be the weight of the $j$\textsuperscript{th} heaviest curve of $\Gamma_n^k$ for each $j\in\intint{k}$. The weight of the $\lfloor 7k/8\rfloor$ heaviest curves of $\Gamma_n^k$ must be at most the weight of the $\lfloor 7k/8\rfloor$-geodesic watermelon. Thus, when $A\cap B_3^c$ occurs,
	\begin{gather*}
	X_{n}^{\lfloor 7k/8\rfloor} + \left(X_{n}^{k,\lfloor 7k/8\rfloor+1}+\ldots+X_{n}^{k,k}\right) \geq X_{n}^{k} > \mu nk -\frac{1}{16}c'k^{5/3}n^{1/3}\\
	\implies X_{n}^{k,\lfloor 7k/8\rfloor+1}+\ldots+X_{n}^{k,k} > \frac18 \mu nk - \frac{1}{8}c' k^{5/3}n^{1/3}.
	\end{gather*}
	By the ordering of $\{X_{n}^{k,i}:i\in\intint{k}\}$, this then implies that $X_{n}^{k,\lfloor 7k/8\rfloor+1}> \mu n - c'k^{2/3}n^{1/3}$. Again by the ordering, this bound applies to $X_{n}^{k,1},\ldots,X_{n}^{k,\lfloor 7k/8\rfloor}$ as well. This means we have $\lfloor7k/8\rfloor$ disjoint curves $\Gamma$, each with $\ell(\Gamma) > \mu n - c'k^{2/3}n^{1/3}$.

	Thus, on $A\cap B_1^c\cap B_3^c$ and by the pigeonhole principle, we must have at least $\lfloor k/8\rfloor$ disjoint curves~$\Gamma$, each satisfying $\tf(\Gamma)>\frac12\delta k^{1/3}n^{2/3}$ and $\weight(\Gamma)>\mu n - c'k^{2/3}n^{1/3}$.
	By the BK inequality and~\eqref{e.B_2 bound}, the probability of this occurrence is seen to be at most
	$\exp\left(-ck\cdot k\right) = \exp\left(-ck^{2}\right)$.
	Noting the bounds \eqref{e.B_1 bound} and \eqref{e.B_3 bound} on $\P(B_1)$ and $\P(B_3)$, we complete the proof of the weight upper bound of Theorem~\ref{t.notwidenotthingeneral}(\ref{weight1'}), by taking $C_2$ in its statement to be $c'$.
\end{proof}

\section{Not too light: Bounding below the watermelon weight}
\label{s:construction}
In this section, we construct collections of disjoint paths in the square $\intint{n}^2$ that achieve a certain weight with high probability and so prove Theorem~\ref{t.flexible construction}. This construction is one of the principal new tools developed in this paper.

Recall that to prove Theorem~\ref{t.flexible construction}, for some $c>0$, $c_1>0$, $k\leq c_1n$, and $m\leq k$, we must construct $m$ disjoint paths $\gamma_1, \ldots, \gamma_m$ such that $\sum_{i=1}^m \ell(\gamma_i) \geq \mu n m - C_1 mk^{2/3}n^{1/3}$ and $\max_i\tf(\gamma_i) \leq 2mk^{-2/3}n^{1/3}$, with probability at least $1-\exp(-cmk)$.

\subsection{The construction in outline}

The construction leading to Theorem~\ref{t.flexible construction} may be explained in light of Theorem~\ref{t.notwidenothin}(\ref{tf}) on 
 the width of the $k$-geodesic watermelon having order $k^{1/3}n^{2/3}$, even if the latter assertion is a consequence of the theorem rather than a means for deriving it.  
Indeed, that $k$ watermelon curves coexist in a strip of width $k^{1/3}n^{2/3}$ suggests that, at least around the mid-height $n/2$, adjacent curves will separated on the order of $k^{-2/3}n^{2/3}$. We will demand this separation for the $m$ curves in our construction. The curves will begin near $(1,1)$ and end near $(n,n)$ at unit-order distance, so we must guide them apart to become separated during their mid-lives. 

We will index the life of paths in the square $\intint{n}^2$ according to distance along the diagonal interval, indexed so that $[a,b]$ refers to the region between the lines $x+y = 2a$ and $x+y=2b$. The diagonal interval $[1, n]$ that indexes the whole life of paths in the construction will be divided into five consecutive intervals called {\em phases} that carry the names take-off, climb, cruise, descent and landing. By the start of the middle, cruise, phase, the sought separation has been obtained, and it will be maintained there. This separation is gained during take-off and climb, and it is undone in a symmetric way during descent and landing.

Take-off is a short but intense phase that takes the curves at unit-order separation on the tarmac to a consecutive separation of order $k^{-1/3}n^{1/3}$
in a duration (or height) of order $k^{2/3}n^{1/3}$. Climb is a longer and gentler phase, of duration roughly $n/3$, in which separation expands dyadically until it reaches the scale $k^{-2/3}n^{2/3}$. Cruise is a stable phase of rough duration $n/3$.

The shortfall in weight of the $m$ paths in Theorem~\ref{t.flexible construction} relative to the linear term $\mu n m$ has order $mk^{2/3}n^{1/3}$.
The weight shortfall in each phase is the difference in total weight contributed by the curve fragments in the phase and the linear term given by the product of $\mu m$ and the duration of the phase.
The weight shortfall will be shown to have order $mk^{2/3}n^{1/3}$ for each of the five phases.%

Take-off is a phase where gaining separation is the only aim. No attempt is made to ensure that the constructed curves have weight, and the trivial lower bound of zero on weight is applied.
The weight shortfall is thus at most $\mu m \cdot \Theta(1) k^{2/3} n^{1/3}$.

The climb phase is where a rapid increase in separation is obtained, via a doubling of separation across dyadic scales. A careful calculation is needed to bound above the shortfall by  $mk^{2/3}n^{1/3}$ for the climb (and for the descent) phase.  The calculation will estimate the loss incurred across the dyadic scales using the parabolic loss in weight of Assumption~\ref{a.limit shape assumption}. This assumption plays a crucial role due to the rapid increase in separation of the curves, which causes a corresponding increase in anti-diagonal separation of the paths being considered.
The climb and descent phases are at the heart of the construction.

Cruise must maintain consecutive separation of order $k^{-2/3}n^{2/3}$ for a duration of roughly $n/3$. We construct paths that  travel through an order of $k$ consecutive boxes of height $k^{-1}n$ and width $k^{-2/3}n^{2/3}$. As the KPZ one-third exponent for energy predicts, each passage of a path across a box incurs weight shortfall of order $k^{-1/3}n^{1/3}$. Cruise weight shortfall is thus of order $m\cdot k\cdot k^{-1/3}n^{1/3} = mk^{2/3}n^{1/3}$.

Now we turn to giving the details of each phase. As in Section~\ref{s:bk}, we adopt a rounding convention for coordinates which are not integers, but, in contrast, we will be ignoring the resulting terms of $\pm 1$. More precisely, all expressions for coordinates of points should be rounded down, but  extra terms of $\pm 1$ which thus arise in non-coordinate quantities, such as expected values of weights, will be absorbed into constants without explicit mention.

\subsection{The construction in detail}

\subsubsection{Take-off}
In this phase, the $m$ curves will travel from $(1,1), \ldots,$ $(1,m)$ to the line $x+y = 2k^{2/3}n^{1/3}$.
Since we will make no non-trivial claim about the weight of take-off curves, we may choose these $m$ curves to be any disjoint upright paths, with the $i^\text{th}$  starting at $(1, i)$ and ending at
\begin{equation}\label{e.first segment end pos}
\left(k^{2/3}n^{1/3} - \left(\frac{m}{2}-i\right)k^{-1/3}n^{1/3},\  k^{2/3}n^{1/3} + \left(\frac{m}{2}-i\right)k^{-1/3}n^{1/3}\right).
\end{equation}
The next statement suffices to show that these $m$ disjoint paths exist; we omit the straightforward proof.

\begin{lemma}
Suppose given $m$ starting points $\{(1,i) : i\in\intint{m}\}$; and $m$ ending points $\{(x_i, y_i) : i\in\intint{m}\}$ on the line $x+y = r$, for some $r\geq m+1$, with $1\leq x_1 < x_2 < ... < x_m\leq m$. Then there exist $m$ disjoint paths such that the $i$\textsuperscript{th} connects $(1,i)$ to $(x_{m-i+1}, y_{m-i+1})$. 
\end{lemma}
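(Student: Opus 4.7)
The plan is to give an explicit L-shaped construction and verify disjointness by a short case analysis; no probabilistic input will be required.

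For each $i \in \intint{m}$, I will take $\gamma_i$ to be the concatenation of the horizontal segment from $(1,i)$ to $(x_{m-i+1}, i)$ followed by the vertical segment from $(x_{m-i+1}, i)$ to $(x_{m-i+1}, y_{m-i+1})$. This is a legal upright path provided $x_{m-i+1} \geq 1$ (immediate) and $y_{m-i+1} \geq i$. For the latter, strict monotonicity of the integers $1 \leq x_1 < \cdots < x_m$ yields $x_{m-i+1} \leq x_m - (i-1)$; combined with $x_m \leq m$ and $r \geq m+1$ this gives $y_{m-i+1} = r - x_{m-i+1} \geq r - x_m + (i-1) \geq i$.

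For pairwise disjointness of $\gamma_i$ and $\gamma_j$ with $i < j$, the horizontal legs sit at the distinct heights $y = i$ and $y = j$, and the vertical legs sit on the distinct vertical lines $x = x_{m-i+1}$ and $x = x_{m-j+1}$ (with $x_{m-i+1} > x_{m-j+1}$ by strict monotonicity), so like-type legs cannot meet. For the mixed cases, the horizontal leg of $\gamma_i$ meets the vertical line carrying $\gamma_j$'s vertical leg only at $(x_{m-j+1}, i)$, whose $y$-coordinate $i$ falls below $\gamma_j$'s vertical range $[j, y_{m-j+1}]$; symmetrically, the horizontal leg of $\gamma_j$ can only meet $\gamma_i$'s vertical line at $(x_{m-i+1}, j)$, whose $x$-coordinate $x_{m-i+1} > x_{m-j+1}$ lies outside $\gamma_j$'s horizontal range $[1, x_{m-j+1}]$.

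There is no real obstacle here: the content of the lemma is essentially that the pairing $(1, i) \leftrightarrow (x_{m-i+1}, y_{m-i+1})$ couples the $i^{\text{th}}$ lowest starting point with the $i^{\text{th}}$ rightmost (equivalently, $i^{\text{th}}$ lowest) endpoint, producing a non-crossing coupling that the L-shapes automatically respect.
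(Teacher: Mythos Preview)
Your proof is correct. The paper itself omits this argument as straightforward, and your explicit L-shaped construction with the clean case analysis for disjointness is exactly the kind of direct verification intended.
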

The hypothesis of this lemma that $2k^{2/3}n^{1/3} \geq m + 1$ is satisfied when $n\geq 1$ because $m\leq k\leq n$.

\begin{figure}[h]

    \begin{subfigure}[t]{0.57\textwidth}
		\centering
		\includegraphics[width=\textwidth]{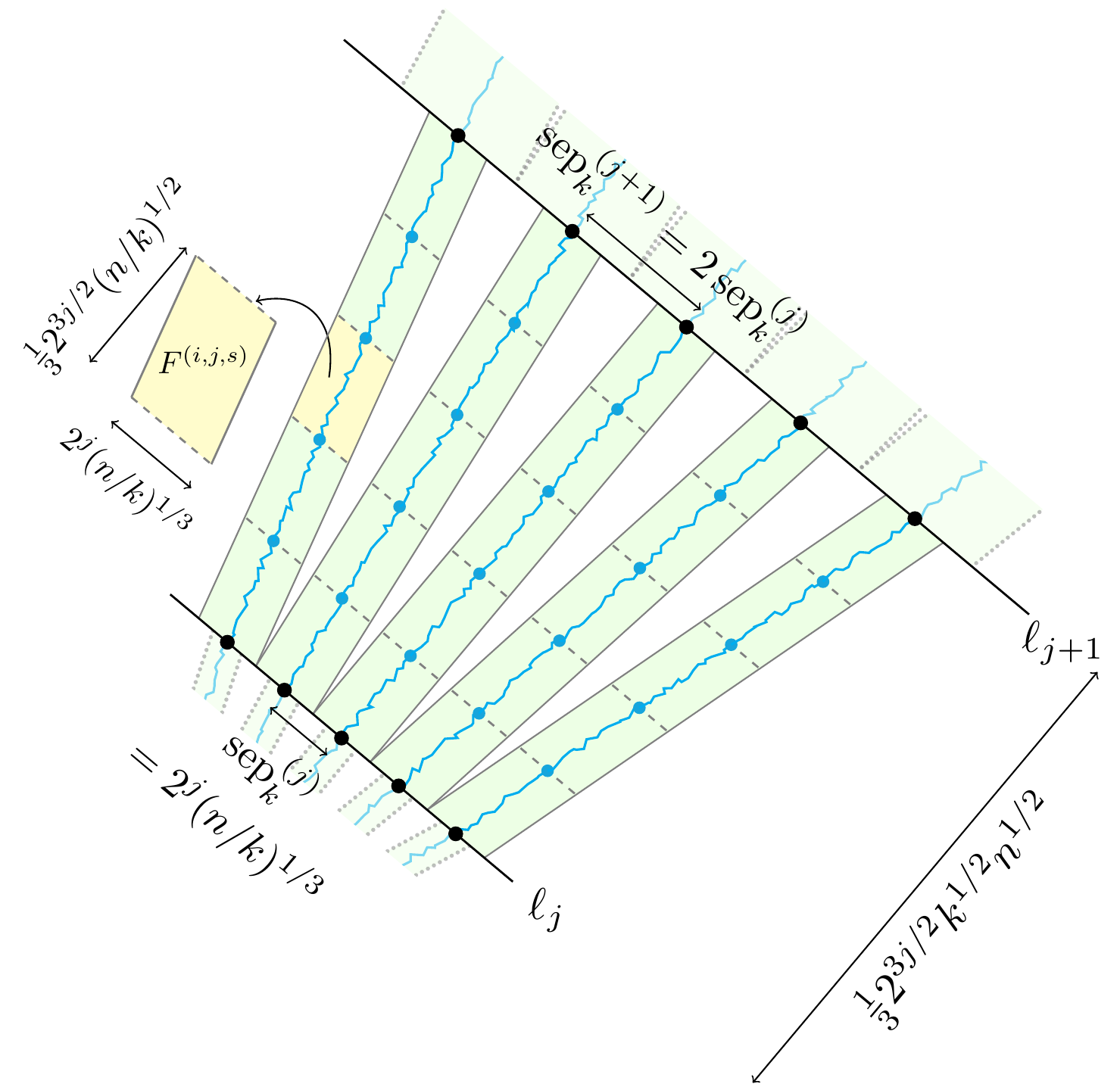}
		\caption{One segment of climb, the second phase}
		\label{f.construction segment 2}
	\end{subfigure} %
	\hspace{0.8cm}%
	\begin{subfigure}[t]{0.37\textwidth}
		\centering
		\raisebox{10mm}{\includegraphics[width=\textwidth]{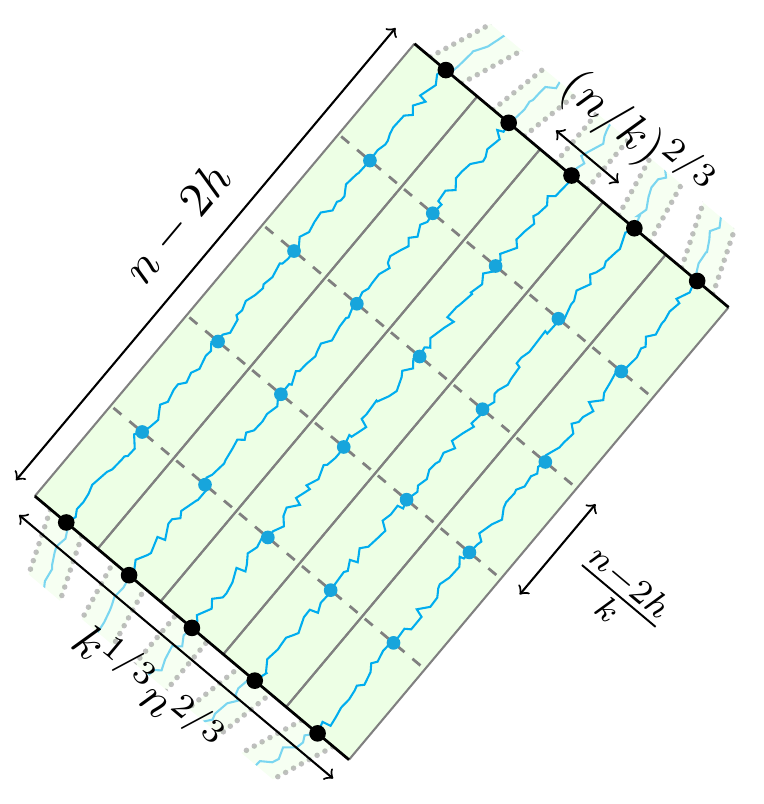}}
		\caption{Cruise, the third phase}
		\label{f.construction segment 3}
	\end{subfigure}
 \caption{In Panel A is a depiction of climb, between levels $j$ and $j+1$. Here $m=k=5$ and the highlighted flight corridor (small parallelogram) is $F^{(i,j,s)}$ for $i=1$ (first curve) and $s=3$ (third sublevel). Observe that any large green parallelogram with opposite sides on $\smash{\ell_{j}}$ and $\smash{\ell_{j+1}}$ does \emph{not} have an on-scale aspect ratio as $\smash{(\sep{j})^{3/2} = 2^{3j/2}k^{-1/2}n^{1/2}}$ is a factor of $k$ short of matching the separation along the diagonal between the two lines. However, after dividing the large parallelogram into $k$ smaller parallelograms $F^{(i,j,s)}$, the aspect ratio becomes on-scale as the small parallelogram's height is exactly $\frac{1}{3}2^{3j/2} k^{-1/2} n^{1/2}$. Also depicted in lower opacity is how the construction continues on a larger and smaller scale in the succeeding and preceding levels. In Panel B is depicted the simpler cruise phase, and how the second phase connects to it on either side. Here $h$ is the distance along the diagonal occupied by the first and second phases on the lower side.}
 \label{f.construction}
\end{figure}

\subsubsection{Climb}
This phase concerns the construction of curves as they pass through diagonal coordinates between $k^{2/3}n^{1/3}$ and
a value $h$ that we will specify in \eqref{e.height defn}.
In Lemma~\ref{l.height bound}, we will learn that $cn \leq h \leq n/2$; the climb phase thus has duration $\Theta(n)$, since $k$ is supposed to be at most a small constant multiple of $n$.

During climb, the order of separation rises from $k^{-1/3}n^{1/3}$ to  $k^{-2/3}n^{2/3}$. Separation will double during each of several segments into which the phase will be divided. The number $N$ of segments is chosen to satisfy 
\begin{equation}
2^{N} \in  k^{-1/3}n^{1/3} \cdot [1,2) \, . \label{e.number of levels}
\end{equation}

 A depiction of one of the segments is shown in Figure~\ref{f.construction segment 2}.

In order that the constructed curves remain disjoint and incur a modest weight shortfall, 
we will insist that they pass through a system of disjoint parallelograms whose geometry respects KPZ scaling:
the width of each parallelogram, and
the anti-diagonal offset between its lower and upper sides, will have the order of the two-thirds power of the parallelogram's height.

Segments $[2\ell_{j-1},2\ell_j]$ will be indexed by $j \in \intint{N}$, with  
\begin{equation}\label{e.ell defn}
\ell_j := \ell_{j-1} +  3^{-1} 2^{3(j-1)/2}k^{1/2}n^{1/2},
\end{equation}
and $\ell_0 = k^{2/3}n^{1/3}$. Climb begins where takeoff ends, at the diagonal coordinate $2\ell_0$: see~\eqref{e.first segment end pos}.

By level~$j$, we mean $\big\{ (x,y) \in \Z^2: x + y = 2\ell_j \big\}$. 
The $i^\text{th}$ curve will intersect level~$j$ at a unique point
$( \ell_j - \pos{i}{j}, \ell_j + \pos{i}{j})$,
where $\pos{i}{j}$  is inductively defined by 
\begin{equation}\label{e.pos defn}
\pos{i}{j} := \pos{i}{j-1} + \left(\frac m2 -i\right)2^{j-1}k^{-1/3}n^{1/3}
\end{equation}
from initial data $\pos{i}{0} := (m/2-i) k^{-1/3}n^{1/3}$ that is chosen consistently with \eqref{e.first segment end pos}.

We indicated that separation would double during each segment in climb from an initial value of  $k^{-1/3}n^{1/3}$.
This is what our definition ensures:  
the separation $\sep{j}$ between positions on the $j^\text{th}$ level is given by
\begin{equation}\label{e.sep defn}
\sep{j} := \pos{i}{j} - \pos{i-1}{j} = 2^j k^{-1/3}n^{1/3},
\end{equation}
where the latter equality is due to \eqref{e.pos defn}; and $\sep{0} = \pos{i}{0} - \pos{i-1}{0} = k^{-1/3}n^{1/3}$.

For indices $i$ 
that differ from the midpoint value $m/2$ by a unit order,
the curve of index $i$ has an anti-diagonal displacement of order $2^{j-1} k^{-1/3}n^{1/3}$ in its traversal between levels $j-1$ and $j$; see Figure~\ref{f.on scale parallelogram} for a depiction of anti-diagonal displacement. The  $(3/2)^{\text{th}}$ power  $2^{3(j-1)/2}k^{-1/2}n^{1/2}$ 
would seem a natural candidate for the value of $\ell_j$. Our definition in~\eqref{e.ell defn} includes a further factor of $k$, reflecting the greater separation available for curves at the edge, for which the index $i$ is close to zero or $m$. This dilation by $k$ influences the form of construction in the climb phase, and we will discuss it further soon. 

\begin{figure}[h]
   \centering
        \includegraphics[width=.75\textwidth]{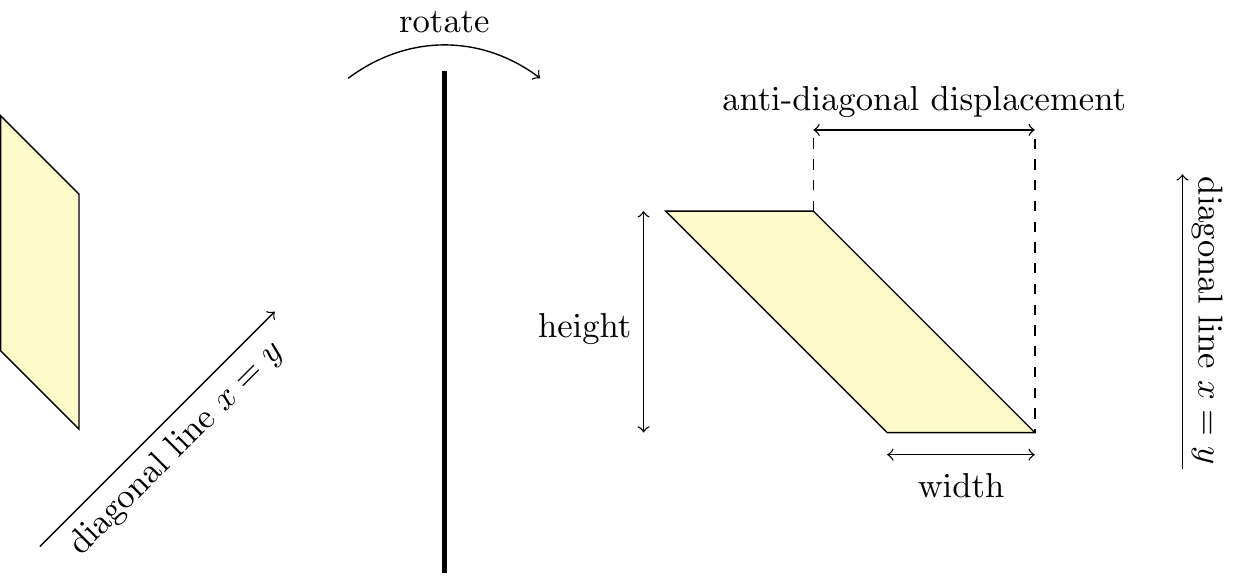}
 \caption{In order that the weight loss of the heaviest path constrained to lie in a parallelogram be on-scale, we need the anti-diagonal displacement and width of the parallelogram to both be of order $(\text{height})^{2/3}$. On the left is the parallelogram in the original coordinates, which has been rotated on the right for clarity so that the diagonal $x=y$ is visually vertical.}
 \label{f.on scale parallelogram}
\end{figure}

The height $h$ that marks the end of the climb phase may now be set: 
\begin{equation}
h := \ell_N.  \label{e.height defn}
\end{equation}

\subsubsection*{Flight corridors.} We have indicated that the curves under construction will be forced to pass through certain disjoint parallelograms. The latter regions will be called {\em flight corridors}. We may consider a flight corridor delimited by levels $j$ and $j+1$ of width $\sep{j}$. The width $2^j k^{-1/3}n^{1/3}$ and  height  $\smash{\frac{1}{3}2^{3j/2}k^{1/2}n^{1/2}}$ would, however, violate the relation $\mathrm{width} \approx \smash{\mathrm{height}^{2/3}}$ due to a mismatch in the exponent of $k$. The culprit is the extra factor of $k$ in the definition \eqref{e.ell defn}.

 The desired aspect ratio and anti-diagonal displacement conditions for all the curves will be obtained by use of a system of $k$ consecutive flight corridors for each curve. 
 Let $s \in \intint{k}$. Consider the planar line segment that runs from
\begin{gather}
\left(1-\frac{s-1}{k}\right)\left(\ell_{j}-\pos{i}{j},\, \ell_{j}+\pos{i}{j}\right) + \frac{s-1}{k}\left(\ell_{j+1}-\pos{i}{j+1},\, \ell_{j+1}+\pos{i}{j+1}\right) + (1,0)\nonumber\\
\text{to}\label{e.segment 2 parallelogram center coordinates}\\
\left(1-\frac{s}{k}\right)\left(\ell_{j}-\pos{i}{j},\, \ell_{j}+\pos{i}{j}\right) + \frac{s}{k}\left(\ell_{j+1}-\pos{i}{j+1},\, \ell_{j+1}+\pos{i}{j+1}\right) \, . \nonumber
\end{gather}
The $s$\textsuperscript{th} flight corridor $\paragram{i}{j}$ between levels $j$ and $j+1$ for the $i$\textsuperscript{th} curve
consists of those $(x,y) \in \Z^2$ that are displaced from some element in the just indicated line segment
by a vector $(t,-t)$ for some~$t$ with absolute value at most $2^{-1}\sep{j} - 1$. Thus the flight corridors associated to the $i$\textsuperscript{th} curve for different values of $i$ are disjoint.
 See Figure~\ref{f.construction}. The addition of $(1,0)$ in the first line of \eqref{e.segment 2 parallelogram center coordinates} is to ensure that the consecutive flight corridors of the same curve are disjoint.
 The $s^\text{th}$ subpath of the $i^\text{th}$ curve from level $j$ to $j+1$ 
 is defined to be a path of maximum weight between the two planar points just displayed that remains in the flight corridor  $\paragram{i}{j}$.
 Its weight will be denoted by
$\pathweight{i}{j}{2}$. Note that the $s$\textsuperscript{th} subpath's ending point in $\paragram{i}{j}$ and the $(s+1)$\textsuperscript{th} subpath's starting point in \smash{$F^{(i,j,s+1)}$} are adjacent to each other, which allows us to concatenate these curves.

The height of $\paragram{i}{j}$ is $k^{-1} \big( \ell_{j+1} - \ell_j \big) = 3^{-1} 2^{3j/2}k^{-1/2}n^{1/2}$ and its width is $\sep{j} = 2^{j}k^{-1/3}n^{1/3}$; so the flight corridor satisfies  the relation $\mathrm{height} = \frac{1}{3}\cdot\mathrm{width}^{3/2}$.

Anti-diagonal displacement at opposite ends of the flight corridor is highest for curves $1$ and $m$. For these paths, this displacement between levels $j$ to $j+1$ is 
$$
k^{-1}(\pos{i}{j+1}-\pos{i}{j}) = \frac{1}{2}2^jmk^{-4/3}n^{1/3}
$$
in view of \eqref{e.pos defn}. As the height gain in $F^{(i,j,s)}$ is $3^{-1}2^{3j/2}k^{-1/2}n^{1/2}$, and $m\leq k$, the anti-diagonal displacement for these corridors is at most $C\cdot(\text{height})^{2/3}$.

\subsubsection{Cruise}

Curves enter cruise at separation $k^{-2/3}n^{2/3}$, and must be maintained there for a duration of order $n$: see Figure~\ref{f.construction segment 3}. More precisely, and recalling that $h=\ell_{N}$,   
the $i^\text{th}$ curve enters cruise  at 
$$\left(h-\left(\frac{m}{2}-i\right)k^{-2/3}n^{2/3}, h+\left(\frac{m}{2}-i\right)k^{-2/3}n^{2/3}\right).$$
We set $\ell'_0 = h$, and $\ell'_j = \ell'_{j-1} + \frac{n-2h}{k} = h+(n-2h)\cdot\frac{j}{k}$
for $j \in \intint{k}$.
We will demand that the $i^\text{th}$ curve visits each point
\begin{equation}
\left(\ell'_{j}-\left(\frac{m}{2}-i\right)k^{-2/3}n^{2/3}, \ell'_{j}+\left(\frac{m}{2}-i\right)k^{-2/3}n^{2/3}\right). \label{e.3rd segment positions}
\end{equation}
Between consecutive points, this curve will be be constrained to remain in a flight corridor that comprises those vertices in $\Z^2$ that are displaced from the planar line segment that interpolates the pair of points by a vector $(t,-t)$ for some $t$ with absolute value at most $2^{-1}k^{-2/3}n^{2/3}$, which will be made to be disjoint by a unit displacement of the bottom side, as in \eqref{e.segment 2 parallelogram center coordinates}.
Let $\smash{\pathweightnos{i}{j}{3}}$ be the weight of the $i^\text{th}$ curve in the $j$\textsuperscript{th} corridor; namely, the maximum weight of paths that interpolate the endpoint pair and remain in the corridor.

Momentarily, we will verify that $h = \Theta(n)$. The new flight corridor thus satisfies the relation 
$$\mathrm{width} \in \left[C^{-1}\cdot\mathrm{height}^{2/3}, C\cdot \mathrm{height}^{2/3}\right]$$
 for a positive constant $C$. The corridor has no anti-diagonal displacement.

\begin{lemma}\label{l.height bound}
There exist positive $c_1$ and $c$ such that if $k<c_1 n$, then $h = \ell_{N}$ satisfies $cn\leq h\leq n/2$ for large enough $n$.
\end{lemma}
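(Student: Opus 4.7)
The lemma is a deterministic, purely computational claim about the geometric recursion in~\eqref{e.ell defn}, so the plan is simply to unroll that recursion into a closed-form geometric sum and then to insert the bounds on $2^N$ supplied by~\eqref{e.number of levels}. No probabilistic input is required.

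Iterating \eqref{e.ell defn} from $\ell_0 = k^{2/3}n^{1/3}$ produces
\begin{equation*}
h \,=\, \ell_N \,=\, k^{2/3}n^{1/3} + \frac{k^{1/2}n^{1/2}}{3}\sum_{j=0}^{N-1} 2^{3j/2} \,=\, k^{2/3}n^{1/3} + \frac{k^{1/2}n^{1/2}\bigl(2^{3N/2}-1\bigr)}{3\bigl(2^{3/2}-1\bigr)}.
\end{equation*}
From~\eqref{e.number of levels}, $k^{-1/3}n^{1/3} \le 2^N < 2k^{-1/3}n^{1/3}$, and raising to the power $3/2$ gives $k^{-1/2}n^{1/2} \le 2^{3N/2} < 2^{3/2}\hair k^{-1/2}n^{1/2}$. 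Thus the product $k^{1/2}n^{1/2}\cdot 2^{3N/2}$ is pinched between $n$ and $2^{3/2}n$, up to lower-order terms.

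For the lower bound, substituting $2^{3N/2}\geq k^{-1/2}n^{1/2}$ gives
\begin{equation*}
h \,\geq\, k^{2/3}n^{1/3} + \frac{n - k^{1/2}n^{1/2}}{3(2^{3/2}-1)}.
\end{equation*}
When $k\leq c_1 n$ with $c_1<1$ fixed, the right-hand side is at least $cn$ for some $c>0$ independent of $k,n$, once $n$ is large. For the upper bound, substituting $2^{3N/2}<2^{3/2}k^{-1/2}n^{1/2}$ gives
\begin{equation*}
h \,<\, k^{2/3}n^{1/3} + \frac{2^{3/2}}{3(2^{3/2}-1)}\, n,
\end{equation*}
and since $k\leq c_1 n$ forces $k^{2/3}n^{1/3}\leq c_1^{2/3}n$, choosing $c_1$ sufficiently small makes the first summand negligible and pins $h$ below a prescribed fraction of $n$; selecting $c_1$ so that $c_1^{2/3}$ plus the explicit coefficient $\tfrac{2^{3/2}}{3(2^{3/2}-1)}$ sits below $\tfrac{1}{2}$ yields $h\leq n/2$ as claimed.

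There is no substantive obstacle: the proof is essentially bookkeeping around a geometric series, and the only content beyond that is a numerical verification that the multiplicative constant arising from the sum, combined with the small contribution $c_1^{2/3}$ from the initial term, fits below $1/2$. This is precisely what determines how small $c_1$ has to be taken, and it is what links the rate of geometric growth of segment heights---the factor $1/3$ in~\eqref{e.ell defn}---to the fact that the climb phase leaves enough room for a symmetric descent phase and a nonempty cruise phase in between.
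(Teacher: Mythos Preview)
Your approach is exactly the paper's: unroll the recursion into a geometric series and read off the size of $h$. The lower bound goes through cleanly.

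The upper bound, however, has a genuine numerical gap. You correctly obtain from \eqref{e.number of levels} that $2^{3N/2}<2^{3/2}k^{-1/2}n^{1/2}$, and hence
\[
h < k^{2/3}n^{1/3} + \frac{2^{3/2}}{3(2^{3/2}-1)}\,n.
\]
But $\dfrac{2^{3/2}}{3(2^{3/2}-1)}=\dfrac{2\sqrt2}{3(2\sqrt2-1)}\approx 0.516$, which already exceeds $1/2$. So your final sentence---choosing $c_1$ so that $c_1^{2/3}+\dfrac{2^{3/2}}{3(2^{3/2}-1)}<\dfrac12$---asks for something impossible; no positive $c_1$ will do. Your careful handling of the range of $2^N$ (which the paper's displayed computation glosses over by plugging in the lower endpoint $2^N=k^{-1/3}n^{1/3}$ as if it were an equality) has in fact uncovered that the literal bound $h\le n/2$ is delicate with the constant $1/3$ in \eqref{e.ell defn}. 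The fix at the level of the construction is innocuous---replace $1/3$ by any smaller constant (e.g.\ $1/6$) in \eqref{e.ell defn}, which drives the coefficient safely below $1/2$ and leaves the rest of Section~\ref{s:construction} untouched---but as written your argument does not establish the stated inequality.
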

\begin{proof}
	This inference follows by noting that%
	\begin{align}\label{e.second segment height}
	h = \ell_{0}+\sum_{j=0}^{N-1} 3^{-1} 2^{3j/2}k^{1/2}n^{1/2} 
	&= \ell_{0}+\frac{1}{3(2\sqrt 2-1)}\left(2^{3N/2}-1\right)k^{1/2}n^{1/2}\\
	&= k^{2/3}n^{1/3}+\frac{1}{3(2^{3/2}-1)}\big( n - (kn)^{1/2} \big) \, .\nonumber\qedhere
	\end{align}
\end{proof}

\subsubsection{Descent and landing} These two phases are specified symmetrically with climb and take-off.

\subsection{Bounding below expected weight in the construction}\label{s.construction expected weight}

We will argue, in two steps, that the constructed curves attain the sought weight $\mu n m - \Theta(mk^{2/3}n^{1/3})$ with high probability. In this section, we will show that, under Assumptions~\ref{a.limit shape assumption}  and~\ref{a.one point assumption}, the expected total curve weight is at least $\mu n m - Cmk^{2/3}n^{1/3}$ for a constant $C>0$. In the next, the desired bound on 
curve weight will be obtained by showing that this weight concentrates around its mean due to the independence of contributions from the various flight corridors.

We will control the contribution from a given flight corridor using the lower bound on the expectation of the constrained point-to-point weight from Proposition~\ref{p.constrained lower tail}. That proposition will permit us to derive the next result, the conclusion of the present section.

\begin{proposition}\label{p.expected melon weight lower bound}
Let $X$ denote the sum of the weights of the $m$ curves in our construction. There exist $n_0 \in \N$, $C > 0$ and $c_1>0$ such that, for $n>n_0$, $k<c_1 n$ and $m\leq k$,
$\E[X] \geq \mu nm - Cmk^{2/3}n^{1/3}$.
\end{proposition}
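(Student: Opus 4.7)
The plan is to apply linearity of expectation, decomposing $\E[\ell(\gamma_i)]$ for each constructed curve $\gamma_i$ into contributions from the five phases, and to bound each phase's contribution using Proposition~\ref{p.constrained lower tail} applied to its flight corridors (after translating each corridor's lower-left corner to $(1,1)$, which is permitted by distributional translation-invariance of the vertex weights).

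For take-off and landing, the trivial lower bound $\E[\ell]\geq 0$ (from non-negativity of vertex weights) suffices; each of these phases has diagonal extent $\ell_0=k^{2/3}n^{1/3}$, so the deficit relative to the linear rate $\mu$ totals $2m\mu k^{2/3}n^{1/3}$, already of the claimed order. For cruise, each curve traverses $k$ disjoint corridors of height $(n-2h)/k$, bounded aspect ratio, and zero anti-diagonal offset ($z=0$); here Lemma~\ref{l.height bound} ensures $(n-2h)/k = \Theta(n/k)$, which exceeds $r_0$ under $k\leq c_1 n$ with $c_1$ small. Applying Proposition~\ref{p.constrained lower tail} with $z=0$ to each corridor and summing over the $k$ corridors and $m$ curves yields cruise contribution $\geq m\mu(n-2h)-Cmk^{2/3}n^{1/3}$.

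The climb phase (and, symmetrically, descent) is the delicate part. A direct calculation from \eqref{e.ell defn}, \eqref{e.pos defn}, and \eqref{e.sep defn} shows that each corridor at level $j$ has height $r_j=3^{-1}2^{3j/2}k^{-1/2}n^{1/2}$, scaled aspect ratio $\ell=\sep{j}/r_j^{2/3}=3^{2/3}$, and scaled anti-diagonal offset $z_{i,j}=k^{-1}(\pos{i}{j+1}-\pos{i}{j})/r_j^{2/3}=3^{2/3}(m/2-i)/k$. Both $\ell$ and $|z_{i,j}|$ are uniformly bounded—crucially, $|z_{i,j}|\leq 3^{2/3}/2$ holds precisely because $m\leq k$—so Proposition~\ref{p.constrained lower tail} applies uniformly with per-corridor expected weight $\geq \mu r_j - G_1 z_{i,j}^2 r_j^{1/3}-Cr_j^{1/3}$. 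The leading $\mu r_j$ terms telescope across the $k$ subsegments per level and across the $N$ levels to $\mu(h-\ell_0)$ per curve. The parabolic loss requires the geometric sum $\sum_{j=0}^{N-1}r_j^{1/3}\leq Ck^{-1/3}n^{1/3}$ (using $2^N\asymp k^{-1/3}n^{1/3}$ from \eqref{e.number of levels}), giving per-curve parabolic loss $C(m/2-i)^2k^{-4/3}n^{1/3}$; then $\sum_{i=1}^m(m/2-i)^2\leq Cm^3$ combined with $m\leq k$ yields $m^3k^{-4/3}\leq mk^{2/3}$, bounding the total parabolic loss by $Cmk^{2/3}n^{1/3}$. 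The residual $Cr_j^{1/3}$ terms contribute $Cmk^{2/3}n^{1/3}$ analogously.

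Combining all five phases gives
\begin{equation*}
\E[X]\geq 2m\mu(h-\ell_0)+m\mu(n-2h)-Cmk^{2/3}n^{1/3}=m\mu n-2m\mu\ell_0-Cmk^{2/3}n^{1/3}\geq m\mu n-C'mk^{2/3}n^{1/3},
\end{equation*}
absorbing $2m\mu\ell_0=2m\mu k^{2/3}n^{1/3}$ into the error. The main obstacle is the climb/descent accounting: verifying uniform applicability of Proposition~\ref{p.constrained lower tail} over the $kN$ corridors per curve, and ensuring the geometric-series bounds for the parabolic losses sum to exactly the right order $mk^{2/3}n^{1/3}$—a tight accounting that breaks without the hypothesis $m\leq k$.
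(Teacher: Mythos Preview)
Your proposal is correct and follows essentially the same approach as the paper: decompose by phase, apply Proposition~\ref{p.constrained lower tail} to each flight corridor, sum the $\mu r_j$ terms telescopically, and control the parabolic and residual losses via the geometric series $\sum_j r_j^{1/3}=O(k^{-1/3}n^{1/3})$ together with $m^3\leq mk^2$. The paper's proof carries out the same computation, organized by summing over $s$, then $i$, then $j$, arriving at the identical bound; your explicit identification of the scaled parameters $\ell=3^{2/3}$ and $z_{i,j}=3^{2/3}(m/2-i)/k$ as uniformly bounded is exactly what the paper uses implicitly to justify that Proposition~\ref{p.constrained lower tail} applies with constants independent of $i,j$.
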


\begin{proof}
The values of $C$ and $c$  may change from line to line but they do not depend on $n$, $m$, or $k$.

	We start with the weight of a curve fragment between two levels during climb. That is, we find a lower bound on $\sum_{s=1}^k \E[\pathweight{i}{j}{2}]$, where recall that $\pathweight{i}{j}{2}$ the weight of the heaviest path that travels between the points~\eqref{e.segment 2 parallelogram center coordinates}  without exiting the flight corridor~$F^{(i,j,s)}$. 

In specifying the climb phase, we noted that the condition in Proposition~\ref{p.constrained lower tail} that the anti-diagonal displacement of the flight corridor  is of order $r^{2/3}$ holds. Thus, using Proposition~\ref{p.constrained lower tail} with the settings $r=k^{-1}(\ell_{j+1}- \ell_j)$ and $zr^{2/3}=k^{-1}(\pos{i}{j+1}-\pos{i}{j})$, we obtain 
	\begin{align}\label{e.first expression of EXijs}
	\E[\pathweight{i}{j}{2}] \geq \frac{\mu}{k}\left(\ell_{j+1}-\ell_{j}\right) - C\frac{\left(\pos{i}{j+1}-\pos{i}{j}\right)^2}{k\left(\ell_{j+1}-\ell_j\right)} - \frac{c}{k^{1/3}}\left(\ell_{j+1}-\ell_j\right)^{1/3}.
	\end{align}
	We simplify the first two terms of \eqref{e.first expression of EXijs} using the expressions from \eqref{e.ell defn} and \eqref{e.pos defn} to obtain
	\begin{align*}
	\frac{\mu}{3k} 2^{3j/2}k^{1/2}n^{1/2} - C&\frac{\frac{1}{k^2}\left(\frac m2 -i\right)^2 2^{2j}k^{-2/3}n^{2/3}}{\frac{1}{3k} 2^{3j/2}k^{1/2}n^{1/2}}\\
	&= \frac{\mu}3 2^{3j/2}k^{-1/2}n^{1/2} - C\left(\frac m2 -i\right)^2 2^{j/2}k^{-13/6}n^{1/6}.
	\end{align*}
	For the last term of \eqref{e.first expression of EXijs} we have, from \eqref{e.ell defn},
	\begin{align*}
	\frac{c}{k^{1/3}}\left(\ell_2^{(j+1)}-\ell_2^{(j)}\right)^{1/3} 
	&= \frac{c}{k^{1/3}}\cdot2^{j/2}k^{1/6}n^{1/6}
	= c\cdot 2^{j/2}k^{-1/6}n^{1/6},
	\end{align*}
	where we have absorbed the constant factor of $3^{-1/3}$ into the value of $c$.
	Combining the three preceding displays and summing over $s$ and $i$ gives a lower bound on the expected weight of all the curves between levels $j$  and $j+1$:
	\begin{align*}
	\sum_{i=1}^m\sum_{s=1}^k \E[\pathweight{i}{j}{2}]&\geq \frac{\mu}{3}\cdot 2^{3j/2}mk^{1/2}n^{1/2} - C\cdot2^{j/2}m^3k^{-7/6}n^{1/6} - c2^{j/2}mk^{5/6}n^{1/6}\\
	&\geq \frac{\mu}{3}\cdot 2^{3j/2}mk^{1/2}n^{1/2} - C\cdot2^{j/2}mk^{5/6}n^{1/6};
	\end{align*}
	here we used that $m^3 \leq mk^2$ to bound the second term in the latter inequality.
	Summing from $j=0$ to $N-1$ gives a lower bound for the total weight of the climb phase of
	\begin{align}
	\sum_{i=1}^m\sum_{j=0}^{N-1}\sum_{s=1}^k \E[\pathweight{i}{j}{2}] 
	&\geq \frac{\mu m}{3}\cdot \sum_{j=0}^{N-1}2^{3j/2}k^{1/2}n^{1/2} - C\cdot 2^{N/2}mk^{5/6}n^{1/6}\nonumber\\
	&= \mu(h-k^{2/3}n^{1/3})\cdot m - Cmk^{4/6}n^{1/3}
	= \mu hm - Cmk^{2/3}n^{1/3}, \label{e.complete lower bound of second segment}
	\end{align}
	where $2^{N/2} = k^{-1/6}n^{1/6}$ from \eqref{e.number of levels} and the expression of $h$ from \eqref{e.second segment height} were used.

 As for cruise, an easy computation from Proposition~\ref{p.constrained lower tail} yields that
	\begin{equation}\label{e.lower bound for third segment}
	\E[\pathweightnos{i}{j}{3}] \geq \frac{\mu(n-2h)}{k} - ck^{-1/3}(n-2h)^{1/3}\geq \frac{\mu(n-2h)}{k} - ck^{-1/3}n^{1/3}.
	\end{equation}
We write $\overline X^{(i,j,s)}_{n,k,2}$ for the descent counterpart of the climb flight corridor weight maximum. The bound on mean valid for climb holds equally for descent.
 Combining climb, descent and cruise weight bounds with the zero bound for take-off and landing, the  total weight $X$ in our construction is seen to satisfy 
	\begin{equation}\label{e.construction weight}
	X \geq \sum_{i,j,s}\left(\pathweight{i}{j}{2}+ \overline X^{(i,j,s)}_{n,k,2}\right) + \sum_{i,j}\pathweightnos{i}{j}{3} \, . 
	\end{equation}
 Taking expectation, and applying \eqref{e.complete lower bound of second segment} and \eqref{e.lower bound for third segment}, we obtain
	$$\E\left[\sum_{i,j,s}\left(\pathweight{i}{j}{2}+ \overline X^{(i,j,s)}_{n,k,2}\right) + \sum_{i,j}\pathweightnos{i}{j}{3}\right] \geq \mu nm - Cmk^{2/3}n^{1/3} \, ,$$
	the conclusion of Proposition~\ref{p.expected melon weight lower bound}. 
\end{proof}

\subsection{One-sided concentration of the construction weight}\label{s.construction concentration}
We now know that the expected weight of the construction is correct. To prove Theorem~\ref{t.flexible construction},  we argue that the weight is unlikely to fall much below its mean. We will stochastically dominate the summands in the expression for $X$ from \eqref{e.construction weight} with independent exponential random variables of varying parameters, and use the following concentration result from \cite{janson2018tail} for sums of such variables. We denote by $\Exp(\lambda)$ the exponential distribution with rate $\lambda$.

\begin{proposition}[Theorem 5.1 (i) of \cite{janson2018tail}]\label{p.concentration of exponentials}
Let $W=\sum_{i=1}^{n} W_{i}$ where $W_{i} \sim \Exp\left(a_{i}\right)$ are independent. Define
\begin{align*}
\nu :=\E W=\sum_{i=1}^{n} \E W_{i}=\sum_{i=1}^{n} \frac{1}{a_{i}},\,\, \,\,
a_{*}  :=\min _{i} a_{i}.
\end{align*}
Then for  $\lambda\geq 1$,
$$
\P\left(W \geq \lambda \nu\right) \leq \lambda^{-1} e^{-a_{*} \nu(\lambda-1-\log \lambda)}.
$$
\end{proposition}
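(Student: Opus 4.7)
The plan is a Chernoff bound argument built on the explicit moment generating function of each exponential summand. Since $\E[e^{tW_i}] = a_i/(a_i - t)$ for $t < a_i$, independence gives
\begin{equation*}
\E[e^{tW}] \; = \; \prod_{i=1}^{n} (1 - t/a_i)^{-1}, \qquad t \in [0, a_*),
\end{equation*}
and Markov's inequality applied to $e^{tW}$ yields $\P(W \geq \lambda \nu) \leq e^{-t\lambda\nu} \prod_i (1 - t/a_i)^{-1}$ for every such $t$. The natural choice, dictated by the form of the target exponent $\lambda - 1 - \log\lambda$, is $t = a_*(1 - 1/\lambda)$, which lies in $[0, a_*)$ since $\lambda \geq 1$; for this $t$, the Markov factor is exactly $e^{-a_*\nu(\lambda-1)}$.

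To control the MGF product, set $q_i := a_*/a_i \in (0, 1]$, so that each factor equals $(1 - q_i + q_i/\lambda)^{-1}$. Convexity of $x \mapsto \lambda^{-x}$, applied via Jensen's inequality to a Bernoulli law of mean $q_i$, yields the elementary pointwise bound
\begin{equation*}
\lambda^{-q_i} \; \leq \; (1 - q_i) + q_i \lambda^{-1}, \qquad q_i \in [0, 1];
\end{equation*}
reciprocating gives $(1 - q_i + q_i/\lambda)^{-1} \leq \lambda^{q_i}$. Multiplying over $i$ and using $\sum_i q_i = a_*\nu$ bounds $\prod_i(1 - t/a_i)^{-1}$ by $\lambda^{a_*\nu}$. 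Combining the two factors produces the Chernoff bound $\P(W \geq \lambda\nu) \leq e^{-a_*\nu(\lambda - 1 - \log\lambda)}$, which already has the correct exponential shape.

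The main obstacle is recovering the extra $\lambda^{-1}$ prefactor asserted by the proposition, which the crude Chernoff argument just described misses by exactly this factor. This polynomial correction is a saddle-point refinement: at the optimal tilt $t$, the Chernoff bound is tight only up to the Gaussian spread of the exponentially tilted measure near its mean, and a finer integration of the tilted density $e^{tW} f_W / M(t)$ across the region $\{W \geq \lambda\nu\}$, rather than the crude envelope $\mathbf{1}_{W \geq \lambda\nu} \leq e^{t(W - \lambda\nu)}$, extracts a factor proportional to $1/(a_* - t) = \lambda/a_*$ when weighed against the diverging curvature of $\phi(t) = -\sum_i \log(1-t/a_i)$ near $t = a_*$. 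Since the proposition is quoted directly from \cite{janson2018tail}, I would appeal to their sharper integration argument for the precise $\lambda^{-1}$ prefactor rather than reproduce it here.
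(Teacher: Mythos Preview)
The paper does not prove this proposition at all; it is quoted verbatim from \cite{janson2018tail} and used as a black box in Section~\ref{s.construction concentration}. There is therefore no ``paper's own proof'' to compare against.

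Your Chernoff argument is correct and cleanly executed: the choice $t = a_*(1-1/\lambda)$ and the Jensen bound $(1-q_i+q_i/\lambda)^{-1} \le \lambda^{q_i}$ together yield $\P(W\ge\lambda\nu)\le e^{-a_*\nu(\lambda-1-\log\lambda)}$, which is exactly the stated bound shorn of the $\lambda^{-1}$ prefactor. You are also right that recovering that prefactor needs a finer argument than a bare Markov step; Janson's proof indeed proceeds by a more careful manipulation that extracts it. So your proposal is honest about where it stops.

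Two remarks worth making. First, for the paper's sole application of this proposition (the proof of Theorem~\ref{t.flexible construction}), $\lambda$ is taken to be the fixed constant $3/2$, so the $\lambda^{-1}$ prefactor is just $2/3$ and is absorbed into the exponential; your weaker bound would have sufficed there without change. Second, your description of the mechanism behind the missing prefactor as a ``saddle-point refinement'' tied to curvature near $t=a_*$ is somewhat imprecise: Janson's actual argument does not integrate a tilted density but rather isolates one factor $(1-t/a_*)^{-1}$ from the product, bounds the remaining factors essentially as you did, and then optimises the resulting one-variable expression in $t$; the extra $\lambda^{-1}$ falls out of that optimisation directly.
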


\begin{proof}[Proof of Theorem~\ref{t.flexible construction}]
	Let $K\geq 0$ and $0< L_1\leq L_2$. As before, let $U = U_{r,\ell r^{2/3}, zr^{2/3}}$ be a parallelogram with width $\ell r^{2/3}$, height $r$, and anti-diagonal displacement $zr^{2/3}$, where $|z|\leq K$ and $L_1\leq \ell \leq L_2$. Let $X_r^{U}$ denote the weight of the heaviest midpoint-to-midpoint path that lies in~$U$. By Proposition \ref{p.constrained lower tail}, and $\E[X^U_{r}] \leq \E[X_r] \leq \mu r$ (which is due to Assumption~\ref{a.limit shape assumption}), there exists $c_4=c_4(L_1,L_2,K)>0$ such that, for $r>r_0 = r_0(L_1, L_2, K)$ and $\theta>\theta_0 = \theta_0(L_1, L_2, K)$,
	$$\P\left(X^U_r -\E[X^U_r] < -\theta r^{1/3}\right) \leq e^{-c_4\theta} \, ;
	$$
	or
	\begin{equation}\label{e.stochastic domin of constrained weight}
	\P\left(X^U_r - \E[X^U_r] < -\theta\right) \leq \exp(-c_4\theta/r^{1/3}) \, ,
	\end{equation}
	the latter for $\theta$ larger than $\theta_0 r^{1/3}$ and $r>r_0$. Equation~\eqref{e.stochastic domin of constrained weight} says that we have the stochastic domination
	$$-\left(X^U_r-\E[X^U_r]\right) \leq_{\mathrm{sd}} \Exp(c_4/r^{1/3}) + \theta_0 r^{1/3},$$
	where, in an abuse of notation, $\Exp(\lambda)$ denotes a random variable with the exponential distribution of rate $\lambda$, and $X \leq_{\mathrm{sd}} Y$ denotes that the distribution of $X$ is stochastically dominated by that of~$Y$. 
	Thus, in our construction, we have a coupling
	\begin{align}
	\begin{split}\label{e.stochastic dom by exp}
	-\left(\pathweight{i}{j}{2}-\E[\pathweight{i}{j}{2}]\right) &\leq W_2^{(i,j,s)}+ \theta_0 2^{j/2}k^{-1/6}n^{1/6},\\
	-\left(\overline X^{(i,j,s)}_{n,k,2} -\E[\overline X^{(i,j,s)}_{n,k,2}]\right) &\leq \overline W_2^{(i,j,s)}+\theta_0 2^{j/2}k^{-1/6}n^{1/6}, \quad \text{and}\\
	-\left(\pathweightnos{i}{j}{3}-\E[\pathweightnos{i}{j}{3}]\right) &\leq W_3^{(i,j)} + \theta_0 k^{-1/3}n^{1/3}.
	\end{split}
	\end{align}
	Here the random variables on the right-hand side are independent and distributed as
	\begin{equation}\label{e.exponentials}
	\begin{split}
	W_2^{(i,j,s)} &\sim \Exp\left(c_42^{-j/2}k^{1/6}n^{-1/6}\right),\\
		\overline W_2^{(i,j,s)} &\sim \Exp\left(c_42^{-j/2}k^{1/6}n^{-1/6}\right),\quad\text{and}\\
		W_3^{(i,j)} &\sim \Exp\left(c_4k^{1/3}n^{-1/3}\right).
	\end{split}
	\end{equation}
 By our construction, $K$, $L_1$, and $L_2$ in Proposition~\ref{p.constrained lower tail} may be chosen independently of $i$ and~$j$.
Thus, whatever the dependence of $c_4$ and $\theta_0$ 
on $i$ and $j$, they are uniformly bounded away from $0$ and $\infty$ respectively, and so may be assumed to be constant.
We set 
	$$W :=\sum_{i,j,s}\left(W_{2}^{(i,j,s)}+\overline W_{2}^{(i,j,s)}\right) + \sum_{i,j}W_{3}^{(i,j)}.$$
	We wish to use Proposition~\ref{p.concentration of exponentials} on the sum $W$  of independent exponential random variables, and so we must estimate Proposition~\ref{p.concentration of exponentials}'s parameters. We have
	\begin{align*}
	\E\left[\overline W_{2}^{(i,j,s)}\right] = \E\left[W_{2}^{(i,j,s)}\right] &= c_4^{-1}2^{j/2}k^{-1/6}n^{1/6} \qquad \text{and} \qquad	\E\left[W_{3}^{(i,j)}\right] = c_4^{-1}k^{-1/3}n^{1/3}.
	\end{align*}
	Summing the expressions of the last display over the indices and using that $2^{N/2} = k^{-1/6}n^{1/6}$ from~\eqref{e.number of levels} gives that the total mean $\nu$ satisfies
	\begin{equation} \label{e.nu value}
	\nu := \E[W] \leq 8c_4^{-1}mk^{2/3}n^{1/3}.
	\end{equation}
	Noting that the coefficients of $\theta_0$ in \eqref{e.stochastic dom by exp} are the same as the mean of the corresponding exponential random variable up to a factor of $c_4$, similarly summing these coefficients shows the stochastic domination
	$$-(X-\E[X]) \leq W + 8\theta_0mk^{2/3}n^{1/3}.$$
	Using that $2^N = k^{-1/3}n^{1/3}$ from \eqref{e.number of levels}, we see that the minimum $a^*$ of the rates of the exponential random variables defined in \eqref{e.exponentials} is given by
	\begin{equation}\label{e.a* value}
	a^* = \min(c_42^{-N/2}k^{1/6}n^{-1/6}, c_4k^{1/3}n^{-1/3}) = c_4k^{1/3}n^{-1/3}.
	\end{equation}
	 Let $S = \max(c_4^{-1},\theta_0)$. By Proposition~\ref{p.expected melon weight lower bound}, with this result contributing the value of $C$, and the stochastic domination~\eqref{e.stochastic dom by exp},
	\begin{align*}
	\P\Big(X < \mu nm - (C+20S)mk^{2/3}n^{1/3}\Big) &= \P\left(-(X-\E[X]) > 20Smk^{2/3}n^{1/3}\right)\\
	&\leq \P\left(W \geq 20Smk^{2/3}n^{1/3} - 8\theta_0mk^{2/3}n^{1/3}\right).
	\end{align*}
	Since $20S-8\theta_0\geq 12S \geq 12c_4^{-1}$, the latter quantity is bounded using Proposition~\ref{p.concentration of exponentials}, \eqref{e.nu value}, and \eqref{e.a* value}:
	\begin{align*}
	\P\left(W > 12c_4^{-1}mk^{2/3}n^{1/3}\right)	\leq \P\left(W > \frac{12}{8} \nu\right)
	&\leq \frac23 \exp\left(-a^*\nu\left(\frac32-1-\log\frac32\right)\right)\\
	&\leq \exp\left(-cmk\right).
	\end{align*}
	Numerical evaluation of the exponent shows that we may take $c=3/4$.

In view of the paragraphs after \eqref{e.segment 2 parallelogram center coordinates} and \eqref{e.3rd segment positions},  flight corridors during climb, descent and cruise lie within the strip around the diagonal of width $2mk^{-2/3}n^{2/3}$; thus, the transversal fluctuations of the constructed curves in these phases also satisfy this bound.
By setting $C_1 = C+20S$, we complete the proof of Theorem~\ref{t.flexible construction}.
\end{proof}

\begin{remark}
The argument just given shows that, for positive constants $c$ and $x_0$ and  $x\geq x_0$,
$$
\P\Big(X < \mu nm - (C+x)mk^{2/3}n^{1/3}\Big) \leq \exp\left(-cmkx\right) \, .
$$

\end{remark}

\section{Combinatorics of watermelon geometry}
\label{s:geometry}

Here we  prove three geometric results that we have invoked several times; namely, Propositions \ref{p.starting and ending points}, \ref{p.melon interlacing}, and \ref{p:ordering}. The arguments are geometric and combinatorial, rather than probabilistic, and, we believe, of independent interest.  Proposition \ref{p.starting and ending points} asserts that a $k$-geodesic watermelon exists whose curves begin and end at certain prescribed points on the boundary of the square $\intint{n}^2$. 

We begin by defining a partial order on paths which we will need in the proof of Proposition~\ref{p.starting and ending points}. Having a partial order will allow us to consider maximal elements, which will have nice geometric properties.

Given two disjoint upright paths $\gamma$ and $\gamma'$ in $\intint{n}^2$, we say $\gamma'$ is vertically above $\gamma$, denoted $\gamma \preceq_{\mathrm{v}} \gamma'$, if
$$\exists x \in \intint{n} \text{ such that } \max\{y : (x,y) \in \gamma\} \leq \min\{y: (x,y) \in \gamma'\},$$
where $\min \emptyset = -\infty$ and $\max\emptyset = \infty$. Because we want this to be a partial order, we additionally impose reflexivity, i.e., $\gamma \preceq_{\mathrm{v}} \gamma$ for all upright paths $\gamma$. That this partial order is anti-symmetric, i.e., $\gamma\neq \gamma'$ implies that not both $\gamma \preceq_{\mathrm{v}} \gamma'$ and $\gamma' \preceq_{\mathrm{v}} \gamma$ hold, follows from the disjointness and upright nature of the two curves. In fact, two disjoint curves are comparable if and only if their projections on to the $x$-axis are not disjoint; we will refer to these projections as the \emph{$x$-projections} of the curves.

\begin{proof}[Proof of Proposition \ref{p.starting and ending points}]
The proof is an induction on $k$. The inductive hypothesis asserts that, for any $m$ and $n$, and given a collection of $k\leq \min(m,n)$ disjoint upright paths $\{\phi_i : i\in\intint{k}\}$ in $\intint{m}\times \intint{n}$, there exist $k$ disjoint upright paths $\gamma_1,\ldots,\gamma_k$ such that
\begin{enumerate}
\item $\cup_{j=1}^{k}\phi_j\subseteq \cup_{j=1}^{k}\gamma_j$;  and
\item for  $i \in \intint{k}$, 
$\gamma_i$ starts at $(1,k-i+1)$ and ends at  $(m,n-i+1)$.
\end{enumerate}

Taking $m=n$ and $\phi_1, \ldots, \phi_k$ to be the collection of curves in the given $k$-geodesic watermelon $\phi_n^k$, it is easy to see that this statement implies Proposition \ref{p.starting and ending points}: the set of $k$ disjoint paths $\{\gamma_i : i \in \intint{k} \}$ is a $k$-geodesic watermelon, because its weight is at least that of $\phi_n^k$, in view of  $\xi_v \geq 0$ for $v \in \Z^2$. 

Now we prove the inductive hypothesis. The base case $k=1$ is easy to verify. Assuming this statement at index $k-1$, we derive it at index $k$. Relabel the given curves if required so that $\phi_1$ is the topmost, and, if there is a tie, the leftmost. More precisely, consider the set of maximal elements of $\{\phi_i : i\in\intint{k}\}$ under the $\preceq_{\mathrm{v}}$ order; as they are disjoint and mutually incomparable, the maximal elements must have disjoint $x$-projections, and we label the maximal element with leftmost $x$-projection as $\phi_1$. Now remove the top row and the left column; which is to say, consider the restriction of $\phi_2,\ldots,\phi_{k}$ to $\llbracket2,m\rrbracket\times\intint{n-1}$. This is a collection of $k-1$ disjoint upright paths because $\phi_j, j\in\llbracket2,k\rrbracket$, are disjoint and upright.
Applying the assumed form of the inductive hypothesis to this collection, we obtain paths $\gamma_2',\ldots,\gamma_k'$, with $\gamma_i'$ from $(2,k-i+1)$ to $(m,n-i+1)$, such that $\cup_{j=2}^k \gamma_j'$ contains $\cup_{j=2}^k\phi_j \cap (\llbracket2,m\rrbracket\times \intint{n-1})$
 for $i \in \llbracket 2,k \rrbracket$. 
We select $\gamma_2', \ldots, \gamma_k'$ to be \emph{maximally to the right}; i.e., such that there is no collection of disjoint curves $\tilde\gamma_2, \ldots, \tilde\gamma_k$ that satisfy the three displayed conditions  with $\gamma_i' \preceq \tilde\gamma_i$ for all $i \in \llbracket 2,  k \rrbracket$ and $\gamma_j' \neq \tilde\gamma_j$ for some $j \in \llbracket 2 , k \rrbracket$. Finally we add the vertex $(1,k-i+1)$ to $\gamma'_i$ and call the resulting path $\gamma_i$, for each $i\in \llbracket2,k\rrbracket$.

It remains to construct $\gamma_1$. Starting from $(1,k)$, $\gamma_1$ first follows the vertical segment to $(1,y_0)$, where $y_0$ is the $y$-coordinate of the top vertex of $\cup_{j=2}^k\phi_j$ on the $x=1$ line; if no such vertex exists, we set $y_0=k$. We now follow the vertices of $\phi_1$ which are not elements of $\cup_{j=2}^k \gamma_j$ till we reach the last such vertex $(x_1,y_1)$. This is possible because Lemma~\ref{l.initial part of phi_1} ahead asserts that the leftmost vertex of $\phi_1\setminus\cup_{j=2}^k \gamma_j$ is reachable from $(1,y_0)$ and because Lemma~\ref{l.maximally to the right}, also ahead, says that no vertex of $\phi_1\setminus\cup_{j=2}^k\gamma_j$ is to the right of $\gamma_2$, which implies that the vertices of $\phi_1\setminus\cup_{j=2}^k \gamma_j$ form a union of path fragments which may be joined without intersecting $\gamma_2$. Now $\gamma_1$ follows the vertical segment from $(x_1,y_1)$ to $(x_1,n)$, if $y_1 <n$, and then horizontally to $(m,n)$.

     The displayed condition (2) is thus satisfied by $\{ \gamma_i: i \in \intint{k} \}$. It also immediately follows from the construction that the vertices of $\phi_1$ and the restriction of $\cup_{j=2}^k\phi_j$ to $\intint{m}\times\intint{n-1}$ are contained in $\cup_{j=1}^k \gamma_j$; the only vertices left to be accounted for are the vertices of $\cup_{j=2}^k\phi_j$ on the line $y=n$. But since $\phi_1$ is the leftmost maximal element, all these vertices are included in the final horizontal segment of $\gamma_1$. Indeed, if not, there would be a vertex of $\phi_{j_0}$ on the line $y=n$ with $x$-coordinate strictly smaller than $x_1$ for some $j_0\in\llbracket2,k\rrbracket$. Then either $\phi_1\preceq_{\mathrm{v}} \phi_{j_0}$ or, if these paths are not comparable, $\phi_1$ is not the leftmost maximal element; either way, a contradiction.
     Thus $\{\gamma_j : j \in \intint{k} \}$ satisfies the inductive hypothesis at index $k$, as we sought to show.
\end{proof}

\begin{lemma}\label{l.maximally to the right}
Any vertex of $\phi_1$ that lies to the right of $\gamma_2$ belongs to $\cup_{i=2}^k\gamma_i$.
\end{lemma}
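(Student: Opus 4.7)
We argue by contradiction. Suppose there exists $v\in\phi_1$ lying strictly to the right of $\gamma_2$ in the $\preceq$ ordering, yet $v\notin\cup_{i=2}^k\gamma_i$. The plan is to construct a new collection $(\tilde\gamma_2',\ldots,\tilde\gamma_k')$ of disjoint upright paths with the same endpoints as $(\gamma_2',\ldots,\gamma_k')$ and still satisfying the containment $\cup_{j=2}^k\phi_j\cap R\subseteq\cup_{j=2}^k\tilde\gamma_j'$ (with $R=\llbracket 2,m\rrbracket\times\intint{n-1}$), but with $\gamma_j'\preceq\tilde\gamma_j'$ for every $j\in\llbracket 2,k\rrbracket$ and strict inequality somewhere. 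This contradicts the choice of $(\gamma_2',\ldots,\gamma_k')$ as maximally to the right.

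I first verify that $v\in R$. Since $\gamma_2'$ has $x$-coordinate at least $2$ and $y$-coordinate at most $n-1$ at every vertex in its time range $[k+1,m+n-1]$, the condition that $v$ is strictly to the right of $\gamma_2'(x_v+y_v)$ forces $x_v\geq 3$ and $y_v\leq n-2$. The only remaining level, $x_v+y_v=k$ (where $\gamma_2$ and $\gamma_2'$ differ), is handled separately using $\gamma_2(k)=2-k$ together with $y_v\geq 1$, which yields $x_v\geq 2$ and $y_v\leq k-2$. Combined with the pre-existing containment and $v\notin\cup_{j\geq 2}\gamma_j'$, this gives $v\notin\cup_{j\geq 2}\phi_j$, consistent with $v\in\phi_1$ and the disjointness of the $\phi_j$'s.

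The geometric backbone is a parity observation. At any diagonal level $t$, the value $\gamma(t)=x-y$ of an upright path $\gamma$ has the same parity as $t$, so for two upright paths $\alpha,\beta$ both passing through level $t$, $\alpha(t)-\beta(t)$ is an even integer; since each upright step alters this difference by an element of $\{-2,0,+2\}$, $\alpha$ cannot pass from strictly-right of $\beta$ to strictly-left (or conversely) without first sharing a vertex with $\beta$. Applied to $\phi_1$ and $\gamma_2'$: if $P$ denotes the maximal sub-path of $\phi_1$ containing $v$ every vertex of which is strictly to the right of $\gamma_2'$, then each endpoint of $P$ (in its $\phi_1$-traversal) is either a vertex of $\gamma_2'$ or an endpoint of $\phi_1$ itself.

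In the principal case, where both endpoints $p,q$ of $P$ lie on $\gamma_2'$, I define $\tilde\gamma_2'$ by following $\gamma_2'$ from its start to $p$, then tracing $P$ from $p$ to $q$, then resuming $\gamma_2'$ from $q$ to its end; this is an upright path from $(2,k-1)$ to $(m,n-1)$ with $\gamma_2'\preceq\tilde\gamma_2'$ and strict inequality at $v$. The main obstacle I expect is that $P$ may cross some $\gamma_j'$ for $j\geq 3$ and the replaced portion of $\gamma_2'$ may contain vertices of $\cup_{j\geq 2}\phi_j$ that must still be covered in the new collection. I plan to resolve both concerns by iterating the same parity-based swap: each intersection of $P$ with some $\gamma_j'$ (necessarily preceded by a shared vertex, by applying the parity argument to $\phi_1$ and $\gamma_j'$) is absorbed by simultaneously modifying $\gamma_j'$ through the corresponding arc of $\phi_1$, pushing it further right as well. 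Since $\phi_1$ is disjoint from $\phi_2,\ldots,\phi_k$, the arcs of $\phi_1$ used in these swaps contain no $\phi_j$-vertex for $j\geq 2$, and the displaced portions of the original $\gamma_j'$ lie entirely to the left of the new arcs, so no $\phi_j$-vertex is lost from $\cup_{j\geq 2}\tilde\gamma_j'$. The remaining case, when an endpoint of $P$ is an endpoint of $\phi_1$, is reduced to the principal case by extending the construction to the boundary of $\intint{m}\times\intint{n}$ using that $\phi_1$ is the top-leftmost maximal element under $\preceq_{\mathrm{v}}$.
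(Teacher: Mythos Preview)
Your approach has a genuine gap at the coverage step. After replacing the arc of $\gamma_2'$ between $p$ and $q$ by the excursion $P\subset\phi_1$, you assert that ``no $\phi_j$-vertex is lost from $\cup_{j\geq 2}\tilde\gamma_j'$'' because ``the displaced portions of the original $\gamma_j'$ lie entirely to the left of the new arcs.'' That implication is a non-sequitur: the displaced arc of $\gamma_2'$ can perfectly well contain a vertex $w\in\phi_{j_0}$ for some $j_0\geq 2$, and such a $w$ is simply deleted from the new union. To rule this out you must use the one piece of structure you never invoke in the main argument, namely that $\phi_1$ is the leftmost maximal element under $\preceq_{\mathrm v}$. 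The paper makes exactly this move: it proves a \emph{quadrant claim} --- no vertex of $\cup_{j\geq 2}\phi_j$ lies in $\{(x,y):x\leq x',\,y\geq y'\}$ --- directly from the maximality and leftmostness of $\phi_1$, and this is what guarantees the replacement loses no $\phi_j$-vertex.

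There is a second, related problem with your cascading-swap idea. If $P$ crosses $\gamma_3'$, you propose to push $\gamma_3'$ to the right ``through the corresponding arc of $\phi_1$.'' But that arc of $\phi_1$ is a sub-arc of $P$, which is already being used by $\tilde\gamma_2'$; the resulting $\tilde\gamma_2'$ and $\tilde\gamma_3'$ would share vertices. The paper sidesteps this entirely by a different construction: rather than reusing arcs of $\phi_1$, it takes the smallest index $j$ with an uncovered vertex $(x',y')$ of $\phi_1$ lying between $\gamma_j$ and $\gamma_{j+1}$, and modifies \emph{only} $\gamma_j$ by a rectangular detour --- follow $\gamma_j$ to the line $y=y'$, go horizontally to $(x',y')$, go vertically up to $\gamma_j$, and continue. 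This single modification stays disjoint from $\gamma_{j+1},\ldots,\gamma_k$ automatically and, by the quadrant claim, still covers $\cup_{j\geq 2}\phi_j\cap R$, immediately contradicting maximal-rightness.
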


\begin{proof} 
In search of a contradiction, suppose the contrary. 
Vertices not lying in $\cup_{i=2}^k\gamma_i$ will be called uncovered. Let $j$ be the smallest index such that an uncovered vertex $(x',y')$ of  $\phi_1$ lies between $\gamma_j$ and $\gamma_{j+1}$.  

We claim the following: No vertex of $\cup_{j=2}^k\phi_j$ lies in the quadrant whose south-east corner is $(x',y')$, i.e., $\{(x,y): x\leq x', y\geq y'\}$. To prove this, suppose to the contrary that there is a vertex of $\phi_{j_0}$ contained in this quadrant for some $j_0\in\llbracket2,k\rrbracket$. Consider the maximal element $\bar\phi$ of some chain starting from $\phi_{j_0}$. The union of the $x$-projections of the elements of this chain forms an interval. Let $I$ be the $x$-projection of $\bar\phi$, which is an interval. We have two cases: (i) $\max I < x'$ and (ii) $\max I \geq x'$. Case (i) contradicts that $\phi_1$ is the leftmost maximal element. In case (ii), the union of the $x$-projections of the elements of the chain contains $x'$ and hence, by the disjointness of $\{\phi_i : i\in\intint{k}\}$, there is an element of the chain which is larger in the $\preceq_{\mathrm{v}}$ order than $\phi_1$, which is again a contradiction.

Consider the alternative path $\gamma_j'$ formed by concatenating four curves: the portion of $\gamma_j$ up to and including the rightmost vertex on the line $y=y'$; the horizontal segment from this point to $(x',y')$; the vertical segment from $(x',y')$ to the lowest vertex of $\gamma_j$ on the line $x=x'$; and the portion of $\gamma_j$ from this latter point onwards. Note that replacing $\gamma_j$ by $\gamma_j'$ maintains the displayed conditions (1) and (2) in the proof of Proposition~\ref{p.starting and ending points} due to the above claim which ensures that no vertex in $\cup_{j=2}^k\phi_j \cap (\llbracket2,m\rrbracket\times \intint{n-1})$ becomes uncovered. However, the set $\{\gamma_i: i \in \llbracket 2,k \rrbracket\}$ was constructed to be maximally to the right, in contradiction to the last inference.
\end{proof}

\begin{lemma}\label{l.initial part of phi_1}
The leftmost vertex of $\phi_1\setminus\cup_{j=2}^k\gamma_j$ is reachable from $(1,y_0)$, as defined in the proof of Proposition \ref{p.starting and ending points}, by an upright path.
\end{lemma}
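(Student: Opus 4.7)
The plan is to write $v^* = (x^*, y^*)$ for the leftmost vertex of $\phi_1 \setminus \cup_{j=2}^k \gamma_j$ (breaking ties, say, by minimizing the $y$-coordinate among those with smallest $x$-coordinate), and to observe that an upright path from $(1, y_0)$ to $v^*$ exists if and only if $x^* \geq 1$ (automatic) and $y^* \geq y_0$. I then would split into the cases $x^* = 1$ and $x^* \geq 2$; both will be handled by a careful use of the maximality of $\phi_1$ in the partial order $\preceq_{\mathrm{v}}$, supplemented in the second case by Lemma~\ref{l.maximally to the right}.

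When $x^* \geq 2$, every vertex of $\phi_1$ on the column $x = 1$ must be covered; since $\gamma_j$ meets this column only at $(1, k-j+1)$ for $j \in \llbracket 2, k\rrbracket$, such vertices have $y$-coordinate in $\{1, \ldots, k-1\}$. Now consider any $\phi_j$ with $j \geq 2$ that visits $x = 1$: denoting by $[a_j, b_j]$ its vertical segment there and by $[a_1, b_1] \subseteq [1,k-1]$ the (possibly empty) segment of $\phi_1$, disjointness gives $b_j < a_1$ or $a_j > b_1$, while the maximality of $\phi_1$ (otherwise $\phi_1 \preceq_{\mathrm{v}} \phi_j$ via the witness $x = 1$) rules out $a_j > b_1$; hence $b_j < a_1 \leq k-1$, so every such vertex has $y \leq k - 2$. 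Consequently the top vertex of $\cup_{j=2}^k \phi_j$ on $x=1$ (if one exists) has $y$-coordinate at most $k-2$, and in particular $y_0 \leq k$. Lemma~\ref{l.maximally to the right} together with the observation that $v^* \notin \gamma_2$ then forces $v^*$ to lie strictly above $\gamma_2$; since $\gamma_2$ emanates upright from $(1, k-1)$, its maximum $y$-coordinate at column $x^*$ is at least $k-1$, and hence $y^* \geq k \geq y_0$.

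In the remaining case $x^* = 1$, the fact that $(1, y^*)$ is uncovered gives $y^* \geq k$ directly, and writing $[a_1, b_1]$ for the vertical segment of $\phi_1$ on $x = 1$ one reads off $y^* = \max(a_1, k)$. The same disjointness-and-maximality argument shows that the top vertex of $\cup_{j=2}^k \phi_j$ on $x = 1$ (if one exists) has $y$-coordinate at most $a_1 - 1$, so $y_0 \leq \max(a_1 - 1, k) \leq y^*$. The main obstacle is to realise that Lemma~\ref{l.maximally to the right}, combined with the very simple geometry of $\gamma_2$ near its starting point $(1, k-1)$, already pins uncovered vertices of $\phi_1$ above the line $y = k - 1$, and to dovetail this with the maximality of $\phi_1$ so as to control $y_0$; once these ingredients are assembled, the inequality $y^* \geq y_0$ reduces to a short case analysis.
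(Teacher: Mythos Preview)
Your argument has a genuine gap in the case $x^*\ge 2$. You claim there that $y_0\le k$, arguing via the segment $[a_1,b_1]$ of $\phi_1$ on the column $x=1$ and the maximality of $\phi_1$. But you yourself note that $[a_1,b_1]$ may be empty, and in that case the disjointness and witness-at-$x=1$ reasoning collapses: if $\phi_1$ never visits $x=1$, there is no witness column $x=1$ available to force $\phi_j$ below $\phi_1$ there, and the bound $b_j\le k-2$ simply does not follow.

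In fact the claim $y_0\le k$ is false in this situation. Take $k=3$, $m=n=100$, and let $\phi_1$ run from $(2,50)$ to $(50,100)$, $\phi_2$ from $(1,20)$ up to $(1,25)$ and on to $(10,30)$, and $\phi_3$ from $(60,1)$ to $(100,40)$. These are disjoint; $\phi_2\preceq_{\mathrm v}\phi_1$ via the witness $x=2$; and $\phi_1,\phi_3$ are incomparable with $\phi_1$ leftmost. Here $\phi_1$ misses $x=1$ entirely, $y_0=25$, and $k=3$, so $y_0>k$. The lemma still holds ($y^*=50\ge 25=y_0$), but not by your inequality chain $y^*\ge k\ge y_0$.

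The paper handles exactly this obstruction by a chain argument rather than trying to bound $y_0$ by $k$: starting from the curve $\phi_j$ containing $(1,y_0)$, it climbs a $\preceq_{\mathrm v}$-chain to a maximal element, shows (using that $\phi_1$ is the \emph{leftmost} maximal element) that this maximal element must be $\phi_1$, and deduces $y'\ge y_0$ from the chain relation. Your case $x^*=1$ is fine, and your use of Lemma~\ref{l.maximally to the right} to get $y^*\ge k$ matches the paper's first step; what is missing is precisely a mechanism linking $y_0$ to $\phi_1$ when $\phi_1$ does not touch the first column.
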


\begin{proof}
Let the leftmost lowermost vertex of $\phi_1\setminus\cup_{j=2}^k\gamma_j$ be $(x',y')$. We first note that Lemma~\ref{l.maximally to the right} implies that $y'\geq k$. So we only need to consider the case of the definition of $y_0$ where there is at least one vertex of $\cup_{j=2}^k\phi_j$ on the $x=1$ line.

We must prove that $y'\geq y_0$, as obviously $x'\geq 1$. Without loss of generality, assume that $(x_0, y_0) \in \phi_2$. Consider the maximal elements of all chains that $\phi_2$ is a member of, and consider the leftmost such maximal element $\bar\phi$. We claim that $\bar\phi = \phi_1$, which clearly implies that $y'\geq y_0$.

To prove this claim, note that the union of the $x$-projections of the elements of this chain form a single interval; call it $I$. Recall how we labelled $\phi_1$ and note that it precludes $\phi_1$'s $x$-projection from being disjoint from $I$, and so there must be an element of the chain, say $\phi_{j_0}$, whose $x$-projection is not disjoint from that of $\phi_1$. Now since $\phi_{j_0}$ and $\phi_1$ are disjoint, and by the labelling of $\phi_1$, we must have $\phi_{j_0} \preceq_{\mathrm{v}} \phi_1$. But this implies that $\bar\phi = \phi_1$, completing the proof.
\end{proof}

\subsection{Proof of melon interlacing, Proposition~\ref{p.melon interlacing}}
With Proposition~\ref{p.starting and ending points}, which allows us to pick the curves of geodesic watermelons to begin and end at the bottom-left and top-right corners of $\intint{n}^2$, in hand, we now move to showing Proposition~\ref{p.melon interlacing}, which says that we may also pick the curves such that they interlace. Instead of assuming that $\nu$ is continuous as in that proposition, we will prove the following deterministic statement, whose hypotheses hold almost surely when $\nu$ has no atoms. For a subset $A\subseteq \intint{n}^2$ of vertices, we first define its weight by
$$\ell(A) := \sum_{v\in A} \xi_v.$$

\begin{proposition}\label{p.stronger interlacing}
Suppose the vertex weights $\{\xi_v : v\in\intint{n}^2\}$ are non-negative and such that no two distinct subsets $A,B\subseteq \intint{n}^2$ have equal weight. Then the consecutive terms in the unique sequence of geodesic watermelons $\{\gamma_n^{j}: j\in\intint{n}\}$ produced by Proposition~\ref{p.starting and ending points} interlace.
\end{proposition}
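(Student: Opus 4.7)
The plan is a contradiction argument powered by the distinct-weight hypothesis, which immediately makes each $j$-geodesic watermelon's vertex set unique: two distinct vertex sets both achieving $X_n^j$ would be two distinct subsets of $\intint{n}^2$ having the same total weight, violating the hypothesis. I would write the left-to-right ordered curves of $\gamma_n^k$ and $\gamma_n^{k+1}$ as $\alpha_1 \prec \cdots \prec \alpha_k$ and $\beta_1 \prec \cdots \prec \beta_{k+1}$, with starting and ending points as supplied by Proposition~\ref{p.starting and ending points}.

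First I would verify that the desired interlacing inequalities $\beta_i \preceq \alpha_i \preceq \beta_{i+1}$ hold automatically at the boundary of the common time ranges by comparing endpoints: $\alpha_i$ runs from $(1,k-i+1)$ to $(n,n-i+1)$ while $\beta_i$ runs from $(1,k-i+2)$ to $(n,n-i+2)$, so $\beta_i$ sits weakly to the left of $\alpha_i$ at both boundary times, and analogously for $\beta_{i+1}$ to the right of $\alpha_i$. Suppose for contradiction that $\beta_i(t^*) > \alpha_i(t^*)$ at some interior time $t^*$. Because any upright path $\gamma$ has $\gamma(t) = 2x-t$ at $(x,y) \in \gamma$ with $x+y=t$, both $\alpha_i(t)$ and $\beta_i(t)$ share the parity of $t$, so $\beta_i(t) - \alpha_i(t)$ is always even and changes by at most $2$ per unit time. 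Combined with the boundary inequalities, this forces the existence of times $t_1 < t^* < t_2$ at which $\alpha_i$ and $\beta_i$ coincide, sharing common vertices $v_1$ and $v_2$.

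At this point I would perform the standard double surgery: define $\alpha_i'$ to follow $\alpha_i$ up to $v_1$, switch to $\beta_i$ from $v_1$ to $v_2$, then return to $\alpha_i$ thereafter, and define $\beta_i'$ symmetrically. Both $\alpha_i'$ and $\beta_i'$ are upright paths with the original endpoints, and because the multiset of used vertices is preserved one has $\ell(\alpha_i') + \ell(\beta_i') = \ell(\alpha_i) + \ell(\beta_i)$. Crucially, since $\beta_i(t) > \alpha_i(t)$ strictly on $(t_1, t_2)$, the vertex set $V(\alpha_i')$ strictly differs from $V(\alpha_i)$. So once one can show that $\{\alpha_j' : j \in \intint{k}\}$ (with $\alpha_j' = \alpha_j$ for $j \neq i$) and $\{\beta_j'\}$ are again valid $k$- and $(k+1)$-melons, we have produced two distinct $k$-melon vertex sets of identical weight, contradicting the distinct-weight hypothesis.

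The main obstacle will be verifying that the surgery preserves intra-melon disjointness; the $\beta_i$-segment inserted into $\alpha_i'$ may share vertices with some $\alpha_j$ for $j \neq i$, and analogously for $\beta_i'$. I would handle this by iterating the same surgery: any newly uncovered shared vertex between two curves of the same family triggers an analogous double swap between them at that vertex (and a second shared vertex, guaranteed to exist by the same parity-and-intermediate-value reasoning applied to the newly created crossing). A monovariant such as $|V(\gamma_n^k) \triangle V(\gamma_n^{k+1})|$, or equivalently the number of unordered pairs of curves from the two melons whose relative order violates $\preceq$, strictly decreases at each iteration, so the process terminates in a valid pair of modified melons and yields the required contradiction.
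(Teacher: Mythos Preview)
Your opening moves are sound and match the paper's strategy: the distinct-weight hypothesis yields unique melon vertex sets, the endpoint inequalities are correctly verified, the parity argument correctly locates an excursion interval $(t_1,t_2)$ with shared endpoints $v_1,v_2$, and the single swap preserves total weight. The contradiction mechanism---produce distinct melon vertex sets of equal weight---is exactly the paper's.

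The gap is in your repair step. Two concrete failures:

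\emph{The monovariant is wrong.} Your quantity $|V(\gamma_n^k)\triangle V(\gamma_n^{k+1})|$ does not decrease under the swap; it can increase. Suppose $v$ lies on the $\beta_i$-segment over $(t_1,t_2)$ and also on $\alpha_{i+1}$ (precisely the situation forcing you to iterate). Before the swap $v\in V(\gamma_n^k)\cap V(\gamma_n^{k+1})$, so it is not in the symmetric difference. After the swap $v$ lies in the new $k$-collection (via both $\alpha_i'$ and $\alpha_{i+1}$) but not in the new $(k{+}1)$-collection: it has been removed from $\beta_i$ and was never in any other $\beta_j$. So the symmetric difference goes \emph{up}. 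The alternative monovariant you mention (``number of pairs violating $\preceq$'') is not specified precisely enough to evaluate.

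\emph{Tangencies cannot be repaired by swapping.} Your claim that a shared vertex between $\alpha_i'$ and $\alpha_{i+1}$ forces a second shared vertex is false. The parity/IVT reasoning only produces a second meeting point if the difference $\alpha_{i+1}-\alpha_i'$ changes sign; but it may touch zero once and stay nonnegative. In that case the two curves share exactly one vertex, there is no excursion interval on which to swap, and you cannot restore disjointness.

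The paper avoids both issues by performing all swaps \emph{simultaneously} rather than iteratively. For each curve of the $k$-melon it identifies a ``home region'' bounded by the two adjacent $(k{-}1)$-melon curves, collects every excursion of that curve into the complementary ``wilderness,'' and swaps each excursion with the corresponding boundary segment of the $(k{-}1)$-melon. The real work is then a direct case analysis (using the nested region structure) showing that this global swap produces two valid disjoint families in one step---no iteration, no monovariant. That disjointness verification is exactly the technical content your sketch is missing.
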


The sequence of geodesic watermelons produced by Proposition~\ref{p.starting and ending points} is unique because the set of vertices of the $k$-geodesic watermelon is unique for every $k\in\intint{n}$ under the hypotheses of Proposition~\ref{p.stronger interlacing}.

\begin{proof}[Proof of Proposition~\ref{p.melon interlacing}]
This is immediate since when $\nu$ has no atoms and has support contained in $[0,\infty)$, the hypotheses of Proposition~\ref{p.stronger interlacing} are almost surely satisfied.
\end{proof}

After proving Proposition~\ref{p.stronger interlacing}, we will upgrade it to Proposition~\ref{p.specified melon interlacing}, which handles the case that $\nu$ has atoms, by a perturbation argument. We next introduce some notation to facilitate the proof of Proposition~\ref{p.stronger interlacing}.

Recall the partial order $\preceq$ introduced in Definition~\ref{d.time range}, where informally $\gamma\preceq \gamma'$ means $\gamma$ is to the left of $\gamma'$. Then as in \eqref{watermelondef}, for any $k$, let $\pordmel{k}{i}$ be the curves of $\phi_{n}^k$ produced by Proposition~\ref{p.starting and ending points}, ordered from left to right, i.e., $\pordmel{k}{1} \prec \pordmel{k}{2} \prec\ldots \prec \pordmel{k}{k}.$
For a set of vertices $A$, we call $\{\gamma_1,\ldots,\gamma_m\}$ a \emph{path fragment decomposition} of $A$ if each $\gamma_i$ is a connected component of $A$ (under the connectivity structure of $\Z^2$) as well as an upright path and $A = \cup_{i=1}^m \gamma_i$; note that not all sets have a path fragment decomposition. We then call the $\gamma_i$ \emph{path fragments}.

For $j\in \intint{k-1}$, let $R^k_j$ be the region between $\pordmel{k}{j}$ and $\pordmel{k}{j+1}$, defined by
$$R^k_j = \left\{(x,y) \in \intint{n}^2 : \pordmel{k}{j}\preceq \gamma_{(x,y)} \preceq \pordmel{k}{j+1}\right\},$$
where $\smash{\gamma_{(x,y)}}$ is the singleton path at $(x,y)$. Recall that an order relation between two paths is well-defined only if their time ranges are not disjoint. Since the elements of $\{\pordmel{k}{j} : j\in\intint{k}\}$ do not share the same time range, for certain $(x,y)$, it may be that only one of the inequalities $\pordmel{k}{j}\preceq \gamma_{(x,y)}$ and $\gamma_{(x,y)} \preceq \pordmel{k}{j+1}$ is well-defined (regardless of whether it is true or false). In these cases, the definition of $R_{j}^k$ above will only require that inequality for which the corresponding time-ranges are not disjoint to be true, and will disregard the remaining inequality which is not well-defined.
We also define $\smash{R^k_0 = \{(x,y) \in \intint{n}^2 : \gamma_{(x,y)} \preceq \pordmel{k}{1}\}}$ and $\smash{R_k^k = \{(x,y) \in \intint{n}^2 : \pordmel{k}{k}\preceq \gamma_{(x,y)}\}}$. We note that, by the ordering of $\{\pordmel{k}{j}:j\in\intint{k}\}$,
\begin{equation}\label{e.disjoint region condition}
R_{j_1}^k\cap R_{j_2}^k = \emptyset \quad\text{if}\quad |j_1-j_2|\geq 2.
\end{equation}
We also note that $\phi_{n}^k$ interlacing with $\phi_n^{k-1}$ is equivalent to $\smash{\pordmel{k}{j} \subseteq R_{j-1}^{k-1}}$ for each $j\in\intint{k}$.
We say that $\pordmel{k}{j}$ is in its \emph{home region} if $\pordmel{k}{j}\subseteq R^{k-1}_{j-1}$. We define the \emph{wilderness region} $W_{j}^{k}$ (for $\pordmel{k}{j}$) to be the complement of $\smash{R^{k-1}_{j-1}}$, i.e.,
$$W_j^k := \intint{n}^2 \setminus R_{j-1}^{k-1}.$$
The idea of the proof of Proposition~\ref{p.stronger interlacing} is the following. Consider the case that $\phi_n^k$ and $\phi_n^{k-1}$ do not interlace, and look at the excursions of $\pordmel{k}{j}$ into the respective wilderness regions $\smash{W_j^k}$ for each $j\in\intint{k}$. For each excursion, we identify a candidate path along the home region border, i.e., a subpath of $\pordmel{k-1}{j-1}$ or $\pordmel{k-1}{j}$, which has the same endpoints as the excursion. We then simultaneously swap all the excursions with the candidate paths. This results in two new collections of $k$ and $k-1$ disjoint paths (which do not necessarily interlace) which cover the same set of vertices with the same multiplicities. The weight maximality property of $\phi_n^k$ and $\phi_n^{k-1}$ will then imply that the two new collections of paths are also $(k-1)$- and $k$-geodesic watermelons, contradicting the uniqueness of the watermelons guaranteed by the hypothesis. See Figure~\ref{f.interlacing proof}.

To implement this plan, we define for each $j\in\intint{k}$ the set $\WPF{k}{j}$ of \emph{wilderness excursions} to be the path fragment decomposition of $\pordmel{k}{j}\cap W_j^k$. The first step is to identify the border candidate paths---subpaths of the paths of $\smash{\phi_n^{k-1}}$---which will be swapped with the excursions into the wilderness---into elements of $\WPF{k}{j}$.

\begin{figure}[h]
   \centering
        \includegraphics[width=\textwidth]{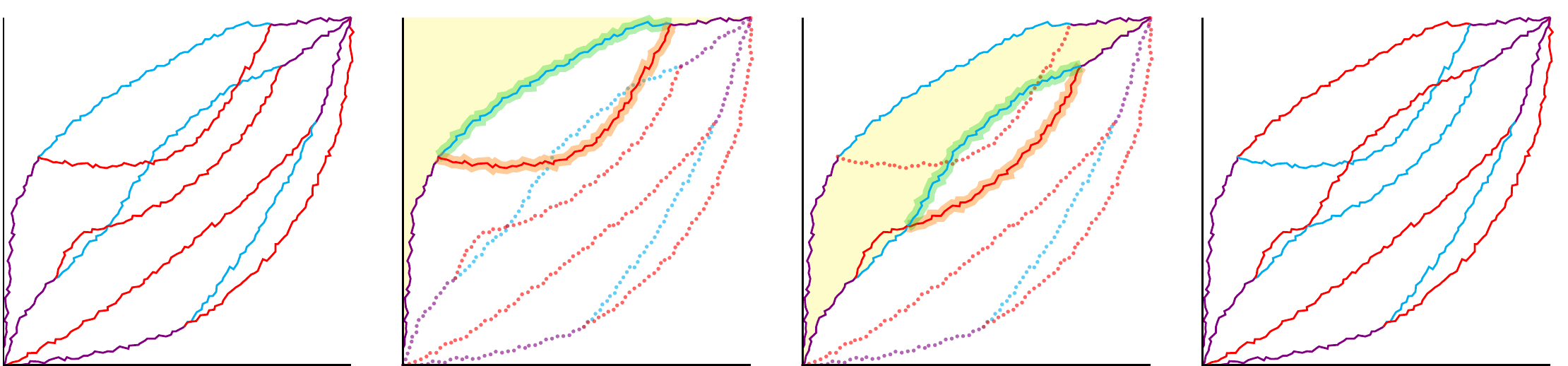}
 \caption{A depiction of the argument for Proposition~\ref{p.stronger interlacing}. The red curves comprise the supposed unique 4-geodesic watermelon $\phi_n^4$ and the blue curves comprise the unique 3-geodesic watermelon $\phi_n^3$; vertices which are covered by both types of curves are coloured violet. Note that the left two red curves are not in their home regions (the home regions of $\phi_{n,\shortrightarrow}^{4,1}$ and $\phi_{n,\shortrightarrow}^{4,2}$ are shaded in light yellow in the second and third panels respectively), and so $\phi_n^4$ is not interlacing with $\phi_n^3$; we look for a contradiction. In the middle two panels, the excursions of $\phi_n^4$ in their wilderness regions are highlighted in orange, and the border subsets of $\phi_n^3$ they will be swapped with are highlighted in green. In the final panel we have swapped each orange excursion with the corresponding green path to result in two new collections of three or four disjoint paths, $\gamma_n^3$ and $\gamma_n^4$ (which do not necessarily interlace). But $\smash{\sum_{j=3,4}\ell(\gamma_n^j) = \sum_{j=3,4}\ell(\phi_n^j)}$ since the same vertices are covered the same number of times; thus the weight maximizing property of $\phi_n^3$ and $\phi_n^4$ implies that $\ell(\gamma_n^j) = \ell(\phi_n^j)$ for $j=3,4$, contradicting the uniqueness of $\phi_n^3$ and $\phi_n^4$.}
 \label{f.interlacing proof}
\end{figure}

\begin{lemma}\label{l.existence of good path}
For every $\gamma\in \WPF{k}{j}$, there exists a path $C_j(\gamma)$, which we call a \emph{candidate} path, such that $C_j(\gamma)\subseteq \pordmel{k-1}{j-1}$ (if $\gamma$ is to the left of $\pordmel{k-1}{j-1}$) or $C_j(\gamma)\subseteq \pordmel{k-1}{j}$ (if $\gamma$ is to the right of $\pordmel{k-1}{j}$). Further, $(\pordmel{k}{j}\cup C_j(\gamma))\setminus \gamma$ and $(\pordmel{k-1}{i} \cup\gamma)\setminus C_j(\gamma)$ are upright paths, where $i\in \{j-1,j\}$ is chosen so that $C_j(\gamma)\subseteq \pordmel{k-1}{i}$; that is, the sets obtained by swapping the wilderness excursion with the candidate path are upright paths.
\end{lemma}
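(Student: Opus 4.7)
The plan is to identify, via a short parity check, the crossing vertices between $\gamma$ and the adjacent $(k-1)$-melon curve, define $C_j(\gamma)$ as the subpath of that curve between these crossings, and then verify the two upright-path claims by checking that the swap simply concatenates upright fragments at adjacent lattice points. I treat the case when $\gamma$ lies in the left wilderness, so that $\gamma$ lies strictly to the left of $\pordmel{k-1}{j-1}$; the right case follows from the symmetric argument with $\pordmel{k-1}{j}$ in place of $\pordmel{k-1}{j-1}$.

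Let $a$ and $b$ be the first and last vertices of $\gamma$ along $\pordmel{k}{j}$, with diagonal times $t_a=a_x+a_y$ and $t_b=b_x+b_y$. Since $\pordmel{k}{j}$ and $\pordmel{k-1}{j-1}$ share the starting vertex $(1,k-j+1)$, and the terminal vertex of $\pordmel{k}{j}$ is either on $\pordmel{k-1}{j-1}$ or strictly to its right, neither $a$ nor $b$ can coincide with an endpoint of $\pordmel{k}{j}$; hence the predecessor $u$ of $a$ and the successor $v$ of $b$ along $\pordmel{k}{j}$ both exist and lie in the home region $R^{k-1}_{j-1}$. The key structural step, proved by enumerating the four combinations of (step $u\to a$) and (step of $\pordmel{k-1}{j-1}$ at time $t_u$) under the parity constraint that at time $t$ the value $x-y$ has the parity of $t$, is that the only configuration consistent with $u$ being in the home region and $a$ being in the left wilderness is: $u$ lies on $\pordmel{k-1}{j-1}$, $a=u+(0,1)$, and $\pordmel{k-1}{j-1}$ steps $(1,0)$ at time $t_u$. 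The analogous analysis at the exit yields that $v$ lies on $\pordmel{k-1}{j-1}$, $v=b+(1,0)$, and $\pordmel{k-1}{j-1}$ steps $(0,1)$ at time $t_b$. Denoting by $a'$ and $b'$ the vertices of $\pordmel{k-1}{j-1}$ at times $t_a$ and $t_b$, these satisfy $a'=u+(1,0)$ and $v=b'+(0,1)$. I would then define $C_j(\gamma)$ to be the subpath of $\pordmel{k-1}{j-1}$ from $a'$ to $b'$, which is well-defined and non-empty because $t_a\leq t_b$, and $C_j(\gamma)\subseteq \pordmel{k-1}{j-1}$ by construction.

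Both upright-path conclusions then reduce to a concatenation check. Because $\gamma$ is a maximal run of wilderness vertices along $\pordmel{k}{j}$, the vertices of $\pordmel{k}{j}$ at times in $[t_a,t_b]$ are exactly the vertices of $\gamma$; thus $\pordmel{k}{j}\setminus \gamma$ and $C_j(\gamma)$ occupy disjoint time slices and are disjoint, while $\gamma$ is disjoint from $\pordmel{k-1}{j-1}$ (hence from $C_j(\gamma)$) since it lies strictly to the left of the latter. Therefore $(\pordmel{k}{j}\cup C_j(\gamma))\setminus \gamma$ is the concatenation of the initial segment of $\pordmel{k}{j}$ ending at $u$, then $C_j(\gamma)$ from $a'=u+(1,0)$ to $b'$, then the terminal segment of $\pordmel{k}{j}$ starting at $v=b'+(0,1)$, each junction being a $(1,0)$ or $(0,1)$ step; the result is an upright path. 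Symmetrically, $(\pordmel{k-1}{j-1}\cup\gamma)\setminus C_j(\gamma)$ is the initial segment of $\pordmel{k-1}{j-1}$ ending at $u$, then $\gamma$ from $a=u+(0,1)$ to $b$, then the terminal segment of $\pordmel{k-1}{j-1}$ starting at $v=b+(1,0)$, again an upright path. The main technical obstacle is the parity case check pinning down $u$ and $v$ on $\pordmel{k-1}{j-1}$; once this is established, the verification reduces to bookkeeping about vertex sets and adjacency.
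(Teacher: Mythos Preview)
Your proof is correct and follows essentially the same approach as the paper: identify the predecessor and successor of $\gamma$'s endpoints on $\pordmel{k}{j}$, argue they lie on the adjacent $(k-1)$-melon curve, and define $C_j(\gamma)$ as the intervening subpath. The paper leaves the crossing identification to a one-line invocation of planarity and the prescribed starting/ending points, whereas you make this explicit via the parity case check; both lead to the same candidate path and the same concatenation verification.
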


\begin{proof}
Let $\gamma\in\WPF{k}{j}$ be a given wilderness excursion, and let $u$ be its startpoint and $v$ be its endpoint. Note by planarity; the ordering of the paths $\{\smash{\pordmel{k-1}{j}}:j\in\intint{k-1}\}$; and the starting and ending points of the curves of $\phi_n^k$ and $\phi_n^{k-1}$ guaranteed by Proposition~\ref{p.starting and ending points} that both $u$ and $v$ are adjacent to  $\pordmel{k-1}{j}$ or both are adjacent to $\pordmel{k-1}{j-1}$.

Suppose both are adjacent to $\pordmel{k-1}{j}$; the other case is argued in the same way. Then $\gamma$ must be to the right of $\pordmel{k-1}{j}$. The points on $\pordmel{k-1}{j}$ adjacent to $u$ and $v$ must also lie on $\pordmel{k}{j}$; call these points $\underline u$ and $\overline v$. Then define $C_j(\gamma)\subseteq \pordmel{k-1}{j}$ to be the portion of $\pordmel{k-1}{j}$ lying between $\underline u$ and $\overline v$, excluding these two points. It is easy to see that this path satisfies the claims made in Lemma~\ref{l.existence of good path}. \end{proof}

Having identified which subsets of vertices to swap, we define the resulting new set of $k$ curves. For each $j\in\intint{k}$, define
$$\tordmel{k}{j} = \pordmel{k}{j}\cup \bigg(\bigcup_{\gamma\in \WPF{k}{j}} C_j(\gamma)\bigg)\setminus \bigg(\bigcup_{\gamma\in \WPF{k}{j}}\gamma\bigg).$$
This is an upright path by Lemma~\ref{l.existence of good path}. We will define an analogous collection of $k-1$ curves by considering the other half of the swap after the next lemma, which asserts that these just defined $k$ curves are in fact disjoint.

\begin{lemma}\label{l.new curves are disjoint}
$\{\tordmel{k}{j} : j\in\intint{k}\}$ is a disjoint collection of $k$ curves.
\end{lemma}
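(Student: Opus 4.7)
The plan is to show $\tordmel{k}{j}\cap\tordmel{k}{j'}=\emptyset$ for all $j\neq j'$. Since each $\tordmel{k}{i}$ decomposes as the union of its ``home piece'' $\pordmel{k}{i}\setminus\bigcup_{\gamma\in\WPF{k}{i}}\gamma$ and its collection of candidate paths $\bigcup_{\gamma\in\WPF{k}{i}}C_i(\gamma)$, a hypothetical common vertex $w$ falls into one of four cases depending on which contribution supplies it on each side. The home--home case is immediate from the disjointness of the curves of $\phi_n^k$, and the candidate--home case is symmetric (under swapping $j\leftrightarrow j'$) to the home--candidate case; so it suffices to rule out home--candidate and candidate--candidate intersections.

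The argument leverages two structural facts. First, each $C_i(\gamma)$ lies on exactly one of $\pordmel{k-1}{i-1}$ or $\pordmel{k-1}{i}$ according to which side of the home region $\gamma$ excurses to. Combined with the disjointness of the curves of $\phi_n^{k-1}$, any shared vertex $w$ in either case must lie on a single common $(k-1)$-melon curve $\pordmel{k-1}{a}$; a short case analysis then constrains $j,j'$. In the candidate--candidate case one is forced to $\{j,j'\}=\{a,a+1\}$ with $C_j(\gamma)$ and $C_{j'}(\gamma')$ corresponding to excursions on opposite sides of $\pordmel{k-1}{a}$. In the home--candidate case, the additional constraint $w\in\pordmel{k}{j}\cap R^{k-1}_{j-1}$ arising from $w$ lying in the home piece of $\pordmel{k}{j}$ pins $\pordmel{k-1}{a}(t_w)$ between $\pordmel{k-1}{j-1}(t_w)$ and $\pordmel{k-1}{j}(t_w)$, forcing $a\in\{j-1,j\}$ and leaving only the sub-cases $j'\in\{j-1,j+1\}$.

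Second, in each residual case I exploit positions at the common time $t_w=w_1+w_2$. By construction $w$ lies strictly between the boundary endpoints $\underline u,\overline v$ of $C_j(\gamma)$ on $\pordmel{k-1}{a}$, which means $t_w$ lies strictly inside the time range of the excursion $\gamma$; in particular $\pordmel{k}{j}(t_w)=\gamma(t_w)$ is strictly on the wilderness side of $\pordmel{k-1}{a}(t_w)=w_1-w_2$. Applied to the candidate--candidate case this produces $\pordmel{k}{a}(t_w)>\pordmel{k-1}{a}(t_w)>\pordmel{k}{a+1}(t_w)$, directly contradicting $\pordmel{k}{a}\prec\pordmel{k}{a+1}$. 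Each home--candidate sub-case similarly reduces to a positional inequality at $t_w$ violating the strict left-to-right ordering of the $\phi_n^k$-curves (for example, if $a=j$ and $j'=j+1$, then $\gamma'$ is a left-excursion of $\pordmel{k}{j+1}$, so $\pordmel{k}{j+1}(t_w)<\pordmel{k-1}{j}(t_w)=\pordmel{k}{j}(t_w)$, contradicting $\pordmel{k}{j}\prec\pordmel{k}{j+1}$). The most delicate point to check is verifying that the candidate endpoints $\underline u,\overline v$ really coincide with the entry and exit vertices of $\pordmel{k}{j}$ at the boundary of its wilderness excursion, so that the translation between ``strictly between $\underline u$ and $\overline v$ on $\pordmel{k-1}{a}$'' and ``strictly inside the time range of $\gamma$'' is valid; this is precisely what Lemma~\ref{l.existence of good path} and the fact $\pordmel{k-1}{j}\subseteq R^{k-1}_{j-1}$ provide.
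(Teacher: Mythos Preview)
Your proof is correct and shares the paper's case decomposition into home--home, home--candidate, and candidate--candidate intersections, but the arguments inside the nontrivial cases differ.

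For the candidate--candidate case, the paper dismisses it in one line by asserting that $C_{j_1}(\gamma_1)$ and $C_{j_2}(\gamma_2)$ ``are subsets of distinct curves of $\gamma_n^{k-1}$''. This assertion is not actually true as stated: when $j_2=j_1+1$ one can have $C_{j_1}(\gamma_1)\subseteq\pordmel{k-1}{j_1}$ (a right-excursion of $\pordmel{k}{j_1}$) and $C_{j_2}(\gamma_2)\subseteq\pordmel{k-1}{j_1}$ (a left-excursion of $\pordmel{k}{j_1+1}$) simultaneously. Your positional argument at the common time $t_w$ is exactly what is needed to rule out a shared vertex: it shows such excursions have disjoint time ranges, since a shared time would give $\pordmel{k}{j_1}(t_w)>\pordmel{k-1}{j_1}(t_w)>\pordmel{k}{j_1+1}(t_w)$, contradicting $\pordmel{k}{j_1}\prec\pordmel{k}{j_1+1}$. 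So your treatment is more careful here.

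For the home--candidate case, the paper takes a different route: it first uses the ordering of $\phi_n^k$ at $t_v$ to get (in the right-excursion sub-case) $j_2\le j_1-1$, and then invokes the region disjointness $R^{k-1}_{j_1}\cap R^{k-1}_{j_2-1}=\emptyset$ to force $v\in W^k_{j_2}$, contradicting that $v$ lies in the home piece. You instead use that $w\in R^{k-1}_{j-1}\cap\pordmel{k-1}{a}$ forces $a\in\{j-1,j\}$ by strict ordering of the $(k-1)$-curves, pin down $j'\in\{j-1,j+1\}$, and derive the ordering contradiction directly at $t_w$. Both arguments are valid; yours trades the appeal to~\eqref{e.disjoint region condition} for a more uniform one-time-slice comparison, at the cost of having to check two symmetric sub-cases explicitly.
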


\begin{proof}
Suppose to the contrary that, for some $j_1, j_2 \in \intint{k}$ with $j_1\neq j_2$, there is a vertex $v$ such that $v\in\tordmel{k}{j_1}\cap\tordmel{k}{j_2}$. Since $\pordmel{k}{j_1}\cap\pordmel{k}{j_2}=\emptyset$, there are two remaining cases:
\begin{enumerate}
	\item there exist $\gamma_1\in\WPF{k}{j_1}$ and $\gamma_2\in\WPF{k}{j_2}$ such that $v\in C_{j_1}(\gamma_1)\cap C_{j_2}(\gamma_2)$; or

	\item there exists $\gamma\in\WPF{k}{j_1}$ such that $v\in C_{j_1}(\gamma)\cap \pordmel{k}{j_2}$ and $v\not\in \bigcup\{\gamma':\gamma'\in\WPF{k}{j_2}\}$ (without loss of generality).
\end{enumerate}
We note immediately that Case 1 cannot occur as $C_{j_1}(\gamma_1)$ and $C_{j_2}(\gamma_2)$ are subsets of distinct curves of $\gamma_n^{k-1}$, which are disjoint. We next show that Case 2 leads to a contradiction.

There are again two cases, depending on whether $C_{j_1}(\gamma)$ is a subset of $\pordmel{k-1}{j_1}$ or of $\pordmel{k-1}{j_1-1}$. We handle the first as the argument for the second is similar. In that case, by planarity, $\gamma\subseteq \pordmel{k}{j_1}$ is to the right of $\pordmel{k-1}{j_1}$. Since $v\in C_{j_1}(\gamma)\cap \pordmel{k}{j_2}$, it follows that $j_2\leq j_1-1$; for planarity and the ordering of $\{\pordmel{k}{j} : j\in\intint{k}\}$ precludes $j_2\geq j_1+1$. But this implies that $v\in W^k_{j_2} = \intint{n}^2\setminus R^{k-1}_{j_2-1}$ since $v\in \pordmel{k-1}{j_1}\subseteq R^{k-1}_{j_1}$ and $R^{k-1}_{j_1} \cap R^{k-1}_{j_2-1} = \emptyset$ when $j_2 \leq j_1-1$ by \eqref{e.disjoint region condition}.

We have arrived at a contradiction since this implies that $v\in\cup\{\gamma':\gamma'\in\WPF{k}{j_2}\}$.
\end{proof}

We now define the new set of $k-1$ disjoint curves by doing the swap in the other way: for each $j\in\intint{k-1}$, define
$$\tordmel{k-1}{j} = \pordmel{k-1}{j}\cup \bigg(\bigcup_{i=j}^{j+1}\bigcup_{\substack{\gamma\in \WPF{k}{i}:\\ C_i(\gamma)\subseteq \phi_{n,\shortrightarrow}^{k-1, j}}} \gamma\bigg)
\setminus \bigg(\bigcup_{i=j}^{j+1}\bigcup_{\substack{\gamma\in \WPF{k}{i}:\\ C_i(\gamma)\subseteq \phi_{n,\shortrightarrow}^{k-1, j}}} C_i(\gamma)\bigg).$$
From Lemma~\ref{l.existence of good path}, we see that each $\tordmel{k-1}{j}$ is an upright path. We also need the analogue of Lemma~\ref{l.new curves are disjoint}, that $\smash{\{\tordmel{k-1}{j} : j\in\intint{k-1}\}}$ is a collection of $k-1$ disjoint curves. The proof bears some similarities to that of Lemma~\ref{l.new curves are disjoint} but we provide it here for completeness. For convenience, we first define
$$
A_j \, = \, \bigcup_{i=j}^{j+1}\bigcup_{\substack{\gamma\in \WPF{k}{i}:\\ C_i(\gamma)\subseteq \phi_{n,\shortrightarrow}^{k-1, j}}} \gamma \, .
$$

\begin{lemma}\label{l.new k-1 curves are disjoint}
$\{\tordmel{k-1}{j} : j\in\intint{k-1}\}$ is a disjoint collection of $k-1$ curves.
\end{lemma}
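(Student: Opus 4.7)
The plan is to mirror the proof of Lemma~\ref{l.new curves are disjoint}, dualized so that wilderness excursions and candidate subpaths trade their roles. I would suppose for contradiction that some vertex $v$ belongs to both $\tordmel{k-1}{j_1}$ and $\tordmel{k-1}{j_2}$ with $j_1 \neq j_2$. Since $\pordmel{k-1}{j_1}$ and $\pordmel{k-1}{j_2}$ are disjoint, at least one of the two occurrences must come from an added set $A_j$, leaving two essential cases after symmetry: (i) $v \in A_{j_1} \cap A_{j_2}$, and (ii) $v \in A_{j_1}$ and $v \in \pordmel{k-1}{j_2}$ while $v$ avoids every candidate $C_{i'}(\gamma')$ lying on $\pordmel{k-1}{j_2}$.

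For case (i), I would write $v \in \gamma_1 \cap \gamma_2$ with $\gamma_\alpha \in \WPF{k}{i_\alpha}$ and $C_{i_\alpha}(\gamma_\alpha) \subseteq \pordmel{k-1}{j_\alpha}$ for $\alpha = 1,2$. When $i_1 = i_2$, distinct connected components of $\pordmel{k}{i_1} \cap W^k_{i_1}$ are disjoint so $\gamma_1 = \gamma_2$, and then the (uniquely determined) candidate $C_{i_1}(\gamma_1)$ would have to lie in two disjoint curves $\pordmel{k-1}{j_1}$ and $\pordmel{k-1}{j_2}$, which is impossible. When $i_1 \neq i_2$, the disjoint curves $\pordmel{k}{i_1}$ and $\pordmel{k}{i_2}$ of $\phi_n^k$ would share $v$, again impossible.

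Case (ii) is the heart of the proof. Let $\gamma \in \WPF{k}{i}$ satisfy $v \in \gamma$ and $C_i(\gamma) \subseteq \pordmel{k-1}{j_1}$, so $i \in \{j_1, j_1+1\}$; by Lemma~\ref{l.existence of good path}, $\gamma$ lies strictly on one side of $\pordmel{k-1}{j_1}$, and I take WLOG that it lies to the right (the other subcase being symmetric under left/right reflection), so $j_1 = i$ and $v$ is strictly to the right of $\pordmel{k-1}{i}$. Planarity and the ordering of $\phi_n^{k-1}$ then force $j_2 \geq i+1$. The key geometric step is to exhibit a wilderness excursion of $\pordmel{k}{j_2}$ whose candidate on $\pordmel{k-1}{j_2}$ contains $v$. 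Setting $t = v_x + v_y$, the strict ordering $\pordmel{k}{j_2} \succ \pordmel{k}{i}$ yields $\pordmel{k}{j_2}(t) > v_x - v_y = \pordmel{k-1}{j_2}(t)$, placing $\pordmel{k}{j_2}$ strictly to the right of $\pordmel{k-1}{j_2}$ at time $t$; since the home region $R^{k-1}_{j_2-1}$ of $\pordmel{k}{j_2}$ sits to the left of $\pordmel{k-1}{j_2}$, the vertex $\pordmel{k}{j_2}(t)$ lies in the wilderness $W^k_{j_2}$. The excursion $\gamma'' \in \WPF{k}{j_2}$ passing through this vertex therefore satisfies $C_{j_2}(\gamma'') \subseteq \pordmel{k-1}{j_2}$ by Lemma~\ref{l.existence of good path}, and the explicit description of $C_{j_2}(\gamma'')$ as the portion of $\pordmel{k-1}{j_2}$ strictly between the vertices adjacent to the entry and exit endpoints of $\gamma''$ shows that $v$, sitting on $\pordmel{k-1}{j_2}$ at a time strictly within the span of $\gamma''$, belongs to $C_{j_2}(\gamma'')$, the desired contradiction.

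The main obstacle is precisely this geometric extraction in case (ii): from only $v \in \pordmel{k-1}{j_2} \cap \pordmel{k}{i}$ with $i < j_2$ one has to locate a specific wilderness excursion of $\pordmel{k}{j_2}$ whose candidate on $\pordmel{k-1}{j_2}$ must cover $v$. This rests on the strictness of the partial order $\prec$ guaranteeing a genuine crossing (so that the relevant anti-diagonal coordinate lies strictly inside the entry and exit times of the excursion) and on the combinatorial description of candidates supplied by Lemma~\ref{l.existence of good path}; once this step is secured, the symmetric subcase where $\gamma$ sits to the left of $\pordmel{k-1}{j_1}$ is handled verbatim by left/right reflection.
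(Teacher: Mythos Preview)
Your proposal is correct and follows essentially the same approach as the paper: the same two-case split, the same disposal of Case~(i) via disjointness of wilderness excursions, and the same geometric mechanism in Case~(ii) of locating a wilderness excursion of $\pordmel{k}{j_2}$ whose candidate on $\pordmel{k-1}{j_2}$ covers $v$. Your Case~(ii) is in fact slightly more streamlined than the paper's, since you go directly from $\pordmel{k}{j_2}(t) > \pordmel{k}{i}(t) = \pordmel{k-1}{j_2}(t)$ to the conclusion, whereas the paper routes through an intermediate observation about $\pordmel{k}{j_2+1}$ before arriving at the same endpoint.
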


\begin{proof}
Suppose to the contrary that, for some $j_1, j_2 \in \intint{k-1}$ with $j_1\neq j_2$, there is a vertex $v$ such that $v\in\tordmel{k-1}{j_1}\cap\tordmel{k-1}{j_2}$. Since $\pordmel{k-1}{j_1}\cap\pordmel{k-1}{j_2}=\emptyset$, there are two remaining cases:
\begin{enumerate}
	\item $v\in A_{j_1}\cap A_{j_2}$; or

	\item $v\in \pordmel{k-1}{j_1}\cap A_{j_2}$ and $v\not\in \bigcup\big\{C_i(\gamma) : i\in\llbracket j_1, j_1+1\rrbracket, \gamma\in \WPF{k}{i}, C_i(\gamma)\subseteq \pordmel{k-1}{j_1}\big\}$ (without loss of generality). 
\end{enumerate}
We immediately note that Case 1 cannot occur since any $\gamma\in \cup_{i=1}^j\WPF{k}{i}$ can belong to at most one of the $A_j$, and distinct elements of $\smash{\cup_{i=1}^j}\WPF{k}{i}$ are disjoint. We next show that Case 2 leads to a contradiction.

Since $v\in A_{j_2}$, there are two possibilities: there exists $\gamma\in\WPF{k}{i}$ such that $v\in\gamma$ and $C_i(\gamma)\subseteq \pordmel{k-1}{j_2}$ for $i=j_2$ or $j_2+1$. We deal with the case that $i=j_2$; the argument for the other is similar.

That $C_{j_2}(\gamma)\subseteq \pordmel{k-1}{j_2}$ implies that $\gamma$ is to the right of $\pordmel{k-1}{j_2}$ from Lemma~\ref{l.existence of good path}. Thus $v\in\pordmel{k-1}{j_1}$ is to the right of $\pordmel{k-1}{j_2}$, yielding that $j_1\geq j_2+1$ since $j_1\neq j_2$.

Now $\pordmel{k}{j_2+1}$ is to the right of $\pordmel{k}{j_2}$, and so also of $\gamma$. Since $v\in\gamma$ lies on $\pordmel{k-1}{j_1}$, which is the left boundary of $R^{k-1}_{j_1}$, we see that $\pordmel{k}{j_2+1}$ intersects $R^{k-1}_{j}$ for some $j\geq j_1 \geq j_2+1$, which for all such $j$ is to the right of the home region $R^{k-1}_{j_2}$ for $\pordmel{k}{j_2+1}$. This implies that $\pordmel{k}{j_1}$ also intersects its wilderness region, and that it is to the right of its home region, by the ordering of $\{\pordmel{k}{j} : j\in\intint{k}\}$ by considering the anti-diagonal line through $v$. But then by the definition of the candidate paths, there must exist $\tilde\gamma \in \WPF{k}{j_1}$ such that $v\in C_{j_1}(\tilde\gamma)$, which is a contradiction.
\end{proof}

Finally we record a sufficient condition for there to be no wilderness path fragments which we will use shortly; note that there being no wilderness path fragments is equivalent to $\phi_n^{k}$ and $\phi_n^{k-1}$ interlacing.

\begin{lemma}\label{l.condition for no wild fragments}
If $\cup_{j=1}^k\WPF{k}{j} \subseteq \phi_n^{k-1}$, then $\cup_{j=1}^k\WPF{k}{j} = \emptyset$.
\end{lemma}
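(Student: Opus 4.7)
I will proceed by contradiction. Assume $\mathcal{G} := \bigcup_{j=1}^{k}\WPF{k}{j}$ is nonempty, and set $G := \bigcup_{\gamma\in\mathcal{G}}\gamma$ and $C := \bigcup_{\gamma\in\mathcal{G}}C_j(\gamma)$. Consider the swapped families $\{\tordmel{k}{j}\}_{j=1}^{k}$ and $\{\tordmel{k-1}{j}\}_{j=1}^{k-1}$ constructed in the preceding paragraphs. By Lemma~\ref{l.existence of good path} together with Lemmas~\ref{l.new curves are disjoint} and~\ref{l.new k-1 curves are disjoint}, these are disjoint collections of upright paths, each sharing endpoints with the corresponding path of $\phi_n^{k}$ and $\phi_n^{k-1}$. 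The swap removes $C$ from the $(k-1)$-side and inserts $G$, while doing the reverse on the $k$-side, so the total weight is preserved:
\[
  \sum_{j=1}^{k}\ell(\tordmel{k}{j}) + \sum_{j=1}^{k-1}\ell(\tordmel{k-1}{j}) = \ell(\phi_n^k) + \ell(\phi_n^{k-1}).
\]

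Next I will use the hypothesis $G\subseteq\phi_n^{k-1}$ (as vertex sets) to show that both swapped families are themselves watermelons. Since $G\subseteq \phi_n^{k-1}$ by hypothesis and $C\subseteq \phi_n^{k-1}$ by construction of the candidate paths, the vertex set of $\bigcup_j \tordmel{k-1}{j}$ is contained in that of $\phi_n^{k-1}$. Because each $\tordmel{k-1}{j}$ has the same endpoints, and therefore the same number of vertices, as $\pordmel{k-1}{j}$, disjointness forces the total vertex counts to match and hence the two vertex sets to coincide. Consequently $\ell\big(\bigcup_j\tordmel{k-1}{j}\big) = \ell(\phi_n^{k-1})$, and from the displayed identity $\ell\big(\bigcup_j\tordmel{k}{j}\big) = \ell(\phi_n^k)$. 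The hypothesis of Proposition~\ref{p.stronger interlacing} that no two distinct subsets of $\intint{n}^2$ share a weight renders the maximum-weight vertex set of a disjoint $k$-path collection unique, so the vertex sets $\bigcup_j\tordmel{k}{j}$ and $\phi_n^k$ also agree.

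Tracking vertex-level changes, these identities force $G = C$ as sets. Indeed, a vertex $v\in G$ lies on a unique $\pordmel{k}{j}$ (by disjointness of $\phi_n^k$) and is removed from that path by the swap; for $v$ still to belong to $\bigcup_j\tordmel{k}{j} = \phi_n^k$, it must be reintroduced through some candidate path, which gives $G\subseteq C$. The symmetric argument on the $(k-1)$-side yields $C\subseteq G$.

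The final contradiction comes from a minimality argument. Let $v_*\in G$ have the smallest diagonal coordinate $x+y$. Since each $\gamma\in\mathcal{G}$ is an upright path, $v_*$ must be the starting endpoint of some $\gamma_*\in\WPF{k}{j}$. The equality $G=C$ gives $v_*\in C_{j'}(\gamma')$ for some $\gamma'$, and a short verification shows $\gamma_*\neq\gamma'$ because $\gamma_*$ is disjoint from the ambient curve $\pordmel{k-1}{\ell}$ of its own candidate path. In the generic case where $v_*$ is not adjacent along $\pordmel{k-1}{\ell'}$ to the start-boundary vertex of $C_{j'}(\gamma')$, the predecessor $v_*^-$ of $v_*$ along $\pordmel{k-1}{\ell'}$ lies in $C_{j'}(\gamma')\subseteq G$, producing a vertex of $G$ with strictly smaller diagonal coordinate than $v_*$ and contradicting minimality. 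I expect the main obstacle to be the boundary case $v_*^- = \underline{u}'$, in which $v_*$ is adjacent to the start-side boundary vertex of $C_{j'}(\gamma')$: then the starting endpoint $u'$ of $\gamma'$ has the same diagonal coordinate as $v_*$, and one iterates the minimality argument with $u'$ in place of $v_*$. Because each iteration moves to a distinct fragment and $\mathcal{G}$ is finite, the generic case must eventually be reached, completing the contradiction.
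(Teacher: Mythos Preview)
Your approach differs substantially from the paper's. The paper gives a short, purely combinatorial counting argument on a single anti-diagonal line: take the least $j_0$ with $\pordmel{k}{j_0}$ not in its home region, pick a wilderness vertex $v$, and observe that on the line $x+y=\ell$ through $v$ each of $\pordmel{k}{j_0},\ldots,\pordmel{k}{k}$ must (by the hypothesis and the ordering) coincide with a distinct curve of $\phi_n^{k-1}$ of index at least $j_0+1$; there are too few of the latter. No swap, no weight hypothesis. Your route instead uses the swap construction together with the distinct-weights hypothesis of Proposition~\ref{p.stronger interlacing} to deduce $G=C$. That reduction is valid, but it imports an extra hypothesis the lemma does not need, and it essentially reproves the first half of Proposition~\ref{p.stronger interlacing} inside the lemma.

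There is a genuine gap in your final step. You assert that ``each iteration moves to a distinct fragment and $\mathcal{G}$ is finite, [so] the generic case must eventually be reached.'' You have only shown that \emph{consecutive} fragments differ; nothing rules out the iteration cycling, say $\gamma_*\to\gamma'\to\gamma_*$. In fact, if the generic case is never reached, then on the minimal anti-diagonal line your iteration defines a fixed-point-free map $\sigma$ on the finite set of fragments whose start lies there; since distinct candidate paths have distinct starts, $\sigma$ is a permutation and must cycle. To finish you would need to extract a contradiction from such a cycle. One way: let $\gamma_1$ be the fragment in the cycle with leftmost start $v_1$; then $\sigma^{-1}(\gamma_1)$ having start to the right of $v_1$ forces $\gamma_1$ to be a leftward excursion, while $\sigma(\gamma_1)=\gamma_2$ having start to the right of $v_1$ forces $\gamma_2$ to be rightward; the first gives (via $v_1\in\pordmel{k-1}{j_2}$ strictly left of $\pordmel{k-1}{j_1-1}$) that $j_2<j_1-1$, while comparing positions of $v_1\in\pordmel{k}{j_1}$ and $v_2\in\pordmel{k}{j_2}$ gives $j_2>j_1$. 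But this patch is essentially the paper's line-counting argument in disguise, so the detour through $G=C$ ends up buying nothing.
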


The proof idea is to assume the contrary and to then consider the leftmost path $\pordmel{k}{j}$ of $\phi_n^k$ which has a wilderness excursion. This excursion must be a subset of some path of $\phi_n^{k-1}$. By considering an anti-diagonal line $x+y=\ell$ through a vertex of this wilderness excursion, we see that each path of $\phi_n^k$ to the right of $\pordmel{k}{j}$ must have a wilderness excursion which intersects this anti-diagonal line and which must be a subset of a distinct path of $\phi_n^{k-1}$. But this is a contradiction as there are not enough paths in $\phi_n^{k-1}$ to accommodate all these paths of $\phi_n^k$.

\begin{proof}[Proof of Lemma~\ref{l.condition for no wild fragments}]
Suppose $\cup_{j=1}^k\WPF{k}{j} \neq\emptyset$. Then there is a minimum $j_0$ such that $\pordmel{k}{j_0} \not\subseteq R^{k-1}_{j_0-1}$. Let $v\in \pordmel{k}{j_0}\setminus R^{k-1}_{j_0-1}$. That $\pordmel{k}{j_0}$ is a path implies that $\pordmel{k}{j_0}\subseteq \phi_n^{k-1}$, and therefore there is an $i_0\geq 1$ such that $\pordmel{k}{j_0}  = \pordmel{k-1}{j_0+i_0}$ on the unique anti-diagonal line $x+y= \ell$ that passes through $v$. The ordering of $\{\pordmel{k}{j} : j\in\intint{k}\}$ implies that, for each $i\geq 1$, $\pordmel{k}{j_0+i}$ equals a distinct curve of $\phi_n^{k-1}$, again on the line $x+y=\ell$, because the vertex of $\pordmel{k}{j_0+i}$ on this line cannot lie in $R^k_{j_0+i-1}$. This is a contradiction as there are $k-1-j_0-i_0$ curves of $\phi_n^{k-1}$ to the right of $\pordmel{k-1}{j+i_0}$ but $k-j_0-1$ to the right of $\pordmel{k}{j_0}$.
\end{proof}

With the two new collections of $k$ and $k-1$ disjoint curves, we may implement the last step of the outline of the proof of Proposition~\ref{p.stronger interlacing} by showing that $\gamma_n^k$ and $\gamma_n^{k-1}$ are new geodesic watermelons, contradicting the uniqueness of geodesic watermelons as implied by the hypothesis that distinct sets have distinct weights.

\begin{proof}[Proof of Proposition~\ref{p.stronger interlacing}]
We first note that $\phi_n^k$ and $\phi_n^{k-1}$ interlacing is equivalent to $\cup_{j=1}^k \WPF{k}{j} = \emptyset$, and we will show the latter for $k\in\intint{n}$.

Fix $k\in\intint{n}$. The weight maximality property of $\phi_n^k$ and $\phi_n^{k-1}$, combined with Lemmas~\ref{l.new curves are disjoint} and \ref{l.new k-1 curves are disjoint}, implies that
\begin{equation}\label{e.comparing weights}
\ell(\gamma_n^{k}) \leq \ell(\phi_n^k) \quad\text{and}\quad
\ell(\gamma_n^{k-1}) \leq \ell(\phi_n^{k-1}).
\end{equation}
But in the specification of $\gamma_n^k$ and $\gamma_n^{k-1}$ compared to $\phi_n^k$ and $\phi_n^{k-1}$, we have only swapped vertices. Thus we have
$$\ell(\gamma_n^k) + \ell(\gamma_n^{k-1}) = \ell(\phi_n^k) + \ell(\phi_n^{k-1}).$$
Combining the last display with \eqref{e.comparing weights} implies that
$$\ell(\gamma_n^{k-1}) = \ell(\phi_n^{k-1})\quad\text{and}\quad \ell(\gamma_n^{k}) = \ell(\phi_n^{k}).$$
Then by the hypothesis that distinct sets have distinct weights, $\gamma_n^k = \phi_n^{k}$ and $\gamma_n^{k-1} = \phi_n^{k-1}$, where the equalities refer to the vertex sets. This can only happen if $\cup_{j=1}^k\WPF{k}{j} = \emptyset$. This is because $\gamma_n^{k-1} = \phi_n^{k-1}$ implies that $\cup_{j=1}^k\WPF{k}{j} \subseteq \phi_n^{k-1}$, which allows us to reach the conclusion of Lemma~\ref{l.condition for no wild fragments}. This completes the proof of Proposition~\ref{p.stronger interlacing}.
\end{proof}

Next we turn to proving Proposition~\ref{p.specified melon interlacing}, which handles the case where $\nu$ may have atoms.
Its proof proceeds by introducing a deterministic perturbation which makes the given $k$-geodesic watermelon $\phi_{n}^k$ the unique such melon; adding a second random perturbation to make all distinct subsets have distinct weights; invoking Proposition~\ref{p.stronger interlacing} to get interlacing; and showing that the $k$-geodesic watermelons in the perturbed environment are also $k$-geodesic watermelons in the original environment.

\begin{proof}[Proof of Proposition~\ref{p.specified melon interlacing}]
Consider the perturbed environment obtained from $\intint{n}^2$ as follows: each point $v$ in $\phi^k_{n}$ is deterministically given weight $\xi_v+\epsilon/4n^2$, where $\xi_v$ is the original weight of $v$ and $\epsilon>0$ will be chosen later; the remaining points in $\intint{n}^2$ have weight unchanged. We now further randomly perturb weights, by increasing the weight of each $v\in\intint{n}^2$ by an independent random variable distributed uniformly on $[0,\varepsilon/8n^2]$. Then it is clear that, for every $\epsilon>0$, $\phi^k_{n}$ is the vertices of the unique $k$-geodesic watermelon in the perturbed environment and that every distinct subset of vertices in this environment almost surely has distinct weight.

Applying Proposition~\ref{p.stronger interlacing} gives a sequence of interlacing geodesic watermelons $\{\widetilde\gamma_{n}^j : j\in\intint{n}\}$ (which is $\epsilon$-dependent) in the perturbed environment, and the uniqueness at level $k$ implies that the set of vertices of $\widetilde\gamma_{n}^k$ is the same as that of $\phi^k_{n}$. 

We set $\epsilon$ to be the minimum gap, over $j\in \intint{n}$, between the $j$-geodesic melon's weight and the weight of the best collection of $j$ disjoint paths which is not a $j$-geodesic melon, both in the unperturbed environment. It is clear that $\epsilon>0$.

We now argue that \smash{$\{\widetilde\gamma_{n}^j : j\in\intint{n}\}$}, which is a sequence of geodesic watermelons in the perturbed environment, must also be a sequence of geodesic watermelons in the original environment. If we let $\widetilde\ell(\cdot)$ denote the perturbed length, then it is easy to see that, for any subset of vertices $A\subseteq \intint{n}^2$,
\begin{equation}\label{e.perturbed unperturbed relation}
\widetilde\ell(A) - \frac{3\epsilon}{8} \leq \ell(A) \leq \widetilde\ell(A).
\end{equation}
Let $\gamma_{n}^j$ be a $j$-geodesic melon in the unperturbed environment and suppose by way of contradiction that for some $j\in\intint{n}$, $\widetilde\gamma_{n}^j$ is not a $j$-geodesic melon in the unperturbed environment. Then observing $\widetilde\ell(\gamma_{n}^j)<\widetilde\ell(\widetilde\gamma_{n}^j)$  and applying \eqref{e.perturbed unperturbed relation} with $A=\gamma_{n}^j$ for the first inequality and $A=\widetilde\gamma_{n}^j$ for the final inequality yields
$$\ell(\gamma_{n}^j) \leq \widetilde \ell(\gamma_{n}^j) <\widetilde \ell(\widetilde\gamma_{n}^j) \leq \ell(\widetilde\gamma_{n}^j) + \frac{3\epsilon}{8}.$$
But the definition of $\epsilon$ implies that this inequality does not hold. So $\widetilde\gamma_{n}^j$ must be a $j$-geodesic melon in the original environment for every $j\in\intint{n}$.
\end{proof}

\subsection{Proving Proposition~\ref{p:ordering}}
We must prove that the sequence $Y_{n,k} = X^k_n - X_{n}^{k-1}$ is non-decreasing in $k > 1$, for any given set of non-negative weights.
Note that $Y_{n,k} \leq Y_{n,k-1}$ is equivalent to $X_{n}^k + X_{n}^{k-2} \leq 2X_{n}^{k-1}$. To prove the latter, we will decompose the $2(k-1)$ curves of $\melon{k}$ and $\melon{k-2}$ into two sets of $k-1$ disjoint curves and then invoke the maximality of the  weight $X_{n}^{k-1}$. This is easy when $k=3$, as Figure~\ref{f.decomposition} illustrates.

The case of general $k$ will be proved by induction. We will establish a stronger inductive hypothesis, Lemma~\ref{l.k-1 sets decomposition}. We remind the reader of the abuse of notation we have adopted till now where a path is sometimes identified with the set of vertices in its range. It serves the purpose of stating and proving Lemma~\ref{l.k-1 sets decomposition} to adopt a further abuse, where we will sometimes regard a path as a \emph{multi}set of vertices in its range. A multiset in this instance is an $\N$-valued map on $\intint{n}^2$, i.e, a subset of $\intint{n}^2$ where the elements may have multiplicities. We use the symbol $\equiv$ to denote equality of two multisets. So, for example, it will hold for two paths $\gamma_1$ and $\gamma_2$ that $(\gamma_1\cup\gamma_2) \setminus \gamma_1 \equiv \gamma_2$, even if the two paths have portions of overlap.

\begin{figure}[h]
   \centering
        \includegraphics[width=.97\textwidth]{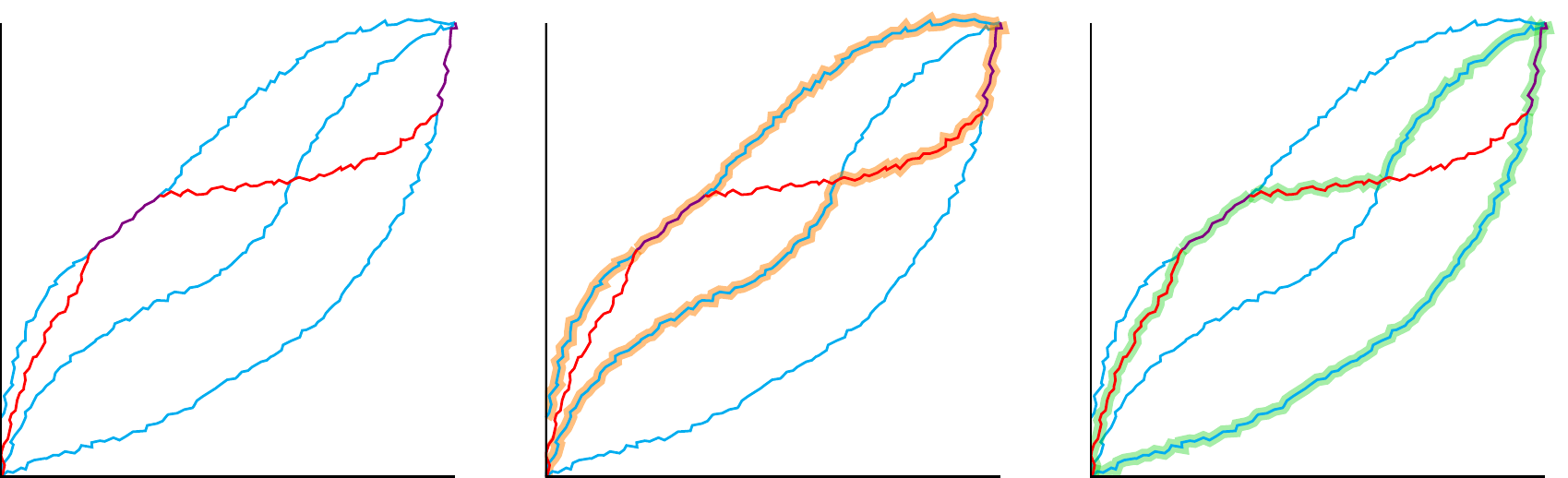}
 \caption{The base case of Lemma~\ref{l.k-1 sets decomposition}. In the first panel are the $\gamma$ curves in blue and $\gamma'$ curve in red (with overlap in purple), which in our application of Lemma~\ref{l.k-1 sets decomposition} will be the 3-melon and 1-melon. In the remaining two panels are highlighted how these four paths can be decomposed into a pair of two disjoint paths while respecting multiplicities. The rightmost orange path in the second panel is $\max(\gamma_1', \gamma_2)$.}
 \label{f.decomposition}
\end{figure}

\begin{lemma}\label{l.k-1 sets decomposition}
Suppose $\gamma_1 \preceq \ldots \preceq \gamma_k$ and $\gamma_1'\preceq \ldots \preceq \gamma_{k-2}'$ are such that all $2k-2$ curves have the same time-range (recall Definition~\ref{d.time range}) and no vertex is counted with multiplicity strictly greater than two in $\bigcup_{i=1}^k \gamma_i \cup \bigcup_{i=1}^{k-2}\gamma_i'$. Then there exist two collections $A_1$ and $A_2$ of $k-1$ disjoint curves each such that the curves have the same time range as before and
$$
A_1\cup A_2 \equiv \bigcup_{i=1}^k \gamma_i \cup \bigcup_{i=1}^{k-2}\gamma_i' \, ,
$$ %
with
$\gamma_1\in A_1$ and $\gamma_k\in A_2$.
\end{lemma}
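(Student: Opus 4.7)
The plan is to construct $A_1$ and $A_2$ explicitly via pointwise minima and maxima of the given curves. For two upright paths $\alpha,\beta$ with common time range, define $\alpha\vee\beta$ and $\alpha\wedge\beta$ by $(\alpha\vee\beta)(t):=\max(\alpha(t),\beta(t))$ and $(\alpha\wedge\beta)(t):=\min(\alpha(t),\beta(t))$, using the coordinate $\gamma(t)=x-y$ from Definition~\ref{d.time range}. The one genuinely geometric preliminary step is to verify that $\alpha\vee\beta$ and $\alpha\wedge\beta$ are themselves upright paths on the same time range: the increments $\alpha(t+1)-\alpha(t)$ and $\beta(t+1)-\beta(t)$ each lie in $\{-1,+1\}$, and the parity constraint $\alpha(t)\equiv\beta(t)\equiv t\pmod 2$ forces $\alpha(t)-\beta(t)$ to be even, after which a short case analysis on the four sign patterns for the increments of $\alpha$ and $\beta$ gives $(\alpha\vee\beta)(t+1)-(\alpha\vee\beta)(t)\in\{-1,+1\}$ and similarly for $\wedge$. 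A further bookkeeping observation is that $(\alpha\vee\beta)\cup(\alpha\wedge\beta)\equiv\alpha\cup\beta$ as multisets of vertices, since $\max+\min=\alpha+\beta$ pointwise.

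Next, set
\[
A_1 := \{\gamma_1\}\cup\big\{\gamma_i'\vee\gamma_{i+1}:1\le i\le k-2\big\},\qquad A_2 := \big\{\gamma_i'\wedge\gamma_{i+1}:1\le i\le k-2\big\}\cup\{\gamma_k\}.
\]
Each of $A_1,A_2$ has $1+(k-2)=k-1$ elements and contains the required distinguished curve, and their combined vertex multisets equal $\bigcup_i\gamma_i\cup\bigcup_j\gamma_j'$ by applying the multiset identity above to each of the $k-2$ pairs $(\gamma_i',\gamma_{i+1})$.

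It remains to verify pairwise disjointness within each $A_j$, which we will do by establishing the strict chain
\[
\gamma_1\prec\gamma_1'\vee\gamma_2\prec\gamma_2'\vee\gamma_3\prec\cdots\prec\gamma_{k-2}'\vee\gamma_{k-1}
\]
inside $A_1$, and the analogous chain with $\wedge$ ending $\gamma_{k-2}'\wedge\gamma_{k-1}\prec\gamma_k$ inside $A_2$. The boundary inequalities reduce to $\gamma_1\prec\gamma_2\preceq\gamma_1'\vee\gamma_2$ and $\gamma_{k-2}'\wedge\gamma_{k-1}\preceq\gamma_{k-1}\prec\gamma_k$. The interior inequalities follow from a short strict-monotonicity claim: if $\alpha_1\prec\alpha_2$ and $\beta_1\prec\beta_2$ then $\alpha_1\vee\beta_1\prec\alpha_2\vee\beta_2$ and $\alpha_1\wedge\beta_1\prec\alpha_2\wedge\beta_2$. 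Indeed, at any $t$, if $(\alpha_1\vee\beta_1)(t)=\alpha_1(t)$ then $(\alpha_2\vee\beta_2)(t)\ge\alpha_2(t)>\alpha_1(t)$, with the other subcase symmetric; the $\wedge$ claim uses $\alpha_2(t)>\alpha_1(t)$ and $\beta_2(t)>\beta_1(t)\ge\alpha_1(t)$ (when $\alpha_1(t)\le\beta_1(t)$) to deduce $(\alpha_2\wedge\beta_2)(t)>\alpha_1(t)=(\alpha_1\wedge\beta_1)(t)$.

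The main obstacle, if one can call it that, is the parity/increment verification that $\vee$ and $\wedge$ preserve the upright-path property; thereafter the argument is purely algebraic. The inductive presentation hinted at in the paper can be recovered by peeling off the pair $(\gamma_{k-2}',\gamma_{k-1})$ and applying the $k-1$ case to the remaining paths $\{\gamma_1,\ldots,\gamma_{k-1}\}$ and $\{\gamma_1',\ldots,\gamma_{k-3}'\}$, but the explicit formula above makes a formal induction unnecessary.
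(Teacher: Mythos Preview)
Your construction is pleasingly direct, but it has a genuine gap: the disjointness argument for $A_1$ and $A_2$ silently upgrades the hypothesis $\gamma_1\preceq\cdots\preceq\gamma_k$ and $\gamma_1'\preceq\cdots\preceq\gamma_{k-2}'$ to strict $\prec$. You write ``the boundary inequalities reduce to $\gamma_1\prec\gamma_2\preceq\gamma_1'\vee\gamma_2$'' and invoke the strict monotonicity claim for the interior, but the lemma only gives weak orderings. Under the stated hypotheses your construction can fail. Take $k=4$ and consider a time $t$ at which $\gamma_1(t)=1$, $\gamma_2(t)=\gamma_3(t)=3$, $\gamma_4(t)=5$, $\gamma_1'(t)=\gamma_2'(t)=2$; the orderings and the multiplicity-at-most-two hypothesis are satisfied, yet your $A_1$ takes the values $\{1,\,\gamma_1'\vee\gamma_2,\,\gamma_2'\vee\gamma_3\}=\{1,3,3\}$ at $t$, so its curves are not disjoint. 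A telling symptom is that your argument never invokes the multiplicity hypothesis at all, whereas the paper's proof uses it essentially.

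The paper's inductive ``peeling'' is not merely a repackaging of your formula: for the same $k=4$ example it produces $A_1$ with values $\{1,2,3\}$ and $A_2$ with values $\{2,3,5\}$ at time $t$, which are disjoint. The point is that each inductive step re-sorts via nested $\min/\max$, and the disjointness at each stage is obtained from a sandwiching argument together with the multiplicity bound (this is exactly where the paper uses that three curves through a single vertex is forbidden). Your final remark that the induction ``can be recovered'' from the explicit formula is therefore incorrect: the two give different decompositions once ties are allowed. That said, in the application to Proposition~\ref{p:ordering} the curves come from disjoint watermelons and hence are strictly ordered, so your simpler formula does suffice there; it just does not prove the lemma as stated.
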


Here is how we will prove Lemma~\ref{l.k-1 sets decomposition}: regard the union of the paths as a subset of $\intint{n}^2$, peel off the left boundary and label it as one path in $A_1$, and peel off the right boundary and label it as a path in $A_2$. Now peel off the \emph{right} boundary of the remaining set of vertices and label it as a path in $A_1$, and do likewise with the left boundary and $A_2$. Continue iteratively in this fashion, switching whether the left boundary is assigned to $A_1$ or $A_2$, till all the vertices have been depleted. This will yield the desired decomposition: see Figure~\ref{f.decomposition} to see that this is how the orange curves have been assigned to $A_1$ and the green curves to $A_2$.

To render a proof from this idea, we will need to define the max and min of two curves $\gamma$ and $\gamma'$ in order to specify precisely what we called peeling off the right and left boundaries. In each excursion between $\gamma$ and $\gamma'$, we will retain the right path fragment in the max, the left fragment in the min, and the overlapping portion in both. If the curves are disjoint, the right path is the max. In the notation introduced in Definition~\ref{d.time range}, for $t\in \mathcal R(\gamma)\cap \mathcal R(\gamma')$,
\begin{align*}
\max(\gamma, \gamma')(t) &= \max(\gamma(t), \gamma'(t))\\
\min(\gamma, \gamma')(t) &= \min(\gamma(t), \gamma'(t)),
\end{align*}
where the max and min on the right are with respect to the normal $\leq$ ordering. Note that $\min(\gamma,\gamma') \preceq \gamma,\gamma' \preceq \max(\gamma,\gamma')$.
We also record without proof the following simple statement.

\begin{lemma}\label{l.min max of paths}
Let $\gamma_1$, $\gamma_1'$, $\gamma_2$, and $\gamma_2'$ be upright paths with $\mathcal R(\gamma_1) = \mathcal R(\gamma_2)$, $\mathcal R(\gamma_1') = \mathcal R(\gamma_2')$, and $\gamma_i\preceq \gamma_i'$ for $i=1,2$. Then
\begin{enumerate}
	\item $\gamma_1\cup\gamma_2 \equiv \max(\gamma_1, \gamma_2)\cup \min(\gamma_1,\gamma_2)$;

	\item $\max(\gamma_1,\gamma_2) \preceq \max(\gamma_1',\gamma_2')$ and $\min(\gamma_1,\gamma_2) \preceq \min(\gamma_1',\gamma_2')$; and

	\item $\max(\gamma_1, \gamma_2)$ and $\min(\gamma_1, \gamma_2)$ are upright paths.
\end{enumerate}
\end{lemma}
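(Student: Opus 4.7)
All three claims can be verified pointwise in time, using the convention that an upright path is identified with the function $t \mapsto \gamma(t) \in \Z$ on its time range. The plan is to check each item separately at each $t$, the main subtlety being only in part (3).

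For (1), fix $t \in \mathcal{R}(\gamma_1) = \mathcal{R}(\gamma_2)$. The vertex of $\gamma_i$ at time $t$ has coordinates $\big((t+\gamma_i(t))/2,\, (t-\gamma_i(t))/2\big)$, so the multiset $\{\gamma_1(t), \gamma_2(t)\}$ determines the multiset of vertices contributed by $\gamma_1 \cup \gamma_2$ at level $t$. Since $\{\gamma_1(t), \gamma_2(t)\} = \{\min(\gamma_1,\gamma_2)(t), \max(\gamma_1,\gamma_2)(t)\}$ as a multiset (the case $\gamma_1(t) = \gamma_2(t)$ included), summing multiplicities over $t$ gives the identity. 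For (2), at each $t$ in the relevant overlap of time ranges (nonempty by the hypothesis $\gamma_i \preceq \gamma_i'$), the inequalities $\gamma_i(t) \leq \gamma_i'(t)$ for $i = 1, 2$ immediately yield $\max(\gamma_1(t), \gamma_2(t)) \leq \max(\gamma_1'(t), \gamma_2'(t))$ and the symmetric statement for $\min$.

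The substantive step is (3). An upright path is characterized by consecutive vertices differing by $(1,0)$ or $(0,1)$, which in the $\gamma(t)$ coordinate is equivalent to $\gamma(t+1) - \gamma(t) \in \{-1, +1\}$ at every $t$ in the time range. The main obstacle, which superficially threatens the claim, is that $\max(\gamma_1, \gamma_2)$ could in principle remain constant from $t$ to $t+1$ when the two paths ``swap'' which one realises the maximum. This is ruled out by a parity observation: because $\gamma_i(t) \equiv t \pmod 2$ for each $i$, the difference $\gamma_1(t) - \gamma_2(t)$ is always even, so it changes by an amount in $\{-2, 0, +2\}$ per unit time and in particular cannot jump from $\geq 2$ at time $t$ to $\leq -2$ at time $t+1$. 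A short case analysis on whether $\max(\gamma_1, \gamma_2)(t)$ is attained by the same index as $\max(\gamma_1, \gamma_2)(t+1)$, together with the sub-case $\gamma_1(t) = \gamma_2(t)$ in which the max can be taken from either path, then shows $\max(\gamma_1, \gamma_2)(t+1) - \max(\gamma_1, \gamma_2)(t) \in \{-1, +1\}$ in every scenario. The argument for $\min(\gamma_1, \gamma_2)$ is symmetric, completing the proof.
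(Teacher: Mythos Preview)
Your proof is correct. The paper itself records Lemma~\ref{l.min max of paths} without proof, calling it a ``simple statement,'' so there is nothing to compare against; your argument supplies the omitted details. The parity observation in part (3)---that $\gamma_i(t) \equiv t \pmod 2$ forces $\gamma_1(t) - \gamma_2(t)$ to be even, so the index realizing the max cannot switch without first passing through equality---is exactly the point that makes the claim nontrivial, and you handle it cleanly.
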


\begin{proof}[Proof of Lemma~\ref{l.k-1 sets decomposition}]
The proof is by induction on $k$. We first verify the base case of $k=3$; see Figure~\ref{f.decomposition}. We take $A_1 = \{\gamma_1, \max(\gamma_1', \gamma_2)\}$ (highlighted orange in the second panel of Figure~\ref{f.decomposition}). %
 We take $A_2 = \{\min(\gamma_1', \gamma_2), \gamma_3\}$ (highlighted green in the third panel there). We now show that $A_1$ is a pair of disjoint curves; the argument of the same for $A_2$ is similar. We observe that
\begin{gather*}
\gamma_1 \preceq \gamma_2 \preceq \max(\gamma_1', \gamma_2) \quad\text{and}\quad
\gamma_1 \preceq \gamma_1' \preceq \max(\gamma_1', \gamma_2).
\end{gather*}
Thus any vertex in both $\gamma_1$ and $\max(\gamma_1', \gamma_2)$ must be a member of $\gamma_1,$ $\gamma_2$, and $\gamma_1'$, contradicting the multiplicity assumption.
This completes the case $k=3$. %

Now we move to the inductive step. The idea is similar to the base case's. Consider the set
$$
B \equiv \bigcup_{i=2}^{k-1} \gamma_i \cup \bigcup_{i=1}^{k-2}\gamma_i' \, ;
$$%
i.e., remove the left and right boundaries $\gamma_1$ and $\gamma_k$ from the original collection.
We claim that this set satisfies the inductive hypotheses for $k-1$. To see this, define $\widetilde \gamma_i = \min(\gamma_{i+1}, \gamma'_{i})$ for $i \in \intint{k-2}$ and $\widetilde \gamma_{k-1} = \max(\gamma_{k-1}, \gamma_{k-2}')$, and define $\widetilde \gamma_i' = \max(\gamma_{i+1},\gamma_i')$ for $i \in \intint{k-3}$.  Lemma~\ref{l.min max of paths} asserts that each of these is an upright path, and that their union is $B$.
It is also immediate from Lemma~\ref{l.min max of paths} that the ordering relations of $\widetilde\gamma_i$ and $\widetilde\gamma_i'$ are satisfied, and the multiplicity condition is inherited from the original $\gamma_i$ and $\gamma'_i$ curves. Thus $B$ satisfies the inductive hypotheses for $k-1$.

We therefore obtain two sets, $A_1'$ and $A_2'$, each with $k-2$ disjoint curves, such that
$$
\widetilde A_1\cup \widetilde A_2  \, \equiv \, \bigcup_{i=2}^{k-1} \gamma_i \cup \bigcup_{i=1}^{k-2}\gamma_i' \, ,
$$
$\min(\gamma_2,\gamma'_1) = \widetilde\gamma_1 \in \widetilde A_1$, and $\max(\gamma_{k-1}, \gamma_{k-2}') = \widetilde\gamma_{k-1} \in \widetilde A_2$. We now form $A_1$ and $A_2$ by defining
$$
A_1 = \widetilde A_2\cup\gamma_1 \quad\text{and}\quad A_2 = \widetilde A_1\cup\gamma_k \, .
$$
All we have to verify is that $\gamma_1$ is disjoint with the curves in $\widetilde A_2$ and the same for $\gamma_k$ and $\widetilde A_1$. We will show the first as the argument for the second is similar.

Suppose there is $\gamma\in \widetilde A_2$ such that there exists $v \in \gamma\cap \gamma_1$. Notice that
$$
\gamma_1 \preceq \min(\gamma_2,\gamma_1') \preceq \gamma \, .
$$
Thus if $v$ is an element of the left-hand side and right-hand side paths in this display, then it must also be an element of the middle path. But then $v$ must have multiplicity at least three in $\bigcup_{i=1}^k\gamma_i\cup\bigcup_{i=1}^{k-2}\gamma_i'$ as the three paths in the last display belong to three disjoint collections of paths, namely $\bigcup_{i=1}^k\gamma_i\cup\bigcup_{i=1}^{k-2}\gamma_i' \setminus (\widetilde A_1 \cup \widetilde A_2)$, $\widetilde A_1$, and $\widetilde A_2$ respectively. This is a contradiction, completing the induction step and the proof of Lemma~\ref{l.k-1 sets decomposition}.
\end{proof}

\begin{remark}
In fact, the hypotheses that $\{\gamma_i:i\in\intint{k}\}$ and $\{\gamma'_i : i\in\intint{k-2}\}$ are ordered; $\gamma_1\preceq \gamma_1'$; and $\gamma_{k-2}'\preceq \gamma_k$ are all not needed for the conclusion of Lemma~\ref{l.k-1 sets decomposition} to hold, with the modification that we require the left boundary of the collection to lie in $A_1$ and the right boundary in $A_2$, instead of $\gamma_1$ and $\gamma_k$. Without these hypotheses, the set $B$ in the proof above is taken to be the original collection with the left boundary and right boundary curves removed; that the remaining curves can be decomposed as needed to apply the induction hypothesis is proved by a separate induction argument. As the argument becomes more involved and we do not require this generality for our application, we have chosen to not make this more general statement.
\end{remark}

Now we use Lemma~\ref{l.k-1 sets decomposition} to complete the proof of Proposition~\ref{p:ordering}.

\begin{proof}[Proof of Proposition \ref{p:ordering}]
	That $Y_{n,1} \geq Y_{n,2}$ is equivalent to $2X_{n}^1 \geq X_n^2$, which is obvious from the definition of the two quantitites.

	To prove $Y_{n,k-1}\geq Y_{n,k}$ for $k\geq 3$, as we noted, it is sufficient to prove $X_n^{k-2} + X_n^k \leq 2X_n^{k-1}$. 
	Let $\gamma_n^k = \{\ordmel{k}{j}: j\in\intint{k}\}$ and $\gamma_n^{k-2} = \{\ordmel{k-2}{j}: j\in\intint{k-2}\}$ be $k$- and $(k-2)$-geodesic watermelons as given by Proposition~\ref{p.starting and ending points}. We extend them by horizontal line segments so that $\gamma_i$ is $\ordmel{k}{i}$ extended to begin at $(i-k+1, k-i+1)$ and end at $(n+i-1, n-i+1)$ for $i\in\intint{k}$, and $\gamma'_i$ is $\ordmel{k-2}{i}$ extended to begin at $(i-k+3, k-i-1)$ and end at $(n+i-1, n-i+1)$ for $i\in\intint{k-2}$; this is done so that all the paths have the same time-range.

	We apply Lemma~\ref{l.k-1 sets decomposition} to these curves;
	the hypothesis on the ordering of the curves and the multiplicity assumption is satisfied by the disjointness of the curves in geodesic watermelons and an invocation of interlacing (Proposition~\ref{p.stronger interlacing}). Then we obtain two collections of $k-1$ disjoint curves $A_1$ and $A_2$. We intersect these curves with $\intint{n}^2$ to get $A_1'$ and $A_2'$, which are still two collections of $k-1$ disjoint curves and whose union is the same as the union of $\gamma_n^k$ and $\gamma_n^{k-2}$, regarded as multisets. The maximum weight property of $(k-1)$-geodesic watermelons completes the proof by implying that
	\begin{equation*}
	X_{n}^{k-2} + X_{n}^{k} = \ell(A_1') + \ell(A_2') \leq 2X_{n}^{k-1}. \qedhere
	\end{equation*}
\end{proof}

\section{Connections to determinantal point processes and eigenvalue rigidity}
\label{s:special}

We begin this section by showing how the just proved deterministic monotonicity of $\{Y_{n,i}: i\in\intint{n}\}$ can be used, somewhat surprisingly, to quickly obtain a  concentration of measure result (which we expect to be optimal) for the same point process.

\begin{proposition}
\label{p:kthweight} 
Suppose that Assumptions \ref{a.passage time continuity}, \ref{a.limit shape assumption}, and \ref{a.one point assumption} (resp. \ref{a.one point assumption convex}) hold.
There exist positive constants $C_1$, $C_2$ and $c_1,$
such that, for some $c>0$, and for  $1\leq k\leq c_1n^{1/2}$ (resp. $1\leq k \leq c_1 n$),
$$\P\left(Y_{n,k} - \mu n \notin - k^{2/3}n^{2/3} \cdot (C_1 ,  C_2 )\right)\leq e^{-ck^2} \, . $$
\end{proposition}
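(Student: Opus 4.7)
The plan is to combine the deterministic monotonicity $Y_{n,1}\geq Y_{n,2}\geq\cdots\geq Y_{n,n}$ of Proposition~\ref{p:ordering} with the two-sided weight control of Theorem~\ref{t.notwidenotthingeneral}(\ref{weight1'}). Monotonicity yields the deterministic squeeze
\begin{equation*}
\frac{X_n^\ell - X_n^k}{\ell - k} \;\leq\; Y_{n,k} \;\leq\; \frac{X_n^k - X_n^j}{k - j}
\end{equation*}
for any $1 \leq j < k < \ell \leq n$, since the right-hand side averages $Y_{n,j+1},\ldots,Y_{n,k}$ (each $\geq Y_{n,k}$) and the left-hand side averages $Y_{n,k+1},\ldots,Y_{n,\ell}$ (each $\leq Y_{n,k}$). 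The proof then reduces to inserting the bounds of Theorem~\ref{t.notwidenotthingeneral}(\ref{weight1'}) at three well-chosen indices of order $k$, and taking a union bound over the corresponding failure events.

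For the upper bound on $Y_{n,k}$ I would choose $j = k - \lfloor \alpha k \rfloor$ for a constant $\alpha\in(0,1)$ to be fixed. On the intersection of the favourable events of Theorem~\ref{t.notwidenotthingeneral}(\ref{weight1'}) at indices $j$ and $k$—so that $X_n^k \leq \mu n k - C_1 k^{5/3}n^{1/3}$ and $X_n^j \geq \mu n j - C_2 j^{5/3}n^{1/3}$—one obtains
\begin{equation*}
Y_{n,k} - \mu n \;\leq\; -\,\frac{C_1 - C_2(1-\alpha)^{5/3}}{\alpha}\, k^{2/3} n^{1/3},
\end{equation*}
which is a strictly negative multiple of $k^{2/3}n^{1/3}$ provided $(1-\alpha)^{5/3} < C_1/C_2$; the choice $\alpha = 1 - \bigl(C_1/(2C_2)\bigr)^{3/5}$ does the job. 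For the lower bound I would take $\ell = 2k$; the lower tail of Theorem~\ref{t.notwidenotthingeneral}(\ref{weight1'}) at index $2k$ together with its upper tail at index $k$ gives
\begin{equation*}
Y_{n,k} - \mu n \;\geq\; -\bigl(C_2\cdot 2^{5/3} - C_1\bigr)\, k^{2/3} n^{1/3}.
\end{equation*}

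A union bound over the three failure events in Theorem~\ref{t.notwidenotthingeneral}(\ref{weight1'}) at indices $j$, $k$, and $2k$ costs only a constant factor in the $e^{-ck^2}$ tail. The admissible range of $k$ in the conclusion is inherited from the range of validity of Theorem~\ref{t.notwidenotthingeneral}(\ref{weight1'}) at index $2k$, forcing us to shrink $c_1$ by a factor of two but otherwise preserving the hypothesis $k \leq c_1 n^{1/2}$ (resp.\ $k \leq c_1 n$) verbatim. The main (if minor) obstacle is the choice of $\alpha$: the squeeze argument only yields an informative upper bound if the $C_2$ loss captured at the lower index $j$ does not swamp the $C_1$ gain captured at index $k$, which forces $\alpha$ to sit close to $1$. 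It is precisely the matching $k^{5/3}n^{1/3}$ fluctuation order on \emph{both} sides of Theorem~\ref{t.notwidenotthingeneral}(\ref{weight1'}) that makes this arithmetic succeed; a one-sided fluctuation bound on $X_n^k$ would be insufficient for the argument.
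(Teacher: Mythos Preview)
Your argument is correct and follows essentially the same route as the paper: combine the monotonicity of Proposition~\ref{p:ordering} with the weight bounds of Theorem~\ref{t.notwidenotthingeneral}(\ref{weight1'}) via the sandwich $\frac{X_n^\ell-X_n^k}{\ell-k}\le Y_{n,k}\le\frac{X_n^k-X_n^j}{k-j}$.

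The one noteworthy simplification you missed is in the upper bound. The paper takes $j=0$: since $Y_{n,k}\le k^{-1}\sum_{i=1}^k Y_{n,i}=k^{-1}X_n^k$, the upper-tail part of Theorem~\ref{t.notwidenotthingeneral}(\ref{weight1'}) alone gives $Y_{n,k}\le\mu n - C_2 k^{2/3}n^{1/3}$ directly, with no need to introduce a second index $j\approx(1-\alpha)k$ or to tune $\alpha$ against the ratio $C_1/C_2$. Your closing remark that ``a one-sided fluctuation bound on $X_n^k$ would be insufficient'' is therefore not quite right for the upper bound; the two-sided input is genuinely needed only for the lower bound on $Y_{n,k}$, where (as both you and the paper do) one must control $X_n^\ell$ from below and $X_n^k$ from above at comparable indices. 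For that lower bound the paper actually uses the cruder Lemma~\ref{l.first upper bound on melon weight} on $X_n^{k-1}$ rather than the sharp upper tail of Theorem~\ref{t.notwidenotthingeneral}(\ref{weight1'}), but your choice works just as well.
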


\begin{proof}%
By Proposition \ref{p:ordering}, $\big\{ Y_{n,i}: i \in \intint{n}\big\}$ are ordered. Thus, taking $C_1$ as in Theorem~\ref{t.notwidenotthingeneral}(\ref{weight1'}), 
	\begin{align*}
	\P\left(Y_{n,k} < \mu n - 4C_1k^{2/3}n^{1/3}\right)
	&\leq \P\left(\sum_{i=k}^{2k-1} Y_{n,i} < \mu nk - 4C_1k^{5/3}n^{1/3}\right).
	\end{align*}
	The latter quantity is bounded by
	$$
	\P\left(\sum_{i=1}^{2k-1} Y_{n,i}  < \mu n(2k-1) - 2^{5/3}C_1k^{5/3}n^{1/2}\right) + \P\left(\sum_{i=1}^{k-1} Y_{n,i}  > \mu n(k-1) + (4-2^{5/3})C_1k^{5/3}n^{1/3}\right) \, .
	$$
	Note that $\sum_{i=1}^{2k-1}Y_{n,i} = X_n^{2k-1}$ and $\sum_{i=1}^{k-1}Y_{n,i} = X_n^{k-1}$. Thus the preceding display is equal to
	$$\P\left(X_{n}^{2k-1}  < \mu n(2k-1) - C_1(2k)^{5/3}n^{1/3}\right)
	+ \P\left( X_{n}^{k-1}  > \mu n(k-1) + (4-2^{5/3})C_1k^{5/3}n^{1/3}\right);$$
	the first term is bounded by $e^{-ck^2}$ by the construction in Theorem~\ref{t.flexible construction}, and the second term by the crude upper bound in Lemma~\ref{l.first upper bound on melon weight} (noting that $4-2^{5/3}>0$). Both these inferences hold for ranges of $k$ and $n$ depending on whether Assumption~\ref{a.one point assumption} or \ref{a.one point assumption convex} is in force, as in the statement of Proposition~\ref{p:kthweight}.

	Now we turn to the other bound. Since the sequence $\big\{ Y_{n,i}: i \in \intint{n}\big\}$ is ordered,
	\begin{align*}
	\P\left(Y_{n,k} > \mu n - C_2k^{2/3}n^{1/3}\right) \leq \P\left(\sum_{i=1}^{k} Y_{n,i} > \mu n k- C_2k^{5/3}n^{1/2}\right)
	&= \P\left(\melonweight > \mu n k- C_2k^{5/3}n^{1/2}\right)\\
	&\leq e^{-ck^2},
	\end{align*}
	where Theorem~\ref{t.notwidenotthingeneral}(\ref{weight1'}) was used in the last line, with $C_2$ as in the latter, and where  $k$ and~$n$ are again supposed to belong to certain ranges depending on whether Assumption~\ref{a.one point assumption} or \ref{a.one point assumption convex} is in force. 
\end{proof}

Integrable methods also yield a lot of information about the process $\{Y_{n,i}: i\in\intint{n}\}$. For instance, in exponential LPP,  this process has the distribution of $\lambda_1 \geq  \cdots \geq \lambda_{n},$ the eigenvalues of the LUE \cite{adler-eig-perc-connection}, i.e., the Hermitian matrix $X^*X$ where $X$ is an $n\times n$ matrix of i.i.d.\ standard complex Gaussian entries. This can be used to show that the edge, given by  $\big\{ Y_{n,i}: i \in \intint{k} \big\}$ with $k$ fixed,   converges weakly in the limit of high~$n$ to the $k$ uppermost elements in the Airy point process, after appropriate centering and scaling.  Determinantal techniques prove the following weaker counterpart to Proposition \ref{p:kthweight}.
\begin{submitted-version}
\begin{equation}\label{e.eigenvalue concentration}
\P\left (\lambda_{k}\notin (4n-C_5k^{2/3}n^{1/3}, 4n-C_6k^{2/3}n^{1/3})\right)\leq e^{-ck}.
\end{equation}
\end{submitted-version}
\begin{arxiv-version}
\begin{proposition}
\label{p:kthweight2}
There exist $C_5>C_6>0,c>0$ and $k_0\in \N$ such that for all $k\geq k_0$ and all $n\geq n_0(k)$ we have 
$$\P\left (\lambda_{k}\notin (4n-C_5k^{2/3}n^{1/3}, 4n-C_6k^{2/3}n^{1/3})\right)\leq e^{-ck}.$$
\end{proposition}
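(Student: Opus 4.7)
The plan is to exploit the identification, noted above, of $\{Y_{n,i}:i\in\intint{n}\}$ with the ordered eigenvalues $\lambda_{1}\geq\cdots\geq\lambda_{n}$ of the Laguerre unitary ensemble, and then to invoke standard determinantal point process machinery. For an interval $I\subseteq\R$, write $N_{I}$ for the number of $\lambda_{i}$ lying in~$I$. The events $\{\lambda_{k}>a\}$ and $\{N_{(a,\infty)}\geq k\}$ coincide, so both one-sided bounds in Proposition~\ref{p:kthweight2} reduce to showing that $N_{I}$ concentrates about its mean for the window $I=(4n-uk^{2/3}n^{1/3},\infty)$ with $u$ suitably chosen.

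The key input from integrability is that the LUE eigenvalue process is determinantal on $\R$ with the Christoffel--Darboux Laguerre kernel $K_{n}$. A classical theorem on $\R$-valued determinantal processes then yields that $N_{I}$ is equal in distribution to a sum of independent Bernoulli random variables $\sum_{j}\xi_{j}$, whose parameters are the eigenvalues of the trace-class operator $\mathbf{1}_{I}K_{n}\mathbf{1}_{I}$; in particular $\Var(N_{I})\leq\E[N_{I}]$, so Bernstein's inequality gives
\[
\P\bigl(|N_{I}-\E[N_{I}]|\geq t\bigr)\;\leq\; 2\exp\!\bigl(-t^{2}/(2\E[N_{I}]+2t/3)\bigr).
\]
To calibrate $u$, I would compute $\E[N_{I}]$ using the Marchenko--Pastur density $\rho_{n}(x)=\tfrac{1}{2\pi}\sqrt{(4n-x)/x}\,\mathbf{1}_{[0,4n]}(x)$ of the LUE: the substitution $4n-x=s$ and the approximation $\sqrt{(4n-x)/x}\approx\sqrt{(4n-x)/(4n)}$, valid in the edge regime $4n-x=o(n)$, yield
\[
\int_{4n-uk^{2/3}n^{1/3}}^{4n}\rho_{n}(x)\,dx \;=\;\tfrac{u^{3/2}k}{6\pi}\bigl(1+o(1)\bigr),
\]
uniformly for $k_{0}\leq k\leq c_{1}n^{1/2}$. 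Standard edge asymptotics for $K_{n}$---for instance the Plancherel--Rotach expansion of the Laguerre polynomials, or the comparison of $K_{n}$ with the Airy kernel on scale $n^{1/3}$---control $\E[N_{I}]-\int_{I}\rho_{n}$ by $O(1)$.

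I would then pick $C_{5}>0$ so that $\tfrac{1}{6\pi}C_{5}^{3/2}\geq 2$, ensuring $\E[N_{I}]\geq 2k$ for $u=C_{5}$ and $k\geq k_{0}$; Bernstein with $t=k$ then gives $\P(N_{I}<k)\leq e^{-ck}$, i.e., $\P(\lambda_{k}\leq 4n-C_{5}k^{2/3}n^{1/3})\leq e^{-ck}$. Symmetrically, choose $C_{6}>0$ with $\tfrac{1}{6\pi}C_{6}^{3/2}\leq \tfrac{1}{2}$, so that $\E[N_{I}]\leq k/2$ for $u=C_{6}$; Bernstein with $t=k/2$ gives $\P(N_{I}\geq k)\leq e^{-ck}$, and hence $\P(\lambda_{k}\geq 4n-C_{6}k^{2/3}n^{1/3})\leq e^{-ck}$. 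The main technical obstacle is the uniform-in-$k$ accuracy of the mean estimate, particularly as $I$ penetrates into the soft-edge region where the bulk density ceases to be accurate; this is handled, in standard fashion, by Airy-kernel asymptotics for $K_{n}$ on scale $n^{1/3}$ near $x=4n$. It is worth emphasising that the Bernstein step is also the source of the weaker tail obtained here: for a sum of $\Theta(k)$ Bernoullis, none of which is near-deterministic, Gaussian-type concentration cannot do better than $\exp(-c\,k^{2}/\E[N_{I}])=\exp(-ck)$ at deviation scale $k$, so the determinantal route genuinely cannot reach the $e^{-ck^{2}}$ tail that Proposition~\ref{p:kthweight} extracts via interlacing and monotonicity.
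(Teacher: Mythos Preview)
Your proposal is correct and follows essentially the same route as the paper: translate the eigenvalue location event into a counting statistic $N_I$, use the determinantal structure to express $N_I$ as a sum of independent Bernoullis, apply Bernstein with $\Var(N_I)\leq\E[N_I]$, and calibrate $C_5,C_6$ via the Marchenko--Pastur density at the edge. The paper differs only in citing a quantitative rate-of-convergence result of G\"otze--Tikhomirov to control $\E[N_I]-\int_I\rho_n$ by $O(1)$, whereas you invoke Plancherel--Rotach/Airy asymptotics; either works. One small correction: the restriction $k\leq c_1 n^{1/2}$ you impose is stronger than necessary---the paper needs only $n\geq C_8 k$ (so that the window $uk^{2/3}n^{-2/3}$ stays bounded away from the bulk), which matches the stated hypothesis $n\geq n_0(k)$.
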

\end{arxiv-version}
\begin{arxiv-version} 
\end{arxiv-version}\begin{submitted-version} 
This is proved in the version of this article on arXiv.\end{submitted-version}
The basic argument for 
\begin{arxiv-version}%
 Proposition~\ref{p:kthweight2}%
\end{arxiv-version}\begin{submitted-version}
\eqref{e.eigenvalue concentration}
\end{submitted-version}
uses the representation of the number of points of a determinantal point process in a given interval as the sum of independent Bernoulli random variables \cite[Theorem~4.5.3]{manjunath} and then applies standard concentration inequalities for such sums. However, we also need a sharp estimate on the mean of this sum of Bernoulli random variables, which is available in the exponential LPP case from the literature on the LUE. The corresponding estimate does not seem to be available for geometric LPP, where the relevant determinantal process is the less-studied Meixner ensemble.

\begin{arxiv-version}
\begin{proof}[Proof of Proposition~\ref{p:kthweight2}]
	We first prove that
	$$\P(\lambda_{k}< 4n-C_5k^{2/3}n^{1/3}) \leq e^{-ck}.$$
	The argument for the other side is analogous; we shall point out the steps at the end.
	Let us denote the scaled eigenvalues $\lambda_i/n$ by $\tilde \lambda_i$. Thus it suffices to prove that
	\begin{equation}
	\P(\tilde \lambda_{k}< 4-C_5(k/n)^{2/3}) \leq e^{-ck}.
	\end{equation}
	Let $I_k = [4-C_5k^{2/3}n^{-2/3}, \infty)$. The event that $\tilde \lambda_k < 4-C_5k^{2/3}n^{-2/3}$ is the same as $N_{I_k} < k$, where $N_I$ is the number of scaled eigenvalues lying in the interval $I$. So it suffices to prove
	$$\P(N_{I_{k}}< k) \leq e^{-ck}.$$
	The eigenvalues of the LUE form a determinantal point process, and for such point processes, for any interval $I$, $N_I$ can be expressed as the sum of independent Bernoulli random variables \cite[Theorem 4.5.3]{manjunath}. Following an argument in \cite{dallaporta}, we combine this fact with a Bernstein-type inequality to arrive at
	$$\P\left(|N_{I_{k}}-\E N_{I_k}|> t\right) \leq 2\exp\left(-\frac{t^2}{2\sigma_k^2 + t}\right),$$
	where $\sigma_k^2 = \Var(N_{I_k}).$ Since $N_{I_k}$ is a sum of Bernoulli random variables, $\sigma_k^2 \leq \E[N_{I_k}]$, so we have
	\begin{equation}\label{e.bernstein}
	\P\left(|N_{I_{k}}-\E N_{I_k}|> t\right) \leq 2\exp\left(-\frac{t^2}{2\E[N_{I_k}] + t}\right).
	\end{equation}
	(We mention in passing that the empirical distribution of $\tilde\lambda_i$ converges to the Marchenko-Pastur distribution: see \cite{marvcenko1967distribution}.)
	What we require is a rate for this convergence of the empirical dsitribution of $\tilde\lambda_i$ to the Marchenko-Pastur distribution; this is given by \cite[Theorem 1.4]{gotze2005rate}, which states that that there exists an absolute constant $C_7 > 0$ independent of $n, k, C_5$ such that

	$$\left|\E[N_{I_k}] - n\int^{4}_{4-C_5k^{2/3}n^{-2/3}}\mu_{\mathrm{MP}}(x)\,dx\right| < C_7,$$
	where the Marchenko-Pastur density function $\mu_{\mathrm{MP}}(x)$ equals $\frac{1}{2\pi x}\sqrt{x(4-x)}$ on $[0,4]$. Straightforward calculus yields that for $0<y<1/5,$ and for $0<b_1<b_2$,%
	$${b_1}y^{3/2}\leq \int_{4-y}^4 \frac{1}{2\pi x}\sqrt{x(4-x)}\, dx \leq b_2 y^{3/2}.$$
	Now putting $y = C_5k^{2/3}n^{-2/3}$, $k>C_7$, we find that, for a large enough $C_5$ and $n\geq C_8k$ (as $y<1/5$ is needed for the last estimate),
	$$\E[N_{I_k}] \in  [R_1k, R_2k],$$
	for some numbers $R_1$ and $R_2$ (although one can precisely compute the number up to smaller order terms by exactly evaluating the integral). %
	Now, manipulating, and using \eqref{e.bernstein},
	\begin{align*}
	\P(N_{I_k} < k) = \P\big(N_{I_k} -\E[N_{I_k}] < k-\E[N_{I_k}]\big)
	&\leq \P\big(|N_{I_k} -\E[N_{I_k}]|  >k\big) \\
	&\leq 2\exp\left(-\frac{k^2}{2\E[N_{I_k}] + k}\right)
	\end{align*}
	for large enough $k$ and $n > k$. This last quantity is bounded by $2e^{-ck}$, as required.%

	For proving $\P(\lambda_{k}> 4n-C_6k^{2/3}n^{1/3}) \leq e^{-ck}$, we similarly define the interval $\widetilde I_k = [4-C_6k^{2/3}n^{-2/3}, \infty)$ %
	and note that this probability is the same as $\P(N_{\widetilde I_k}>k)$. We follow the same steps as before, writing $N_{\widetilde I_k}$ as a sum of independent Bernoulli random variables and using a Bernstein-type inequality. The only difference is that, for this side, we use $\E[N_{\widetilde I_k}] < k/2$ for some $C_6$, which follows using the same estimate of $\smash{\E[N_{\widetilde I_k}]}$ as before for $k>4C_7$. The upper bound with these choices is $\smash{2e^{-k/6}}$. We omit the remaining details.
\end{proof}

We wish to point out that while straightforward applications of tools from determinantal point processes, as above, lead to quantitatively weaker concentration estimates for $Y_{n,k}$ even in case of the integrable model of exponential LPP, it may be possible to obtain such estimates by using more refined techniques from random matrix theory and determinantal point processes. We have not explored this direction.

\end{arxiv-version}

\subsection{Another route to Theorems \ref{t.notwidenothin} and \ref{t.notwidenotthingeneral} using the lower bound of $Y_{n,k}$}
The proof of Theorem~\ref{t.notwidenotthingeneral} that we have presented entails obtaining a high probability lower bound on $X_n^{j,j}$ for a random $j$ by averaging \eqref{avecons}, and applying  interlacing to relate $j$ and $k$. Since $X_{n}^{k,k} \geq Y_{n,k}$ by \eqref{e.averaging inequality}, there is another route to Theorems \ref{t.notwidenothin} and\ref{t.notwidenotthingeneral}, which uses the lower bound of $Y_{n,k}$ provided by Propositions \ref{p:kthweight} and~\ref{p:kthweight2}, allowing the direct application of Theorems~\ref{t:disjoint} and \ref{t.tf} to prove Theorem~\ref{t.notwidenotthingeneral}(\ref{tf'}), bypassing averaging and interlacing. %
The upper bound on $X_{n}^k$ follows the argument given in Section~\ref{s:upper}, but again invokes the lower bound on $X_{n}^{k,k}$ directly. 
 
{Note that the lower bound on $Y_{n,k}$ in Proposition~\ref{p:kthweight} relies only on the construction in Theorem~\ref{t.flexible construction}, the upper bound in Lemma~\ref{l.first upper bound on melon weight}, and the key monotonicity property in Proposition \ref{p:ordering}.}

{
For exponential LPP, a more direct approach that yields weaker  bounds invokes Proposition \ref{p:kthweight2} to obtain the required lower bound on $Y_{n,k}$  and proceeds as in the preceding paragraph. The monotonicity result in this case is an immediate consequence of the correspondence with LUE \cite{adler-eig-perc-connection}. 
}

\section{Point-to-line LPP}
\label{s:p2l}

In this section we prove Theorem~\ref{t:p2lgeneral}. We start by noting that the proof of interlacing for $\Gamma_{n}^k$ applies verbatim to $\Upsilon_{n}^k$. \\

\noindent
{\bf{Proof of Theorem \ref{t:p2lgeneral}.}} We will first bound the weight fluctuations, proving the analogue of Theorem~\ref{t.notwidenotthingeneral}(\ref{weight1'}), and then provide the arguments to verify the versions of Theorem~\ref{t.notwidenotthingeneral}(\ref{tf'}) concerning transversal fluctuations.

\medskip
\noindent
\textbf{Weight fluctuations:}
The lower bound follows immediately from the weight lower bound of Theorem~\ref{t.notwidenotthingeneral}(\ref{weight1'}) as the point-to-point $k$-geodesic watermelon's weight is stochastically dominated by the point-to-line $k$-geodesic watermelon's weight. So we need to only prove the upper bound. 

In fact the proofs of all the ingredient lemmas and propositions that went into the proof of the weight upper bound of Theorem~\ref{t.notwidenotthingeneral}(\ref{weight1'}) apply verbatim to the point-to-line watermelon by replacing $X_{n}^k$ with $Z_{n}^k$ and $\underline X_{n}^{k,k}$ with $\underline Z_{n}^{k,j}$.
   The only ingredient for which this is not true is Theorem \ref{t.tf}, which must be replaced by its point-to-line version Theorem \ref{t.point to line weight loss}. %

We will now follow the steps of the proof of the weight upper bound of Theorem~\ref{t.notwidenotthingeneral}(\ref{weight1'}). We first observe the analogue of Lemma~\ref{l.first upper bound on melon weight} for the point-to-line watermelon weight:
\begin{equation}\label{e.p2l crude upper bound}
\P\left(Z_{n}^k > \mu nk + tk^{5/3}n^{1/3}\right) \leq e^{-ct^{3/2}k^2},
\end{equation}
for $k<t^{-3/4}n^{1/2}$.
This requires the one-point input for the point-to-line single geodesic weight, which is provided from Assumption~\ref{a.one point assumption} by Proposition \ref{p.p2l general upper tail} with $t=s=0$. With \eqref{e.p2l crude upper bound} and the weight lower bound of Theorem~\ref{t.notwidenotthingeneral}(\ref{weight1'}), we obtain the analogue of Lemma \ref{p.smallest curve weight lower bound}, which holds for large enough $n$, a $C<\infty$, and $k<C^{-3/4} n^{1/2}$:
\begin{equation}\label{e.p2l average smallest curve lower bound}
\P\left(\bigcup_{j =\lfloor \frac{k}{2} \rfloor}^{k} \left\{\underline Z_{n}^{j,j} > \mu n-Ck^{2/3}n^{1/3}\right\}\right) \geq 1-e^{-ck^2}.
\end{equation}
 Let $C_3>0$. Similarly to Lemma \ref{l.no interior packing}'s statement, let $\underline E_n^{j,*}(\delta)$ is the minimum number of curves of $\Upsilon_{n}^j$, over all $j$-geodesic point-to-line watermelons, which exit the strip $U_{n,\frac12\delta k^{1/3}n^{2/3}}$ with $\delta$ as given in Theorem \ref{t:disjoint}. (Note that Theorem~\ref{t:disjoint} applies equally well without any modification to its statement in this situation.)  Thus we obtain the analogue of Lemma~\ref{l.no interior packing}:
\begin{equation}\label{e.p2l no interior packing}
\P\left(\bigcup_{j=\lfloor k/2\rfloor}^k \left\{\underline E_n^{j,*}(\delta) > \frac{k}{4}\right\}\right) \geq 1- e^{-ck^2}.
\end{equation}
At this point the proof of the weight upper bound of Theorem~\ref{t.notwidenotthingeneral}(\ref{weight1'}) applies essentially verbatim, but we reproduce it here for the reader's benefit.

Let $B_1$ be the complement of the event whose probability is bounded below in \eqref{e.p2l no interior packing}, so
	$\P(B_1) \leq e^{-ck^2}.$
	On $B_1^c,$ by interlacing, at least $k/4$ of the curves of any $k$-geodesic point-to-line watermelon $\Upsilon_{n}^{k}$ must exit the strip of width $\frac12\delta k^{1/3}$.
	By Theorem \ref{t.point to line weight loss}, there is a $c'$ such that, if $B_2$ is defined as
	$$B_2 = \left\{\exists\, \Gamma : \ell(\Gamma)>\mu n-c'k^{2/3}n^{1/3}, \tf(\Gamma)>\frac12\delta k^{1/3}n^{2/3}\right\},$$
	where $\Gamma$ is a upright path from $(1,1)$ to the line $x+y =2n$, then
	\begin{equation} \label{e.p2l B_2 bound}
	\P(B_2)\leq e^{-ck}.
	\end{equation}
	Consider the events $A=\left\{Z_{n}^k > \mu nk - \frac{1}{16}c'k^{5/3}n^{1/3}\right\}$ and $B_3 = \{Z_{n}^{\lfloor7k/8\rfloor} > \mu n\lfloor7k/8\rfloor + \frac{1}{16}c' k^{5/3}n^{1/3}\}$. By \eqref{e.p2l crude upper bound},
	$\P(B_3)\leq e^{-ck^2}.$
	Fix some $k$-geodesic point-to-line watermelon $\Upsilon_n^k$, and let $Z_n^{k,j}$ be the weight of its $j$\textsuperscript{th} heaviest curve for each $j\in\intint{k}$. Now on $A\cap B_3^c$ we have 
	\begin{gather*}
	Z_{n}^{\lfloor7k/8\rfloor} + \left(Z_{n}^{k,\lfloor7k/8\rfloor+1} + \ldots + Z_{n}^{k,k}\right) \geq Z_{n}^{k} > \mu nk -\frac{1}{16}c'k^{5/3}n^{1/3}\\
	\implies Z_{n}^{k,\lfloor7k/8\rfloor + 1} + \ldots + Z_{n}^{k,k} > \frac18 \mu nk - \frac{1}{8}c' k^{5/3}n^{1/3} \, .
	\end{gather*}
	By definition, the $Z_{n}^{k,i}$ are ordered; thus, we learn that  $Z_{n}^{k,\lfloor7k/8\rfloor + 1}> \mu n - c'k^{2/3}n^{1/3}$. Again by the ordering, this bound applies to $Z_{n}^{k,1},\ldots, Z_{n}^{k,\lfloor7k/8\rfloor}$ as well. This means we have $\lfloor7k/8\rfloor$ disjoint curves $\Gamma$, each with $\ell(\Gamma) > \mu n - c'k^{2/3}n^{1/3}$. Thus, on $A\cap B_1^c\cap B_3^c$, we must have at least $\lfloor k/8\rfloor$ disjoint curves $\Gamma$, each satisfying $\tf(\Gamma)>\frac12\delta k^{1/3}n^{2/3}$ and $\weight(\Gamma)>\mu n - c'k^{2/3}n^{1/3}$.
	By the BK inequality Proposition~\ref{p.bk} and the bound on $\P(B_2)$ in \eqref{e.p2l B_2 bound}, the probability of this occurring is bounded by 
	$\exp\left(-ck\cdot k\right) = \exp\left(-ck^{2}\right).$
	Noting the bounds on $\P(B_1)$ and $\P(B_3)$ completes the proof.

\medskip
\noindent
\textbf{Transversal fluctuations:}
We first prove the exponent lower  bound, analogous of Theorem~\ref{t.notwidenotthingeneral}(\ref{tf2'}); namely, that there exist $C<\infty$, $c>0$ and $\delta>0$ such that, for $k>k_0$ and $n>Ck$,
\begin{equation}\label{e.p2l not thin}
\P\left(\underline\tf^*(n,k) < \delta k^{1/3}n^{2/3}\right) \leq e^{-ck^2}.
\end{equation}
The proof proceeds in exactly the same way as that of Theorem \ref{t.notwidenotthingeneral}(\ref{tf2'}). Let $C$ be as in \eqref{e.p2l average smallest curve lower bound}. Let $\delta=\delta(2^{2/3}C)$ as in Theorem \ref{t:disjoint} and let $\delta '= 2^{-1/3}\delta$. Let $A_{k}$ denote the event that there exist $\lfloor k/2\rfloor$ disjoint paths contained in $U_{n,\delta' k^{1/3}n^{2/3}}$, each of which has weight at least $\mu n- Ck^{2/3}n^{1/3}$. Further, let $B_{k}$ denote the event from \eqref{e.p2l average smallest curve lower bound}: 
$$B_{k}:=\bigcup_{j =\lfloor \frac{k}{2} \rfloor}^{k} \left\{\underline Z_{n}^{j,j} > \mu n-Ck^{2/3}n^{1/3}\right\}.$$
Clearly, by Theorem \ref{t:disjoint} and \eqref{e.p2l average smallest curve lower bound}, we have $\P(A_{k}^c \cap B_{k})\geq 1-e^{-ck^2}$ for some $c>0$. 

Observe next that, on $A_{k}^{c}\cap B_k$, there exists $j\in \{\frac{k}{2}, \ldots , k\}$ such that each of the curves of any $j$-geodesic point-to-line watermelon $\Upsilon_{n}^j$ has weight at least $\mu n- Ck^{2/3}n^{1/3}$, and hence some of them must exit $U_{n,\delta' k^{1/3}n^{2/3}}$. By the interlacing result for point-to-line watermelons, the same must be true for any $k$-geodesic point-to-line watermelon. This completes the proof of \eqref{e.p2l not thin} with $\delta$ in the statement replaced by $\delta'$.

Now we turn to the exponent upper bound, analogous to Theorem~\ref{t.notwidenotthingeneral}(\ref{tf1'})); namely, that there exist $C''<\infty, c>0$ and $k_0$ such that, for $k>k_0$ and $n>k$,
\begin{equation}\label{e.p2l not wide}
\P\left(\overline\tf^*(n,k) > C''k^{1/3}n^{2/3}\right) \leq e^{-ck},
\end{equation}
which immediately implies the upper exponent bound. This proof proceeds exactly as does Theorem~\ref{t.notwidenotthingeneral}(\ref{tf1'})'s.

Let $C'$ be as in \eqref{e.p2l average smallest curve lower bound} with $2k$ in place of $k$, and $B'_{k}$ denote the large probability event from there:
$$
B'_{k}:=\bigcup_{j =k}^{2k} \left\{\underline X_{n}^{j,j} > \mu n-C'k^{2/3}n^{1/3}\right\} \, .
$$
Let $A'_{k}=A'_{k}(C'')$ denote the event that there exists a path from $(1,1)$ to the line $x+y=2n$ that exits $U_{n,C''k^{1/3}n^{2/3}}$ and has weight at least $\mu n - C'k^{2/3}n^{1/3}$. Now choose $C''>0$ (possible by Theorem \ref{t.point to line weight loss}) such that 
$\P(A'_{k})\leq e^{-c''k},$ 
for some $c''>0$ for all $k$ and all $n$ sufficiently large. Clearly now it suffices to show that, on $B'_k \cap (A'_{k})^c$, no $k$-geodesic point-to-line watermelon exits $U_{n,C''k^{1/3}n^{2/3}}$. By the definition of $B'_{k}$, there exists $j\in \llbracket k,2k\rrbracket$ such that all paths of all $j$-geodesic point-to-line watermelons have weight at least $\mu n-C'k^{2/3}n^{1/3}$, and $(A'_{k})^c$ ensures that all these paths are contained in $U_{n,C''k^{1/3}n^{2/3}}$. The proof of \eqref{e.p2l not wide} is completed by invoking the interlacing of geodesic point-to-line watermelons. \qed

\bibliographystyle{alpha}
\bibliography{watermelon-combined}
\appendix
\section{Exponential \& geometric LPP satisfy the assumptions}\label{app.exp and geo satisfy assumptions}
In this appendix we cite the results which show that  exponential and geometric LPP satisfy Assumptions~\ref{a.passage time continuity}, \ref{a.limit shape assumption}, and \ref{a.one point assumption combined}. 

We start with the foundational result on the Tracy-Widom fluctuations of the maximal path weight, due to Johansson \cite{johansson2000shape}, and continue with moderate deviations estimates and expectation asymptotics for the maximal path weight. 

Recall that $X_{n,\lfloor hn\rfloor}$ is the last passage value from $(1,1)$ to $(n,\lfloor hn\rfloor)$.

\begin{theorem}\label{t.tracy-widom}
Suppose that the vertex weight law in LPP is exponential of mean one, and let $h>0$.  As $n\to\infty$,
$$
\frac{X_{n, \lfloor hn\rfloor} - (1+\sqrt h)^2n}{h^{-1/6}(1+\sqrt h)^{4/3}n^{1/3}}\stackrel{d}{\to} F_{\mathrm{TW}} \, .
$$
Suppose instead that the weight law is geometric with parameter $p\in (0,1)$. Then, as $n\to\infty$,
$$\frac{X_{n, \lfloor hn\rfloor} - \omega(h,p)n}{\sigma(h,p)n^{1/3}}\stackrel{d}{\to} F_{\mathrm{TW}},$$
where
$$\omega(h,p) = \frac{(1+\sqrt{hp})^2}{1-p}\quad\text{and}\quad \sigma(h,p)=\frac{p^{1/6}h^{-1/6}}{1-p}(\sqrt h + \sqrt p)^{2/3}(1+\sqrt{hp})^{2/3}.$$
Here $\stackrel{d}{\to}$ denotes convergence in distribution and $F_{\mathrm{TW}}$ is the GUE Tracy-Widom distribution.
\end{theorem}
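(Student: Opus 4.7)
The statement is the celebrated theorem of Johansson~\cite{johansson2000shape}, so in the paper proper the argument will amount to a citation. Nevertheless, let me sketch the strategy one would follow to derive it, as this clarifies why both exponential and geometric LPP are tractable in the same framework. The plan rests on the Robinson--Schensted--Knuth (RSK) correspondence: applied to the $n \times \lfloor hn\rfloor$ array of independent geometric (or exponential) weights, RSK produces a pair of semistandard Young tableaux whose common shape is a random partition $\lambda = (\lambda_1, \lambda_2, \ldots)$ such that $\lambda_1$ has the same law as the point-to-point last passage value $X_{n,\lfloor hn\rfloor}$. The induced measure on $\lambda$ is the Schur measure; for geometric weights this is the Meixner ensemble, and for exponential weights (which arise as a scaling limit of geometrics) it is the Laguerre unitary ensemble.

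The next step is to reinterpret the random configuration $\{\lambda_i - i + \text{const}\}_{i \geq 1}$ as a determinantal point process on $\Z$ (or $\R$) whose correlation kernel can be written explicitly in terms of Meixner orthogonal polynomials (respectively, Laguerre polynomials). In particular, for the exponential case one has $\lambda_1 \stackrel{d}{=} \lambda_{\max}(W)$, where $W$ is an $n \times \lfloor hn\rfloor$ complex Wishart matrix. The distribution of $X_{n,\lfloor hn\rfloor}$ is then the Fredholm determinant $\det(I - K_n)$ on an appropriate half-line, where $K_n$ is the Meixner (or Laguerre) kernel.

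Having reached this integrable representation, I would carry out Plancherel--Rotach-type asymptotic analysis of the orthogonal polynomials at the edge of the limiting spectrum. Using either the classical contour-integral representation of the kernel together with steepest descent, or (as Johansson did) an analysis directly at the level of the kernel, one shows that in the edge-scaling regime
\[
K_n\bigl(\omega(h,p) n + \sigma(h,p) n^{1/3} \xi,\ \omega(h,p) n + \sigma(h,p) n^{1/3} \eta\bigr)\,\sigma(h,p) n^{1/3} \longrightarrow K_{\mathrm{Airy}}(\xi,\eta),
\]
with the constants $\omega(h,p)$ and $\sigma(h,p)$ determined by a double critical point of the appropriate phase function. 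Trace-class convergence of the kernels then upgrades to convergence of Fredholm determinants, which yields
\[
\P\Bigl(\tfrac{X_{n,\lfloor hn\rfloor} - \omega(h,p) n}{\sigma(h,p) n^{1/3}} \leq s\Bigr) \to \det\bigl(I - K_{\mathrm{Airy}}\bigr)_{L^2(s,\infty)} = F_{\mathrm{TW}}(s).
\]
The exponential case follows either by repeating the analysis with the Laguerre kernel or by taking $p \to 0$ in the geometric formulas after appropriate rescaling.

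The main technical obstacle is the saddle-point analysis of the discrete orthogonal polynomial kernel near the soft edge: one needs uniform control on the contour deformations, and for geometric LPP the discrete nature of the Meixner ensemble requires some care in choosing contours that avoid the poles of the integrand. Once the edge asymptotics of the kernel are in hand, the remainder of the argument is essentially formal. Since Johansson carried this program out completely in~\cite{johansson2000shape}, in the appendix I would simply invoke that reference.
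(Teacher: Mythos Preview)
Your proposal is correct and matches the paper's approach exactly: the paper simply cites \cite{johansson2000shape} (Theorems~1.6 and~1.2 there) without further argument, precisely as you anticipated. Your additional sketch of Johansson's RSK/determinantal/kernel-asymptotics strategy is accurate but goes beyond what the paper provides.
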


\begin{proof}
These results appear in \cite{johansson2000shape}, the first as Theorem 1.6 there and the second as Theorem~1.2.
\end{proof}

The next three statements establish that exponential and geometric LPP satisfy Assumptions~\ref{a.limit shape assumption} and \ref{a.one point assumption}.

\begin{theorem}[Moderate deviation estimate]\label{t.mod-dev}
Consider exponential LPP. Fix $\psi>1$ and let $h\in[\psi^{-1},\psi]$. There exist $t_0=t_0(\psi), n_0 = n_0(\psi)$ and $c = c(\psi)>0$ such that, for $n>n_0$ and $t > t_0$,
\begin{align*}
\P\left(X_{n, \lfloor hn\rfloor} - (1+\sqrt h)^2 n > tn^{1/3}\right) \leq \exp\left(-c\min(t^{3/2}, tn^{1/3})\right) \, .
\end{align*}
For the lower tail, for $t>t_0$,
\begin{align*}
\P\left(X_{n, \lfloor hn\rfloor} - (1+\sqrt h)^2 < -tn^{1/3}\right) \leq \exp\left(-ct^3\right) \, . 
\end{align*}
Similarly, in geometric LPP of parameter $p\in(0,1)$, the above two displays hold with $\omega(h,p)$ (as in Theorem~\ref{t.tracy-widom}) in place of $(1+\sqrt h)^2$.
\end{theorem}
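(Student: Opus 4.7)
The plan is to reduce both the exponential and geometric moderate deviation statements to known one-point tail bounds for the Laguerre and Meixner unitary ensembles, invoking the classical determinantal representations, and then to upgrade the pointwise-in-$h$ estimates to uniformity over the compact interval $[\psi^{-1}, \psi]$ by a short interpolation-and-grid argument. My starting observation is that for exponential LPP, the random variable $X_{n,\lfloor hn \rfloor}$ has the same distribution as the largest eigenvalue $\lambda_{\max}$ of the complex Wishart/Laguerre ensemble $W^*W$, where $W$ is an $n \times \lfloor hn \rfloor$ matrix of independent standard complex Gaussian entries; this is a consequence of the Johansson--RSK correspondence.  For geometric LPP, the analogous object is the top position of the Meixner ensemble as established in \cite{johansson2000shape}.

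First I would handle the exponential case for fixed $h$.  The upper tail bound, with its crossover between the $\exp(-ct^{3/2})$ and $\exp(-ctn^{1/3})$ regimes, is exactly the form of the moderate deviation estimate for $\lambda_{\max}$ of a complex Wishart matrix established (in various forms) by Ledoux--Rider~\cite{ledoux2010small} and refined later by others; the crossover is intrinsic since for $t \gg n^{2/3}$ the correct tail of $\lambda_{\max}$ is a genuine large deviation rate proportional to $tn^{1/3}$ rather than the Tracy--Widom $t^{3/2}$ decay.  The lower tail bound with exponent $3$ reflects the GUE Tracy--Widom rigidity on the soft edge and is likewise a standard output of Ledoux--Rider type methods (see also Aubrun and the subsequent refinements used in Dallaporta's work on eigenvalue rigidity).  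Both estimates come with explicit constants depending only on the aspect ratio $h$ continuously.

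Next I would obtain uniformity in $h \in [\psi^{-1}, \psi]$.  The natural approach is a compactness argument: the scaling constants $(1+\sqrt{h})^2$ and $h^{-1/6}(1+\sqrt{h})^{4/3}$ (and their Meixner analogues $\omega(h,p),\sigma(h,p)$) are continuous and bounded above and below on the compact interval, so taking $t_0$ and $c$ uniformly over this interval is possible provided the pointwise bounds hold with locally uniform constants.  To make this rigorous I would cover $[\psi^{-1}, \psi]$ by a finite grid of $h$-values, apply the pointwise estimate at each grid point, and then use a simple monotonicity/sandwich argument: $X_{n, \lfloor hn \rfloor}$ is monotone non-decreasing in $\lfloor hn \rfloor$, so between consecutive grid values $h_i < h_{i+1}$ one controls $X_{n, \lfloor hn \rfloor}$ by the boundary values, paying only a small constant correction to the centering that is absorbed by slightly adjusting $c$.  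For the geometric case, identical reasoning applies with the Meixner ensemble replacing the Laguerre one and $(\omega,\sigma)$ replacing the exponential constants.

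The main obstacle will be the optimal cubic exponent on the lower tail, which is strictly sharper than one would obtain by ``soft'' methods (e.g., superadditivity or subgaussian concentration); this genuinely requires invoking the determinantal structure.  I would therefore be careful to cite the sharpest available moderate deviation inequality for the soft edge of LUE/Meixner in its precise form rather than trying to reprove it, and to verify that the cited bound holds with constants that are locally uniform in the aspect ratio parameter.  A secondary obstacle is the crossover in the upper tail near $t \asymp n^{2/3}$: one must be sure the cited bound in the literature is written with the $\min(t^{3/2}, tn^{1/3})$ form rather than only the Tracy--Widom regime, but this too is standard in the LUE/Meixner literature and can be extracted either from the Fredholm determinant representation or from direct Laplace transform arguments.
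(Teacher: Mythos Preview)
Your proposal is correct and matches the paper's approach: the paper's proof is simply a citation of the relevant moderate deviation bounds from the literature (Ledoux's deviation inequalities and Johansson's shape-fluctuation paper for exponential LPP; Johansson and Baik--Deift--McLaughlin--Miller--Zhou for geometric LPP), exactly the kind of random-matrix/determinantal input you describe. The only difference is that the paper does not perform your grid-and-sandwich argument for uniformity in $h$, since the cited bounds already come with constants depending continuously on the aspect ratio, so uniformity over the compact interval $[\psi^{-1},\psi]$ is immediate.
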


\begin{proof}
	For exponential LPP, the lower tail for $t>t_0$ and the upper tail for $t_0 < t < n^{2/3}$ are provided by Theorem 2 of \cite{ledoux2010}. For the remaining case of $t\geq n^{2/3}$ in the upper tail, see \cite{johansson2000shape}.

	For geometric LPP, the upper tail bound is proved in \cite{johansson2000shape} (combining Corollary~2.4 and equation (2.22) there), while the lower tail bound is implied by \cite[Theorem 1.1]{baik2001optimal}.
\end{proof}

\begin{theorem}[Expected point-to-point weight]\label{l.polymer expected value}
Fix $\psi>1$ and let $h \in [\psi^{-1},\psi]$. There exist $c_1 = c_1(\psi)>0$, $c_2=c_2(\psi)>0$, and  $n_0=n_0(\psi)$ such that, for $n>n_0$, in exponential LPP,
$$\frac{\E[X_{n, \lfloor hn\rfloor}] - (1+\sqrt{h})^2n}{h^{-1/6}(1+\sqrt{h})^{4/3}n^{1/3}} \in (-c_1,-c_2),$$
while for geometric LPP of parameter $p\in(0,1)$,
$$\frac{\E[X_{n, \lfloor hn\rfloor}] - \omega(h,p) n}{	\sigma(h,p)n^{1/3}} \in (-c_1,-c_2),$$%
with $\omega(h,p)$ and $\sigma(h,p)$ as in Theorem~\ref{t.tracy-widom}.
\end{theorem}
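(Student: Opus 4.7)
The plan is to leverage two inputs already available in the excerpt: the moderate deviation bounds of Theorem~\ref{t.mod-dev}, which provide uniform-in-$h$ tail control on the centered, scaled passage time, and the weak convergence to the GUE Tracy-Widom distribution of Theorem~\ref{t.tracy-widom}, together with the classical fact (noted in the paper's acknowledgements) that $\mu_{\mathrm{TW}} := \mathbb{E}[F_{\mathrm{TW}}]$ is strictly negative. Let $Y_n(h) := (X_{n,\lfloor hn\rfloor} - (1+\sqrt h)^2 n)/(h^{-1/6}(1+\sqrt h)^{4/3} n^{1/3})$ denote the scaled passage time in the exponential case; the geometric case is identical after replacing the centering by $\omega(h,p)n$ and the scale by $\sigma(h,p)n^{1/3}$.

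The lower bound $\mathbb{E}[Y_n(h)] \geq -c_1$ is the easier half. Using the layer-cake identity,
\[
\mathbb{E}[Y_n(h)] \,\geq\, -\int_0^\infty \P\bigl(Y_n(h) < -t\bigr)\,dt,
\]
and the lower-tail bound $\P(Y_n(h) < -t) \leq e^{-ct^3}$ from Theorem~\ref{t.mod-dev} (valid for $t \geq t_0$, with $c$ uniform over $h \in [\psi^{-1},\psi]$), the integral is dominated by a uniform constant $c_1(\psi)$.

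The upper bound $\mathbb{E}[Y_n(h)] \leq -c_2$ requires the negativity of the Tracy-Widom mean and is more delicate. First, the upper-tail bound $\P(Y_n(h) > t) \leq e^{-c\min(t^{3/2}, tn^{1/3})}$ implies uniformly bounded moments of $\{Y_n(h)\}$, hence uniform integrability uniformly in $h$; combined with the pointwise weak convergence $Y_n(h) \stackrel{d}{\to} F_{\mathrm{TW}}$, this yields $\mathbb{E}[Y_n(h)] \to \mu_{\mathrm{TW}} < 0$ for each fixed $h$, so for each such $h$ there is $n_0(h)$ with $\mathbb{E}[Y_n(h)] \leq \mu_{\mathrm{TW}}/2$ whenever $n \geq n_0(h)$.

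The main obstacle is promoting this \emph{pointwise} convergence in $h$ to \emph{uniform} control over $h \in [\psi^{-1},\psi]$, so that the threshold $n_0$ and the negative constant $c_2$ may be chosen independent of $h$; this is the technical heart of the argument. My approach is a compactness/contradiction argument: were the uniform bound to fail, one could extract sequences $h_k \to h^\star \in [\psi^{-1},\psi]$ and $n_k \to \infty$ with $\mathbb{E}[Y_{n_k}(h_k)] \to 0$. It would then suffice to establish joint convergence $Y_{n_k}(h_k) \stackrel{d}{\to} F_{\mathrm{TW}}$, since uniform integrability would deliver $\mathbb{E}[Y_{n_k}(h_k)] \to \mu_{\mathrm{TW}} < 0$, the desired contradiction. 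This joint convergence would be obtained from Theorem~\ref{t.tracy-widom} at $h^\star$ together with a continuity-in-endpoint estimate $|Y_n(h) - Y_n(h^\star)| \leq \epsilon$ (in probability) whenever $|h - h^\star|$ is small on the scale determined by $n$; such an estimate follows from the smoothness of $h \mapsto (1+\sqrt h)^2$ and $h \mapsto h^{-1/6}(1+\sqrt h)^{4/3}$ together with a standard interpolation bound comparing last passage times to slightly perturbed endpoints, with the fluctuation size controlled by Theorem~\ref{t.mod-dev}. This uniformity step, rather than the $n \to \infty$ limit itself, will be where the real care is needed.
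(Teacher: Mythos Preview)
Your approach---uniform integrability from Theorem~\ref{t.mod-dev}, weak convergence from Theorem~\ref{t.tracy-widom}, and the strict negativity of the Tracy--Widom mean---is precisely the paper's, whose proof is the single sentence ``This follows from Theorem~\ref{t.tracy-widom} and Theorem~\ref{t.mod-dev}, and that the GUE Tracy-Widom distribution has negative mean.'' You are right that the only substantive issue is the uniformity in $h$ of the threshold $n_0$, which the paper does not address.

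Your compactness plan for that uniformity has a scale mismatch, however. Along a subsequence with $h_k\to h^\star$ and $n_k\to\infty$ you want $Y_{n_k}(h_k)\stackrel{d}{\to}F_{\mathrm{TW}}$, and your route is to compare $Y_{n_k}(h_k)$ with $Y_{n_k}(h^\star)$ via an endpoint-continuity bound valid when $|h_k-h^\star|$ is small \emph{on the scale set by $n_k$}. But compactness gives no rate: for a subsequence with, say, $|h_k-h^\star|\sim(\log n_k)^{-1}$, the endpoints $(n_k,\lfloor h_k n_k\rfloor)$ and $(n_k,\lfloor h^\star n_k\rfloor)$ differ by order $n_k/\log n_k\gg n_k^{2/3}$, and Theorem~\ref{t.mod-dev} only tells you each centred, scaled weight is tight, not that their difference is small in probability. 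What actually closes the argument lies outside the tools you list: Johansson's proof in \cite{johansson2000shape} establishes convergence of the correlation kernel to the Airy kernel, and that convergence is locally uniform in the aspect ratio, so the Tracy--Widom limit in fact holds along any sequence of endpoints $(n_k,m_k)$ with $m_k/n_k\to h^\star\in(0,\infty)$. Granting that strengthening of Theorem~\ref{t.tracy-widom}, your contradiction closes immediately.
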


\begin{proof}
	This follows from Theorem \ref{t.tracy-widom} and Theorem \ref{t.mod-dev}, and that the GUE Tracy-Widom distribution has negative mean.
\end{proof}

Assumption~\ref{a.limit shape assumption} can be verified in exponential and geometric LPP by replacing $n$ by $n-xn^{2/3}$ and setting $h$ to be $(n+xn^{2/3})/(n-xn^{2/3})$ in Theorem~\ref{l.polymer expected value}. 

We now note why the GUE Tracy-Widom distribution has negative mean, by pulling together known results 
to which Ivan Corwin has drawn our attention.

\begin{lemma}[Negative mean of GUE Tracy-Widom]\label{l.negative second order of mean}
Let $X_{\mathrm{TW}}$ be distributed according to the GUE Tracy-Widom distribution. Then we have $\E[X_{\mathrm{TW}}] < 0$.
\end{lemma}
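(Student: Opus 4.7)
The plan is to reduce the inequality $\E[X_{\mathrm{TW}}] < 0$ to a tractable numerical comparison via the integrated tail representation. Since the GUE Tracy--Widom distribution enjoys super-exponential decay on both sides, namely $1 - F_{\mathrm{TW}}(s) \leq \exp\bigl(-\tfrac{4}{3}s^{3/2}(1+o(1))\bigr)$ as $s \to +\infty$ and $F_{\mathrm{TW}}(s) \leq \exp\bigl(-\tfrac{1}{12}|s|^3(1+o(1))\bigr)$ as $s \to -\infty$ (both proved in the original Tracy--Widom paper, via the Hastings--McLeod solution of Painlev\'e II), the mean is finite and may be written
$$
\E[X_{\mathrm{TW}}] \,=\, \int_0^\infty \bigl(1 - F_{\mathrm{TW}}(s)\bigr)\, ds \;-\; \int_{-\infty}^0 F_{\mathrm{TW}}(s)\, ds.
$$
Proving the lemma amounts to showing that the second integral strictly exceeds the first.

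First I would upper bound the right-tail integral by combining an explicit form of the super-exponential upper-tail bound for $F_{\mathrm{TW}}$---available with explicit constants from the Fredholm determinant representation $F_{\mathrm{TW}}(s) = \det(I - K_{\mathrm{Ai}}|_{L^2(s,\infty)})$---with the trivial bound $1 - F_{\mathrm{TW}}(s) \leq 1$ on a bounded window. This yields a concrete numerical upper bound~$U$ of order one.

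Next I would lower bound the left-tail integral. The cleanest route is to use the fact that the median of $X_{\mathrm{TW}}$ is known to lie near $-1.77$, so that by monotonicity $F_{\mathrm{TW}}(s) \geq 1/2$ on some explicit interval $[s_1, 0]$ with $s_1 < 0$ of unit order; hence $\int_{-\infty}^0 F_{\mathrm{TW}}(s)\, ds \geq |s_1|/2$. By strengthening this using that $F_{\mathrm{TW}}$ stays bounded away from zero on a macroscopic further window extending to the left of $s_1$, obtained from the Painlev\'e~II asymptotics for $F_{\mathrm{TW}}$ at $-\infty$, one gets a lower bound $L$ for this integral that strictly exceeds $U$.

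The main obstacle is that turning the widely accepted numerical value $\E[X_{\mathrm{TW}}] \approx -1.7711$ into a rigorous sign is fiddly: the bookkeeping of constants in the Painlev\'e~II tail estimates must be sufficiently sharp that $L > U$. Rather than perform these bounds from scratch, I would cite the rigorous numerical and asymptotic work on $F_{\mathrm{TW}}$---for instance Bornemann's high-precision evaluation, or the Deift--Its--Krasovsky asymptotic analysis of Painlev\'e~II---in which the value of $\E[X_{\mathrm{TW}}]$ is computed to rigorous precision and is negative, discharging the lemma immediately.
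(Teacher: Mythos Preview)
Your approach is different from the paper's and, while workable in principle, is considerably more laborious and leaves its final step as an appeal to external numerics whose rigorous status you do not pin down. The paper's argument is a two-line stochastic-domination trick: it cites a result from \cite{quastel2019flat} that the Baik--Rains distribution $X_{\mathrm{BR}}$ strictly stochastically dominates $4^{1/3}X_{\mathrm{TW}}$, and then invokes \cite{baik2000limiting} for the fact that $\E[X_{\mathrm{BR}}]=0$. Strict stochastic domination gives $4^{1/3}\E[X_{\mathrm{TW}}]<\E[X_{\mathrm{BR}}]=0$, and the lemma follows immediately.

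What this buys over your route is that no constants need to be tracked and no numerical computation need be certified. Your integrated-tail decomposition is correct, and the super-exponential tails you quote are the right inputs, but as you yourself note the bookkeeping is fiddly; your fallback of citing ``rigorous numerical work'' is weak unless you can name a specific reference that proves (not merely computes to high precision) that $\E[X_{\mathrm{TW}}]$ lies in an explicit interval bounded away from zero. Bornemann's numerics and the Deift--Its--Krasovsky asymptotics are excellent for the approximate value but do not, without further argument, furnish a one-line citation for the strict sign. The paper's domination argument sidesteps all of this.

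A minor point: you refer to the median being ``near $-1.77$'', but that is approximately the mean; the median is a different (also negative) number. This does not affect your outline, since you only use that the median is negative and of unit order, but it is worth correcting.
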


\begin{proof}
	Remark 1.15 and equation 1.29 of \cite{quastel2019flat} show that  $X_{\mathrm{BR}}$  strictly stochastically dominates $4^{1/3} X_{\mathrm{GUE}}$, where the law of $X_{\mathrm{BR}}$  is the Baik-Rains distribution (the cumulative distribution function of $X_{\mathrm{BR}}$ is labeled $F_{\mathrm{stat}}^0$ in \cite[Equation 1.29]{quastel2019flat}).
	Now \cite[Proposition~2.1]{baik2000limiting} asserts that $\E[X_{\mathrm{BR}}] = 0$, completing the proof.
\end{proof}

\section{Proofs of basic tools}\label{app.tail bounds}

This appendix contains some consequences of Assumptions~\ref{a.passage time continuity}, \ref{a.limit shape assumption}, and \ref{a.one point assumption} that were used in the main text.

\begin{itemize}
	\item Section~\ref{app.interval to interval} proves Proposition~\ref{l.sup tail}, the upper tail bound on interval-to-interval weights; 

\item Section~\ref{app.point-to-line} proves Proposition~\ref{p.p2l general upper tail}, concerning upper tails for point-to-line weights; and 

\item Section~\ref{app.constrained lower tail} proves Proposition~\ref{p.constrained lower tail}, concerning the lower tail and mean of the constrained point-to-point weights.

\end{itemize}

\subsection{Interval-to-interval estimates}\label{app.interval to interval}

\newcommand{\Elow}{E_{\mathrm{low}}}
\newcommand{\Eup}{E_{\mathrm{up}}}

\begin{proof}[Proof of Proposition~\ref{l.sup tail}]
	We start with the bound \eqref{e.sup tail away from axes} on $\widetilde Z$, and we treat only the case that $|z|\leq \rho r^{1/3}$; when $|z|>\rho r^{1/3}$, our argument works by setting $z=\rho r^{1/3}$ and using the concavity of the limit shape posited in Assumption~\ref{a.limit shape assumption}.

	By considering the event that $\sup_{(u,v)\in\mathcal S(U)} X_{u \ar v}$ is large and two events defined in terms of the environment outside of $U$, we find a point-to-point path which has large length. To define these events, first define points $\philower$ and $\phiupper$ on either side of $A$ and $B$:
	\begin{align*}
	\philower &:= \left(-\ell^{3/2}r, -\ell^{3/2}r\right)\\
	\phiupper &:= \left((1+\ell^{3/2})r-zr^{2/3}, (1+\ell^{3/2})r +zr^{2/3}\right).
	\end{align*}
	Let $u^*$ and $v^*$ be points on $A$ and $B$ where the suprema in the definition of $\widetilde Z$ are attained, and set
	\begin{align*}
	\Elow &= \left\{X_{\philower \ar  u^*-(0,1)}  > \mu \ell^{3/2}r - \frac{\theta\ell^{1/2}}{3}r^{1/3}\right\}\quad \text{and}\quad
	\Eup = \left\{X_{v^*+(0,1) \ar  \phiupper}  > \mu \ell^{3/2}r - \frac{\theta\ell^{1/2}}{3} r^{1/3}\right\}.
	\end{align*}
	Using Assumption~\ref{a.limit shape assumption} to bound the expectation in going from the second to the third line of the following, we find that
	\begin{align}\label{e.side-to-side decomposition}
	\P\bigg(\sup_{(u,v)\in\mathcal S(U)} &X_{u\ar v} > \mu r - G_2\frac{z^2r^{1/3}}{1+2\ell^{3/2}}+\theta\ell^{1/2}r^{1/3}, \Elow, \Eup\bigg)\\
	&\leq \P\left(X_{\philower\ar \phiupper} \geq \mu(1+2\ell^{3/2})r - G_2\frac{z^2r^{1/3}}{1+2\ell^{3/2}}+ \frac{\theta\ell^{1/2}}{3} r^{1/3}\right)\nonumber\\
	&\leq \P\left(X_{\philower\ar \phiupper} \geq \E\left[X_{\philower \ar \phiupper}\right] + \frac{\theta\ell^{1/2}}{3} r^{1/3}\right)\nonumber
	\leq \exp\left(-c\min(\theta^{3/2}, \theta r^{1/3})\right)\nonumber,
	\end{align}
	for $c>0$ independent of $\ell$. We used Assumption~\ref{a.limit shape assumption} for the former inequality of the final line, and Assumption~\ref{a.one point assumption} and the stationarity of the random environment
	for the latter.

	Let us denote conditioning on the environment $U$ by the notation $\P(\,\cdot \mid U)$. By this we mean we condition on the weights of vertices interior to $U$ as well as those on the lower and upper sides $A$ and $B$. Then we see
	\begin{align*}
	\MoveEqLeft[3.5]
	\P\bigg(\sup_{(u,v)\in\mathcal S(U)}  X_{u \ar v} > \mu r - G_2\frac{z^2r^{1/3}}{1+2\ell^{3/2}}+\theta\ell^{1/2}r^{1/3}, \Elow, \Eup \, \bigg\vert \, U\bigg)\\
	&= \P\left(\sup_{(u,v)\in\mathcal S(U)} X_{u \ar v} > \mu r - G_2\frac{z^2r^{1/3}}{1+2\ell^{3/2}}+\theta\ell^{1/2}r^{1/3} \, \bigg\vert \,  U\right)\cdot \P\left(\Elow\mid U\right)\cdot \P\left(\Eup \mid U\right).
	\end{align*}
	So with \eqref{e.side-to-side decomposition}, all we need is a lower bound on $\P\left(\Elow\mid U\right)$ and $\P\left(\Eup\mid U\right)$. This is straightforward using independence of the environment  between $U$ and the regions above and below it: 	%
	\begin{align*}
	\P\left(E^c_{\mathrm{lower}} \mid U\right) &\leq \sup_{u\in A} \,\P\left(X_{\philower \ar u-(0,1)} \leq \mu \ell^{3/2}r - \frac{\theta\ell^{1/2}}{3}r^{1/3}\right)
	\leq \frac12
	\end{align*}
	for large enough $\theta$ and $r$ (depending on $\ell$), using Assumption~\ref{a.one point assumption}. 
	A similar upper bound holds for $\P\left(E^c_{\mathrm{upper}}\mid U\right)$. Together this gives
	\begin{align*}
	\MoveEqLeft[10]
	\P\bigg(\sup_{(u,v)\in\mathcal S(U)}X_{u \ar v} > \mu r - G_2\frac{z^2r^{1/3}}{1+2\ell^{3/2}}+\theta\ell^{1/2}r^{1/3}, \Elow, \Eup  \, \bigg\vert \, U\bigg)\\
	&\geq \frac14\cdot \P\bigg(\sup_{(u,v)\in\mathcal S(U)} X_{u \ar v} >\mu r - G_2\frac{z^2r^{1/3}}{1+2\ell^{3/2}}+ \theta\ell^{1/2}r^{1/3}\bigg) \, ,
	\end{align*}
	and taking expectation on both sides, combined with \eqref{e.side-to-side decomposition}, gives the bound \eqref{e.sup tail away from axes} of Proposition~\ref{l.sup tail}.

	We now treat the bound \eqref{e.sup tail extreme} on $Z^{\mathrm{ext}, \delta}$. Observe that
	$$
	\left\{Z^{\mathrm{ext},\delta} > \theta\right\} \subseteq \bigcup_{u\in \linelower}\left(\left\{X_{u \ar (\delta r, 2r)} > \mu r + \theta r^{1/3}\right\} \cup \left\{X_{u \ar (2r, \delta r)} > \mu r + \theta r^{1/3}\right\}\right) \, .
	$$
	We bound the probability of $\bigcup_{u\in \linelower}\left\{X_{u \ar (\delta r, 2r)} > \mu r + \theta r^{1/3}\right\}$; the full bound then follows by a symmetric argument and a union bound.

	The point $(\delta r, 2r)$ can be written as $(\tilde r -\tilde z, \tilde r +\tilde z)$ with $\tilde r = (1+\delta/2)r$ and $\tilde z =(1-\delta/2)r$. By the definitions, we have that $\tilde z/\tilde r \geq \rho$ if $\delta$ is sufficiently small. So by the concavity guaranteed by Assumption~\ref{a.limit shape assumption} and by making $\delta$ small enough, we see that
	\begin{align*}
	\E\left[X_{u \ar (\delta r,2r)}\right] \leq \mu\tilde r - G_2 \rho^2\tilde r
	&\leq \mu r +\frac{\mu\delta}{2}r- G_2\rho^2\left(1+\frac{\delta}{2}\right)r
	\leq (\mu -\eta)r
	\end{align*}
	for some $\eta>0$. With such a value of $\delta$ fixed, note that $(\delta r, 2r)$ is such that we may apply Assumption~\ref{a.one point assumption}; we find by so doing that
	\begin{align*}
	\P\left(X_{u \ar (\delta r,2r)} \geq  \mu r+\theta r^{1/3}\right) &\leq \P\left(X_{u \ar (\delta r, 2r)} - \E[X_{u \ar (\delta r, 2r)}] \geq \eta r+\theta r^{1/3}\right)\\
	&\leq \exp\left(-cr-c\min(\theta^{3/2}, \theta r^{1/3})\right).
	\end{align*}
	Taking a union bound over the $O(r^{2/3})$ many $u$ in $\linelower$ and reducing the value of $c$ to absorb this factor into the exponent complete the proof of Proposition~\ref{l.sup tail}.
\end{proof}

\subsection{Point-to-line estimates} \label{app.point-to-line}

\begin{proof}[Proof of Proposition~\ref{p.p2l general upper tail}]
	Note that we may assume $s\leq r^{1/3}$. Let $\rho$ and $G_1$ be as in Assumption~\ref{a.limit shape assumption}. Let $\Gamma$ be the leftmost path with endpoint not on the line segment connecting $(r-(s+t)r^{2/3}, r+(s+t)r^{2/3})$ and $(r+(s+t)r^{2/3}, r-(s+t)r^{2/3})$ whose weight is $X$; this is well-defined by the weight maximization property of $\Gamma$.
	Let $\pleft$ be the coordinates of the starting point of $\Gamma$ on $\lineleft$, and $\pright$ be the coordinates of the endpoint of $\Gamma$ on the line $x+y=2r$. Let $y_j = (s+t+j)r^{2/3}$, $\mathbb L_j$ be the line segment joining the points
	$$\left(r - y_j, r + y_j\right) \quad \text{and} \quad  \left(r - y_{j+1}, r + y_{j+1}\right),$$
	and let $A_j$ be the event that $\pright\in \L_j$ and $(\pleft, \pright)$ are such that the slope of the line connecting them is not extreme enough to apply the second part of Proposition~\ref{l.sup tail}, i.e., it holds that $|(\pleft -\pright)_y|/(\pleft+\pright)_x \leq 1-\delta/2$, for $j=0,\ldots, r^{1/3}$, with $\delta$ as in that proposition's second part. Finally, let $A$ be the event that $(\pleft,\pright)$ satisfy $|(\pleft -\pright)_y|/(\pleft+\pright)_x>1-\delta/2$. Then clearly the whole probability space equals
	$$
	\bigcup_{j=0}^{r^{1/3}}A_j \cup A \, .
	$$
	Thus we have
	\begin{equation}\label{e.p2l tf breakup}
	\begin{split}
	\P\Big(X -\mu r >  \theta r^{1/3}-0.5c_1s^2r^{1/3}\Big) \, \leq \, \sum_{j=0}^{r^{1/3}}\P\Big(&X > \mu r + \theta r^{1/3}-0.5c_1s^2r^{1/3}, A_j\Big)\\
		&+ \P\left(X > \mu r + \theta r^{1/3}-0.5c_1s^2r^{1/3}, A\right).
	\end{split}
	\end{equation}
	We will bound the two terms using the next two lemmas.

\begin{lemma}\label{l.bound on A_j}
In the notation of Proposition \ref{p.p2l general upper tail} and under Assumptions \ref{a.limit shape assumption} and \ref{a.one point assumption}, there exist $c>0$, $c_1>0$, and $r_0$ such that, for $r>r_0$ and $j=0,1,\ldots, r^{1/3}$,
$$\P\left(X > \mu r + \theta r^{1/3} - 0.5c_1s^2r^{1/3}, A_j\right) \leq \exp\left(-c(\min(\theta^{3/2}, \theta r^{1/3})+s^3+j^3)\right).$$
\end{lemma}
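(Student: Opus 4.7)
\smallskip

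\noindent\textbf{Proof plan for Lemma~\ref{l.bound on A_j}.}
The strategy is to dominate the weight between the long interval $\lineleft$ (of width $2tr^{2/3}$) and the short interval $\mathbb L_j$ (of width $r^{2/3}$) by a supremum over a single \emph{bounded-aspect-ratio} parallelogram, to which Proposition~\ref{l.sup tail} applies cleanly. Attempting to cover $\lineleft$ and $\mathbb L_j$ by one parallelogram of width $\ell r^{2/3}$ forces $\ell\asymp t$, which would dilute the curvature correction $G_2 z^2/(1+2\ell^{3/2})$ and destroy the desired $s^2 r^{1/3}$ loss. So I would instead subdivide $\lineleft$ into $\lceil t\rceil+1$ consecutive anti-diagonal sub-intervals $I_1,\ldots,I_{\lceil t\rceil+1}$, each of width at most $r^{2/3}$, and take a union bound; for each pair $(I_k,\mathbb L_j)$ the relevant parallelogram has $\ell$ an absolute constant (e.g.\ $\ell=1$).

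For a fixed $(I_k,\mathbb L_j)$, the anti-diagonal displacement between the centres is $z_{k,j}\hair r^{2/3}$ with $z_{k,j}\in[s+j,\ s+j+2t]$ (since the transversal position of $I_k$ lies in $[-t,t]\hair r^{2/3}$, and the transversal position of $\mathbb L_j$ is $(s+t+j+\tfrac12)r^{2/3}$; the side of $\lineleft$ closer to $\mathbb L_j$ corresponds to the smaller value of $z_{k,j}$). In particular, in the moderate range $z_{k,j}\le (1-\delta)r^{1/3}$, Proposition~\ref{l.sup tail}\,\eqref{e.sup tail away from axes} with $\ell=1$ and this choice of $z$ yields, for the associated parallelogram weight $\widetilde Z_{k,j}$,
\[
\P\left(\widetilde Z_{k,j}>\theta^\ast\right)\le \exp\left(-c\min\!\big((\theta^\ast)^{3/2},\theta^\ast r^{1/3}\big)\right).
\]
Translating, and writing $\kappa=G_2/(1+2)$, the supremum $X_{k,j}$ of path weights between $I_k$ and $\mathbb L_j$ satisfies
\[
\P\!\left(X_{k,j}-\mu r>\theta r^{1/3}-0.5 c_1 s^2 r^{1/3}\right)\le \P\!\left(\widetilde Z_{k,j}>\theta-0.5c_1 s^2+\kappa(s+j)^2\right).
\]
The plan is now to fix the absolute constant $c_1>0$ so small that $0.5 c_1\le\kappa/4$; then $\theta-0.5c_1 s^2+\kappa(s+j)^2\ge \theta+(\kappa/2)(s+j)^2$. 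Using the elementary inequality $(a+b)^{3/2}\ge c(a^{3/2}+b^{3/2})$ and $(s+j)^3\ge c(s^3+j^3)$, this gives
\[
\P\!\left(X_{k,j}-\mu r>\theta r^{1/3}-0.5 c_1 s^2 r^{1/3}\right)\le \exp\!\left(-c\min(\theta^{3/2},\theta r^{1/3})-c(s^3+j^3)\right).
\]

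In the extreme range $z_{k,j}\ge(1-\delta) r^{1/3}$ (which only contributes when $s+j$ is comparable to $r^{1/3}$), one instead invokes Proposition~\ref{l.sup tail}\,\eqref{e.sup tail extreme}, which produces the extra factor $e^{-cr}$. Since $r\ge c(s+j)^3$ in that regime, this bound comfortably absorbs into the same right-hand side.

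Finally I would take a union bound over the at most $\lceil t\rceil+1$ sub-intervals $I_k$ of $\lineleft$ that comprise the event $A_j$. Because the lemma's hypothesis forces $t\le s^2$, the resulting union-bound factor is at most $Cs^2$, which is absorbed into the exponential $e^{-c s^3}$ once $s\ge s_0$ and $r\ge r_0$ (the $j=0$ contribution, along with large $\theta$, handles the case $s<s_0$ via \eqref{e.sup tail away from axes} directly). The only delicate point of the argument, and the one that drove the set-up, is the choice of $\ell=O(1)$: this is why the subdivision into $O(t)$ pieces is essential, since a single parallelogram of width $\asymp tr^{2/3}$ would give the parabolic correction the wrong order and defeat the curvature input from Assumption~\ref{a.limit shape assumption}.
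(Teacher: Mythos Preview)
Your proposal is correct and follows essentially the same route as the paper: subdivide $\lineleft$ into $O(t)$ sub-intervals of width $r^{2/3}$, apply Proposition~\ref{l.sup tail} with $\ell$ an absolute constant to each pair $(I_k,\mathbb L_j)$, choose $c_1$ as a small multiple of $G_2$ so that the curvature correction $\kappa(s+j)^2$ dominates $0.5c_1 s^2$, and absorb the $O(t)\le O(s^2)$ union-bound factor into $e^{-cs^3}$. The paper sets $c_1=G_2/3$ and adds/subtracts $\tfrac13 G_2(s+j)^2 r^{1/3}$ explicitly, but the arithmetic is the same as yours.

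One small redundancy: your separate treatment of the ``extreme range'' $z_{k,j}\ge(1-\delta)r^{1/3}$ is not needed here. The event $A_j$ is \emph{defined} to include the requirement that the slope between $\pleft$ and $\pright$ is bounded away from the axes, precisely so that the first part of Proposition~\ref{l.sup tail} applies directly; the extreme case is handled separately by the event $A$ in Lemma~\ref{l.bound on A}. So on $A_j$ the parallelogram $(I_k,\mathbb L_j)$ containing the actual endpoints automatically has $|z|$ bounded away from $r^{1/3}$, and only \eqref{e.sup tail away from axes} is invoked.
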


\begin{lemma}\label{l.bound on A}
In the notation of Proposition \ref{p.p2l general upper tail} and under Assumptions \ref{a.limit shape assumption} and \ref{a.one point assumption}, there exist  constants $c>0$, $c_1>0$, $\theta_0$, $s_0$, and $r_0$ such that, for $r>r_0$, $s>s_0$, and $\theta_0<\theta < r^{2/3}$,
$$\P\left(X > \mu r +\theta r^{1/3} +\theta r^{1/3} - 0.5c_1s^2r^{1/3}, A\right) \leq \exp\left(-c(\min(\theta^{3/2}, \theta r^{1/3})+s^3)\right).$$
\end{lemma}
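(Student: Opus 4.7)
\textbf{Proof plan for Lemma \ref{l.bound on A}.} The plan is to combine the extreme-slope interval-to-interval bound \eqref{e.sup tail extreme} from Proposition~\ref{l.sup tail} with a discretization of $\lineleft$ and the upper line. The event $A$ forces the geodesic from $\pleft$ to $\pright$ to have nearly extreme slope, which in turn forces $\pright$ to lie far from the main diagonal, in the extreme region $\mathbb{L}_{\mathrm{ext}}(\delta')$ (defined before \eqref{e.sup tail extreme}) for some $\delta' > 0$ depending on the $\delta$ fixed in Proposition~\ref{l.sup tail}.

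Concretely, I would decompose $\lineleft$ into $N_1 = O(\lceil t\rceil) = O(s^2 \vee 1)$ sub-intervals of anti-diagonal width $2 r^{2/3}$ (each a translate of $\linelower$), and cover $\mathbb{L}_{\mathrm{ext}}(\delta')$ by $N_2 = O(r^{1/3})$ sub-intervals of anti-diagonal width $r^{2/3}$. For each pair $(I_1, I_2)$ of such sub-intervals, translational invariance of the environment combined with \eqref{e.sup tail extreme} yields
\begin{equation*}
\P\left(\sup_{u \in I_1,\, v \in I_2} X_{u \to v} > \mu r + \theta r^{1/3} - 0.5 c_1 s^2 r^{1/3}\right) \leq \exp\left(-cr - c \min(\theta^{3/2}, \theta r^{1/3})\right),
\end{equation*}
provided $c_1 > 0$ is taken small enough that the negative term $-0.5 c_1 s^2 r^{1/3}$ does not disturb the bound (which is licensed by $s^2 r^{1/3} \leq r$ under $s \leq r^{1/3}$). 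A union bound over the $N_1 N_2 = O((s^2 \vee 1)\, r^{1/3})$ pairs contributes a factor polynomial in $r$, whose logarithm is absorbed into $\exp(-cr)$ after shrinking the constant $c$.

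Finally, under the hypotheses $s \leq r^{1/3}$ and $\theta \leq r^{2/3}$, both $s^3 \leq r$ and $\min(\theta^{3/2}, \theta r^{1/3}) \leq r$. Hence $\exp(-c' r - c \min(\theta^{3/2}, \theta r^{1/3}))$ is dominated by $\exp(-c''(s^3 + \min(\theta^{3/2}, \theta r^{1/3})))$ for a suitable $c'' > 0$, completing the bound.

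The main obstacle is the first step, namely extracting from the slope condition in $A$ a quantitative lower bound on the distance of $\pright$ from the diagonal, uniformly across all admissible $s$ (up to $r^{1/3}$) and $t$ (up to $s^2$). In the regime where $s$ is comparable to $r^{1/3}$, the width $O(s^2 r^{2/3})$ of $\lineleft$ becomes comparable to $r$, and $\pleft$ itself may already be far from the origin; one must then split cases according to the magnitude of $|(\pleft)_x|$ to handle this regime, potentially invoking Assumption~\ref{a.limit shape assumption} directly to control the atypical large-$|(\pleft)_x|$ case via the parabolic curvature of the limit shape. Beyond this geometric bookkeeping, the argument is a routine discretization and union bound.
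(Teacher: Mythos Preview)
Your approach is correct and is essentially the paper's own: apply the second part of Proposition~\ref{l.sup tail} to get $\exp(-c(r+\min(\theta^{3/2},\theta r^{1/3})))$, then use $s\le r^{1/3}$ to trade the $r$ in the exponent for $s^3$. The paper's proof is three lines and leaves the discretization of $\lineleft$ implicit (the polynomial-in-$r$ union-bound cost is absorbed by $e^{-cr}$); your second discretization, of $\mathbb{L}_{\mathrm{ext}}(\delta')$, is unnecessary since \eqref{e.sup tail extreme} already handles the supremum over that entire region.

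The ``main obstacle'' you raise is not one. The translation taking a width-$2r^{2/3}$ sub-interval $I_1\subset\lineleft$ to be centred at the origin is along the anti-diagonal, so it fixes the line $x+y=2r$, and the half-$\ell^1$-distance from any $u\in I_1$ to any point on that line is always $r-1$ regardless of which sub-interval was chosen. The extreme-slope condition defining $A$ depends only on the displacement $\pright-\pleft$ and is therefore translation-invariant; since the translated $\pleft$ lies within $r^{2/3}$ of the origin, the translated $\pright$ automatically lands in $\mathbb{L}_{\mathrm{ext}}(\delta')$ for some $\delta'$ near $\delta/2$, uniformly in $I_1$. No case-splitting on the size of $\pleft$ is required, and the paper performs none.
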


	Before proving these lemmas, we show how the proof of Proposition~\ref{p.p2l general upper tail} is completed using them. Lemma \ref{l.bound on A_j} says that each individual summand in the first term of \eqref{e.p2l tf breakup} is bounded by $\exp\left(-c(\min(\theta^{3/2}, \theta r^{1/3})+s^3+j^3)\right)$, while Lemma \ref{l.bound on A} says that the second term is bounded by $\exp\left(-c(\min(\theta^{3/2}, \theta r^{1/3})+s^3)\right)$. So we have, by summing over $j$,
	$$\P\left(X > \mu r + \theta r^{1/3}-0.5c_1s^2r^{1/3}\right) \leq C\exp\left(-c(\min(\theta^{3/2}, \theta r^{1/3})+s^3)\right)$$
	for some $C<\infty$. Reducing the value of $c$ completes the proof.
\end{proof}

In seeking to prove Lemmas \ref{l.bound on A_j} and \ref{l.bound on A}, we wish to show that when the endpoint of a particular geodesic is sufficiently extreme, it suffers a weight loss with high probability. Lemma \ref{l.bound on A_j} addresses the case that the slope between the points is bounded away from $0$ and $\infty$, while Lemma \ref{l.bound on A} addresses when the slope between the points is extreme.%

\begin{proof}[Proof of Lemma~\ref{l.bound on A_j}]
	We fix $j$. We divide $\lineleft$ into segments of size at most $r^{2/3}$ each, indexed by~$i$ as $\lineleft^i$. Thus there are at most $t$ segments.

	In the notation of Proposition~\ref{l.sup tail}, we have that $z$ is bounded uniformly away from $r$, and so we may bound $X$ on $A_j$ by using Assumption~\ref{a.limit shape assumption} and Proposition~\ref{l.sup tail}. We set $c_1 = G_2/3$ with $G_2$ as in Assumption~\ref{a.limit shape assumption}. Then,
	\begin{align*}
	\P\Big(X > \mu r +\theta r^{1/3}-0.5c_1s^2r^{1/3}, A_j\Big)
	&\leq  \P\bigg(\sup_{\substack{y\in\lineleft,\\ w\in\mathbb L_j}} X_{y \ar w} > \mu r +\theta r^{1/3}-0.5c_1s^2r^{1/3}\bigg)\\
	&\leq \sum_{i=1}^t\P\bigg(\sup_{\substack{y\in\lineleft^i,\\ w\in\mathbb L_j}} X_{y \ar w} > \mu r +\theta r^{1/3}-0.5c_1s^2r^{1/3}\bigg).
	\end{align*}
	Each summand in the last quantity is in turn bounded by
	\begin{align*}
	\MoveEqLeft[22]
	\P\Bigg(\smash{\sup_{\substack{y\in\lineleft^i,\\ w\in\mathbb L_j}}} \bigg(X_{y \ar w}-\mu r - \frac{1}{3}G_2(s+j)^2r^{1/3}\bigg) >  \theta r^{1/3}-0.5c_1s^2r^{1/3}+ \frac{1}{3}G_2(s+j)^2r^{1/3}\Bigg)\\
	&\leq  \exp\left(-c(\min(\theta^{3/2}, \theta r^{1/3})+s^3+j^3)\right).
	\end{align*}
	The last inequality is via the first part of Proposition~\ref{l.sup tail}, and holds for both (i) $s=0$ and $\theta$ sufficiently large, as well as for (ii) $\theta=0$ and $s>s_0$ (with the earlier mentioned assumption that $s\leq r^{1/3}$). Using that $t\leq s^2$ and reducing the value of $c$ complete the proof.
\end{proof}

\begin{proof}[Proof of Lemma~\ref{l.bound on A}]
	Here the endpoints are not bounded uniformly away from the coordinate axes, and so we will make use of the second part of Proposition~\ref{l.sup tail}, which yields
	\begin{align*}
	\P\left(X > \mu r  +\theta r^{1/3} - 0.5c_1s^2r^{1/3}, A\right)
	&\leq \exp\left(-c (r+ \min(\theta^{3/2}, \theta r^{1/3}))\right)\\
	& \leq \exp\left(-0.5c(r + \min(\theta^{3/2}, \theta r^{1/3})+s^3)\right).
	\end{align*}
 	The last inequality is from the fact that $s\leq r^{1/3}$ (as otherwise the statement is trivial) and $\theta < r^{2/3}$, and again holds for both choices of parameters (i) and (ii) in Proposition~\ref{p.p2l general upper tail}. %
\end{proof}

\subsection{Lower tail and mean of constrained point-to-point}\label{app.constrained lower tail}

\begin{proof}[Proof of Proposition~\ref{p.constrained lower tail}]
	We first derive the lower tail bound \eqref{e.constrained lower tail}. Fix $J = \theta^{1/2}/\ell$ and define $u_j = \left(J^{-1}\cdot j(r-z-hr^{2/3}), J^{-1}\cdot j(r+z+hr^{2/3})\right)$ for $j=0,\ldots, J$. By the stationarity of the random field and the union bound, we have
	\begin{align*}
	\P\left(X_{u \ar u^{\prime}}^{U} \leq \mu r-\theta r^{1 / 3}\right)
	&\leq J\cdot\P\left(X_{u \ar u_1}^{U} \leq \frac{1}{J}\mu r -\frac{\theta}{J} r^{1 / 3}\right).
	\end{align*}
	We also have
	\begin{align*}
	\P\left(X_{u \ar u_1}^{U} \leq \frac{1}{J}\mu r -\frac{\theta}{J} r^{1 / 3}\right) &\leq \P\left(X_{u \ar u_1} \leq \mu \frac{r}{J}-\frac{\theta}{J^{2/3}}\cdot\left(\frac{r}{J}\right)^{1 / 3}\right)\\
	&\qquad + \P\left(X_{u \ar u_1} > \mu \frac{r}{J}-\frac{\theta}{J^{2/3}}\cdot \left(\frac{r}{J}\right)^{1 / 3}, \tf(\Gamma_{u,u_1})> \ell J^{2/3}\left(\frac{r}{J}\right)^{2/3}\right)\\
	&\leq \exp\left(-c\theta^{3/2}/J\right) + \exp\left(-c\ell^3 J^{2}\right)\\
	&\leq 2\exp\left(-c\ell\theta\right),
	\end{align*}
	for sufficiently large $\theta$ depending on $K$. For the second-to-last inequality we have used Assumption~\ref{a.one point assumption} for the first term and Theorem \ref{t.tf} for the second. The parameters (marked here with tildes to avoid confusion) for the application of Theorem~\ref{t.tf} are as follows: $\tilde r = r/J$, $\tilde t = KJ^{2/3}/J = K/J^{1/3}$, and $\tilde s = \theta^{1/2}/J^{1/3}$. It is easy to check that the conditions of Theorem~\ref{t.tf} are met with these parameter choices for sufficiently large $\theta$. For Theorem \ref{t.tf} to apply we also need $\ell J^{2/3} \geq \tilde s = \theta^{1/2}/ J^{1/3},$ which is satisfied with equality by our choice of $J$. This completes the proof of the lower tail estimate.

	For the lower bound \eqref{e.constrained mean} on $\E[X_{u \ar u'}^U]$, we have
	\begin{align*}
	\E[X_{u \ar u'}^{U}] &= \E[X_{u \ar u'}] - \E[X_{u \ar u'} - X_{u \ar u'}^U]\\
	&\geq \mu r - G_1z^2r^{1/3} - g_1r^{1/3}  - \E[X_{u \ar u'} - X_{u \ar u'}^U],
	\end{align*}
	using the lower bound for the first term from Assumption \ref{a.limit shape assumption}. Note that the second term is the expectation of a positive random variable. We claim that the expectation is bounded above by $Cr^{1/3}$ for some $C<\infty$. This follows from the tail probability formula for expectation:
	\begin{align*}
	r^{-1/3}\E[X_{u \ar u'} - X_{u \ar u'}^U] = \int_0^{\infty}\P\left(X_{u \ar u'} - X_{u \ar u'}^U > tr^{1/3}\right) \, dt.
	\end{align*}
	Bounding the integrand by 
	$
	\P(X_{u \ar u'} > \mu r + 0.5 tr^{1/3}) + \P(X_{u \ar u'}^U < \mu r - 0.5 tr^{1/3})
	$
	and using the bounds from Assumption~\ref{a.one point assumption} and the tail bound from above to show that this expression is integrable, with the bound on the integral being independent of $r$, complete the proof.
\end{proof}

\section{Proof of a crude upper bound under a convexity assumption}\label{app.proof of lemmas}

In this appendix we complete the proof of Lemma~\ref{l.first upper bound on melon weight} under the upper tail convexity hypothesis, Assumption~\ref{a.one point assumption convex}.

\begin{proof}[Proof of Lemma~\ref{l.first upper bound on melon weight} under Assumption~\ref{a.one point assumption convex}]
As in the earlier proof of Lemma~\ref{l.first upper bound on melon weight}, for any fixed $k$-geodesic watermelon $\mc W$, let $\smash{X_{n,\mc W}^{k,j}}$ be the weight of its $j$\textsuperscript{th} heaviest curve. For $J\subseteq \intint{k}$ and $\overline \delta = (\delta_1,\ldots,\delta_k)$ such that $\delta_j \geq 0$ for each $j\in J$ and $\delta_j = 0$ for $j\in \intint{k}\setminus J$, define
$$A_{\overline \delta, J} = \left\{\exists \mc W: X_{n, \mc W}^{k,j} > \mu n + \delta_j n^{1/3} \text{ for all }j\in J\right\}.$$
We claim that if $\sum_{j\in J}\delta_j = \lceil tk^{5/3}\rceil$, then, for an absolute constant $c>0$ and $c_1 = c_1(t)>0$, and for $k\leq t^{-3/2}n$,
\begin{equation}\label{e.P(A_delta) bound}
\P(A_{\overline \delta, J}) \leq \exp\left(-ct^{3/2}k^2\right).
\end{equation}
This follows from the BK inequality and Assumption~\ref{a.one point assumption convex}, which gives that
\begin{align*}
\P\left(A_{\overline \delta, J}\right)
&\leq \P\Big(\exists \text{ disjoint paths } \{\gamma_j: j\in J\} \text{ with } \ell(\gamma_j) \geq \mu n + \delta_j n^{1/3} \ \ \forall i=j\in J\Big)\\
&\leq \exp\bigg(-\sum_{j\in J} I_n(\delta_j)\bigg).
\end{align*}
Since $I_n$ is convex and $\sum_{j\in J}\delta_j = \lceil tk^{5/3}\rceil$, we get
\begin{align*}
\sum_{j\in J} I_n(\delta_j) \geq k I_n\bigg(\frac{1}{k}\sum_{j\in J} \delta_j\bigg)
= k I_n\left(k^{-1}\lceil tk^{5/3}\rceil\right)
&\geq c'\min(t^{3/2} k^2, tk^{5/3}n^{1/3})\\
&\geq c't^{3/2}k^2 
\end{align*}
as $I_n(x) \geq c'\min(x,x^{3/2})$ for all $x\geq 0$ from Assumption~\ref{a.one point assumption convex}, and $k\in\intint{\lfloor \min(1,t^{-3/2})n\rfloor}$. Let $D$ be the set of $\overline \delta$ defined by
$$D = \left\{ \overline\delta: \sum_{j=1}^k\delta_j = \lceil tk^{5/3}\rceil, \delta_j \in \N\cup\{0\} \ \ \forall j\in\intint{k}\right\}.$$
Now we observe
$\P\left(\melonweight > \mu nk + tk^{5/3}n^{1/3}\right) = \P\big(\exists\mc W: \sum_{j=1}^k (X_{n,\mc W}^{k,j} -\mu n)n^{-1/3} > tk^{5/3}\big).$
By considering the ceiling of each summand in the event of this last probability, we obtain that the last expression is bounded by
\begin{align*}
\MoveEqLeft[8]
\P\left(\exists\mc W: \sum_{j=1}^k \lceil(X_{n, \mc W}^{k,j} -\mu n)n^{-1/3}\rceil > tk^{5/3}\right)\\
&\leq\P\left(\bigcup_{\overline\delta\in D}\left\{\exists\mc W\text{ and } J\subseteq \intint{k}: \lceil(X_{n, \mc W}^{k,j} -\mu n)n^{-1/3}\rceil > \delta_j  \ \ \forall j\in J\right\}\right)\\
&\leq 2^k\cdot|D|\cdot\exp\left(-ct^{3/2}k^2\right),
\end{align*}
where the first inequality is seen by considering $J$ to be the set of indices $j$ such that $\lceil(X_{n, \mc W}^{k,j} -\mu n)n^{-1/3}\rceil > 0$ and taking $\delta_j$ to be an integer at most the latter quantity such that $\sum_{j\in J} \delta_j = \lceil tk^{5/3}\rceil$ for $j\in J$, and $\delta_j = 0$ for $j\in\intint{k}\setminus J$. The second inequality follows by the union bound; the bound \eqref{e.P(A_delta) bound} on $\P\left(A_{\bar\delta}\right)$; and the cardinality of the number of subsets of $\intint{k}$. The cardinality of $D$ is the number of non-negative solutions to $x_1+\ldots+x_k = tk^{5/3}$, which is $\binom{tk^{5/3}+k-1}{k-1} \leq \binom{2(tk^{5/3}\vee k)}{k}$. Using the trivial bound $\binom{n}{k}\leq n^k$ and simplifying, we obtain that $|D| \leq \exp(\frac12 ct^{3/2}k^2)$ for $k$ larger than some absolute $k_0$ depending on $t$, so the proof is complete by reducing the value of $c$.
\end{proof}

\end{document}